\setlist{nosep} 
\let\OLDthebibliography\thebibliography
\renewcommand\thebibliography[1]{
  \OLDthebibliography{#1}
  \setlength{\parskip}{0pt}
  \setlength{\itemsep}{0pt plus 0.3ex}
}
\numberwithin{equation}{section}
\newtheorem{theorem}{Theorem}[section]
\newtheorem{corollary}[theorem]{Corollary}
\newtheorem{lemma}[theorem]{Lemma}
\newtheorem{proposition}[theorem]{Proposition}
\newtheorem{definition}[theorem]{Definition}
\theoremstyle{remark}
\newtheorem{remark}[theorem]{Remark}
\tikzset{->-/.style={decoration={
			markings,
			mark=at position #1 with {\arrow{>}}},postaction={decorate}}}
\newcommand{\R}{\mathbb{R}}
\newcommand{\Hcal}{\mathcal{H}}
\newcommand{\Kcal}{\mathcal{K}}
\newcommand{\G}{\mathcal{G}}
\newcommand{\norm}[3]{\|#1\|_{L^{#2}(#3)}}
\DeclareMathOperator{\sech}{sech}
\DeclareMathOperator{\sign}{sign}
\DeclareMathOperator{\argsinh}{argsinh}
\title{ {A comprehensive study of bound-states for the nonlinear Schr\"odinger equation on single-knot metric graphs}}
\author{Francisco Agostinho, Sim\~ao Correia and
	Hugo Tavares}
\date{\today} 
\begin{document}
\maketitle

\begin{abstract}
We study the existence and qualitative properties of action ground-states (that is, bound-states with minimal action) {of the nonlinear Schrödinger equation} over single-knot metric graphs - {noncompact graphs} which are made of half-lines, loops and pendants, all connected at a single vertex. First, we prove existence of action ground-state for generic single-knot graphs, even in the absence of an associated variational problem. Second, for regular single-knot graphs of length $\ell$, we perform a complete analysis of positive monotone bound-states. Furthermore, we characterize all positive bound-states when $\ell$ is small and prove some symmetry-breaking results for large $\ell$. Finally, we apply the results to some particular graphs to illustrate the complex relation between action ground-states and the topological {and metric} features of the underlying metric graph.

{The proofs are nonvariational, using a careful phase-plane analysis, the study of sections of period functions, asymptotic estimates and blowup arguments. We show, in particular, how nonvariational techniques are complementary to variational ones in order to deeply understand bound-states of the nonlinear Schr\"odinger equation on metric graphs. }
\vskip10pt
	\noindent\textbf{Keywords}: nonlinear Schr\"odinger equation, positive solutions, action ground-states, single-knot metric graphs.
	\vskip10pt
	\noindent\textbf{AMS Subject Classification 2020}:  	34C37, 35R02, 35Q55, 70K05 
\end{abstract}


\section{Introduction}

\subsection{Motivation and setting of the problem}
In the past decade, the analysis of bound-states of the nonlinear Schr\"odinger equation (NLS){, for superlinear nonlinearities,} in the context of metric graphs has attracted the attention of both the physical and mathematical communities. Physically, metric graphs can be used as accurate spatial models for optic networks (in the context of signal transmission) or ramified traps (in the framework of Bose-Einstein condensates) \cite{berkolaiko2013introduction,  bolte2014many, bulgakov2011symmetry, burioni2001bose,  dalfovo1999theory, hung2011symmetric}. In the last application, the existence of bound-states directly relates to the appearance of a condensate concentrated over the structure modeled by the metric graph.

From a mathematical point of view, given a metric graph $\mathcal{G}$, bound-states are the  nontrivial critical points of the \textit{action functional} $S(\cdot):H^1(\G)\to\R$ defined by
\begin{equation}\label{ActionFUnctionalChap3}
	S(u)=\frac{1}{2}\int_{\G}|{u'}|^2+|u|^2dx-\frac{1}{p}\int_{\G}|u|^pdx,\qquad p>2.
\end{equation}
 Similar to the euclidean case, we define the \textit{least action level} as
	\begin{equation}\label{ActionProbI}   \mathcal{S}_{\G}:=\inf_{u\in H^1(\G)\setminus\left\{0\right\}}\left\{S(u): u\mbox{ bound-state}\right\}.
	\end{equation}
	An \textit{action ground-state} is a bound-state attaining the least action level.

Comparing with the euclidean case, where there exists a unique bound-state (up to symmetries), new challenges and obstacles appear in the setting of metric graphs. Indeed, one sees that both the \textit{topological} and \textit{metric} properties of the graph are determining factors for existence, multiplicity and qualitative properties of bound-states. See, for example, {\cite{cacciapuoti2018variational,jeanjean_compact_2024, dovetta_compact_2018} for compact graphs; \cite{dovetta_honey_2019, adami_grid_2019,  dovetta_periodic_2019} for periodic graphs; \cite{dovetta_trees_2020} for tree graphs;} \cite{Adami_star2012,adami2014variational} for the star graph; \cite{NOJA2019147} for the double-bridge graph and $p=4$; \cite{PhysRevE.91.013206, noja2015bifurcations, noja2020standing} for the tadpole graph; \cite{kairzhan2021standing} for the flower graph; \cite{AgostinhoCorreiaTavares} for the $\mathcal{T}$-graph.
As a consequence, an all-encompassing theory seems currently out-of-reach and one must restrict oneself to the analysis of graphs with some additional structure. In this work, we introduce the class of\textit{ single-knot metric graphs}{ and show how nonvariational techniques such as phase-plane analysis and blow-up arguments are efficient tools to characterize the sets of bound-states and of action ground-states.}

\begin{definition}[Single-knot metric graph]
	A single-knot metric graph is a graph consisting of {$H\geq 1$} half-lines, $P\ge 0$ pendants and $L\ge 0$ loops, all attached at the same vertex $\mathbf{0}$. The graph is said to be regular (of length $\ell$) if each pendant has length $\ell$ and each loop has length $2\ell$.
\end{definition}


The class of single-knot metric graphs includes the $\mathcal{T}$-graph (which was thoroughly studied in our previous work \cite{AgostinhoCorreiaTavares}), the tadpole graph, the fork graph and the broom graph, depicted in Figures \ref{fig:T}, \ref{fig:tadpole}, \ref{fig:fork} and \ref{fig:broom}, as well as star graphs and flower graphs.
\begin{figure}[h!]
	\centering
	\begin{minipage}{0.45\textwidth}
		\centering\begin{tikzpicture}[node distance=2.5cm,  every loop/.style={}]
			\node(a)		{\Large $\infty$};
			\node[circle, fill=black, label=above:\large $\mathbf{0}$](b)[right of=a]{};
			\node[circle, fill=black, label=left: \large $\boldsymbol{v}$](c)[below of=b]{};
			\node(d)[right of=b]{\Large $\infty$};
			\path (a)edge node[above]{$h_1$}(b)
			(b)edge node[left]{$e_1$} (c)
			edge node[above]{$h_2$} (d)
			;	   	
		\end{tikzpicture}\label{fig:T}
		\caption{The class of $\mathcal{T}$-graphs.}\label{fig:T}	
	\end{minipage}
	\begin{minipage}{0.45\textwidth}
		\centering
		\begin{tikzpicture}[node distance=3cm,every loop/.style={}]
			\node(1){\Large $\infty$};
			\node[circle, fill=black, label=above:\large $\mathbf{0}$](3)[right of=1]{};
			\path
			(1) edge node[above]{$h_1$} (3)
			(3) edge[out=-30,in=30,loop,scale=6] node[left]{$e_1$} (3)
			;
		\end{tikzpicture}
		\vspace{7pt}
		\caption{The class of tadpole graphs.}\label{fig:tadpole}
	\end{minipage}
\end{figure}

\begin{figure}[h!]
	\centering
	\begin{minipage}{0.45\linewidth}
		\centering
		\begin{tikzpicture}[node distance=1.5cm,  every loop/.style={}]
			\node(a)		{\Large $\infty$};
			\node[circle, fill=black, label=above:\large $\mathbf{0}$](b)[right of=a]{};
			\node[circle, fill=black, label=right: \large $\boldsymbol{v}_i$](c)[right of=b]{};
			\node[circle, fill=black, label=right: \large $\boldsymbol{v}_{N-1}$](d)[below of=c]{};
			\node[circle, fill=black, label=right: \large $\boldsymbol{v}_N$](e)[below of=d]{};
			\node[circle, fill=black, label=right: \large $\boldsymbol{v}_2$](g)[above of=c]{};
			\node[circle, fill=black, label=right: \large $\boldsymbol{v}_1$](f)[above of=g]{};
			\path (a)edge node[above]{$h$}(b)
			(b)edge node[left]{$e_1$} (f)
			edge node[above]{$e_2$} (g)
			edge node[above]{$e_i$} (c)
			edge node[right]{$e_{N-1}$} (d)
			edge node[left]{$e_N$} (e)
			(g) edge[dotted] (c)
			(c) edge[dotted] (d)
			;	   	
		\end{tikzpicture}
        		\vspace{7pt}
		\caption{The class of fork graphs.}\label{fig:fork}
	\end{minipage}
		\begin{minipage}{0.45\linewidth}
		\centering
		\begin{tikzpicture}[node distance=1.5cm,  every loop/.style={}]
			\node[circle, fill=black, label=above: \large $\boldsymbol{v}$](a){}		;
			\node[circle, fill=black, label=above:\large $\mathbf{0}$](b)[right of=a]{};
			\node(c)[right of=b]{$\displaystyle \infty$};
			\node(d)[below of=c]{$\displaystyle \infty$};
			\node(e)[below of=d]{$\displaystyle \infty$};
			\node(g)[above of=c]{$\displaystyle \infty$};
			\node(f)[above of=g]{$\displaystyle \infty$};
			\path (a)edge node[above]{$h$}(b)
			(b)edge node[left]{$e_1$} (f)
			edge node[above]{$e_2$} (g)
			edge node[above]{$e_i$} (c)
			edge node[right]{$e_{N-1}$} (d)
			edge node[left]{$e_N$} (e)
			(g) edge[dotted] (c)
			(c) edge[dotted] (d)
			;	   	
		\end{tikzpicture}
        		\vspace{7pt}
		\caption{The class of broom graphs.}\label{fig:broom}
	\end{minipage}
\end{figure}

A classical approach to address action ground-states and their qualitative properties is to relate \eqref{ActionProbI} with the variational problem 
\begin{equation}\label{eq:variational}
    	\mathcal{I}_{\G}(\mu):=\inf_{ u\in H^1(\G)}\left\{ I (u):=\frac{1}{2}\norm{u'}{2}{\G}^2+\frac{1}{2}\norm{u}{2}{\G}^2:\  \frac{1}{p}\norm{u}{p}{\G}^p=\mu\right\}.
\end{equation}
Indeed, it was shown in \cite{AgostinhoCorreiaTavares} that, if the minimization problem \eqref{eq:variational} has a solution, then, {for a specific $\mu$,} action ground-state are the solutions to \eqref{eq:variational} - see also Lemma \ref{lem:minim} below. Afterwards, {for some graphs,} one can use the variational structure to derive several other properties of action ground-states, such as positivity, monotonicity and symmetry. 

It is worth to point out the difference between the existing definitions of ground-state in the literature. First, \cite{de2023notion} defines action ground-states\footnote{Therein, the notion of \textit{least action solution} corresponds to our definition of action ground-state.} as solutions to the Nehari-type minimization problem
\begin{equation}\label{eq:nehari}
    c(\mathcal{G})=\inf_{u\in H^1(\mathcal{G})\setminus \{0\}} \left\{ S(u) : \langle S'(u), u\rangle = 0 \right\}.
\end{equation}
{We have $c(\mathcal{G})\leq \mathcal{S}_\mathcal{G}$ and} it is easy to check that, whenever the above problem has a solution, then both definitions of action ground-state coincide. However, as shown in \cite{de2023notion}, there are cases where \eqref{eq:nehari} is not attained; {the same happens with \eqref{eq:variational}}. We have preferred to work with the more general definition \eqref{ActionProbI}, as it relates to a more physically desirable property.

Another definition concerns \emph{energy} ground-states, which are the solutions to the variational problem
\begin{equation}\label{eq:egs}
    	\mathcal{S}_{\G}(\mu):=\inf_{ u\in H^1(\G)}\left\{ S(u):\  \norm{u}{2}{\G}^2=\mu\right\},\quad 2<p\le 6
\end{equation}
While the notions of action and energy ground-state coincide in the euclidean case (for scaling reasons), the problems are not equivalent in the setting of metric graphs (see \cite{AgostinhoCorreiaTavares, dovetta2024nonuniquenessnormalizedgroundstates, dovetta2023action}). We refer to \cite{Simonegrid2018, adami2015nls, adami2016threshold, 
adami2017negative, 
Borthwick2023,
cacciapuoti2018variational,  chang2023normalized,
SimoneTree2020,   dovetta2024nonuniquenessnormalizedgroundstates, dovetta2020uniqueness, MR3959930} for a profound study of energy ground-states in the context of metric graphs.

\bigskip

In the case of the $\mathcal{T}$ or tadpole graphs, one can show that \eqref{eq:variational} has indeed a solution (see \cite{AgostinhoCorreiaTavares} and Corollary \ref{cor:tadpole} below). On the other hand, for star graphs, \eqref{eq:variational} is not attained (see the construction in \cite[Section 4.3]{dovetta2023action}). We also refer to \cite{Coster} for general criteria for existence/nonexistence of solutions to \eqref{eq:nehari}. For general single-knot metric graphs, it is simple to construct other examples for which \eqref{eq:variational} is \textit{not} attained, in particular, any regular single-knot graph with at least three half-lines and small $\ell$ (see Corollary \ref{coro:broom} and Remark \ref{rmk:falha_var}). As such, while \eqref{eq:variational} is certainly valuable in some particular cases, it fails as a general approach to study action ground-states on single-knot metric graphs (or even general metric graphs). 

In this work, we tackle directly \eqref{ActionProbI} using the bound-state equation
\begin{equation}\label{eq:bstate}
 -u'' + u = |u|^{p-2}u, \quad u\in H^1(\mathcal{G}),
\end{equation}
under the (natural) Neumann-Kirchoff conditions (see \eqref{eq:nk}). We perform a detailed analysis of the solutions to \eqref{eq:bstate} using phase-plane arguments. As a consequence, we are able to show the existence of action ground-states for generic single-knot graphs and prove monotonicity and symmetry properties for regular single-knot graphs. This represents a significant and fruitful departure from the classical variational approach.

\subsection{Description of the main results}

{Let $\mathcal{G}$ be a single-knot metric graph.} We identify each compact edge $\kappa$ of length $\ell_\kappa$ with the interval $[0,\ell_\kappa]$ and each half-line with the interval $[0,\infty)$. In either case, the vertex $\mathbf{0}$ is identified with $x=0$. For the sake of clarity, throughout this work we reserve the letter $\kappa$ for bounded edges (denoting by $\mathcal{K}$ the set of such edges, also referred to as the \emph{compact core}) and $h$ for half-lines (denoting by $\mathcal{H}$ the set of unbounded edges). The set of all edges is denoted by $\mathcal{E}$. {Given $e\in \mathcal{E}$ and $u:\mathcal{G}\to \R$, we denote by $u_e$ the restriction of $u$ to $e$.}

Our first set of results concerns the existence of action ground-states. As mentioned above, the variational approach \eqref{eq:variational} is not suitable for the whole class of single-knot graphs. In particular, to prove the existence of action ground-states, one must understand the behavior of minimizing bound-state sequences, which have a very rigid structure on half-lines: if $u(\mathbf{0})\neq 0$, they coincide with portions of the real-line bound-state (called \textit{soliton}), whereas if $u(\mathbf{0})=0$ then $u_h=0$ for every $h\in \Hcal$. Using the compactness properties of such sequences and a blow-up argument, we prove
\begin{theorem}\label{thm:ags}
    Let $\mathcal{G}$ be a single-knot metric graph. If $\mathcal{G}$ admits a bound-state and
\begin{equation}\label{eq:tanh}
        \sum_{\substack{\kappa\in \mathcal{K}\\ \kappa\ \text{pendant}}} \tanh{\ell_\kappa} +2\sum_{\substack{\kappa\in \mathcal{K}\\ \kappa\ \text{loop}}} \tanh{\frac{\ell_\kappa}{2}}  \neq H-2n,\quad n=0,\dots, [H/2], 
\end{equation}
where $[\cdot]$ is the integer part of a positive real number, 
   then there exists an action ground-state on $\mathcal{G}$.
\end{theorem}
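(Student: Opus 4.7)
The plan is to extract a weakly convergent subsequence from a minimizing sequence of bound-states and to establish nontriviality of its weak limit through a blow-up analysis at the vertex $\mathbf{0}$, with hypothesis~\eqref{eq:tanh} precisely excluding the only potentially degenerate scenario (solitons escaping along half-lines). First, testing~\eqref{eq:bstate} against $u_n$ yields $S(u_n)=(\tfrac{1}{2}-\tfrac{1}{p})\norm{u_n}{p}{\G}^p$, and hence $H^1$-boundedness. Extracting a subsequence $u_n\rightharpoonup u$ in $\HoneG$ (with local uniform convergence on compact subgraphs by Sobolev embedding), one can pass to the limit in~\eqref{eq:bstate} and in the Neumann--Kirchhoff condition at $\mathbf{0}$ to conclude that $u$ is itself a (possibly trivial) bound-state. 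Once $u\not\equiv 0$ is established, Fatou's lemma on each edge gives $\norm{u}{p}{\G}^p\leq\liminf\norm{u_n}{p}{\G}^p$, so $S(u)\leq\mathcal{S}_\G$, forcing $u$ to attain the infimum.

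The crucial step is therefore to rule out $u\equiv 0$. Arguing by contradiction, suppose $t_n:=u_n(\mathbf{0})\to 0^+$ (WLOG positive, replacing $u_n$ with $-u_n$ if needed). On each compact edge, the rescaling $v_{n,\kappa}:=u_{n,\kappa}/t_n$ satisfies $-v''+v=t_n^{p-2}|v|^{p-2}v$ with $v(0)=1$ and the appropriate Neumann/symmetry condition at the far end; continuous dependence on parameters gives $C^1$-convergence, as $t_n\to 0$, to the unique positive solution of the linearized problem $-v''+v=0$, namely $v_\kappa(x)=\cosh(\ell_\kappa-x)/\cosh\ell_\kappa$ on a pendant (hence $u'_{n,\kappa}(0)/t_n\to-\tanh\ell_\kappa$), and the symmetric profile $\cosh(\ell_\kappa/2-x)/\cosh(\ell_\kappa/2)$ on each half of a loop (yielding total outgoing contribution $-2\tanh(\ell_\kappa/2)$ per loop). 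On each half-line, $u_{n,h}$ is a portion of the real-line soliton with $u_{n,h}(0)=t_n$; conservation of energy, which vanishes since $u_{n,h}\to 0$ at infinity, gives $u'_{n,h}(0)/t_n=\pm\sqrt{1-\tfrac{2}{p}t_n^{p-2}}\to\pm 1$, the two signs corresponding to a monotone decreasing tail ($-1$) and a soliton with interior peak escaping to $+\infty$ ($+1$). Passing to a further subsequence with a fixed sign on each half-line, denote by $k\in\{0,\dots,H\}$ the number of $+1$ half-lines; dividing the Kirchhoff identity by $t_n$ and letting $n\to\infty$ yields
\[
2k-H=\sum_{\kappa\,\text{pendant}}\tanh\ell_\kappa+2\sum_{\kappa\,\text{loop}}\tanh(\ell_\kappa/2).
\]
Since the right-hand side is non-negative we have $k\geq H/2$, and writing $m:=H-k\in\{0,\dots,[H/2]\}$ produces a value $H-2m$ forbidden by~\eqref{eq:tanh}, the desired contradiction.

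The main obstacle is the half-line analysis: the peak-escaping soliton mode is not weakly compact and contributes in full to the $L^p$-mass of $u_n$, so the binary sign choice on each half-line and its precise contribution of order $t_n$ to the Kirchhoff identity must be tracked carefully. The compact-edge linearization, by contrast, is a routine application of continuous dependence for ODEs on bounded intervals.
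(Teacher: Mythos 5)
Your argument follows the same route as the paper's: a minimizing sequence of bound-states is $H^1$-bounded, its weak limit is an action ground-state as soon as it is nontrivial, and triviality of the limit is excluded by a blow-up at $\mathbf{0}$ whose limit is the linear Kirchhoff problem $-v''+v=0$, leading exactly to the identity forbidden by \eqref{eq:tanh}. The sign bookkeeping on the half-lines, the profiles $\cosh(\ell_\kappa-x)/\cosh\ell_\kappa$ on pendants and $\cosh(\ell_\kappa/2-x)/\cosh(\ell_\kappa/2)$ on loops, and the reduction of the limiting identity to the forbidden value $H-2m$ with $m\in\{0,\dots,[H/2]\}$ all match the paper's computation.

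There are, however, two gaps in the blow-up step. The main one: you normalize by $t_n=u_n(\mathbf{0})$ and invoke ``continuous dependence on parameters'' to pass to the limit in the rescaled boundary value problem, but continuous dependence is not available for a nonlinear BVP without an a priori bound; what is actually needed is that $\|u_{n,\kappa}\|_{L^\infty}$ and $|u'_{n,\kappa}(0)|$ are $O(t_n)$ on every compact edge, and this is precisely where the paper does real work (it normalizes by $\|u_n\|_{L^\infty(\tilde{\mathcal K})}$ and obtains the $H^1$ bound \eqref{eq:boundh1} by integrating the equation and combining the Kirchhoff condition with the soliton estimate $|u'_{n,h}|\le C|u_{n,h}|$ on the half-lines). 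Your claim is true and can be proved, e.g., from the Hamiltonian \eqref{ODE_Chapter2}: on a pendant, evaluating at $x=0$ and at $x=\ell_\kappa$ (where $u'_{n,\kappa}=0$) gives $\tfrac12 u'_{n,\kappa}(0)^2=f(t_n)-f(u_{n,\kappa}(\ell_\kappa))\le f(t_n)\le\tfrac12 t_n^2$ once $\|u_{n,\kappa}\|_{L^\infty}\to0$, and similarly for loops (where $u'_{n,\kappa}$ vanishes at an interior point by Rolle); but some such argument must be supplied, since as written the compactness of the rescaled sequence is asserted rather than proved. The second, minor, gap: writing $t_n\to0^+$ silently assumes $u_n(\mathbf{0})\neq0$. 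If $u_n(\mathbf{0})=0$ along a subsequence the rescaling is undefined; in that case $u_{n,h}\equiv0$ on every half-line, the sequence is supported on the compact core, local $L^p$ convergence is global, and $\|u_n\|_{L^p}^p=\|u_n\|_{H^1}^2\ge c\|u_n\|_{L^p}^2$ rules out $u\equiv0$ directly, as the paper does in a separate first case.
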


 \begin{remark}
There are situations where it is simple to show the existence of bound-states:
\begin{enumerate}
\item if $\mathcal{G}$ has two pendants $e_1,e_2$ with the same length $\ell$, let $v$ be a positive solution of
\[
-v''+v=v^{p} \quad \text{ in } [0,\ell],\qquad v(0)=v'(\ell)=0.
\]
Then the function $u$ defined by
\[
u=v \text{ in } e_1,\qquad u=-v\text{ in } e_2,\qquad u=0 \text{ elsewhere,}
\]
is a bound-state. A similar construction provides a bound-state when $\mathcal{G}$ contains two loops with the same length\footnote{In both situations, there are actually infinitely many bound-states (all sign-changing).}. 
\item If the number of half-lines is even, choose a pairing of half-lines. Take $u$ to be constant equal to 1 on the compact core and equal to a real-line soliton on each pair of lines. Then $u$ is clearly a bound-state over $\mathcal{G}$.

\end{enumerate}
Therefore, we have the following direct consequence of Theorem \ref{thm:ags}.
\end{remark}
\begin{corollary}
If a single-knot metric graph $\mathcal{G}$ has either:
\begin{enumerate}
    \item an even number of half-lines;
    \item at least two pendants with the same length;
    \item at least two loops with the same length,
    \end{enumerate}
then there exists a bound-state on $\mathcal{G}$. In particular, if $\mathcal{G}$ also satisfies \eqref{eq:tanh}, then there exists an action ground-state on $\mathcal{G}$.
\end{corollary}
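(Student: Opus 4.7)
The plan is to establish existence of a bound-state in each of the three cases by exhibiting an explicit one; the ``in particular'' clause is then a direct application of Theorem \ref{thm:ags}, whose remaining hypothesis \eqref{eq:tanh} is assumed. The three constructions are already sketched in the preceding remark, so the work amounts to verifying, for each proposed function $u$, that (i) $u\in H^1(\mathcal{G})$, (ii) $u$ solves \eqref{eq:bstate} on each edge, and (iii) the Neumann--Kirchhoff conditions hold at $\mathbf{0}$ and at each pendant tip.

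For case (2), I would first produce, by a standard argument (either a variational argument on $H_0^1(0,2\ell)$ followed by symmetrization, or a phase-plane analysis using the first integral $\tfrac12(v')^2-\tfrac12 v^2+\tfrac1p v^p$), the positive solution $v$ on $[0,\ell]$ of $-v''+v=v^{p-1}$ satisfying $v(0)=0$ and $v'(\ell)=0$. Setting $u=v$ on $e_1$, $u=-v$ on $e_2$, and $u=0$ on every other edge, continuity at $\mathbf{0}$ is immediate (all traces equal $0$); the equation is satisfied edgewise, with $u\equiv 0$ being a solution on the remaining edges; the pendant Neumann conditions hold by construction; and the signed outgoing derivatives at $\mathbf{0}$ sum to $v'(0)+(-v'(0))+0=0$. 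Case (3) is analogous: I would extend $v$ to $[0,2\ell]$ by reflection about $x=\ell$, place this even extension $\tilde v$ on one loop, $-\tilde v$ on the other, and $0$ elsewhere. A single loop carrying $\tilde v$ contributes $\tilde v'(0)-\tilde v'(2\ell)=2v'(0)$ to the NK sum (the minus sign accounting for the outgoing direction at the second endpoint), so the two loops together contribute $2v'(0)-2v'(0)=0$.

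For case (1), I would pair the $H$ half-lines arbitrarily; on each pair $(h,h')$, viewed as a glued copy of $\R$ with the junction at $\mathbf{0}$, I would place the translate $\phi(\cdot-c)$ of the even positive real-line soliton $\phi$, with $c$ chosen so that $\phi(c)=1$ (such $c$ exists because $\phi$ is continuous, attains its maximum $(p/2)^{1/(p-2)}>1$ at the origin, and decays to $0$ at infinity). On the compact core I would set $u\equiv 1$, which satisfies $-u''+u=u^{p-1}$ trivially, is compatible with $u'=0$ at each pendant tip, and matches consistently at both endpoints of each loop. Continuity at $\mathbf{0}$ holds since every edge takes the value $1$ there. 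For the NK condition, the constant-$1$ pieces contribute $0$ while each pair of half-lines contributes $0$ because $\phi(\cdot-c)$ is smooth across its gluing, so the two outgoing derivatives are opposite in sign. With a bound-state exhibited in each case, Theorem \ref{thm:ags} yields an action ground-state under \eqref{eq:tanh}. The only delicate point is the NK bookkeeping on loops and on paired half-lines, where the sign conventions for ``outgoing derivative'' must be handled carefully; once unpacked, each verification is routine.
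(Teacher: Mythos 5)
Your proposal is correct and follows essentially the same route as the paper: the paper's preceding remark gives exactly these three explicit constructions (the $\pm v$ pair on two equal pendants, its reflected analogue on two equal loops, and the constant-$1$ core glued to solitons on paired half-lines), and the corollary is then read off from Theorem \ref{thm:ags}. Your additional care with the Neumann--Kirchhoff sign bookkeeping and the existence of the Dirichlet--Neumann profile $v$ only fills in details the paper leaves implicit.
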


\medskip

If bound-states exist, Theorem \ref{thm:ags} shows that ground-states exist up to a finite number of conditions on the lengths of the egdes. We believe that Theorem \ref{thm:ags} is valid without assuming \eqref{eq:tanh}. In fact, the only known graphs for which $\mathcal{S}_\mathcal{G}$ is not attained are graphs with infinitely many edges (see \cite[Figure 1 and 3]{dovetta2023action}) To illustrate this conjecture, we prove a definitive result in the case of \textit{regular} single-knot graphs.

\begin{theorem}\label{thm:agsregular}
    Let $\mathcal{G}$ be a regular single-knot metric graph. Then there exists an action ground-state on $\mathcal{G}$.
\end{theorem}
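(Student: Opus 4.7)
The strategy is to reduce Theorem \ref{thm:agsregular} to Theorem \ref{thm:ags} for all but finitely many values of $\ell$, and then handle the remaining exceptional values by an approximation argument. In the regular case, condition \eqref{eq:tanh} simplifies to
\[
(P+2L)\tanh(\ell)\neq H-2n,\qquad n=0,1,\dots,[H/2],
\]
and since $\tanh$ is strictly increasing, the set $E\subset(0,\infty)$ of lengths where it fails is finite. One then checks that a regular single-knot graph always admits a bound-state: this is immediate in the cases $H$ even, $P\geq 2$, or $L\geq 2$ already covered by the remark after Theorem \ref{thm:ags}, and in the remaining cases (star graphs, or graphs with a single pendant/loop and an odd number of half-lines) it can be obtained by a direct phase-plane construction in the spirit of \cite{AgostinhoCorreiaTavares}. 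Consequently, for every $\ell\notin E$ Theorem \ref{thm:ags} already provides an action ground-state.

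It remains to produce an action ground-state for each of the finitely many $\ell^*\in E$. Choose $\ell_n\to\ell^*$ with $\ell_n\notin E$, and let $u_n$ be an action ground-state on $\mathcal{G}_{\ell_n}$ given by Theorem \ref{thm:ags}. After the natural linear reparameterization of compact edges, each $u_n$ is viewed as an element of $H^1(\mathcal{G}_{\ell^*})$. An explicit bound-state on $\mathcal{G}_{\ell_n}$ whose action varies continuously with $\ell_n$ (for instance, a symmetric one) furnishes a uniform upper bound on $\mathcal{S}_{\mathcal{G}_{\ell_n}}=S(u_n)$, and together with the Nehari identity $\nH{u_n}{\mathcal{G}_{\ell_n}}^2=\norm{u_n}{p}{\mathcal{G}_{\ell_n}}^p$ this gives uniform $H^1$ bounds on $u_n$. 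The rigid structure of bound-states on single-knot graphs — each $u_n$ is a soliton tail on every half-line fully determined by $c_n:=u_n(\mathbf{0})$, and on each compact edge a piece of a periodic orbit of $-u''+u=|u|^{p-2}u$ determined by $c_n$, $\sgn(u_n'(0))$ and a discrete arc-length class — together with ODE continuity with respect to parameters, produces a subsequence along which $c_n\to c^*$ and $u_n\to u^*$ in $H^1(\mathcal{G}_{\ell^*})$, with $u^*$ solving \eqref{eq:bstate} and the Kirchhoff condition at the vertex surviving in the limit.

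The main obstacle is to show that $u^*\not\equiv 0$, since this is precisely the degeneracy that forced the hypothesis \eqref{eq:tanh} in Theorem \ref{thm:ags}. If $c_n\to 0$, an appropriate rescaling of $u_n$ converges to a nontrivial solution of the linearized equation $-u''+u=0$ whose Kirchhoff condition, once the sign choices corresponding to decaying versus growing half-line modes are distinguished, reads exactly $(P+2L)\tanh(\ell^*)=H-2n$ — that is, the failure of \eqref{eq:tanh} at $\ell^*$. Regularity of $\mathcal{G}$ is the decisive extra input to exclude this scenario: an explicit symmetric competitor (or, if necessary, a bifurcating branch from the linear kernel at $\ell^*$) yields a bound-state with action strictly below the value of $S$ on any such rescaled linearized limit, contradicting the minimality of $u_n$ and forcing $c^*\neq 0$. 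Once $u^*$ is nontrivial, the identity $S(u^*)=\mathcal{S}_{\mathcal{G}_{\ell^*}}$ follows from continuity of $\ell\mapsto \mathcal{S}_{\mathcal{G}_\ell}$ at $\ell^*$: lower semicontinuity from $S(u^*)\le \liminf S(u_n)$, and upper semicontinuity from transporting a near-minimizer on $\mathcal{G}_{\ell^*}$ onto $\mathcal{G}_{\ell_n}$ and correcting it to a bound-state via a finite-dimensional implicit function argument on the phase-plane parameters.
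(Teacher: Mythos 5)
Your overall strategy (invoke Theorem \ref{thm:ags} away from the finitely many lengths where \eqref{eq:tanh} fails, then approximate the exceptional lengths) is genuinely different from the paper's proof, which works at a fixed $\ell$ and rules out vanishing of a minimizing sequence directly: it shows via the phase-plane that a degenerating sequence of positive bound-states would have to be core-decreasing on every compact edge with the same energy level (this is where regularity enters, through the strict monotonicity of $w\mapsto \ell(w)$ in Lemma \ref{lem:funcaol_zfixo}), hence core-symmetric, hence unique by Theorem \ref{thm:amin}, hence a constant sequence --- contradicting convergence to zero. Your route avoids that uniqueness machinery, but it founders at exactly the point where the difficulty is concentrated.

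The genuine gap is in your exclusion of vanishing at an exceptional length $\ell^*$. If $c_n=u_n(\mathbf 0)\to 0$, then on each of the $H^-$ half-lines carrying $\varphi(\cdot-y_n)$ the full soliton escapes, so $S(u_n)=\mathcal{S}_{\mathcal{G}_{\ell_n}}\to H^-S(\varphi)$ with $H^-\ge \lceil (H+1)/2\rceil\ge 1$ (the value of $H^--H^+$ being pinned down by the linearized Kirchhoff relation $(P+2L)\tanh\ell^*=H^--H^+$, which is \emph{consistent} at $\ell^*\in E$, not contradictory). To derive a contradiction you therefore need a bound-state $v$ on $\mathcal{G}_{\ell_n}$, uniformly for $\ell_n$ near $\ell^*$, with $S(v)\le H^-S(\varphi)-\delta$. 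You assert that ``an explicit symmetric competitor'' does this, but no such competitor is constructed, and its existence is precisely the strict sub-additivity inequality that the whole compactness argument hinges on. The paper only obtains such strict bounds in special situations (e.g.\ the tadpole via the rearrangement in Proposition \ref{cor:tadpole}, or the small-$\ell$ asymptotics of Section \ref{ExUniqofPosSolution}), and the exceptional lengths $\ell^*$ need not fall in any of those regimes; for a generic regular graph with $P+2L$ large there may be several values in $E$ at moderate $\ell$, where the action of every explicit symmetric bound-state is not known to lie below $H^-S(\varphi)$. A secondary (fixable but nontrivial) issue is your upper-semicontinuity step: transporting a near-minimizing bound-state from $\mathcal{G}_{\ell^*}$ to $\mathcal{G}_{\ell_n}$ by an implicit-function argument on the phase-plane parameters requires a non-degeneracy of the overdetermined system \eqref{gen.IVP2} at that bound-state, which you do not verify and which can fail (Theorem \ref{th:SolIVPGeralTheta}-6(b) shows the length function $L_1$ need not be monotone). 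As it stands, the argument assumes the key inequality rather than proving it.
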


\begin{remark}
The strategy for both proofs is the same: taking a minimizing sequence $u_n$, one sees that it weakly convergences in $H^1(\mathcal{G})$ (up to a subsequence) to a function $u$. If $u\not \equiv 0$, then this turns out to be an action ground-state. 

However, there is a major difference between the proofs of Theorems \ref{thm:ags} and \ref{thm:agsregular} in checking that weak limits are nontrivial. In the first case, we study a limiting problem satisfied by non-compact sequences of bound-states (which results in condition \eqref{eq:tanh}), while in the second, we analyze directly non-compact sequences  through the phase-plane portrait. This last argument hinges on the fact that all compact segments have equal length. It would be interesting to extend the procedure to non-regular single-knot (or even general) metric graphs.
\end{remark}

\begin{remark}
In the case of \emph{regular} single-knot graphs, the existence of bound-states is a direct consequence of Theorem \ref{thm:amin} below.
\end{remark}

Having settled the existence of action ground-states, we focus on the qualitative properties of bound-states on regular single-knot metric graphs. For convenience, for these graphs, we make the following convention:
 \begin{equation}\tag{D}\label{dummy}
\text{``For each loop $\kappa$, we introduce a dummy vertex at $x=\ell$.''}
 \end{equation}
In particular, all compact edges in a regular single-knot graph have the same length $\ell$.

\begin{definition}
	Let $\mathcal{G}$ be a regular single-knot metric graph. A function $u\in H^1(\mathcal{G})$ is:
\begin{enumerate}
	\item 	core-symmetric  if, given any two compact edges $\kappa_1$, $\kappa_2$, $u_{\kappa_1}=u_{\kappa_2}$.
\item symmetric if it is core symmetric and $u_{h_1}=u_{h_2}$, for all $h_1,h_2\in \Hcal$.
\item  core-increasing (resp. core-decreasing) if $u_\kappa$ is increasing (resp. decreasing) for all compact edges $\kappa$. 
\item core-monotone if it is either core-increasing or core-decreasing.
{\item increasing if it is core-increasing and decreasing of each half-line\footnote{Recall that the vertex $\mathbf{0}$ is always identified with $x=0$ on each edge.}.}
\end{enumerate}

\end{definition}

In the variational approach, the above properties can be studied for action ground-states by constructing suitable competitors to \eqref{eq:variational} in the space of core-symmetric (or core-monotone) functions. In our case, since we are dealing directly with \eqref{ActionProbI} {and not with a purely variational characterization}, one must construct suitable \textit{bound-state} competitors in order to study the qualitative properties of action ground-states. Therefore, it becomes necessary to first analyze the set of positive core-symmetric and core-monotone bound-states. 

On a single-knot graph, if $u$ is a  bound-state with $u(\mathbf{0})>0$, then there exists $y>0$ such that, on each half-line $h$, 
$$
u_h=\varphi(\cdot+y)\quad \mbox{or}\quad u_h=\varphi(\cdot-y),
$$  
where $\varphi$ is the real-line soliton (see \eqref{eq:portion}). We let $H^+$ (resp. $H^-$) be the number of half-lines on which the first (resp. second) alternative holds. We define the \emph{incidence index of $u$} as 
\begin{equation}\label{eq:defitheta}
    \theta= \frac{H^+-H^-}{P+2L}.
\end{equation}
 The next result gives a characterization of all core-monotone solutions of
\begin{equation}\label{eq:bs}
 	-u''+u=|u|^{p-2}u,\quad u>0, \quad u\in H^1(\mathcal{G})\mbox{ core-symmetric},
 \end{equation}
in terms of their incidence index $\theta$.

\begin{theorem}\label{thm:amin}
	Consider a regular single-knot metric graph $\mathcal{G}$ of length $\ell$. Set
    $$
    \ell^*_1:=\frac{\pi}{\sqrt{(p-2)}},\qquad \ell^*_2=\ell^*_2(\theta):=\argsinh\left(\frac{\theta}{\sqrt{1-\theta^2}}\right).
    $$
	\begin{enumerate}
		\item For $\theta=0$,
		\begin{enumerate}
			\item For every $\ell>0$, there exists a unique solution to \eqref{eq:bs} which is constant on compact edges.
			\item If $\ell\leq\ell^*_1$, there are no strictly core-monotone solution to \eqref{eq:bs}.
			\item  If $\ell>\ell^*_1$, there exists a unique core-increasing solution to \eqref{eq:bs} and a unique core-decreasing solution to \eqref{eq:bs}.
		\end{enumerate}
		\item For $\theta\in(0,1)$, there exists a \textit{unique} core-monotone solution to \eqref{eq:bs}, which is core-increasing.
        \item For $\theta\in(-1,0)$, 
		if $\ell<\ell^*_2$, there are no core-monotone solutions to \eqref{eq:bs}. If $\ell\geq\ell^*_2$, there exists a unique core-monotone solution to \eqref{eq:bs}, which is core-decreasing.
		\item For $\theta=1$, for every $\ell>0$, there exists a unique solution to \eqref{eq:bs}, which is core-increasing.

        \item For $\theta\leq -1$, there are no core-monotone solution to \eqref{eq:bs}.
		\item For $\theta>1$, \eqref{eq:bs} admits core-monotone solutions for every $\ell>0$. Moreover:
		\begin{enumerate}
			\item If $\theta\leq2$, for every $\ell>0$, there exists a unique core-monotone solution to \eqref{eq:bs}.
			\item there exists $\theta^*>2$ such that, for any $\theta>\theta^*$, there exist $0<\ell_1(\theta)<\ell_2(\theta)$ for which, for every $\ell\in[\ell_1(\theta),\ell_{2}(\theta)]$, there exist \textit{multiple} core-increasing solution to \eqref{eq:bs}.
		\end{enumerate}
	\end{enumerate}
\end{theorem}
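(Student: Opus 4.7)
The approach is a phase-plane / shooting analysis of the bound-state ODE $-v''+v=v^{p-1}$. By core-symmetry, the restriction $v:[0,\ell]\to(0,\infty)$ of $u$ to any compact edge is a common profile with $v'(\ell)=0$ (Neumann condition at pendant tips; derivative-matching at the dummy vertex of a loop). On each half-line $u_h$ is a translated soliton $\varphi(\cdot\pm y)$ with $\varphi(y)=v_0:=v(0)$, so that $|\varphi'(y)|=\alpha(v_0):=v_0\sqrt{1-(2/p)v_0^{p-2}}$. Translating the Neumann-Kirchhoff condition at $\mathbf{0}$ into a constraint on $v'(0)$ yields $v'(0)=\theta\,\alpha(v_0)$, and conservation of the associated Hamiltonian fixes the phase-plane orbit through $(v_0,v'(0))$ at level
\[
H(v,v'):=\frac{(v')^2}{2}-\frac{v^2}{2}+\frac{v^p}{p}\;=\;(1-\theta^2)\,f(v_0),\qquad f(v):=-\frac{v^2}{2}+\frac{v^p}{p}\le 0.
\]
The sign of $1-\theta^2$ partitions the analysis: $|\theta|<1$ gives orbits inside the homoclinic loop of the saddle at $(0,0)$, $|\theta|=1$ gives the homoclinic itself, and $|\theta|>1$ gives orbits outside the loop.

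The key object is the time function $T(v_0,\theta):=\inf\{x>0:v'(x)=0\}$, restricted to initial data for which $v$ remains positive and monotone on $[0,T]$ (with $T=+\infty$ otherwise). Core-monotone positive bound-states correspond bijectively to $v_0\in(0,v^*)$, $v^*:=(p/2)^{1/(p-2)}$, satisfying $T(v_0,\theta)=\ell$, with the sign of $\theta$ dictating whether the solution is core-increasing or core-decreasing. A case-by-case study of $T$ then yields the result. For $\theta=0$, $v_0$ is itself a turning point and $T$ equals the half-period of a closed orbit; this period decreases strictly from $+\infty$ (near the homoclinic) to $2\pi/\sqrt{p-2}$ (near the center), giving item (1). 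For $\theta\in(0,1]$, direct integration shows $T\to+\infty$ as $v_0\to 0^+$ (logarithmic near-saddle divergence) and $T\to 0$ as $v_0\to v^*$; strict monotonicity of $T(\cdot,\theta)$ then yields items (2) and (4). For $\theta\in(-1,0)$, the near-saddle linearisation gives the finite positive limit $T\to\argsinh(|\theta|/\sqrt{1-\theta^2})$ as $v_0\to 0^+$ and $T\to+\infty$ as $v_0\to v^*$, producing the critical length in item (3). For $\theta\le-1$ (item (5)), either the trajectory asymptotes to the saddle ($\theta=-1$) or it crosses $v=0$ in finite time ($\theta<-1$), excluding positive core-monotone solutions.

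The delicate regime is $\theta>1$ (item (6)). The orbit lies outside the homoclinic, but $v$ stays bounded because the nonlinear term dominates at infinity; forward in time $v$ increases from $v_0$ to the unique $v_{\mathrm{top}}(E)>v^*$ with $v'=0$. Again $T\to+\infty$ at $v_0\to 0^+$ and $T\to 0$ at $v_0\to v^*$. For $\theta\in(1,2]$ I expect strict monotonicity of $v_0\mapsto T(v_0,\theta)$, provable via sign analysis of $\partial_{v_0}T$ together with a comparison against the $\theta=1$ profile, yielding item (6)(a). The principal obstacle is (6)(b): producing $\theta^*>2$ for which $T(\cdot,\theta)$ becomes non-monotone. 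The plan is to analyse $T$ in the asymptotic regime $\theta\gg 1$, where the orbit climbs far above $v^*$, and show that a suitable rescaling reveals a local minimum of $T(\cdot,\theta)$; continuity then propagates this non-monotonicity down to a finite $\theta^*>2$, and two applications of the intermediate value theorem (on either side of the local minimum) deliver the multiplicity claim for $\ell$ in a suitable interval $[\ell_1(\theta),\ell_2(\theta)]$. Establishing these monotonicity and non-monotonicity statements---inherently requiring fine integral estimates or period-function machinery (\`a la Chicone--Jacobs) combined with a bifurcation analysis locating $\theta^*$---is the technical heart of the proof.
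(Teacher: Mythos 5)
Your strategy is the same as the paper's: use core-symmetry and the Kirchhoff condition at $\mathbf{0}$ to reduce the problem to the overdetermined IVP \eqref{gen.IVP2} on a single compact edge, and then study the first time $T(v_0,\theta)$ at which $v'$ vanishes as a function of the initial height $v_0$ (these are exactly the paper's length functions $L_1$ and $L_2$ of Definition \ref{def:2.60} and the one following Proposition \ref{prop:2.61}). Your case split according to the sign of $1-\theta^2$, the endpoint limits of $T$ (including the value $\argsinh\bigl(|\theta|/\sqrt{1-\theta^2}\bigr)$ for $\theta\in(-1,0)$, which is indeed the value the paper computes and uses in its applications), and the disposal of $\theta=\pm1$ and $\theta\le-1$ by inspection of the homoclinic all match Section \ref{sec:PositiveSol}.

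The gap is that every statement carrying the actual mathematical weight is asserted or deferred rather than proved: (i) strict monotonicity of $T(\cdot,\theta)$ for $\theta\in(0,1]$ and for $\theta\in(-1,0)$, which is what uniqueness in items 2 and 3 rests on and which you do not argue at all for the decreasing branch; (ii) strict monotonicity for $\theta\in(1,2]$, which you explicitly only ``expect''; (iii) non-monotonicity for large $\theta$, which is only a ``plan''. In the paper these occupy essentially all of Section \ref{sec:PositiveSol}: the differentiated integral identities of Lemmas \ref{lemma:2.62} and \ref{lemma:2.75}, obtained by integrating an auxiliary $1$-form along the orbit to remove the endpoint singularity, the sign of the auxiliary functions $A$ and $g$, and, for $\theta\in(1,2]$, a Taylor-expansion argument to order $n$ for $p\in(n,n+1]$ (Proposition \ref{prop:2.67} via Lemmas \ref{lemma:2.69}--\ref{lemma:2.72}). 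Moreover, your proposed mechanism for item 6(a) --- ``comparison against the $\theta=1$ profile'' --- cannot work as stated: monotonicity of $T(\cdot,\theta)$ genuinely fails for large $\theta$, so any argument must detect the threshold, and in the paper the exponent $2$ enters through a specific polynomial inequality ($h(x,\theta)\le0$ for $\theta\le2$ in Lemma \ref{lemma:2.69}), not through any property of the soliton orbit. As it stands the proposal is a correct road map that identifies where the difficulty lies, but the period-function estimates that constitute the proof are missing.
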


As a consequence of Theorem \ref{thm:amin}, we can fully characterize core-monotone positive bound-states on some particular single-knot graphs (see Subsection \ref{sec:exemplos}).

\begin{remark}
    Theorem \ref{thm:amin} will follow from a detailed analysis of the orbits of positive symmetric core-monotone bound-states on the phase-plane portrait. In particular, one must relate the energy level of the orbit with the length $\ell$. A related analysis was carried out in \cite{BenguriaDepassierLoss} (see also \cite{Chicone}), where the authors show a monotonic relation between the energy level and the \emph{period} of solutions to \eqref{eq:bs} on the real line. We also refer to \cite{Esteban,kairzhan2021standing, noja2020standing} for other phase-plane analysis involving parts of the period function. As we shall see in Section \ref{sec:PositiveSol}, in our situation the graph structure induces jumps in the energy levels and the problem becomes much more complex.
\end{remark}

\medskip

For small $\ell$, we can greatly improve Theorem \ref{thm:amin}, by proving that each incidence index corresponds to (at most) a unique positive bound-state, up to permutations in the half-lines. In particular, action ground-state are core-symmetric. From now on, the topological structure of the graph will remain fixed and only the length $\ell$ will vary.

\begin{theorem} \label{thm:small_ell}
{Take $\mathcal{G}$ a regular single-knot graph of length $\ell$ with $H$ half-lines.} For $\ell$ sufficiently small, given $H^+,H^-\ge 0$ with $H=H^++H^-$,
    \begin{enumerate}
        \item if $H^+>H^-$, there is a unique positive bound-state with incidence index $\theta$. This bound-state  is core-symmetric and core-increasing. 
        \item If $H^+=H^-$,  there is a unique positive bound-state with incidence index $\theta$. This bound-state is constant equal to 1 on the compact core. 
        \item If $H^+<H^-$, there are no positive bound-states with incidence index $\theta$.  
    \end{enumerate}
Regarding action ground-states for small $\ell$:
    \begin{enumerate}
\item[(a)] Action ground-states of $\mathcal{G}$ are (up to sign multiplication) positive. In particular, they are core-symmetric, and either core-increasing or constantly equal to 1 on compact edges. 
\item[(b)] If $H=1$, then there exists a unique action ground-state, up to sign multiplication.
\end{enumerate}
\end{theorem}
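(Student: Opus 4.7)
The plan is to combine a careful phase-plane analysis on the compact edges with a shooting argument at the central vertex, and then to reduce part (a) about action ground-states to the positive classification through an $L^p$-comparison. I first fix the structure on half-lines: any $u_h\in H^1([0,\infty))$ satisfying the bound-state equation is either identically zero or a portion $\pm\varphi(\cdot\pm y_h)$ of the signed soliton. Matching at $\mathbf{0}$ together with the strict monotonicity of $\varphi$ on $[0,\infty)$ forces either (A) $u_h\equiv 0$ on every half-line (so $u(\mathbf{0})=0$), or (B) a common shift $y_h=y$ and a common sign across all half-lines, giving $a:=u(\mathbf{0})=\pm\varphi(y)$, the labels $H^\pm$, and the incidence index $\theta$. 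Up to the sign symmetry $u\mapsto -u$ (which preserves $S$ and the equation), case (B) reduces to the positive situation; part (a) will then be proved by showing that case (A) and the sign-changing variants of (B) are not competitive.

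For the classification (1)--(3), the key step is to prove that for $\ell$ small and each $a\in(0,\varphi(0)]$ the pendant-type problem
\[
-u''+u=u^{p-1}\ \text{on}\ [0,\ell],\qquad u>0,\quad u(0)=a,\quad u'(\ell)=0
\]
admits a unique solution $u_\kappa$, core-increasing when $a>1$, constant equal to $1$ when $a=1$, and core-decreasing when $a<1$. In the phase portrait of $u''=u-u^{p-1}$ with $V(u)=-u^2/2+u^p/p$, the relevant positive-$u$ orbits are the closed orbits around the center $(1,0)$ and the soliton homoclinic $E=0$; the period-function lower bound $T\geq 2\pi/\sqrt{p-2}$, obtained from the monotonicity proved in \cite{BenguriaDepassierLoss}, rules out multi-half-period trajectories once $\ell<\pi/\sqrt{p-2}$, and uniqueness of the remaining short arc follows from the monotonicity of the traversal-to-turning-point time in the orbit parameter. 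The same argument applied on a loop parametrized over $[0,2\ell]$ forces any positive trajectory $(a,b_1)\to(a,b_2)$ to satisfy $b_1=-b_2$, and by time-reversal $u(s)=u(2\ell-s)$, so every loop is automatically core-symmetric with half-profile $u_\kappa$. Writing $F(a,\ell):=u_\kappa'(0)$, the Kirchhoff condition at $\mathbf{0}$ collapses to the scalar equation $F(\varphi(y),\ell)=-\theta\,\varphi'(y)$; a Taylor expansion at the Neumann endpoint gives $F(a,\ell)=\ell\,a(a^{p-2}-1)+O(\ell^3)$, and the three cases follow from sign analysis together with the implicit function theorem at $(y,\ell)=(0,0)$ for $\theta>0$, a pointwise identification for $\theta=0$, and a magnitude mismatch on $y>y_0:=\varphi^{-1}(1)$ for $\theta<0$.

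For part (a), the identity $S(u)=\tfrac{p-2}{2p}\int_\mathcal{G}|u|^p\,dx$ on every bound-state reduces action minimization to $L^p$-minimization. In case (A) the time-$\ell$ constraint combined with the scaling $V(u)\approx u^p/p$ for large $u$ yields $\|u\|_\infty\sim \ell^{-2/(p-2)}$, hence $\int_\mathcal{G}|u|^p\,dx\gtrsim \ell^{-(p+2)/(p-2)}\to\infty$ as $\ell\to 0$; for mixed sign-changing bound-states in case (B), the analogous estimate on the offending compact edge combined with the fact that the positive bound-states from (1)--(2) have $L^p$ norm bounded as $\ell\to 0$ produces a strict action inequality for $\ell$ small. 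Consequently any action ground-state is non-sign-changing and, after a global sign flip, is one of the positive bound-states of (1)--(2), whence its stated structural properties. Part (b) is then immediate: when $H=1$ the only admissible positive incidence index is $\theta=1/(P+2L)>0$, so the positive bound-state is unique and hence so is the action ground-state up to sign. The principal obstacle I anticipate is the $L^p$-comparison in (a), where one must quantitatively control every sign-changing configuration, most delicately those with phase-plane energy barely above zero for which $\|u\|_\infty$ stays bounded but the traversal time required for a sign change remains incompatible with $\ell\to 0$; a secondary subtlety is making the uniqueness in the Kirchhoff step global in $y$ via the asymptotic $F(\varphi(y),\ell)\sim -\ell\,\varphi(y)$ matched against $-\theta\varphi'(y)\sim\theta\,\varphi(y)$ as $y\to\infty$.
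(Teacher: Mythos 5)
Your overall architecture (loop symmetry, core-symmetry via uniqueness of the single-edge problem, reduction of the Kirchhoff condition to a scalar shooting equation in $y$, IFT near the degenerate values $a\in\{0,\varphi(0)\}$, and an $L^p$/blow-up comparison to show ground-states are signed) parallels the paper's proof, and your part (a) estimate $\|u\|_\infty\sim\ell^{-2/(p-2)}$ for sign-changing configurations is a correct variant of the paper's ``an orbit that traverses a full quadrant needs length bounded below'' argument. However, there is a genuine gap at the foundation of your classification: the claim that for small $\ell$ and each $a\in(0,\varphi(0)]$ the pendant problem $-u''+u=u^{p-1}$, $u>0$, $u(0)=a$, $u'(\ell)=0$ has a \emph{unique} solution is false as stated. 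Your period-function bound $T\geq 2\pi/\sqrt{p-2}$ only controls arcs of the closed orbits \emph{inside} the homoclinic loop. Arcs of exterior orbits ($C=F(u,u')<0$) can stay positive, are even monotone increasing up to their turning point at $u=f_2^{-1}(C)>(p/2)^{1/(p-2)}$, and the traversal time from $(a,b)$ to that turning point tends to $0$ as $b\to+\infty$ (it scales like $\|u\|_\infty^{1-p/2}$). So for every small $\ell$ there is a second positive, increasing solution with $u'(0)\sim\ell^{-p/(p-2)}$ and $\|u\|_\infty\sim\ell^{-2/(p-2)}$; these exterior arcs are exactly the mechanism behind the multiplicity in Theorem \ref{thm:amin}-6(b) and Corollary \ref{coro:broom}, so they cannot be dismissed. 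Without excluding them, core-symmetry does not follow, the function $F(a,\ell)=u_\kappa'(0)$ is not well defined, and your scalar Kirchhoff equation does not capture all candidates. The same caveat undermines the assertion that loops are ``automatically'' symmetric.

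The paper closes precisely this hole before anything else: it proves a priori bounds for positive bound-states on the whole graph (Lemmas \ref{lemma:uniformbounds} and \ref{NeummanKirchofAux}), showing that $\|u\|_{L^\infty(\Kcal)}$ is uniformly bounded and $|u_\kappa'(0)|<\epsilon$ for small $\ell$. The key input is the Kirchhoff balance at $\mathbf{0}$: the half-line derivatives are bounded (portions of solitons), a very negative $u_\kappa'(0)$ forces a sign change (contradicting positivity), and hence each $u_\kappa'(0)$ is bounded above as well; combined with $u_\kappa'(\ell)=0$ and the conservation law, this confines all orbits to a fixed compact region of the phase plane, where your short-arc uniqueness argument does apply. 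You need to insert this step (or an equivalent exclusion of the exterior-orbit branch) before invoking uniqueness of the pendant profile; once it is in place, the rest of your reduction, the asymptotic mismatch for $\theta\le 0$, and the IFT at $a=\varphi(0)$ go through essentially as in the paper.
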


\begin{remark}\label{rmk:open}
    Despite this precise characterization of bound-states for small $\ell$, even though they are core-symmetric, it is not clear whether the action ground-state is always symmetric (that is, if $H^+=H$). As $\ell$ tends to 0, the action of all bound-states converges to $H$ times the action of a half-soliton. As such, to determine the action ground-state, one must perform a first-order expansion on the action near $\ell=0$, depending on the incidence index $\theta$. We leave this as an open problem. 
\end{remark}

For large $\ell$, assuming that the variational approach is available,  it is natural to expect that action ground-states concentrate on a single pendant (if $P\neq 0$) or on a single loop (if $P=0$). Indeed, as $\ell\to \infty$, in each edge, the action ground-state should approach either zero or a half-soliton. Since the minimization problem \eqref{eq:variational} is sub-additive, minimizers tend to not split into separate bubbles, which implies the concentration on a single pendant or loop.

In our context, the proof of the concentration phenomenon must rely on an explicit construction of \textit{bound-states} concentrating on a given edge. Our next result states that, for large $\ell$, given any number of pendants and loops, one may find bound-states concentrating exactly on those edges. For other concentration results, see \cite{pistoia_2, pistoia}.

\begin{theorem}
\label{prop:existsymbreak1} Let $\mathcal{G}$ be a regular single-knot metric graph of length $\ell$. Fix $0\le P_0 \le P$ and $0\le L_0\le L$ such that $E_0:=P_0+2L_0>0$. Take a subgraph $\mathcal{G}_0$ of $\mathcal{G}$ consisting of $P_0$ pendants and $L_0$ loops. Given $\epsilon>0$, if $\ell$ is sufficiently large, there exists a bound-state $v$ such that
\begin{enumerate}
    \item $0<v(\mathbf{0})<\epsilon$.
    \item $v$ is core-symmetric in $\mathcal{G}_0$ and $\mathcal{G}\setminus \mathcal{G}_0$.
        \item for each compact edge $e \notin \mathcal{G}_0$, $v_e$ is decreasing and $\|v_e\|_{L^\infty(0,\ell)}<\epsilon$.
            \item for each edge $e\in \mathcal{G}_0$, 
    $\|v_{\kappa}-\varphi(\ell-\cdot)\|_{L^\infty(0,\ell)}=O(\epsilon)$.
    \item for each half-line $h$, $v_h$ is decreasing.

    \end{enumerate}
    In particular,
    $$
    S(v) = \frac{E_0}{2}S(\varphi)+ O(\epsilon).
    $$
\end{theorem}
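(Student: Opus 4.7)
The plan is to construct $v$ edge-by-edge via phase-plane analysis, parametrizing every local profile by the common vertex value $\alpha := v(\mathbf{0})$, and then to choose $\alpha$ so that the Neumann--Kirchhoff condition at $\mathbf{0}$ holds. Under convention \eqref{dummy}, every compact edge is identified with $[0,\ell]$; imposing core-symmetry separately within $\mathcal{G}_0$ and within $\mathcal{G}\setminus\mathcal{G}_0$ forces, as in the proof of Theorem~\ref{thm:amin}, the Neumann condition $v_\kappa'(\ell)=0$ at the free/dummy end of each compact edge. Thus only two model profiles on $[0,\ell]$ need be determined as functions of $\alpha$.

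The building blocks are as follows. On each half-line, the unique positive decreasing $H^1$-solution with $v_h(0)=\alpha$ is the soliton tail $v_h(x)=\varphi(x+y(\alpha))$, where $y(\alpha)$ is defined by $\varphi(y(\alpha))=\alpha$. On each compact edge in $\mathcal{G}_0$, I would take the \emph{large} branch $v^+$ with $v^+(0)=\alpha$, $(v^+)'(\ell)=0$, sitting on the orbit close to the homoclinic through the origin (it coincides with $\varphi(\ell-\cdot)$ precisely when $\alpha=\varphi(\ell)$). On each compact edge in $\mathcal{G}\setminus\mathcal{G}_0$, I would take the \emph{small} branch $v^-$ with $v^-(0)=\alpha$, $(v^-)'(\ell)=0$, sitting on the low-energy orbit near the origin. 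Both branches and their initial slopes $g_\pm(\alpha,\ell):=(v^\pm)'(0)$ depend smoothly on $\alpha$ by continuous dependence in the phase plane. At $\alpha=\varphi(\ell)$ one has $g_+(\varphi(\ell),\ell)=-\varphi'(\ell)>0$; for small $\alpha$, linearizing $-u''+u=0$ yields $g_-(\alpha,\ell)=-\alpha\tanh\ell+O(\alpha^{p-1})$, while $\varphi'(y(\alpha))=-\alpha+O(\alpha^{p-1})$. Assembling these pieces gives a continuous function on $\mathcal{G}$ solving the ODE on every edge; the only remaining requirement is Kirchhoff at $\mathbf{0}$, which (with $E:=P+2L$) reduces to the scalar equation
\begin{equation*}
F(\alpha,\ell) := H\,\varphi'(y(\alpha)) + E_0\,g_+(\alpha,\ell) + (E-E_0)\,g_-(\alpha,\ell) = 0.
\end{equation*}

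Using the asymptotics $\varphi(x),\,-\varphi'(x)\sim Ce^{-x}$ as $x\to\infty$, the natural scaling is $\alpha=\varphi(\ell)\,a$ with $a=O(1)$: then $F=\varphi(\ell)\,\tilde F(a,\ell)$ with $\tilde F(a,\ell)\to -(H+E-E_0)\,a + E_0$ uniformly on compact subsets as $\ell\to\infty$. The limit equation has the non-degenerate root $a_\infty=E_0/(H+E-E_0)>0$, so an implicit function theorem (or a direct intermediate-value and monotonicity argument on $F(\cdot,\ell)$) yields, for $\ell$ large, a unique $\alpha(\ell)\sim a_\infty\varphi(\ell)$ solving $F(\alpha,\ell)=0$; this gives item~(1) since $\alpha(\ell)=O(e^{-\ell})<\epsilon$. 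Items~(2), (3) and (5) are built into the construction (monotonicity of $v^-$ and of the soliton tail being automatic from the chosen branches), and item~(4) follows from continuous dependence in the phase plane, $\|v^+-\varphi(\ell-\cdot)\|_{L^\infty(0,\ell)}=O(|\alpha-\varphi(\ell)|)=O(\epsilon)$. Splitting the action over edges then yields $S(v)=\tfrac{E_0}{2}S(\varphi)+O(\epsilon)$, since each piece in $\mathcal{G}_0$ contributes $\tfrac12 S(\varphi)+O(\epsilon)$ while every half-line or compact piece outside $\mathcal{G}_0$ contributes $O(\alpha^2)+O(\alpha^p)=O(\epsilon)$.

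The main obstacle is defining the large-solution branch $\alpha\mapsto g_+(\alpha,\ell)$ uniformly in $\ell$ on a neighborhood of $\alpha=\varphi(\ell)$, and then showing that $F(\cdot,\ell)$ has a genuine zero (not merely a zero of its leading-order expansion). Both amount to perturbing the half-soliton $\varphi(\ell-\cdot)$ on $[0,\ell]$ with Dirichlet--Neumann data and establishing invertibility of the linearized operator $-\partial_{xx}+1-(p-1)\varphi(\ell-\cdot)^{p-2}$ under these boundary conditions. This non-degeneracy is inherited, up to exponentially small corrections in $\ell$, from the classical spectral analysis of the linearization around the soliton on the half-line, so all exponentially small error terms can be controlled uniformly for $\ell$ large.
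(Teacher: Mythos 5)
Your overall architecture (one compact profile for the edges in $\mathcal{G}_0$, one for those outside, soliton tails on the half-lines, then matching at $\mathbf{0}$) is the same as the paper's, but you reduce to a \emph{one}-parameter shooting in $\alpha=v(\mathbf{0})$ by building the Neumann conditions $v_\kappa'(\ell)=0$ into the two branches, whereas the paper keeps \emph{two} parameters $(z,b)$, builds the Kirchhoff condition into the definition of $w_2'(0)$, and closes the argument with the Poincar\'e--Miranda theorem using only the crude boundary behavior of the two length functions $g_1,g_2$. The difference matters, because your route forces you to compute the branch $\alpha\mapsto g_+(\alpha,\ell)$ quantitatively, and that computation is where the proposal goes wrong.

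Concretely, the claimed limit $\tilde F(a,\ell)\to -(H+E-E_0)a+E_0$ treats $g_+(a\varphi(\ell),\ell)$ as $\approx\varphi(\ell)$, constant in $a$; you only evaluate $g_+$ at the single point $\alpha=\varphi(\ell)$. But by energy conservation $g_+(\alpha,\ell)=\sqrt{2\left(f(\alpha)-C^+\right)}$ with $C^+=f\bigl(v^+(\ell)\bigr)$, and the constraint that the first turning point occurs exactly at $x=\ell$ pins $v^+(\ell)$ exponentially close to $\varphi(0)$, hence $C^+=O(\varphi(\ell)^2)$ --- the \emph{same} order as $f(\alpha)$. So $g_+$ varies with $a$ at leading order. (A saddle-stretching/area-preservation computation shows the admissible set $\{u'(\ell)=0\}$ near the homoclinic is asymptotically the line $\alpha+\beta=\varphi(\ell)-\varphi'(\ell)\approx 2\varphi(\ell)$, i.e.\ $g_+(a\varphi(\ell),\ell)\approx(2-a)\varphi(\ell)$, so the correct limit equation is $-(H+E)a+2E_0=0$ rather than yours; a positive root still exists, but your displayed computation is not the right one.) More seriously, justifying \emph{any} such asymptotic for $g_+$, or the bound $\|v^+-\varphi(\ell-\cdot)\|_{L^\infty}=O(|\alpha-\varphi(\ell)|)$ "by continuous dependence", runs into the fact that the time-$\ell$ flow map near the homoclinic has differential of norm $\sim e^{\ell}$ while the relevant window of $\alpha$ has width $\sim e^{-\ell}$: this is exactly the borderline regime where naive continuous dependence and the implicit function theorem give nothing, which is why the step you yourself flag as "the main obstacle" is the entire difficulty and cannot be delegated to "classical spectral analysis of the soliton" without a genuine uniform-in-$\ell$ invertibility argument. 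The paper's two-parameter Poincar\'e--Miranda setup avoids all of this: it never linearizes, and only needs $g_1(z,\cdot)$ to run from $0$ to $+\infty$ and $g_2$ to be large for small $z$ and bounded for $z=\epsilon$.
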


Combining this result with the fact that, for $\ell$ large, the action of symmetric bound-states is at least  $\min\left\{H, (P+2L)/2\right\}S(\varphi)$ (see Proposition \ref{prop:actionsym} below), we are able to deduce the following symmetry-breaking result of action ground-states for large $\ell$.

\begin{theorem}\label{cor:symmetry}
	Let $\mathcal{G}$ be a regular single-knot graph of length $\ell$, with $P+L>1$. If $P=0$, suppose also that $H\ge 2$. If $\ell$ is sufficiently large, the action ground-state is not {core}-symmetric.
\end{theorem}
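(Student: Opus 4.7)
The strategy is to exhibit an explicit non-core-symmetric bound-state whose action lies strictly below the core-symmetric lower bound provided by Proposition~\ref{prop:actionsym}. Since the action ground-state minimizes $S$ over all bound-states, this will force it to fail core-symmetry.

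First, I would invoke Theorem~\ref{prop:existsymbreak1} with the smallest possible choice of $E_0$: if $P\geq 1$, take $P_0=1$, $L_0=0$ (so $\mathcal{G}_0$ is one pendant and $E_0=1$); if $P=0$, the hypothesis $P+L>1$ forces $L\geq 2$, and I take $P_0=0$, $L_0=1$ (so $\mathcal{G}_0$ is one loop and $E_0=2$). In both cases, $\mathcal{G}_0$ is a \emph{proper} subset of the compact core of $\mathcal{G}$, and Theorem~\ref{prop:existsymbreak1} yields, for each $\epsilon>0$ and all $\ell$ sufficiently large, a bound-state $v_\epsilon$ concentrated on $\mathcal{G}_0$ with $S(v_\epsilon)=\tfrac{E_0}{2}S(\varphi)+O(\epsilon)$. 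Since $v_\epsilon$ is $O(\epsilon)$-close to a reflected half-soliton on the edges of $\mathcal{G}_0$ but is uniformly of order $\epsilon$ on the remaining compact edges, $v_\epsilon$ is patently not core-symmetric.

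Second, I would combine this with Proposition~\ref{prop:actionsym}: for $\ell$ sufficiently large, every core-symmetric bound-state $w$ satisfies $S(w)\geq m(\mathcal{G})\,S(\varphi)$, where $m(\mathcal{G}):=\min\{H,(P+2L)/2\}$. To close the argument I need $\tfrac{E_0}{2}<m(\mathcal{G})$ in each configuration. If $P\geq 1$, the hypothesis $P+L>1$ yields $P+2L\geq 2$, so $m(\mathcal{G})\geq 1>\tfrac{1}{2}=\tfrac{E_0}{2}$; if $P=0$, then $L\geq 2$ and $H\geq 2$ by hypothesis, so $m(\mathcal{G})\geq 2>1=\tfrac{E_0}{2}$. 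Choosing $\epsilon$ small enough (and $\ell$ correspondingly large), one obtains $S(v_\epsilon)<m(\mathcal{G})\,S(\varphi)$. Since $\mathcal{S}_{\mathcal{G}}\leq S(v_\epsilon)$, every action ground-state has action strictly below the core-symmetric threshold and therefore cannot be core-symmetric.

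The substantive analytical content is already encapsulated in Theorem~\ref{prop:existsymbreak1} (the phase-plane concentration construction) and Proposition~\ref{prop:actionsym} (the lower bound on the core-symmetric action); the only obstacle left is the elementary case split above, whose purpose is to confirm that the minimal concentrated competitor always beats the symmetric threshold under the given hypotheses on $P$, $L$ and $H$.
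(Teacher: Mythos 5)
Your proposal is correct and takes essentially the same approach as the paper: the paper likewise invokes Theorem~\ref{prop:existsymbreak1} with $(P_0,L_0)=(1,0)$ when $P\neq 0$ and $(P_0,L_0)=(0,1)$ when $P=0$, and compares the resulting action $\tfrac{E_0}{2}S(\varphi)+O(\epsilon)$ against the lower bound $\min\{H,(P+2L)/2\}S(\varphi)-\epsilon$ of Proposition~\ref{prop:actionsym}, with the same elementary case split on $P$, $L$ and $H$. The only cosmetic difference is that the paper phrases the conclusion as a contradiction with an assumed core-symmetric ground-state rather than as a direct strict upper bound on $\mathcal{S}_{\mathcal{G}}$.
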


When $P\neq 0$, this result can be proved using the variational problem \eqref{eq:variational} (see Remark \ref{rmk:P>0}). However, this approach fails when $P=0$ and one must rely on Theorem \ref{prop:existsymbreak1}.

\begin{remark}
    The case $P=0$ and $H=1$ corresponds to the so-called flower graphs (see, for example, \cite{kairzhan2021standing}). As $\ell\to\infty$, the ground-state should concentrate either on the half-line or on a single loop. In the first case, one expects the solution to be symmetric, while in the second it is not. However, both cases have the same limit action level $S(\varphi)$. Similarly to Remark \ref{rmk:open}, in order to decide which alternative holds for ground-states, one needs to perform a first-order expansion on the action. This is an interesting technical open problem.
\end{remark}

Finally, it remains to consider the case $P+L=1$. 
\begin{theorem}\label{prop:symmetricPL1}
    Let $\mathcal{G}$ be a single-knot graph of length $\ell$ with $P+L=1$ and $H\ge 2$. If $\ell$ is sufficiently large, the action ground-state is symmetric. 
\end{theorem}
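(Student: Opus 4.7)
The plan is to bracket the action ground-state level $\mathcal{S}_\mathcal{G}$ for $\ell$ large by a matching upper bound coming from a symmetric competitor and a lower bound for non-symmetric bound-states. For the upper bound, I apply Theorem \ref{prop:existsymbreak1} with $P_0 = P$, $L_0 = L$, so that $\mathcal{G}_0$ is the entire compact core. The resulting bound-state $v$ is core-symmetric on $\mathcal{K}$, has $0 < v(\mathbf{0}) < \epsilon$, and has each half-line decreasing. Continuity at $\mathbf{0}$ pins the phase $y = \varphi^{-1}(v(\mathbf{0}))$ common to all half-lines, so every $v_h$ equals $\varphi(\cdot + y)$ with the same $y$; hence $v$ is symmetric and
\[
\mathcal{S}_\mathcal{G} \le S(v) \le \tfrac{P+2L}{2}S(\varphi) + O(\epsilon).
\]

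For the lower bound on non-symmetric bound-states, I start from the identity $S(u) = \tfrac{p-2}{2p}\int_\mathcal{G}|u|^p$, which holds for any bound-state by testing the equation against $u$. When $u(\mathbf{0}) \neq 0$, each half-line is $\pm\varphi(\cdot + y)$ or $\pm\varphi(\cdot - y)$ with $y = \varphi^{-1}(|u(\mathbf{0})|)$, so setting $A(y) := \int_0^y \varphi^p$ and summing the half-line contributions yields
\[
S(u) = \tfrac{p-2}{2p}\int_\mathcal{K}|u|^p + \tfrac{H}{2}S(\varphi) + \tfrac{p-2}{2p}(H^- - H^+)A(y).
\]
Since $A(y) \le A(\infty) = \tfrac{p}{p-2}S(\varphi)$, bounding the last term when it is negative gives
\[
S(u) \ge S(\varphi) + \tfrac{p-2}{2p}\int_\mathcal{K}|u|^p \quad \text{whenever} \quad H^- \ge 1.
\]

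If $P = 1$, $L = 0$, non-symmetric positive bound-states necessarily have $H^- \ge 1$ (since the only symmetric configurations are $H^+ = H$ or $H^- = H$, and the latter has action $\gtrsim \tfrac{H}{2}S(\varphi)$, much larger than the all-decreasing symmetric value). Hence $S(u) \ge S(\varphi) > \tfrac{1}{2}S(\varphi) + O(\epsilon)$ for $\ell$ large, and the action ground-state must be symmetric. If $P = 0$, $L = 1$, I distinguish two sub-cases. (A) Core-symmetric loop with $H^- \ge 1$: for $H \ge 3$ the sharper bound $S(u) \ge \tfrac{H}{2}S(\varphi) \ge \tfrac{3}{2}S(\varphi)$ exceeds the symmetric level $\approx S(\varphi)$. (B) Non-core-symmetric loop: matching of $u$ and $|u'|$ at the dummy vertex gives $u_{\kappa_1}'(0) = -u_{\kappa_2}'(0)$, which via the Neumann-Kirchoff condition at $\mathbf{0}$ forces $H^+ = H^-$; the half-line contribution then equals $\tfrac{H}{2}S(\varphi) \ge S(\varphi)$, strictly above $S(\varphi)$ for $H \ge 3$. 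The delicate remaining sub-cases are (A) and (B) with $H = 2$, where the cheap half-line bound only matches the symmetric level. Here a direct phase-plane analysis of the loop orbit, starting at $(\alpha, \beta)$ with $\beta$ determined by Neumann-Kirchoff and $\alpha \to 0$ as $\ell \to \infty$, shows that the only bounded orbits satisfying the dummy matching conditions must lie close to the homoclinic of $-u''+u=|u|^{p-2}u$, forcing $\tfrac{p-2}{2p}\int_\mathcal{K}|u|^p \to S(\varphi)$ and hence $S(u) \to 2 S(\varphi) > S(\varphi)$.

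The main technical obstacle is this uniform phase-plane estimate in the $L = 1$, $H = 2$ sub-cases, which requires control over the period function of periodic orbits close to the homoclinic and builds on the techniques developed in Section \ref{sec:PositiveSol}.
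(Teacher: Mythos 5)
Your overall architecture coincides with the paper's: an upper bound $\mathcal{S}_\mathcal{G}\le \frac{P+2L}{2}S(\varphi)+O(\epsilon)$ from a symmetric competitor concentrating on the compact core (the paper uses the explicit increasing bound-state of Theorem \ref{th:SolIVPGeralTheta} with $\theta=H/(P+2L)$ rather than the degenerate case $\mathcal{G}_0=\mathcal{K}$ of Theorem \ref{prop:existsymbreak1}, but the two give the same level), plus a lower bound for non-symmetric bound-states based on the soliton accounting on half-lines. Your identity $S(u)=\frac{p-2}{2p}\int_\mathcal{K}|u|^p+\frac{H}{2}S(\varphi)+\frac{p-2}{2p}(H^--H^+)A(y)$ is correct, and the case $P=1$, $L=0$ is essentially complete. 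However, for $P=0$, $L=1$ there are two genuine problems. First, in your sub-case (A) the claimed bound $S(u)\ge\frac{H}{2}S(\varphi)$ for $H\ge 3$ does not follow from your identity when $H^+>H^-$: minimizing the last term over $y$ using $A(y)\le A(\infty)$ yields only $S(u)\ge H^-S(\varphi)+\frac{p-2}{2p}\int_\mathcal{K}|u|^p$, which for $H^-=1$ (e.g.\ $H=3$, $H^+=2$) merely \emph{matches} the symmetric level $S(\varphi)$. The borderline situation is therefore not confined to $H=2$; it occurs for every $H$ with $1=H^-<H^+$.

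Second, and more seriously, the uniform strict excess $\frac{p-2}{2p}\int_\mathcal{K}|u|^p\ge\delta>0$ that you need in all these borderline sub-cases is only \emph{asserted}, via an unproven ``phase-plane analysis of orbits near the homoclinic''; this is the technical heart of the statement and cannot be left as a declared obstacle (one must also fix $\delta$ before choosing $\epsilon$, since the upper and lower bounds otherwise agree up to $O(\epsilon)$). The paper obtains the excess without any phase-plane work: since $H^+,H^-\ge 1$, remove a pair of half-lines $h_1,h_2$ carrying $\varphi(\cdot+y)$ and $\varphi(\cdot-y)$; their inward derivatives at $\mathbf{0}$ cancel, so $u$ restricted to $\mathcal{G}'=\mathcal{G}\setminus(h_1\cup h_2)$ is still a nontrivial bound-state, and the identity $\|u\|_{H^1(\mathcal{G}')}^2=\|u\|_{L^p(\mathcal{G}')}^p\le C\|u\|_{H^1(\mathcal{G}')}^p$ with a universal constant $C$ gives $\|u\|_{L^p(\mathcal{G}')}^p\ge\delta$, hence $S(u)\ge S(\varphi)+\frac{p-2}{2p}\delta$. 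In your sub-case (B), where the loop is periodic and Kirchhoff forces $H^+=H^-=H/2$, the same removal applied to all $H/2$ pairs yields $S(u)\ge\frac{H}{2}S(\varphi)+\frac{p-2}{2p}\delta$, which disposes of $H=2$ as well. Finally, you do not treat $u(\mathbf{0})=0$ when $L=1$: there $u$ vanishes on all half-lines and reduces to a bound-state on the loop alone, a case that must be excluded separately (the paper notes that a ground-state of the loop is signed, contradicting $u(\mathbf{0})=0$).
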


\subsection{Applications}\label{sec:exemplos}

We now apply the results to specific regular single-knot graphs. Recall that, for these graphs, Theorem \ref{thm:agsregular} implies the existence of action ground-states. The graphs chosen below are topologically quite simple; however, the properties of action ground-states (and positive increasing symmetric bound-states) are different, which is another indicator of the complexity of the action ground-state problem on single-knot metric graphs.

\medskip

    \paragraph{Example 1 ($T$-graph).} In the $\mathcal{T}$-graph (Figure 1), the incidence index of a bound-state is either $2$, $-2$ or $0$. Comparing with our previous results in \cite{AgostinhoCorreiaTavares}, this corresponds, respectively, to type $\mathcal{A}$, $\mathcal{B}$ and $\mathcal{C}$ solutions. In particular, combining Theorem \ref{thm:amin} with a variational characterization, we recover the results shown  in \cite[Theorems 1.2 and 1.3]{AgostinhoCorreiaTavares}, to which we refer for more details.

    \medskip
    
    \paragraph{Example 2 (tadpole graph).} In the case of the tadpole graph (Figure 2), since $H=1$, the incidence index of a positive bound-state is $\theta=\pm 1/2$. Then, using Theorem \ref{thm:amin} and the variational characterization \eqref{eq:variational}, we have

\begin{proposition}
\label{cor:tadpole}
	Fix $\ell>0$, $p>2$ and consider the tadpole graph. Then any bound-state is symmetric. Moreover:
	\begin{enumerate}
		\item there exists a unique increasing positive bound-state, which corresponds to the (unique) action ground-state.
		\item if $\ell<\argsinh\left(1/\sqrt{3}\right)$, there are no core-decreasing positive  bound-states;
		\item if $\ell\ge\argsinh\left(1/\sqrt{3}\right)$, there exists a unique core-decreasing positive bound-state.
	\end{enumerate}
\end{proposition}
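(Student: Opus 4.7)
The plan is to combine a phase-plane argument on the loop with Theorem~\ref{thm:amin} specialized to the incidence index $\theta=\pm 1/2$, and with the variational characterization~\eqref{eq:variational} to identify the action ground-state.

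First, I would show that any positive bound-state $u$ is core-symmetric (hence symmetric, since $H=1$). Using convention~\eqref{dummy}, parametrize the loop as $v:[0,2\ell]\to(0,\infty)$ with $v(0)=v(2\ell)=u(\mathbf{0})$, so that $u_{\kappa_1}(x)=v(x)$ and $u_{\kappa_2}(x)=v(2\ell-x)$. Conservation of the phase-plane energy $E=\tfrac12(v')^2-\tfrac12 v^2+\tfrac{1}{p}v^p$ together with $v(0)=v(2\ell)$ yields $v'(0)^2=v'(2\ell)^2$. In the subcase $v'(0)=v'(2\ell)$, the Neumann--Kirchhoff identity at $\mathbf{0}$ reduces to $u_h'(0)=0$; since $u_h>0$, this forces $u_h=\varphi$ and $v(0)=\varphi(0)$. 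But since $u>0$ everywhere, $v$ must lie either on a bounded orbit around $(1,0)$, on which $\max v<\varphi(0)$ (so $v(0)=\varphi(0)$ is impossible), or on the soliton separatrix, on which $\max v=\varphi(0)$ is attained only at the peak, forcing $v'(0)=0$ and hence $v\equiv\varphi$; then $v(2\ell)=\varphi(2\ell)<\varphi(0)=v(0)$, a contradiction. Therefore $v'(0)=-v'(2\ell)$, and the reflection $\tilde v(x):=v(2\ell-x)$ satisfies the same ODE with matching initial data, so uniqueness of the IVP gives $v(2\ell-x)=v(x)$, that is, $u_{\kappa_1}=u_{\kappa_2}$.

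Next, I would classify positive core-monotone bound-states via Theorem~\ref{thm:amin}. On the tadpole $H=1$, $P=0$, $L=1$, so $P+2L=2$ and the incidence index satisfies $\theta\in\{-1/2,+1/2\}$. For $\theta=+1/2\in(0,1)$, item~(2) yields a unique core-monotone (core-increasing) positive bound-state; since $H^+=1$ corresponds to a decreasing half-line, this is the unique increasing positive bound-state in the sense of the definition, proving item~(1) modulo the ground-state identification. For $\theta=-1/2\in(-1,0)$, one computes $\ell_2^*=\argsinh\!\bigl(\tfrac{1/2}{\sqrt{1-1/4}}\bigr)=\argsinh(1/\sqrt{3})$; item~(3) then gives exactly the dichotomy in items~(2) and~(3) of the proposition.

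Finally, I would identify the unique action ground-state via the variational problem~\eqref{eq:variational}. The compact core together with a single half-line prevents loss of mass at infinity in any $H^1$-bounded minimizing sequence, so a concentration-compactness argument attains~\eqref{eq:variational}, and Lemma~\ref{lem:minim} then identifies a minimizer as an action ground-state. A monotone rearrangement on the loop, compatible with the symmetric matching $v(0)=v(2\ell)$ and with the critical-point condition $v'(\ell)=0$ inherited from Step~1, produces a minimizer that is positive, core-symmetric and core-increasing; by Step~2 this minimizer is unique and coincides with the $\theta=+1/2$ core-increasing bound-state, which is therefore the unique action ground-state. The principal obstacle is precisely this last step: producing a rearrangement that preserves symmetric endpoint values while strictly reducing the Dirichlet energy unless the function is already core-increasing, thereby ruling out the $\theta=-1/2$ core-decreasing branch as a ground-state candidate and pinning down the action ground-state via the uniqueness statement of Theorem~\ref{thm:amin}(2).
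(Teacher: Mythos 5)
Your Steps 1 and 2 match the paper's argument almost exactly: the paper also derives $u_2'(0)^2=u_2'(2\ell)^2$ from the Hamiltonian at the two endpoints of the loop, excludes the case $u_2'(0)=u_2'(2\ell)$ via Neumann--Kirchhoff (your phase-plane elaboration of why $u_h'(0)=0$ is impossible is a welcome expansion of the paper's terse ``which leads to a contradiction''), concludes symmetry by reflection, and then reads off items 1--3 from Theorem~\ref{th:SolIVPGeralTheta}/Theorem~\ref{thm:amin} with $\theta=\pm 1/2$ and $\ell_2^*=\argsinh(1/\sqrt{3})$. This part is fine.

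The genuine gap is in Step 3, in the sentence ``the compact core together with a single half-line prevents loss of mass at infinity \dots so a concentration-compactness argument attains \eqref{eq:variational}.'' That is false as a principle: the paper itself exhibits single-knot graphs with a nontrivial compact core for which \eqref{eq:variational} is \emph{not} attained (Remark~\ref{rmk:falha_var}, Corollary~\ref{coro:broom}), precisely because minimizing sequences can escape along a half-line towards the real-line level $\mathcal{I}_{\R}(\mu)$. Compactness must be bought by exhibiting a competitor at or below that level, as in Lemma~\ref{lem:minim}(2). The paper does this explicitly: it bends a real-line soliton so that the points $x=\pm\ell$ are glued into the loop, producing a function on a graph with two half-lines at level $\mathcal{I}_{\R}(\mu)$, and then monotone-rearranges the two half-line restrictions onto the single half-line of the tadpole; Pólya--Szeg\H{o} with $n(t)=2$ gives a strict decrease of $I$, hence $\mathcal{I}_{\mathcal{G}}(\mu)<\mathcal{I}_{\R}(\mu)$ and attainment. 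Without this (or an equivalent) construction your Step 3 does not get off the ground. The second unresolved point, which you correctly flag as the main obstacle, is showing that every minimizer is (up to sign) positive, symmetric and core-increasing, so that Theorem~\ref{thm:amin}(2) pins it down as the $\theta=+1/2$ solution; the paper does not reprove this but invokes the rearrangement argument of \cite[Lemma 5.3]{dovetta2020uniqueness}. You should either cite that result or supply the rearrangement; as written, the identification of the ground-state with the increasing branch is asserted rather than proved.
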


\medskip

\paragraph{Example 3 (fork graph).}

Similarly, in the case of fork-graphs (Figure 3), again since $H=1$, the incidence index is either $\theta=1/P$ or  $\theta=-1/P$ and we may use Theorem \ref{thm:amin} to characterize positive symmetric increasing bound-states. Complementing with Theorems \ref{thm:small_ell} and \ref{cor:symmetry}, we have

\begin{corollary}\label{coro:fork}
	Fix $\ell>0$, $p>2$ and consider the regular fork graph. Then
	\begin{enumerate}
		\item there exists a unique increasing symmetric positive  bound-state.
		\item if $\ell<\argsinh\left(1/\sqrt{P^2-1}\right)$, there are no core-decreasing symmetric positive  bound-states.
		\item if $\ell\ge\argsinh\left(1/\sqrt{P^2-1}\right)$, there exists a unique core-decreasing symmetric positive  bound-state.
	\end{enumerate}
    Concerning action ground-states,
    \begin{enumerate}
        \item [(a)] if $\ell$ is small, then the action ground-state is the unique increasing symmetric positive bound-state.
        \item [(b)] if $\ell$ is sufficiently large, then action ground-states are not core-symmetric.
    \end{enumerate}
\end{corollary}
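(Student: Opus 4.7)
\medskip
\noindent\textbf{Proof proposal.} The plan is to reduce Corollary \ref{coro:fork} to direct applications of Theorems \ref{thm:amin}, \ref{thm:small_ell} and \ref{cor:symmetry}, specialized to the combinatorial data of the fork graph: $H=1$, $L=0$, and $P\geq 2$ pendants, so $P+2L=P$. The first step is to identify the incidence index of a positive core-symmetric bound-state $u$ and match it with core-monotonicity. Since all pendants are identical, the Kirchhoff condition at $\mathbf{0}$ reads $u_h'(0^+)+P\,u_\kappa'(0^+)=0$. If $u$ is core-increasing then $u_\kappa'(0^+)>0$, hence $u_h'(0^+)<0$, and so on the half-line $u_h=\varphi(\cdot+y)$ for some $y>0$, giving $H^+=1$, $H^-=0$, and $\theta=1/P$. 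The core-decreasing case is symmetric and yields $\theta=-1/P$.

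\medskip
\noindent With this identification, items (1)--(3) follow by feeding $\theta=1/P\in(0,1)$ into Theorem \ref{thm:amin}(2) and $\theta=-1/P\in(-1,0)$ into Theorem \ref{thm:amin}(3); in the latter case one just observes that
\[
\argsinh\!\left(\frac{|\theta|}{\sqrt{1-\theta^2}}\right)=\argsinh\!\left(\frac{1}{\sqrt{P^2-1}}\right),
\]
which matches the threshold length in the statement. Item (b) will then be a direct invocation of Theorem \ref{cor:symmetry}: for $P\geq 2$ one has $P+L=P>1$ and $P\neq 0$, so the symmetry-breaking theorem applies for $\ell$ large.

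\medskip
\noindent For item (a), since $H=1$, Theorem \ref{thm:small_ell}(b) gives uniqueness of the action ground-state up to sign, and Theorem \ref{thm:small_ell}(a) forces it to be positive, core-symmetric, and either core-increasing or constantly equal to $1$ on the compact core. The only delicate step is ruling out the constant-core alternative on the fork graph: if $u\equiv 1$ on every pendant then $u_\kappa'(0)=0$, Kirchhoff forces $u_h'(0)=0$, and by uniqueness for the Cauchy problem $-u''+u=u^{p-1}$ with $u(0)=1$, $u'(0)=0$ the unique solution is the constant $1$, which is not in $H^1$ of the half-line. Hence the ground-state must coincide with the unique core-increasing symmetric positive bound-state produced by item (1). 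The bulk of the technical work has been absorbed into Theorems \ref{thm:amin}--\ref{cor:symmetry}, so beyond this Kirchhoff bookkeeping and the short Cauchy uniqueness argument I do not anticipate any substantial obstacle.
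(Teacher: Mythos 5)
Your proposal is correct and takes essentially the same route as the paper, which deduces the corollary directly from Theorems \ref{thm:amin}, \ref{thm:small_ell} and \ref{cor:symmetry} after observing that $H=1$ forces the incidence index to be $\theta=\pm 1/P$. Your explicit Kirchhoff bookkeeping and the short argument excluding the constant-on-the-core alternative in item (a) (which is automatic here since $H=1$ is odd, so $H^+=H^-$ is impossible) merely fill in details the paper leaves implicit.
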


The existence of energy ground-states in fork graphs with either two or three pendants has been considered in \cite{adami2016threshold}, where the authors used the variational characterization and a clever construction of test functions. Their argument can be easily adapted to the variational problem \eqref{eq:variational}. However, for larger number of half-lines, their construction fails. It is an open problem whether this variational problem has indeed a solution for general $\ell$. This is yet another evidence of the applicability of our nonvariational methods.

\medskip

\paragraph{Example 4 (broom graph).} In the previous three examples, we discussed three classes of graphs where there is uniqueness of increasing symmetric bound-states. This is not true for broom graphs (Figure 4), as one can see by taking $\theta=H$ in Theorem \ref{thm:amin}. Moreover, combining this result with Remark \ref{rmk:falha_var}, these graphs give an example for the failure of the variational approach:
\begin{corollary}\label{coro:broom}
    Consider the broom graph of length $\ell$. 
    \begin{enumerate}
        \item if the number of half-lines is sufficiently large,  there exist multiple increasing symmetric positive bound-states, all with incidence index $\theta=H=H^+$.
        \item if $H\ge 3$ and $\ell$ is small, then the variational problem \eqref{eq:variational} does not have a solution.
    \end{enumerate}
\end{corollary}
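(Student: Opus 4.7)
The plan is to derive both parts of Corollary \ref{coro:broom} directly from the general theorems of the paper, specializing them to the broom graph topology.

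For part (1), I would begin by identifying the range of admissible incidence indices on a broom graph. Since the broom has $P=1$ pendant, $L=0$ loops and $H$ half-lines, formula \eqref{eq:defitheta} reduces to $\theta = H^+ - H^-$, which takes values in $\{-H,\, -H+2,\, \ldots,\, H-2,\, H\}$. The extreme value $\theta = H$ is realized precisely when every half-line is of type ``$+$'' (that is, $u_h = \varphi(\cdot + y)$ for all $h\in \mathcal{H}$) so that $H^+=H$ and $H^-=0$. In this configuration all half-lines are forced to coincide, hence the bound-state is automatically symmetric in the sense of the definition (core-symmetry being trivial because the broom has a single compact edge). I would then apply Theorem \ref{thm:amin} part 6(b) with $\theta = H$: the theorem furnishes a threshold $\theta^* > 2$ such that whenever $\theta>\theta^*$ there is an interval $[\ell_1(\theta),\ell_2(\theta)]$ for which \eqref{eq:bs} admits multiple core-increasing solutions of that index. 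Taking $H>\theta^*$ (the quantitative meaning of ``sufficiently large'') and any $\ell$ in the corresponding interval yields multiple increasing symmetric positive bound-states with $\theta = H = H^+$.

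For part (2), I would invoke Remark \ref{rmk:falha_var}, which records the non-attainment of the variational problem \eqref{eq:variational} for every regular single-knot graph with at least three half-lines and sufficiently small $\ell$. Since the broom graph has $H\ge 3$ by assumption and $\ell$ is small, the hypotheses of the remark are met and the desired conclusion follows without further work.

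No substantial new ideas are needed for the corollary itself: it functions as a concrete witness statement, instantiating in a single topological example both the phase-plane-based existence of multiple bound-states (Theorem \ref{thm:amin}) and the failure of the variational formulation (Remark \ref{rmk:falha_var}). The only verification worth mentioning is that, for $\theta = H$, the solutions produced by Theorem \ref{thm:amin} are genuinely symmetric across the half-lines; but this is immediate from the fact that forcing $H^-=0$ fixes the ``$+$'' behavior on every half-line simultaneously, so that symmetry on $\mathcal{H}$ and core-symmetry on the unique pendant both hold for free.
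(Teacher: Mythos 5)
Your proposal is correct and follows exactly the paper's (implicit) argument: part (1) is obtained by specializing Theorem \ref{thm:amin}, item 6(b), to the broom graph, where $P=1$, $L=0$ force $\theta=H^{+}-H^{-}=H$ when all half-lines carry the decaying branch $\varphi(\cdot+y)$ (so symmetry on $\mathcal{H}$ and core-symmetry on the single pendant are automatic), and part (2) is a direct citation of Remark \ref{rmk:falha_var}. Your remark that one must take $H>\theta^{*}$ and $\ell$ in the interval $[\ell_{1}(H),\ell_{2}(H)]$ makes explicit the (implicit) restriction on $\ell$ in the statement of part (1), which is a fair and accurate reading.
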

Even if one proves that the action ground-state is increasing, it still remains an open problem to determine which, among all bound-states, corresponds to action ground-states.

\subsection{Structure of the paper}

The structure of this work is as follows. In Section \ref{sec:PositiveSol}, we study the  solutions to \eqref{eq:bs}, through a phase-plane analysis, and prove Theorem \ref{thm:amin}. Section \ref{sec:exist} is devoted to the existence of action ground-states, either directly (Theorems \ref{thm:ags} and \ref{thm:agsregular}) or through the variational approach \eqref{eq:variational}. In particular, the proof of Proposition \ref{cor:tadpole} can be found at the end of this section. In Section \ref{ExUniqofPosSolution}, we study regular single-knot graphs for small $\ell$ (proving Theorem \ref{thm:small_ell}, while the analysis for large $\ell$ (Theorems \ref{prop:existsymbreak1} \ref{cor:symmetry} and \ref{prop:symmetricPL1}) is carried out in Section \ref{sec:large_ell}.

\section{Characterization of positive symmetric bound-states}\label{sec:PositiveSol}

 We call a bound-state of
\begin{equation}\label{StationaryNLS_Chapter2}
-u''+u=|u|^{p-2}u \qquad \text{ in } \mathcal{G},
\end{equation}
to a nontrivial weak solution $u\in H^1(\mathcal{G})$, that is,
	\begin{equation}\label{Weak-Solution}
		\int_{\G}u'\eta'+ u\eta-|u|^{p-2}u\eta=0,\ \text{for all}\ \eta\in H^1(\G).
	\end{equation}
We recall that $H^1(\mathcal{G})=\left\{u: \mathcal{G} \rightarrow \R \mid u \in C(\mathcal{G}) \text { and } u_e \in H^{1} \text{ for every edge } \mathcal{E}\right\}$. 
An immediate corollary is that $u_e$ is an $H^2$ function in each edge, satisfies the equation in the strong sense and, at each vertex $\mathbf{v}$, we have the Neumann-Kirchoff conditions
\begin{equation}\label{eq:nk}
    \sum_{e \prec \mathbf{v}} \frac{d u_e}{dx_e}(v)=0,
\end{equation}
where $e\prec \mathbf{v}$ means that the edge $e$ is incident to the vertex $\mathbf{v}$, and the derivatives are taken in the inward direction of each edge.

Define
$$ F(x,y)=-\frac{1}{2}y^2+f(x),\quad f(x)=\frac{1}{2}x^2-\frac{1}{p}|x|^p,\ \text{for all}\ (x,y)\in\R^2.$$
The contour plot of $F$ is the $(u,u')$-phase-plane diagram for  \eqref{StationaryNLS_Chapter2}. 
\begin{figure}[h]
	\centering
	\includegraphics[height=6cm]{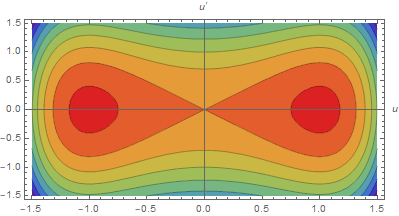}
	\caption{Contour plot of $F$.}
	\label{fig:Phase-Plane}
\end{figure}

As a phase-plane diagram, we observe that the level set of $F$ corresponding to $C=0$ consists of two homoclinic orbits. These orbits correspond to $\pm \varphi$, where $\varphi$ is the real-line soliton given explicitly by
	\begin{equation*}
	\varphi(x)=\left(\frac{ p}{2}\right)^\frac{1}{p-2}\sech^{\frac{2}{p-2}}\left(\frac{p-2}{2}x\right),\ x\in\R.
\end{equation*}
The region of the phase-plane bounded by these homoclinic orbits contains all the level curves of $F$ for positive values of $C$, while the unbounded region contains all the level curves with negative values of $C$.

Given a bound-state $u\in H^1(\G)$, on each compact edge $\kappa$, we have the existence of a constant $C_\kappa$ such that
\begin{equation}\label{ODE_Chapter2}
	-\frac{1}{2}{u'_\kappa}^2(x)+\frac{1}{2}u^2_\kappa(x)-\frac{1}{p}|u_\kappa(x)|^p=C_\kappa,\ \text{for all}\ x\in [0,\ell_\kappa].
\end{equation}
On the other hand, for unbounded edges $h$, since $u_h$ decays to zero as $|x|\to+\infty$, one must have $C_h=0$ and $u_h$ must be a portion of the soliton, that is, there exists $y_h\ge 0$ such that
\begin{equation}\label{eq:portion}
    u_h(x)=\pm\varphi(x\pm y_h),\  \text{for all}\  x\geq0.
\end{equation}

Evidently, if $\kappa$ is a compact edge, the value of $C_\kappa$ can be arbitrary. However, at a given vertex $\mathbf{v}$, the Neumann-Kirchoff conditions impose particular relations between the values of $C_\kappa$ on each edge $e\prec\mathbf{v}$. Consequently, it becomes clear that the specific topology of the graph determines both the number of constants $C_\kappa$ (one per compact edge) and the number of restrictions on these constants (one per nonterminal  vertex). Unless the topology is relatively simple, one finds a large dimensional problem, which may even be underdetermined. 

At this point, we are forced to make some simplifications on the problem at hand. In fact, even the complete analysis of the problem when all the values $C_e$ coincide (as it is the case of single-knot graphs) becomes quite intricate, as we will see below. It is a quite interesting (and hard) problem to perform an analogous analysis when various levels $C_\kappa$ are involved.

\begin{lemma}\label{characterization of bound-states}
    Let $\mathcal{G}$ be a regular single-knot metric graph of lenght $\ell$, and let $u$ be a positive core-symmetric bound-state. Define $\theta$ as in \eqref{eq:defitheta} Then, there exists $y>0$ such that:
    \begin{enumerate}
    \item for each half-line $h$, either
    $u_h(x)=\varphi(x+y)$, or $u_h(x)=\varphi(x-y)$.
    \item Define $z=\varphi(y)=\varphi(-y)\in \left(0,(p/2)^{1/(p-2)}\right)$. For every compact edge $\kappa$, we have
    \begin{equation}\label{gen.IVP2_0}
	-u_\kappa''+u_\kappa=u_\kappa^{p-1}\ \text{in}\ [0,\ell],\quad
	u_\kappa(0)=z,\quad
	u_\kappa'(0)=\theta\sqrt{2f(z)},\quad u_\kappa'(\ell)=0.
\end{equation}
In particular,
\begin{equation}\label{eq:hamil}
    F(u_\kappa,u_\kappa')=C=(1-\theta^2)f(z).
\end{equation}
    \end{enumerate}
\end{lemma}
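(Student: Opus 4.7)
The plan is to read off the structure of $u$ from the phase-plane portrait, combining the explicit form of bound-states on each unbounded edge with continuity and the Neumann--Kirchhoff condition at $\mathbf{0}$. The entire argument then reduces to careful bookkeeping at the central vertex.

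First I would handle the half-lines. Since $u_h\in H^1(0,\infty)$ is positive, the conserved energy in \eqref{ODE_Chapter2} satisfies $C_h=0$, and the shape of the homoclinic orbit of $F$ gives $u_h(x)=\varphi(x+y_h)$ or $u_h(x)=\varphi(x-y_h)$ for some $y_h\ge 0$. Continuity of $u$ at $\mathbf{0}$ yields $\varphi(y_h)=u(\mathbf{0})=:z$ for every half-line; since $\varphi$ is strictly decreasing on $[0,\infty)$, all $y_h$ equal a common value $y$, which proves item (1). Positivity of $u$ implies $z>0$; that $y>0$, equivalently $z<(p/2)^{1/(p-2)}=\varphi(0)$, follows because $z=\varphi(0)$ would force $u_\kappa=\varphi|_{[0,\ell]}$ on each compact edge and thus $u_\kappa'(\ell)=\varphi'(\ell)\neq 0$, contradicting the endpoint condition derived below. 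Using the identity $(\varphi')^2=2f(\varphi)$ on the soliton orbit together with the fact that $\varphi'<0$ on $(0,\infty)$, I then compute $u_h'(0)=\varphi'(y)=-\sqrt{2f(z)}$ if $h\in H^+$, and $u_h'(0)=-\varphi'(y)=+\sqrt{2f(z)}$ if $h\in H^-$.

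For the compact edges, by core-symmetry and convention \eqref{dummy} all share a single profile $u_\kappa$ on $[0,\ell]$ with $u_\kappa(0)=z$, and the number of compact edges incident to $\mathbf{0}$ equals $P+2L$. Plugging the computed values of $u_h'(0)$ into the Neumann--Kirchhoff condition at $\mathbf{0}$ gives
\[
-H^+\sqrt{2f(z)}+H^-\sqrt{2f(z)}+(P+2L)\,u_\kappa'(0)=0,
\]
which rearranges to $u_\kappa'(0)=\theta\sqrt{2f(z)}$. At the far endpoint $x=\ell$, either the edge is a pendant and $u_\kappa'(\ell)=0$ is simply the Neumann condition at the terminal vertex, or the endpoint is the dummy vertex shared by the two halves $\kappa_1,\kappa_2$ of a loop; in that case Kirchhoff reads $-u_{\kappa_1}'(\ell)-u_{\kappa_2}'(\ell)=0$, and combining with $u_{\kappa_1}=u_{\kappa_2}$ from core-symmetry again yields $u_\kappa'(\ell)=0$. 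This establishes \eqref{gen.IVP2_0}. Finally, evaluating the conserved Hamiltonian $F(u_\kappa,u_\kappa')$ at $x=0$ gives
\[
C=f(z)-\tfrac{1}{2}\theta^2\cdot 2f(z)=(1-\theta^2)f(z),
\]
which is \eqref{eq:hamil}. No analytic obstacle is anticipated: the only delicate step is the loop/dummy-vertex accounting, which simultaneously produces the multiplicity $P+2L$ in the Kirchhoff sum at $\mathbf{0}$ and the boundary condition $u_\kappa'(\ell)=0$ at the opposite end.
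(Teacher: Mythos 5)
Your proof is correct and follows essentially the same route as the paper: read off the soliton portions on the half-lines from $C_h=0$ and continuity, feed the resulting derivatives into the Neumann--Kirchhoff balance at $\mathbf{0}$ to get $u_\kappa'(0)=\theta\sqrt{2f(z)}$, obtain $u_\kappa'(\ell)=0$ from the terminal/dummy vertex, and exclude $y=0$ by the incompatibility of $u_\kappa=\varphi$ with that endpoint condition. Your explicit handling of the dummy vertex on loops is slightly more detailed than the paper's, but the argument is the same.
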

\begin{proof} 
 On the half-lines, by \eqref{eq:portion} and the continuity of $u$, one must have $u_h(x)=\varphi(x\pm y)$ for some $y\ge0$ independent on the half-line. If $y=0$, then the Neumann-Kirchoff conditions at $\mathbf{0}$ imply $u_\kappa(0)=\varphi(0),\  u_\kappa'(0)=0=\varphi'(0)$
and thus, by uniqueness, $u_\kappa=\varphi$. However, this is not compatible with Neumann-Kirchoff at $x=\ell$, $u'(\ell)=0$. Consequently, $y>0$.

As for the second point, the condition $u_e'(\ell)=0$ is a direct consequence of the Neumann-Kirchoff condition at the associated vertex. Moreover, since, on each half-line,
$$
u_h'(0)=\pm\sqrt{2f(u_h(0))}=\pm\sqrt{2f(z)},
$$
the Neumann-Kirchoff condition at $\mathbf{0}$ reads
\[
(P+2L)u_\kappa'(0) + (H^--H^+)\sqrt{2f(z)} = 0,\quad \mbox{i.e.,}\quad u_\kappa'(0)=\frac{H^+-H^-}{P+2L}\sqrt{2f(z)}.
\]
Finally, we have
\[
C=F(u(0),u'(0))=-\frac{1}{2}u'(0)^2+f(u(0))=-\theta^2f(z)+f(z)=(1-\theta^2)f(z).\qedhere
\]
\end{proof}
 Therefore, in order to characterize core-symmetric, monotone positive bound-states (and to prove Theorem \ref{thm:amin}), we wish to understand if, given $\ell>0$ and $\theta\in \R$, there exists  $z\in \left(0,(p/2)^{1/(p-2)}\right)$ for which the overdetermined problem
\begin{equation}\label{gen.IVP2}
	-u''+u=u^{p-1}\ \text{in}\ [0,\ell],\quad
	u(0)=z,\quad
	u'(0)=\theta\sqrt{2f(z)},\quad u'(\ell)=0,\quad {u>0}.
\end{equation}
has a monotone solution (and, if so, if $z$ is unique). 

\begin{theorem}\label{th:SolIVPGeralTheta}
	Let $\ell>0$ and $p>2$. The existence of monotone solutions to the overdetermined problem \eqref{gen.IVP2} can be discussed in terms of $\theta$ as follows.\setlist{nolistsep}
	\begin{enumerate}[noitemsep]
		\item For $\theta=0$, set $\ell^*_1:=\frac{\pi}{\sqrt{(p-2)}}>0$. Then:
		\begin{enumerate}
			\item For every $\ell>0$, $u\equiv1$ is the only constant solution of \eqref{gen.IVP2} and it is associated to the value $z=1$.
			\item If $\ell\leq\ell^*_1$, there are no  monotone solutions of \eqref{gen.IVP2}.
			\item  If $\ell>\ell^*_1$, there exists a unique \textit{increasing} solution of \eqref{gen.IVP2}, which is associated with $z\in(0,1)$, and there exists a unique \textit{decreasing} solution of \eqref{gen.IVP2}, which is associated with $z>1$.
		\end{enumerate}
		\item For $\theta\in(0,1)$: for every $\ell>0$, there exists a \textit{unique} monotone (increasing) solution of \eqref{gen.IVP2}.
        \item For $\theta\in(-1,0)$, set $\ell^*_2=\ell^*_2(\theta):=\argsinh\left(\frac{\theta}{\sqrt{1-\theta^2}}\right)$. Then:
        \begin{enumerate}
        \item if $\ell<\ell^*_2$, there are no monotone solutions of \eqref{gen.IVP2}. 
        \item If $\ell\geq\ell^*_2$, there exists a \textit{unique} monotone (decreasing) solution of \eqref{gen.IVP2}.
        \end{enumerate}
		\item For $\theta=1$: for every $\ell>0$, there exists a \textit{unique} solution of \eqref{gen.IVP2}, which is increasing and a portion of the soliton.
		\item 		For $\theta\leq -1$:  there are \textit{no} solutions of \eqref{gen.IVP2} for any $\ell>0$.
		\item For $\theta>1$, there exist monotone (increasing) solutions of \eqref{gen.IVP2} for any $\ell>0$. Moreover:
		\begin{enumerate}
			\item If $\theta\leq2$ then, for every $\ell>0$, there exists a \textit{unique} solution of \eqref{gen.IVP2}, which is increasing.
			\item there exists $\theta^*>2$ such that, for any $\theta>\theta^*$, there exist $0<\ell_1(\theta)<\ell_2(\theta)$ for which, for every $\ell\in[\ell_1(\theta),\ell_{2}(\theta)]$, there exist \textit{multiple} monotone (increasing) solutions of \eqref{gen.IVP2}.
		\end{enumerate}
	\end{enumerate}
\end{theorem}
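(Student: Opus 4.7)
The plan is to reduce the overdetermined problem to the single transcendental equation $T(z;\theta)=\ell$, where $T(z;\theta)$ denotes the first positive time at which the trajectory of $-u''+u=u^{p-1}$ with initial datum $u(0)=z$, $u'(0)=\theta\sqrt{2f(z)}$ returns to $\{u'=0\}$, traced along the orbit of conserved quantity $C=(1-\theta^2)f(z)$ in the phase plane. Monotone positive solutions on $[0,\ell]$ then correspond exactly to the values $z\in(0,(p/2)^{1/(p-2)})$ with $T(z;\theta)=\ell$.

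I would start by classifying the orbit type in terms of $\theta$: a periodic orbit around $(1,0)$ when $|\theta|<1$ (so $C>0$), the homoclinic soliton $\varphi$ when $|\theta|=1$ (so $C=0$), and an outer orbit enclosing both equilibria when $|\theta|>1$ (so $C<0$). This already disposes of the boundary cases. For $\theta=1$, the trajectory is a rising portion of $\varphi$ parametrized by $\ell=\varphi^{-1}(z)$, which is a bijection $(0,(p/2)^{1/(p-2)})\to (0,+\infty)$; this proves item~(4). For $\theta=-1$, the trajectory is a decaying half-soliton which never satisfies $u'=0$ on a finite interval; for $\theta<-1$, the outer orbit crosses $u=0$ before reaching the next turning point, contradicting positivity. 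Together these prove item~(5).

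In the remaining cases, I would write
\begin{equation*}
T(z;\theta)=\int_{\min(z,u^*)}^{\max(z,u^*)}\frac{du}{\sqrt{2(f(u)-C)}},
\end{equation*}
where $u^*$ is the relevant turning point (determined by $f(u^*)=C$ and the sign of $u'(0)$), and analyze the limits of $T(\cdot;\theta)$ at the endpoints of the admissible range. For $\theta=0$, $T$ is the half-period of a symmetric periodic orbit; the classical monotonicity of the period function of $f$ (minimum $\pi/\sqrt{p-2}=\ell^*_1$ attained as $z\to 1$ via linearization at the center, divergence to $+\infty$ as $z\to 0^+$ or $z\to(p/2)^{1/(p-2)}$) yields item~(1) at once. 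For $\theta\in(0,1)$ and for $\theta>1$, $T(z;\theta)\to+\infty$ as $z\to 0^+$ (the datum approaches the saddle at the origin) and $T(z;\theta)\to 0$ as $z\to(p/2)^{1/(p-2)}$ (the datum collides with the target $u^*$), giving existence for every $\ell>0$ by the intermediate value theorem. For $\theta\in(-1,0)$, linearization of $-u''+u=0$ near the origin gives the finite, explicit limit $\lim_{z\to 0^+}T(z;\theta)=\ell^*_2(\theta)$, while $T\to+\infty$ as $z\to(p/2)^{1/(p-2)}$; hence existence precisely when $\ell\ge \ell^*_2$.

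The hardest step is the uniqueness/multiplicity analysis, because $T(\cdot;\theta)$ is a \emph{partial} period whose integration endpoints depend on $z$, so classical period-function monotonicity results do not apply directly. For uniqueness (items~(2), (3) and (6a)) I would differentiate $T$ in $z$ after a desingularizing change of variables that removes the square-root singularity at the turning point, and then establish positivity of the resulting integrand by means of a convexity-type inequality for $f$. For the multiplicity claim in item~(6b), the opposite is needed: by analyzing the profile of $T(\cdot;\theta)$ as $\theta$ grows (the outer orbit stretches far from the origin and the passage near the saddle becomes dominant), I would exhibit $\theta^*>2$ above which $T(\cdot;\theta)$ develops an interior local maximum; by continuity in $\theta$, this maximum persists on a nondegenerate $\theta$-neighborhood and produces the range $[\ell_1(\theta),\ell_2(\theta)]$ of $\ell$-values on which $T(\cdot;\theta)=\ell$ admits multiple roots.
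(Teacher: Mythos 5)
Your reduction is exactly the paper's: you trade the overdetermined problem for the time map $T(z;\theta)=\ell$ (the paper's length functions $L_1$, $L_2$ of Definitions \ref{def:2.60} and the decreasing analogue), you classify orbits by the sign of $C=(1-\theta^2)f(z)$ to kill $\theta=\pm1$ and $\theta<-1$ (Lemmas \ref{lemma:Theta=1} and \ref{lemma:Theta<1}), and you compute the same endpoint limits, including the finite $\argsinh$ limit as $z\to0^+$ for $\theta\in(-1,0)$ and the blow-up/vanishing limits that give existence for all $\ell$ when $\theta>0$. The structure of your argument for item (6b) — a sign change of $T'(\cdot;\theta)$ appearing for large $\theta$ and persisting on a nondegenerate parameter range — is also the paper's mechanism (Lemma \ref{lemma:2.66}: $H(1,\theta)>0$ for all $\theta$, while $H(z,\theta)\to-\infty$ as $\theta\to\infty$ for fixed $z>1$).

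The genuine gap is in the step you yourself flag as the hardest: the strict monotonicity of $T(\cdot;\theta)$ for $\theta\in(-1,1]$ and for $1<\theta\leq2$. Saying you would ``establish positivity of the resulting integrand by means of a convexity-type inequality for $f$'' names a hope, not an argument, and the hope cannot be generic: any proof must use the restriction $\theta\leq 2$ quantitatively, since the conclusion is \emph{false} for $\theta>\theta^*$ by item (6b). In the paper this step consumes most of Section \ref{sec:PositiveSol}: a Green's-theorem identity along the level curve (Lemma \ref{lemma:2.62}) that both desingularizes and splits $L_1'$ into a boundary term and an integral term; the sign analysis of the auxiliary functions $A$ and $g$ (Lemma \ref{lemma:2.63} and Proposition \ref{prop:gNegativa}, the latter imported from \cite{AgostinhoCorreiaTavares}); and, for $1<\theta\leq2$ on $z\geq1$, the inequality $\psi(z,x,\theta)\leq0$ of Proposition \ref{prop:2.67}, proved by a Taylor expansion to order $n$ depending on $p\in(n,n+1]$ (Lemmas \ref{lemma:2.70}--\ref{lemma:2.72}). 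Moreover, the sign of $T'$ is not obtained by showing a single integrand is pointwise signed; it comes from a competition between the boundary and integral contributions (see \eqref{LDerivativeForm}--\eqref{H_p(z,theta)NotSingular}), which is precisely why the answer flips for large $\theta$. So your plan points in the right direction and matches the paper's, but the decisive estimates — the ones that distinguish (2), (3), (6a) from (6b) — are missing, and no off-the-shelf convexity property of $f$ supplies them.
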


With this result at hand, we can do the following.
\begin{proof}[Proof of Theorem \ref{thm:amin}] This is now a direct consequence of Lemma \ref{characterization of bound-states}, which reduces the study of core-symmetric solutions of \eqref{eq:bs} to the study of the overdetermined problem \eqref{gen.IVP2}, and Theorem \ref{th:SolIVPGeralTheta}.
\end{proof}

The remainder of this section is devoted to the proof of Theorem \ref{th:SolIVPGeralTheta}. It will be a consequence of Propositions \ref{prop:2.61} and \ref{prop:2.74} below, as well as some phase-plane analysis. Before we focus on constructing solutions to \eqref{gen.IVP2}, we make some general considerations concerning the phase-plane of the ODE in \eqref{gen.IVP2}.

\subsection{Preliminaries: phase-plane analysis and nonexistence results.}

In this subsection, we perform a preliminary phase-plane analysis, splitting the discussion with respect to $\theta$, and prove the existence result in item 4. and the nonexistence results in items 5. of Theorem  \ref{th:SolIVPGeralTheta}. Observe that, given $\theta\in \R$ and $z=(p/2)^{\frac{1}{p-2}}$, the IVP
\begin{equation}\label{eqn.probu_3.3}
	-u''+u=|u|^{p-2}u,\ \text{in}\ \R_0^+,\quad  u(0)=z,\quad
	u'(0)=\theta\sqrt{2f(z)}.
\end{equation}
has a solution in the whole real line, and $F(u,u')=C=(1-\theta^2)f(z)$.

Before we begin, we observe that $f$ is strictly increasing in $(0,1)$ and strictly decreasing in $(1,\infty)$, with maximum $f(1)=(p-2)/2p$. As such, we can define 
$$
f_1^{-1}:\left[0,\frac{p-2}{2p}\right]\to [0,1]
$$
as the inverse of $f$ over $[0,1]$ and
$$
f_2^{-1}:\left(-\infty,\frac{p-2}{2p}\right]\to [1,\infty)
$$
 as the inverse of $f$ over $[1,\infty)$.

\paragraph{The case $\theta^2=1$.} In this case, the solution $u$ of \eqref{eqn.probu_3.3} satisfies $u'(0)=\pm \sqrt{2f(z)}$ and $C=0$. Therefore, we have the following result, whose proof is immediate (see also Figure \ref{fig:Theta_1PhasePlane}).

\begin{lemma}\label{lemma:Theta=1}
Let $u$ be a solution of \eqref{eqn.probu_3.3}. Then, for $\theta^2=1$,  $u$ is a portion of the real line soliton. In particular:
\begin{itemize}
\item if $\theta=-1$, then $u'$ does not vanish  in $\R^+$. The overdetermined problem \eqref{gen.IVP2} does not have a solution. 
\item if $\theta=1$, then $u'$ has a unique positive zero $L$, with  $u(L)=\varphi(0)=(p/2)^{1/(p-2)}$. Given $\ell>0$, \eqref{gen.IVP2} has a unique solution, which is increasing.
\end{itemize}
\end{lemma}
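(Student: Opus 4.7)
The plan is a straightforward phase-plane analysis anchored on the conserved quantity $F(u, u') = -\frac{1}{2}(u')^2 + f(u)$. First I would evaluate at $x=0$ to compute the level
\[
C = F(u(0), u'(0)) = -\frac{1}{2}\bigl(\theta\sqrt{2f(z)}\bigr)^2 + f(z) = (1 - \theta^2)\, f(z) = 0,
\]
using that $\theta^2 = 1$. Hence the trajectory of $u$ lies on the level set $\{F = 0\}$, which, as shown in Figure \ref{fig:Phase-Plane}, consists of the origin together with the two homoclinic orbits corresponding to $\pm\varphi$. Since $u(0) = z > 0$, the trajectory lies on the homoclinic loop in $\{u > 0\}$, so by translation invariance and uniqueness for the ODE there is $c \in \R$ with $u(x) = \varphi(x + c)$ for all $x \ge 0$.

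Next I would pin down the sign of $c$. Recall that $\varphi$ is even, strictly positive, attains its maximum $\varphi(0) = (p/2)^{1/(p-2)}$ only at $x = 0$, and is strictly increasing on $(-\infty, 0)$ and strictly decreasing on $(0, +\infty)$. From $\varphi'(c) = u'(0) = \theta\sqrt{2f(z)}$ I would read off: if $\theta = 1$ then $\varphi'(c) > 0$, forcing $c < 0$; if $\theta = -1$ then $\varphi'(c) < 0$, forcing $c > 0$. Consequently $u'(x) = \varphi'(x+c)$ vanishes precisely at $x = -c$.

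Two cases then remain. If $\theta = -1$, $-c < 0$, so $u'$ never vanishes on $(0, +\infty)$; in particular the Neumann condition $u'(\ell) = 0$ from \eqref{gen.IVP2} cannot be fulfilled for any $\ell > 0$. If $\theta = 1$, then $L := -c > 0$ is the unique positive zero of $u'$, with $u(L) = \varphi(0) = (p/2)^{1/(p-2)}$. For the overdetermined problem with a prescribed $\ell > 0$, the condition $u'(\ell) = 0$ forces $c = -\ell$, and hence $z = \varphi(-\ell) = \varphi(\ell)$, which uniquely determines $z \in (0, \varphi(0))$ since $\varphi|_{(0,+\infty)}$ is strictly decreasing. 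The resulting solution $u(x) = \varphi(x - \ell)$ is increasing on $[0, \ell]$ because $\varphi$ is increasing on $(-\infty, 0]$. The argument is transparent; the only point requiring care is correctly reading off the sign of $c$ from the sign of $u'(0)$, so I do not expect a genuine obstacle.
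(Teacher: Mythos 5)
Your proposal is correct and follows exactly the argument the paper has in mind: the paper declares the proof ``immediate'' from the observation that $\theta^2=1$ forces $C=(1-\theta^2)f(z)=0$, so the orbit lies on the positive homoclinic loop and $u=\varphi(\cdot+c)$, with the sign of $c$ read off from the sign of $u'(0)$. Your write-up simply makes these steps explicit (implicitly using $z\in\left(0,(p/2)^{1/(p-2)}\right)$ so that $f(z)>0$ and $\varphi'(c)\neq 0$, which is the intended range), and no gap remains.
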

\noindent This lemma proves item 4. in Theorem \ref{th:SolIVPGeralTheta}, and item 5. with $\theta=-1$.

\begin{figure}[h!]
\centering
		\includegraphics[height=4cm]{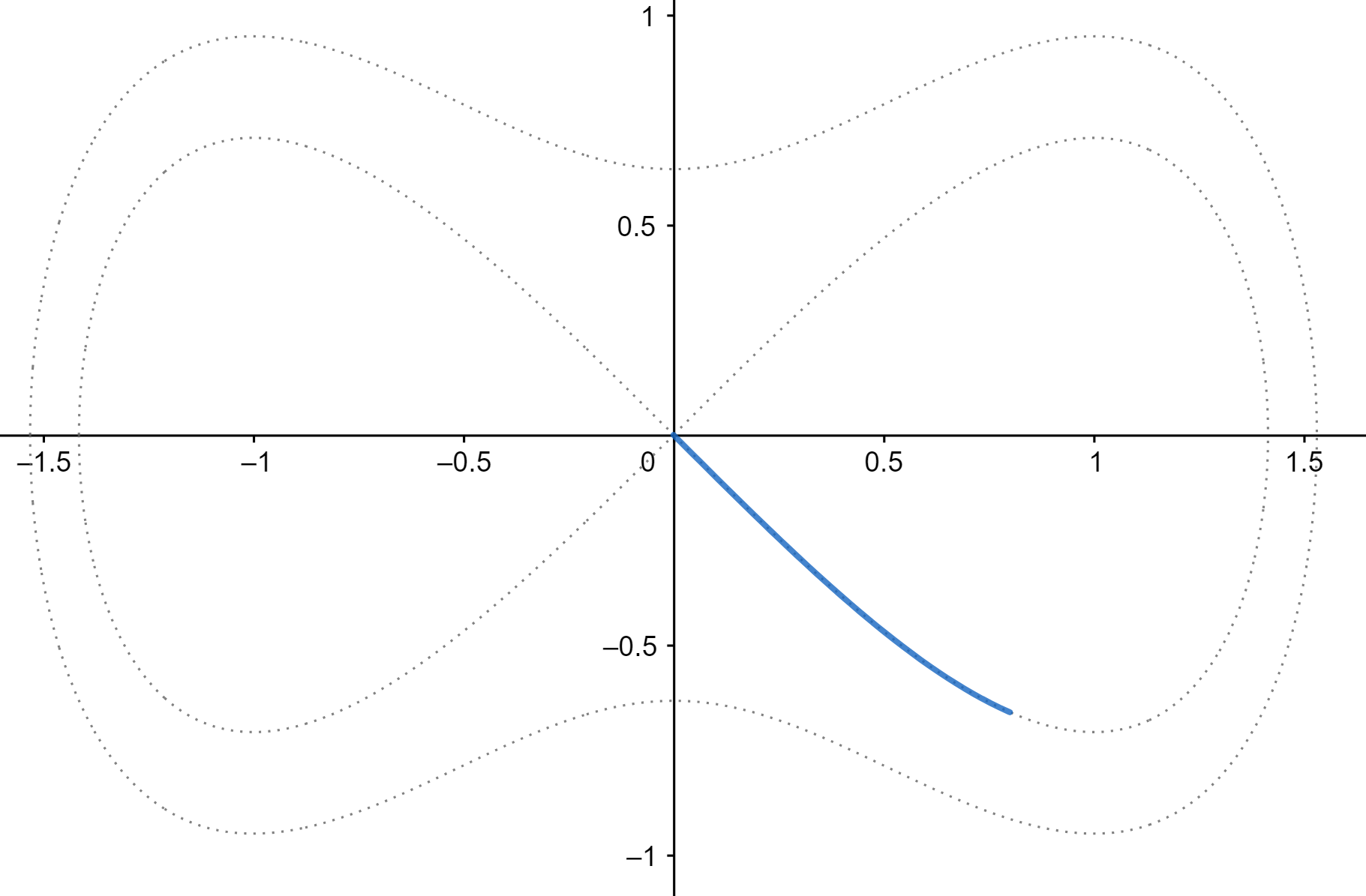}\hspace{2cm}
		\includegraphics[height=4cm]{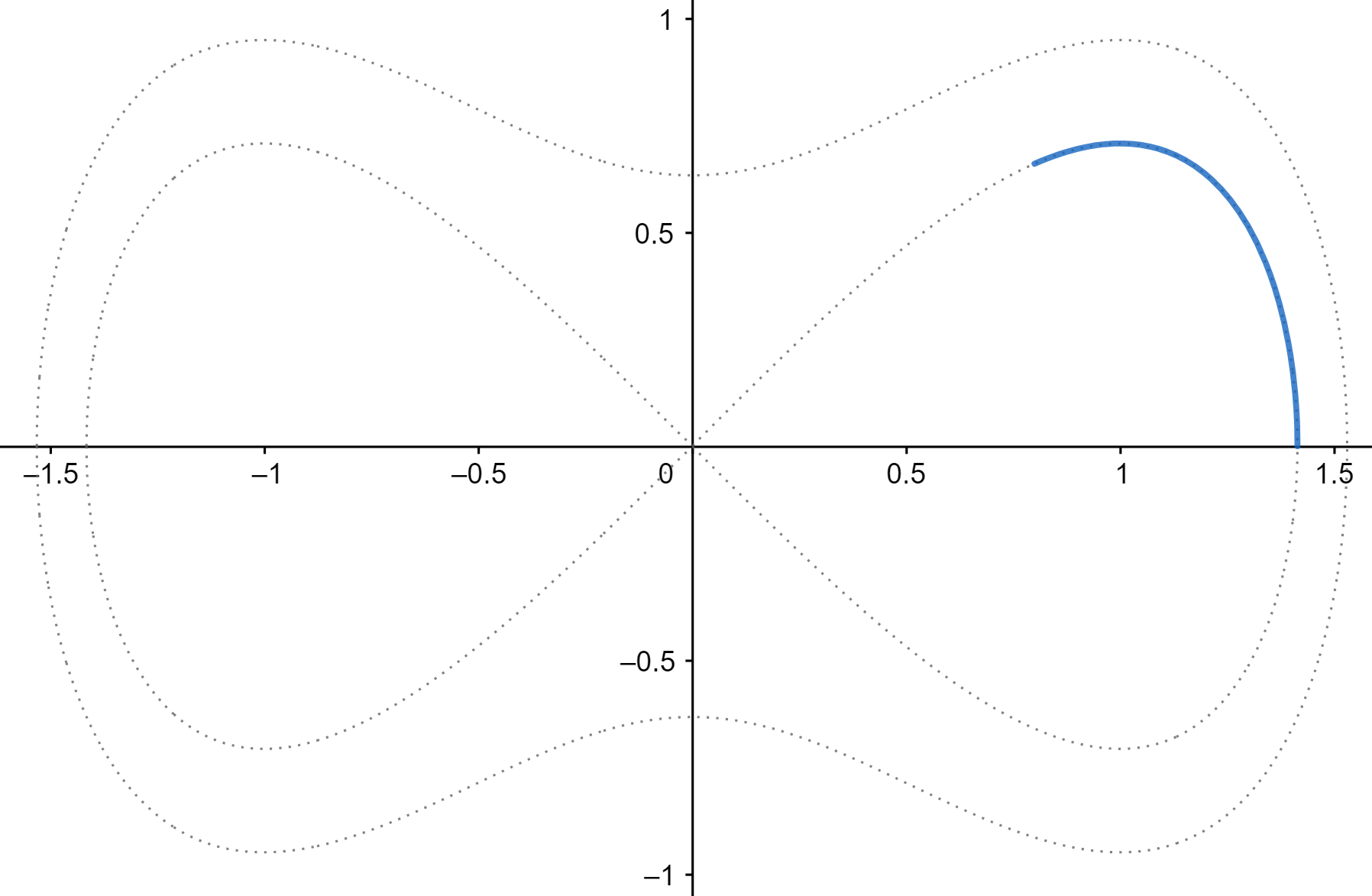}
		\caption{The orbit of a  solution of \eqref{eqn.probu_3.3} with $\theta=-1$ (on the left) and with $\theta=1$ (on the right).}\label{fig:Theta_1PhasePlane}
\end{figure}

\paragraph{The case $\theta=0$.} In this case, a solution $u$ of \eqref{gen.IVP2} is such that $u'(0)=0$. Observe that necessarily $z\in\left(0,(p/2)^{1/(p-2)}\right)$, otherwise the solution is explicitly given by half a soliton, whose derivative does not vanish for $x>0$. 

Since we are interested in monotone solutions, let $L>0$ be the \emph{first} positive zero of $u'$.
Since $u'(0)=0=u'(L)$, we have
\[
0<f(u(0))=F(u(0),0)=C=F(u(L),0)=f(u(L)).
\]
Since $u$ is strictly monotone on  $(0,L)$,  $u(L)$ and $u(0)$ are the two unique and distinct (positive) solutions of the equation $f(z)=C$. We have the following cases to consider:
\begin{lemma}\label{PHASEPLANETHETA=0}
Let $u$ be a solution of \eqref{gen.IVP2} with $\theta=0$.
	\begin{enumerate}
		\item If $z\in (0,1)$, then $u(0)=z<1<u(L)$ and $u'>0$ in $(0,L)$, see Figure \ref{fig:Theta=0PhasePlane} (on the left);
				\item If $z=1$, then necessarily $u\equiv 1$;
		\item If $z\in (1,(p/2)^{\frac{1}{p-2}})$, then $u(0)=z>1>u(L)$ and $u'<0$ in $(0,L)$, see Figure \ref{fig:Theta=0PhasePlane} (on the right).
	\end{enumerate}
\end{lemma}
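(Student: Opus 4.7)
The proof will rest on three ingredients: the conservation law $F(u,u')=C$ along the trajectory, the bell shape of $f$ on $\R^+$, and ODE uniqueness. First, with $\theta=0$ we have $u'(0)=0$, and by construction $u'(L)=0$. Since $F$ is constant along orbits,
\[
f(z) = F(z,0) = C = F(u(L),0) = f(u(L)),
\]
and $C\in (0,(p-2)/(2p)]$ because $z\in(0,(p/2)^{1/(p-2)}]$. As $f$ is strictly increasing on $[0,1]$, strictly decreasing on $[1,+\infty)$ and attains its maximum $f(1)=(p-2)/(2p)$, the equation $f(w)=C$ has exactly the two positive roots $f_1^{-1}(C)\in(0,1]$ and $f_2^{-1}(C)\in[1,+\infty)$. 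Since $L$ is the first positive zero of $u'$, $u$ is strictly monotone on $(0,L)$, so $u(L)\neq z$ and therefore $\{z,u(L)\}=\{f_1^{-1}(C),f_2^{-1}(C)\}$.

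Item 2 is then immediate: if $z=1$, then $C=(p-2)/(2p)$ is the maximum of $f$, so $f_1^{-1}(C)=f_2^{-1}(C)=1$, and the constant function $v\equiv 1$ solves the ODE with the same Cauchy data $(u(0),u'(0))=(1,0)$; Picard--Lindel\"of then forces $u\equiv 1$. For items 1 and 3, the preceding paragraph already yields $u(L)=f_2^{-1}(C)>1>z$ in case 1 and $u(L)=f_1^{-1}(C)<1<z$ in case 3. It remains to identify the sign of $u'$ on $(0,L)$, which I would read off from the ODE by computing $u''(0)=z-z^{p-1}=z(1-z^{p-2})$: this is positive for $z\in(0,1)$ and negative for $z\in(1,(p/2)^{1/(p-2)})$. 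Since $u'(0)=0$, $u'$ inherits the corresponding sign on a right-neighbourhood of $0$, and as $u'$ does not vanish on $(0,L)$ by minimality of $L$, this sign persists throughout the interval.

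There is no serious obstacle here; the whole argument is a direct reading of the level sets of $F$ inside the region enclosed by the homoclinic loop, combined with a one-line uniqueness argument. The only subtle point worth stating carefully is that the strict inequality $z<(p/2)^{1/(p-2)}$ guarantees $C>0$, so that the orbit is a genuine periodic orbit (hence $L$ is finite) rather than the homoclinic orbit $\{C=0\}$ corresponding to $\pm\varphi$; likewise $z\neq 1$ in cases 1 and 3 ensures that the two preimages $f_1^{-1}(C)$ and $f_2^{-1}(C)$ are genuinely distinct.
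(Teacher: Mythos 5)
Your argument is correct and is essentially the one the paper gives (in the discussion immediately preceding the lemma): the conservation law $F(u,u')=C$ forces $f(u(0))=f(u(L))$, and strict monotonicity on $(0,L)$ identifies $u(0)$ and $u(L)$ as the two distinct preimages of $C$ under the bell-shaped $f$. Your added details — Picard--Lindel\"of for the case $z=1$ and the computation $u''(0)=z(1-z^{p-2})$ to fix the sign of $u'$ — merely make explicit what the paper reads off the phase-plane portrait, so there is nothing substantive to flag.
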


\begin{figure}[h!]
\centering
		\includegraphics[height=4cm]{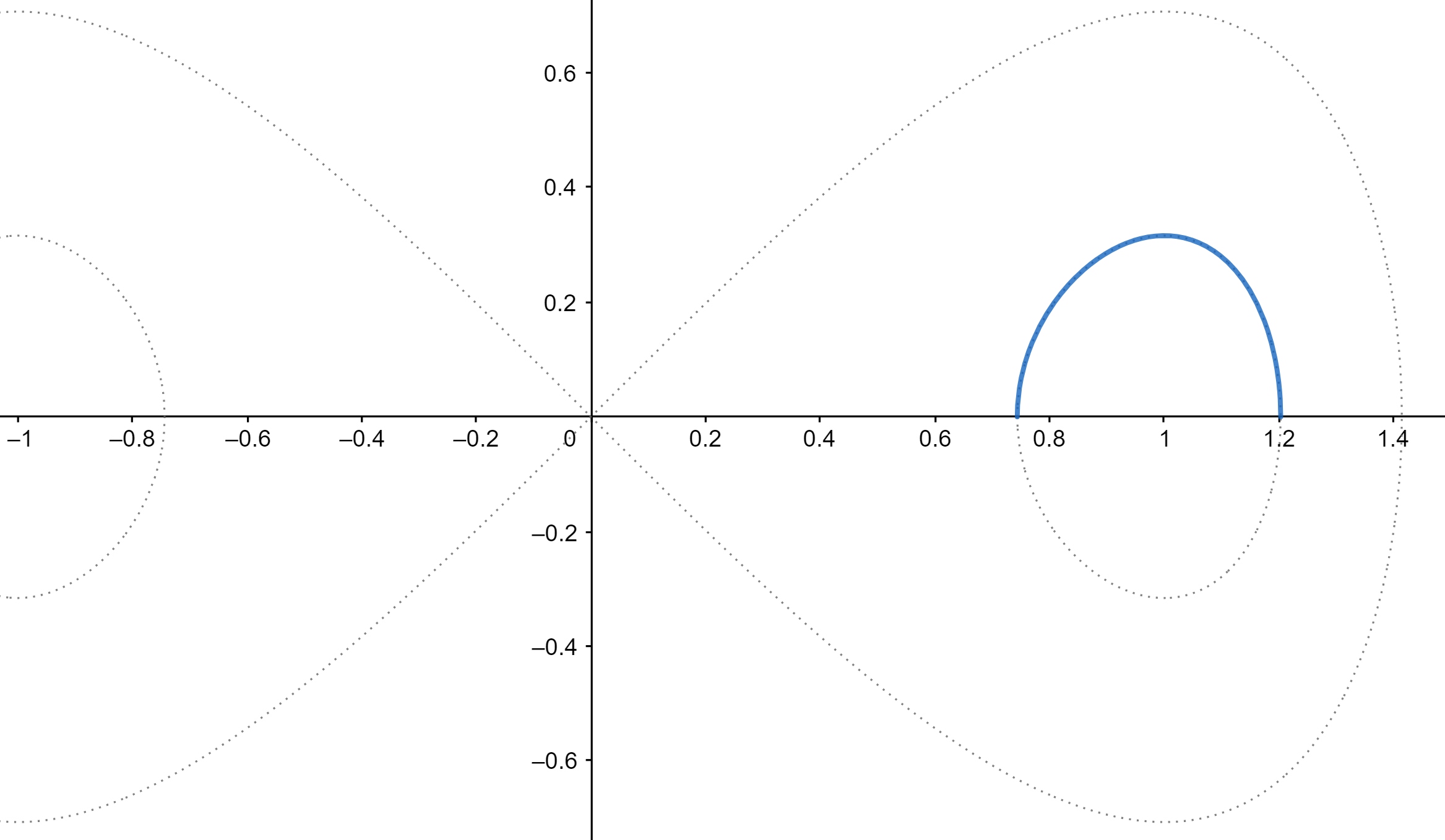}
\hspace{1cm}
		\includegraphics[height=4cm]{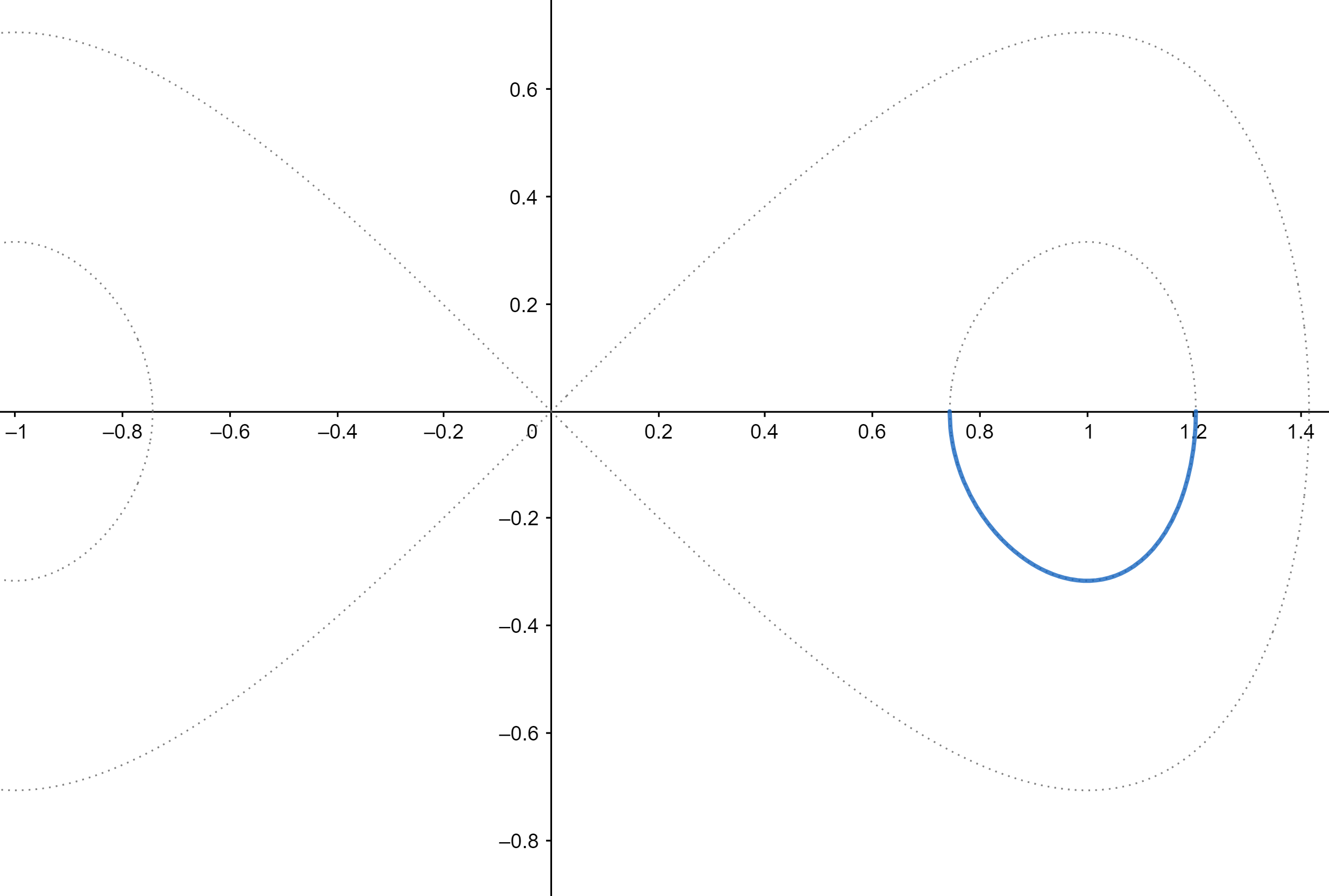}
			\caption{The orbit of an increasing (on the left) and decreasing (on the right) solution of \eqref{gen.IVP2}, with $\theta=0$.}\label{fig:Theta=0PhasePlane}
\end{figure}

\begin{remark}  It is clear from the phase-plane that multiple solutions of \eqref{gen.IVP2} may exist if we drop the requirement of monotonicity.  It suffices that the length $\ell>0$ coincides with any of the zeros of $u'$. 
\end{remark}

\paragraph{The case $\theta^2\in(0,1)$.} For each $z\in \left(0,(p/2)^{1/(p-2)}\right)$, let $u$ be the solution of \eqref{eqn.probu_3.3}. For these values of $\theta$, we have $C=(1-\theta^2)f(z)>0$.

We argue as in the previous case. Let $L$ be the first positive zero of $u'$. As such, it satisfies
$$ f(u(L))=F(u(L),0)=C=(1-\theta^2)f(z).$$
Now, if $u$ is increasing in $(0,L)$, we have that $z=u(0)<u(L)$. Otherwise, we obtain $z=u(0)>u(L)$. In the first scenario, we obtain $ u(L)=f_2^{-1}\left((1-\theta^2)f(z)\right),$ and, in the second, $ u(L)=f_1^{-1}\left((1-\theta^2)f(z)\right)$. We depict the orbits of these solutions on the phase-plane in Figure \ref{fig:Theta_0_1PhasePlane} below.
\begin{figure}[h!]
		\centering
		\includegraphics[height=4cm]{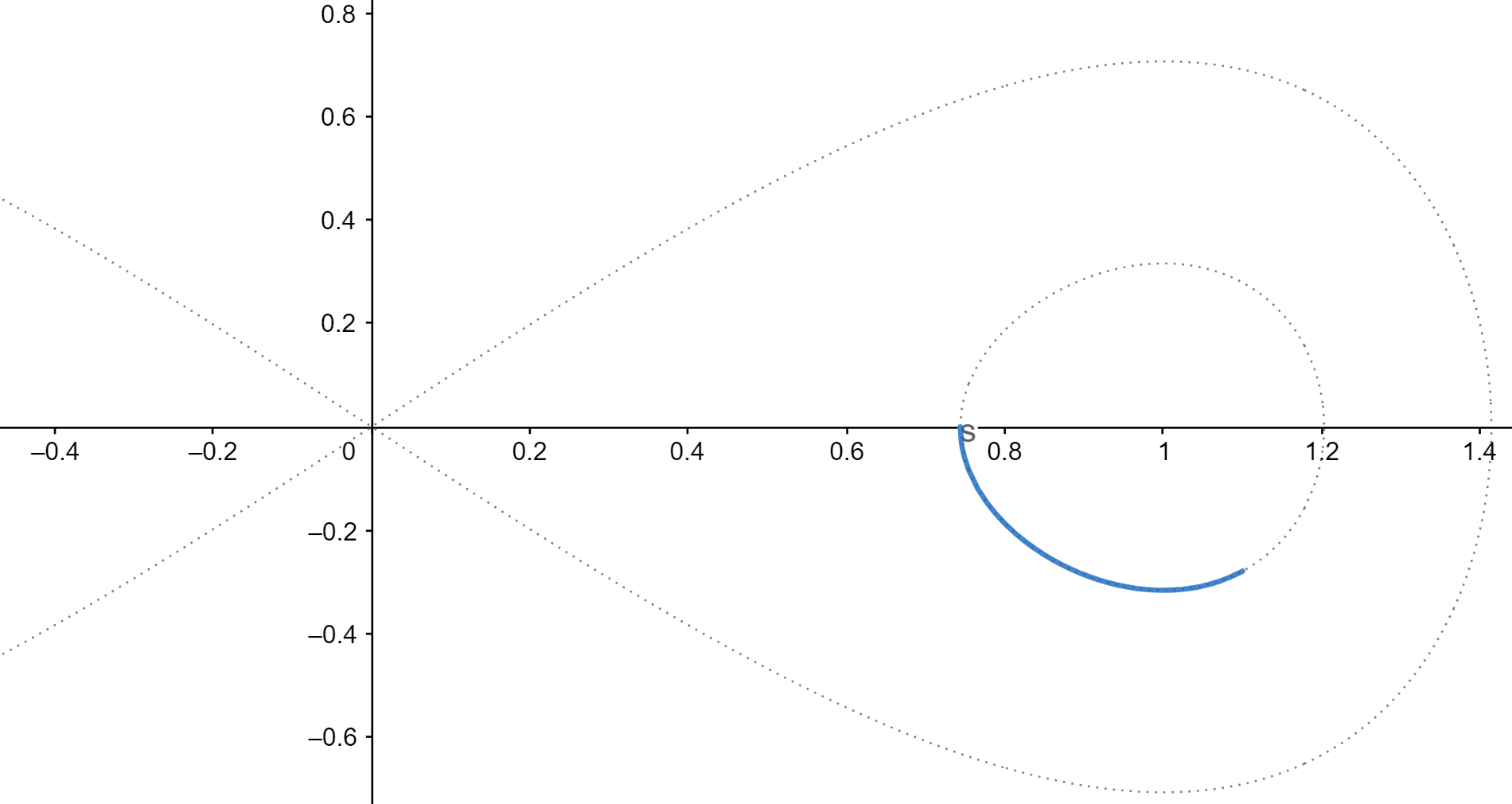}
\hspace{1cm}
		\includegraphics[height=4cm]{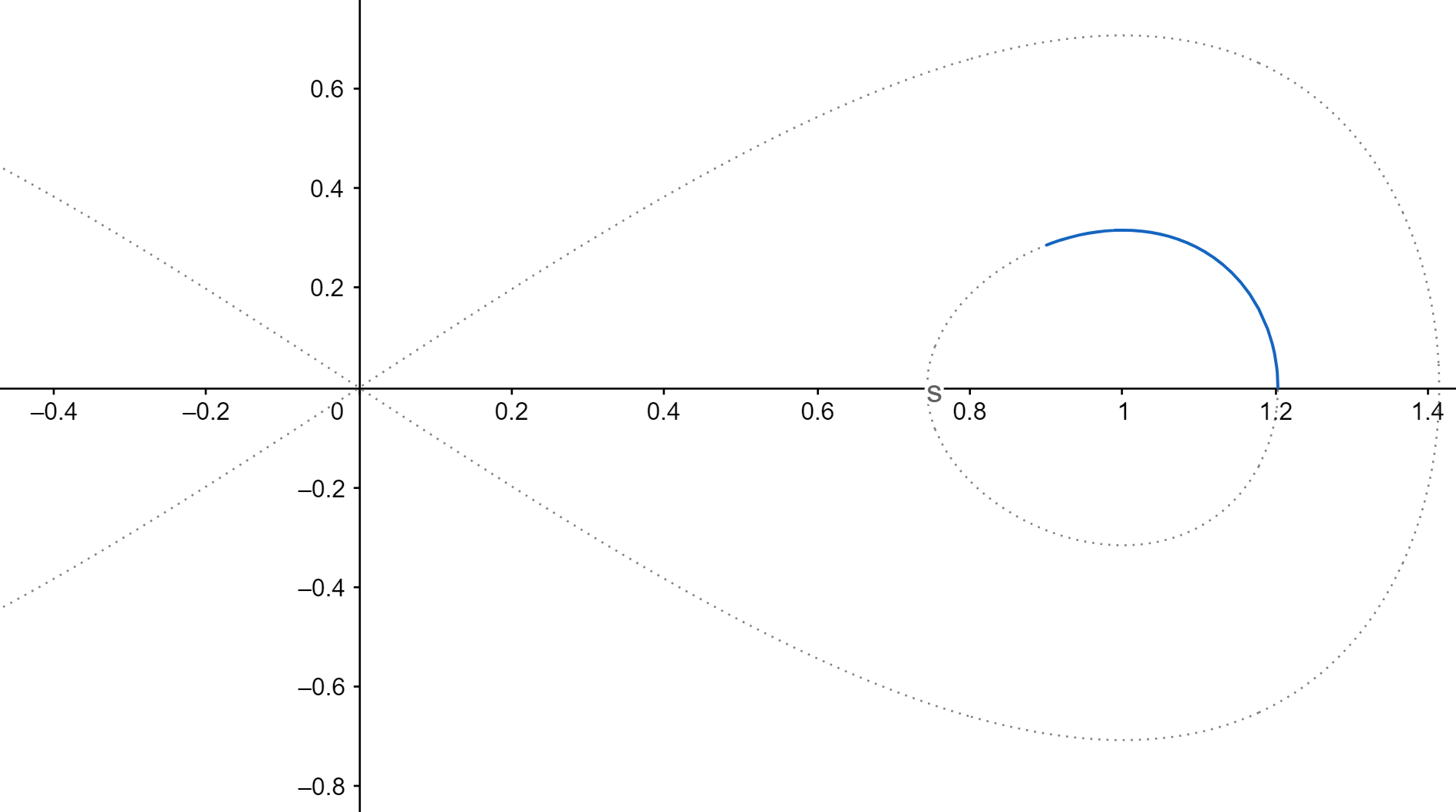}
		\caption{The orbit of a decreasing (on the left) and increasing (on the right) solution of \eqref{gen.IVP2} with $\theta\in(0,1)$.}\label{fig:Theta_0_1PhasePlane}
\end{figure}

\paragraph{The case $\theta^2>1$.} In this case, the orbit of a solution $u$ of \eqref{eqn.probu_3.3} is in the exterior of the homoclinic curve, as $C=(1-\theta^2)f(z)<0$ (see Figure \ref{fig:Theta_Maior_1PhasePlane} for the difference between $\theta>0$ and $\theta<0$). With a simple phase-plane analysis, it is immediate to conclude the following.
\begin{lemma}\label{lemma:Theta<1}
 If $\theta^2>1$, then any solution of \eqref{eqn.probu_3.3} necessarily changes sign. For $\theta<-1$, the solution changes sign before $u'$ vanishes and, in particular, the overdetermined problem \eqref{gen.IVP2} does not admit a positive solution. If $\theta>1$, any positive solution of  \eqref{gen.IVP2} is necessarily increasing.
\end{lemma}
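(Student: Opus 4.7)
The whole statement will be established by direct phase-plane inspection of the level set $\{F=C\}$ with $C=(1-\theta^2)f(z)$. The hypothesis $\theta^2>1$, together with $z\in(0,(p/2)^{1/(p-2)})$ (which guarantees $f(z)>0$), yields $C<0$, so the orbit of any solution of \eqref{eqn.probu_3.3} lies strictly outside both homoclinic loops.

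The first step is to describe the orbit geometrically. Since $f$ is even with $f(0)=0>C$, attains its global maximum $(p-2)/(2p)$ at $u=\pm1$, and satisfies $f(u)\to-\infty$ as $|u|\to\infty$, the equation $f(u)=C$ has exactly two real roots $\pm b$ with $b>1$, and $\{u:f(u)\geq C\}=[-b,b]$. Hence the level set $\{F=C\}$, defined by $u'^2=2(f(u)-C)$, is a smooth simple closed loop around the origin, meeting the $u$-axis at $(\pm b,0)$ (smoothness holding there thanks to $f'(\pm b)\neq 0$) and the $u'$-axis at $(0,\pm\sqrt{-2C})$. Being a closed orbit of an autonomous Hamiltonian system, the solution $u$ is periodic with range $[-b,b]$, so it changes sign. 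This proves the first assertion.

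For the remaining two claims I will track the direction of traversal of the loop via the phase flow $(u,u')\mapsto(u',u-|u|^{p-2}u)$, which forces $u$ to be strictly increasing in the upper half-plane $\{u'>0\}$ and strictly decreasing in the lower half-plane $\{u'<0\}$. Starting from $(z,\theta\sqrt{2f(z)})$ with $z>0$: if $\theta>1$, the initial point sits in the first quadrant and the orbit moves rightward along the upper arc until it first meets the $u$-axis at $(b,0)$, with $u$ strictly increasing along the way. Any $\ell$ for which $u'(\ell)=0$ and $u>0$ on $[0,\ell]$ must correspond to precisely this arc, so any positive solution of \eqref{gen.IVP2} is increasing. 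If instead $\theta<-1$, the initial point lies in the fourth quadrant; following the orbit forward, $u$ strictly decreases and crosses the $u'$-axis at $(0,-\sqrt{-2C})$ before ever reaching $(-b,0)$, which is the first vanishing of $u'$. Hence $u=0$ occurs before $u'=0$, so no positive solution of \eqref{gen.IVP2} can exist.

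I expect no serious obstacle: the statement is essentially topological bookkeeping in the phase plane, once one confirms that $\{F=C\}$ for $C<0$ is a single smooth closed loop (immediate from the shape of $f$) and that the flow direction in each open half-plane is dictated by the sign of $u'$.
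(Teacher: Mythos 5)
Your proof is correct and follows essentially the same route as the paper, which simply asserts that the conclusions are "immediate" from the phase-plane picture once one notes that $C=(1-\theta^2)f(z)<0$ places the orbit outside the homoclinic loops; your write-up just supplies the topological bookkeeping (the level set is a single closed orbit through $(\pm b,0)$ and $(0,\pm\sqrt{-2C})$, and the flow direction in each half-plane) that the paper leaves implicit.
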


\begin{figure}[h!]
		\centering
		\includegraphics[height=4cm]{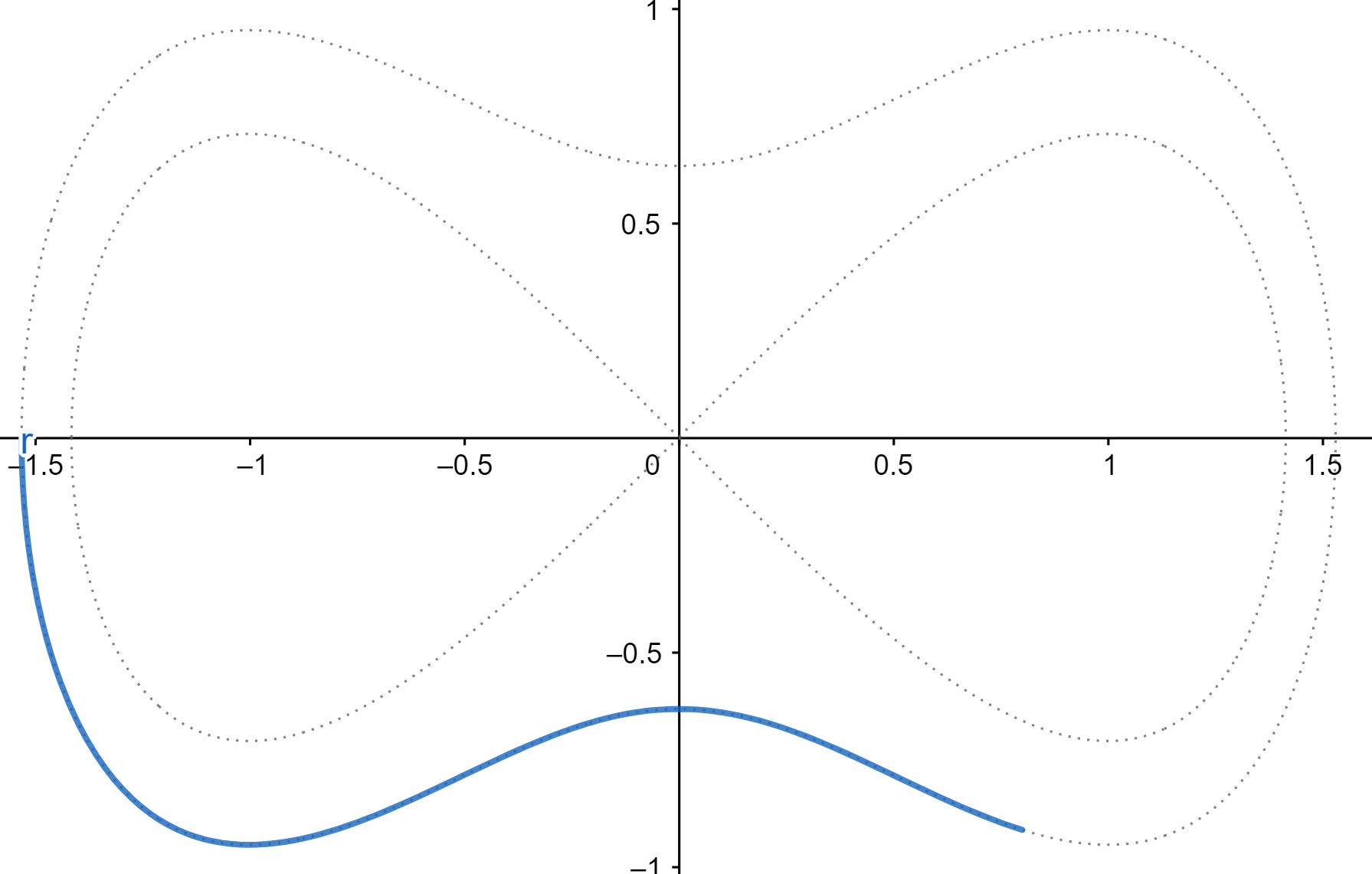}
	\hspace{1cm}
		\includegraphics[height=4cm]{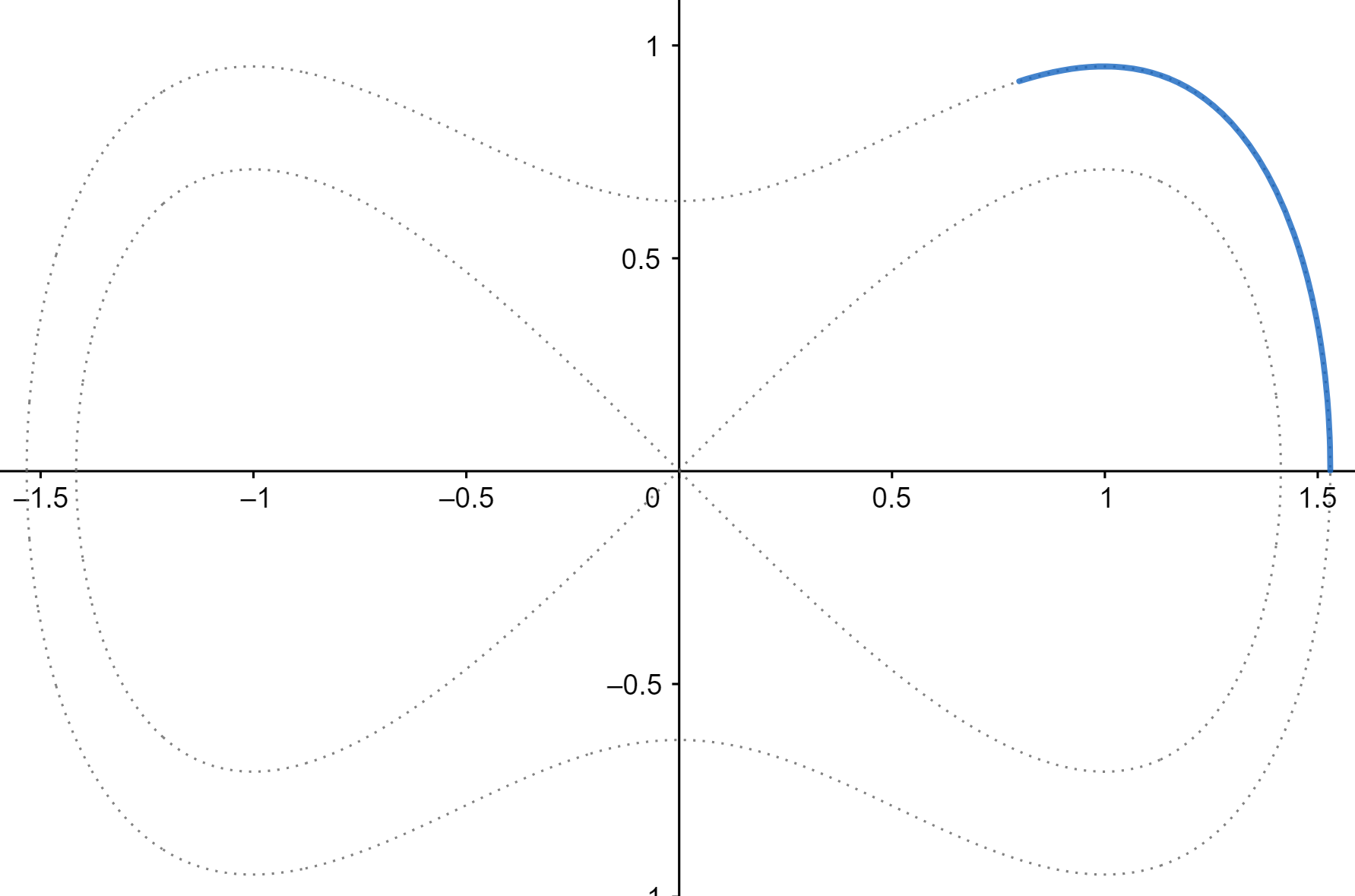}
		\caption{The orbit of a decreasing (on the left) and increasing (on the right) solution of \eqref{gen.IVP2} with $\theta>1$.}\label{fig:Theta_Maior_1PhasePlane}
\end{figure}

\subsection{Classification of increasing solutions of the overdetermined problem \eqref{gen.IVP2}}
To construct increasing solutions of \eqref{gen.IVP2}, let $z\in\left(0,(p/2)^{1/(p-2)}\right)$, we consider once again a solution $u$ of the IVP
\begin{equation}\label{eqn.probu_IVP}
 -u''+u=u^{p-1},\ \text{in}\ \R^+_0,\quad u(0)=z,\quad u'(0)=\theta\sqrt{2f(z)},
 \end{equation}
with $\theta\geq 0$. 

Let $L>0$ be the first positive zero of $u'$. From \eqref{eq:hamil},  we have $(1-\theta^2)f(z)=f(u(L))$. Since $u(L)>u(0)>0$, we have $u(L)=f_2^{-1}\left((1- \theta^2)f(z)\right)$.

Now, we show that there exists a relation between $u(L)$ and $u(0)=z$. Since $u'>0$ in $(0,L)$, we have 
$$ L=\int_0^Ldx=\int_{u(0)}^{u(L)}\frac{dt}{u'(u^{-1}(t))}=\int_{u(0)}^{f_2^{-1}\left((1-\theta^2)f(u(0))\right)}\frac{dt}{u'(u^{-1}(t))}.$$
Using {\eqref{eq:hamil}}, and solving for $u'>0$, we obtain
\begin{equation}\label{eq:L_increasing_aux}
L=\frac{1}{\sqrt{2}}\int_z^{f_2^{-1}\left((1-\theta^2)f(z)\right)}\frac{dt}{\sqrt{f(t)-(1-\theta^2)f(z)}}.
\end{equation}

Observe that this argument is valid for each $z\in\left(0,(p/2)^{1/(p-2)}\right)$ if $\theta>0$. If $\theta=0$, the previous construction works for $z\in(0,1)$, but fails for $z\in\left[1,(p/2)^{1/(p-2)}\right)$, recall Lemma \ref{PHASEPLANETHETA=0}.

\begin{definition}[Length function of increasing solutions of the IVP \eqref{eqn.probu_IVP}]\label{def:2.60}
	For $p>2$ and $\theta>0$, we consider the \textit{length function} of \eqref{eqn.probu_IVP} under  as being $L_{1}:\left(0,(p/2)^{1/(p-2)}\right)\to\R^+$ defined by
	\vspace{-0.3cm}
	$$L_{1}(z)=\frac{1}{\sqrt{2}}\int_z^{f_2^{-1}\left((1-\theta^2)f(z)\right)}\frac{dt}{\sqrt{f(t)-(1-\theta^2)f(z)}}.$$
	If $\theta=0$, the function $L_1$, defined by the same expression, is considered for $z\in(0,1)$.
\end{definition}

The existence of increasing solutions to the overdetermined problem \eqref{gen.IVP2}, with $u'(0)\geq 0$, can now be discussed in terms of the function $L_1$. Indeed, given $\ell>0$, if there exists a solution $z$ to the equation
$ L_1(z)=\ell,$
then a positive increasing solution of \eqref{gen.IVP2} exists. Moreover, if such a solution $z$ is unique, then the increasing solution to \eqref{gen.IVP2} is also unique. Thus, the \textit{length function} allows one not only to discuss the existence but also the uniqueness of increasing 
solutions to \eqref{gen.IVP2}.

\begin{proposition}[Qualitative Behavior of $L_1$]\label{prop:2.61}
	For any $p>2$ and $\theta\geq0$, the function $L_{1}$ is positive, differentiable and satisfies: 
    
    $$
    \lim\limits_{z\to 0^+}L_{1}(z)=+\infty,\quad \begin{cases}
        \lim\limits_{z\to (p/2)^\frac{1}{p-2}}L_{1}(z)=0, & \theta>0\\
          L_1(1):=\lim\limits_{z\to1^-}L_1(z)=\frac{\pi}{\sqrt{p-2}}, & \theta=0
    \end{cases}.
    $$
	Moreover, the dependence on $\theta$ is as follows:\setlist{nolistsep}
	\begin{enumerate}[noitemsep]
		\item If either $\theta=0$ and $z\in(0,1)$, or $0<\theta\leq 1$ and $z\in\left(0,(p/2)^{1/(p-2)}\right)$, then $L_{1}$ is strictly decreasing.
		\item If $\theta>1$, then $L_{1}$ is strictly decreasing in $(0,1)$.
		\item For $\theta\in(1,2]$, $L_{1}$ is strictly decreasing in $\left(1,(p/2)^{1/(p-2)}\right)$ and, if $\theta>2$ is sufficiently large, $L_{1}$ is not monotone in $\left(1,(p/2)^{1/(p-2)}\right)$.		 
	\end{enumerate}
\end{proposition}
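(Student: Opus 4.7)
The plan is to verify each property of $L_1$ in succession: regularity, endpoint asymptotics, and monotonicity. Throughout, it is helpful to note
\[
L_1(z) = \frac{1}{\sqrt 2}\int_z^{g(z)} \frac{du}{\sqrt{f(u) - C(z)}},\qquad C(z) := (1-\theta^2) f(z),\ \ g(z) := f_2^{-1}(C(z)),
\]
and to introduce the substitution $u = g(z) - (g(z)-z)(1-s)^2$, $s\in [0,1]$. Since $g(z) > 1$ strictly in the interior of the domain of $L_1$ and hence $f'(g(z)) < 0$, the expansion $f(u) - C(z) \sim |f'(g(z))|(g(z) - u)$ shows that the endpoint singularity is cancelled by the Jacobian $du/ds \propto (1-s)$. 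Thus $L_1$ becomes an integral over $[0,1]$ of a smooth, strictly positive integrand depending smoothly on $z$, giving at once positivity and differentiability.

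For the endpoint limits, I proceed case by case. As $z \to 0^+$, split at a fixed small $\delta > 0$: the contribution from $[\delta, g(z)]$ stays uniformly bounded, while on $[z, \delta]$ the expansion $f(u) \approx u^2/2$ reduces the contribution to $\sqrt 2 \int_z^\delta du/\sqrt{u^2 - (1-\theta^2) z^2}$ (or $\sqrt{u^2 + (\theta^2 - 1) z^2}$ for $\theta > 1$), evaluated via a hyperbolic substitution to a logarithm that diverges as $z \to 0$. At $z^* := (p/2)^{1/(p-2)}$, both $z$ and $g(z) \to z^*$, and the rescaling $u = z^* - (z^*-z)\tau$ combined with $f(u) \approx -f'(z^*)(z^*-u)$ gives $L_1(z) = O(\sqrt{z^* - z}) \to 0$. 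For $\theta = 0$ and $z \to 1^-$, the orbit shrinks to the center $u = 1$; writing $u = 1 + \epsilon v$ with $\epsilon = 1 - z$ and $f(u) - f(1) = -\tfrac{p-2}{2}\epsilon^2 v^2 + O(\epsilon^3)$, $L_1(z)$ tends to the half-period of $v'' + (p-2) v = 0$, which is $\pi/\sqrt{p-2}$.

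The central point is monotonicity. For $\theta = 0$, $L_1(z) = T(f(z))/2$, where $T(C)$ is the period of the Hamiltonian oscillator $u'' = f'(u)$ at level $-(u')^2/2 + f(u) = C$. I invoke Chicone's period-function criterion applied to $\mathcal{N}(u) := f(u) - \tfrac12 u f'(u) = \tfrac{p-2}{2p}|u|^p$, strictly positive on $\R\setminus\{0\}$: this yields strict monotonicity of $T$, and together with $T(C) \to \infty$ as $C \to 0^+$, $T$ is strictly decreasing on $(0, f(1))$. Since $f$ is strictly increasing on $(0,1)$, $L_1$ is strictly decreasing there. For $\theta \in (0,1]$, I decompose $L_1(z) = T(C(z))/2 - \tau(z)$, where $\tau(z)$ is the time along the orbit from the lower turning point $u_-(C(z))$ up to $z$; differentiating in $z$ and using that $u_-$ and $z$ share the same energy level, together with the positivity of $\mathcal{N}$, gives $L_1'(z) < 0$ on $(0, z^*)$. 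The case $\theta > 1$ on $z \in (0,1)$ is handled analogously, the orbit now being the big closed curve encircling all equilibria.

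The main obstacle is the analysis of $L_1$ on $(1, z^*)$ for $\theta > 1$. For $\theta \in (1, 2]$, direct differentiation reduces the sign of $L_1'(z)$ to a Chicone–Opial-type integral inequality in which the bound $\theta \le 2$ precisely controls the correction from the non-turning endpoint $u = z$. For $\theta$ large, non-monotonicity is a genuine phenomenon: an asymptotic analysis as $\theta \to \infty$, using that $|C| = (\theta^2 - 1)f(z)$ dominates $f(u)$ throughout the relevant portion of the orbit and that $g(z) \sim (p|C|)^{1/p}$, yields
\[
L_1(z; \theta) \sim C(p)\, \theta^{2/p - 1}\, f(z)^{1/p - 1/2},\qquad C(p) > 0,
\]
uniformly for $z$ in compact subsets of $(1, z^*)$. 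Since $p > 2$ forces $1/p - 1/2 < 0$ and $f$ is strictly decreasing on $(1, z^*)$, the right-hand side is strictly increasing in $z$, while $L_1(z; \theta) \to 0$ as $z \to z^*$ for any fixed $\theta$. Consequently $L_1(\cdot; \theta)$ attains an interior maximum in $(1, z^*)$ for all sufficiently large $\theta$, precluding monotonicity and yielding the existence of the threshold $\theta^* > 2$; its sharp value would require further quantitative work.
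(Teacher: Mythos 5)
Your treatment of the regularity, of the three endpoint limits, and of the non-monotonicity for large $\theta$ is sound; in fact the last of these is a genuinely different route from the paper (which shows that $H(1,\theta)=\theta\sqrt{f(1)}>0$ while $H(z,\theta)<0$ for fixed $z>1$ and $\theta$ large, using the sign and asymptotics of $A$), and your asymptotic $L_1(z;\theta)\sim C(p)\,\theta^{2/p-1}f(z)^{1/p-1/2}$ on compact subsets of $\left(1,(p/2)^{1/(p-2)}\right)$, combined with $L_1\to 0$ at the right endpoint, is a clean alternative.

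The monotonicity statements, however --- which are the substance of the proposition --- are not actually proved. First, the criterion you invoke for $\theta=0$ is not a valid period-monotonicity criterion: positivity of $\mathcal N(u)=f(u)-\tfrac12 uf'(u)=\tfrac{p-2}{2p}|u|^p$ is not what the first-order (Loud/Schaaf/Chicone-type) criteria require. The relevant quantity, computed at the center $u=1$, is $\eta(u)=f'(u)^2-2(f(u)-f(1))f''(u)$, and this \emph{changes sign} at $u=1$ (this is exactly Lemma \ref{lemma:2.63}); consequently the simple criteria fail and one is forced to a second-order, convexity-type condition --- the negativity of the function $g$ in Proposition \ref{prop:gNegativa} --- whose proof is itself a substantial computation. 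Second, for $\theta\in(0,1]$ your decomposition $L_1(z)=\tfrac12 T(C(z))-\tau(z)$ produces two competing terms whose signs do not combine for free: e.g.\ for $z>1$ and $\theta\in(0,1)$ one has $C(z)=(1-\theta^2)f(z)$ decreasing in $z$, so $\tfrac12T(C(z))$ is \emph{increasing}, and $L_1'<0$ requires a quantitative comparison with $\tau'(z)$ (which itself depends on $z$ both through the endpoint and through the level $C(z)$). No such comparison is given; the paper does this via the identities \eqref{LDerivativeForm}--\eqref{H_p(z,theta)NotSingular} for $H(z,\theta)$ together with the signs of $A$ and $g$. Third, and most seriously, the case $\theta\in(1,2]$ on $\left(1,(p/2)^{1/(p-2)}\right)$ --- which in the paper occupies Proposition \ref{prop:2.67} and Lemmas \ref{lemma:2.68}--\ref{lemma:2.72}, with a delicate Taylor-expansion argument split according to $p\in(2,3]$ and $p\in(n,n+1]$, and where the threshold $\theta\le 2$ enters in a nontrivial way --- is dispatched in one sentence asserting that "direct differentiation reduces the sign of $L_1'$ to a Chicone--Opial-type integral inequality"; no such inequality is stated, let alone proved. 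These are missing ideas rather than omitted routine details, so the proposal as it stands does not establish items 1--3 of the proposition.
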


\begin{remark}
For $\theta=0$, this result is included in \cite[Proposition 2.16]{AgostinhoCorreiaTavares}, while the particular case $\theta=2$ is included in \cite[Proposition 2.5]{AgostinhoCorreiaTavares}. The proof of the items 1. and 2. with a general $\theta$ case uses the same ideias, and in the proof of this part we will only highlight the differences arising in the treatment of a general $\theta$. The knowledge of the case $\theta=2$ is of key importance to show the first part of 3., while the second part is new.
\end{remark}

The rest of this section is devoted to the proof of Proposition \ref{prop:2.61} for $\theta>0$. The following technical lemma, inspired by \cite{kairzhan2021standing}, collects a few useful identities. 
\begin{lemma}\label{lemma:2.62}
	For $\theta>0$ and $z\in \left(0,(p/2)^{1/(p-2)}\right)$,
	\begin{align}\label{eqn.LDiffForm1}
		\sqrt{2}L_{1}(z)\left((1-\theta^2)\right.&f(z)-\left.f(1)\right)=-2\theta\frac{f(z)-f(1)\sqrt{f(z)}}{f'(z)}\\
		&-\int_z^{f_2^{-1}\left((1-\theta^2)f(z)\right)}\left(3-2\frac{(f(t)-f(1))}{f'(t)^2}f''(t)\right)\sqrt{f(t)-(1-\theta^2)f(z)}dt.\nonumber
	\end{align}
	In particular, $L_1$ is differentiable. Moreover, defining $H(z,\theta):=\sqrt{2}L_1'(z)\left((1-\theta^2)f(z)-f(1)\right)$,
	\begin{align}
&H(z,\theta)=\theta\frac{f(1)}{\sqrt{f(z)}}+(1-\theta^2)f'(z)\int_z^{f_2^{-1}\left((1-\theta^2)f(z)\right)}\frac{A(t)dt}{\sqrt{f(t)-(1-\theta^2)f(z)}}\label{LDerivativeForm}\\
		&=\frac{\theta\left(f(1)-2(1-\theta^2)A(z)f(z)\right)}{\sqrt{f(z)}}-(1-\theta^2)f'(z)\int_z^{f_2^{-1}\left((1-\theta^2)f(z)\right)}\frac{g(t)}{f'(t)^4}\sqrt{f(t)-(1-\theta^2)f(z)}dt,\label{H_p(z,theta)NotSingular}
	\end{align}
	where $A:\R^+\to \R$ is given by $A(t)=\frac{1}{2}-\frac{(f(t)-f(1))f''(t)}{{f'}^2(t)},$ and  $g:\R^+\setminus\{1\}\to\R$ is defined by
	\begin{equation*}
		g(t)=-3f''(t)f'(t)^2+6(f(t)-f(1))f''(t)^2-2(f(t)-f(1))f'''(t)f'(t).
	\end{equation*}
\end{lemma}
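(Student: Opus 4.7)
The plan is to prove the three identities in sequence, each one obtained from the previous by a careful integration by parts that exchanges a factor $1/\sqrt{f(t)-C}$ for $\sqrt{f(t)-C}$ (thus removing the endpoint singularity at $t=f_2^{-1}(C)$). Throughout I write $C=(1-\theta^2)f(z)$.

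First, to obtain \eqref{eqn.LDiffForm1}, I would use the algebraic splitting
\[
\sqrt{f(t)-C}=\frac{f(t)-f(1)}{\sqrt{f(t)-C}}+\frac{f(1)-C}{\sqrt{f(t)-C}},
\]
integrate over $[z,f_2^{-1}(C)]$ and isolate the term proportional to $\sqrt{2}\,L_1(z)$. The ratio $(f(t)-f(1))/\sqrt{f(t)-C}$ is then integrated by parts against the antiderivative $\frac{2(f(t)-f(1))}{f'(t)}\sqrt{f(t)-C}$. A key observation is that, although $f'(1)=0$, the quotient $(f(t)-f(1))/f'(t)$ extends smoothly across $t=1$ (both numerator and denominator vanish to matching orders), so the integration by parts is legitimate whenever $1$ lies in the integration interval. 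The boundary term at $t=f_2^{-1}(C)$ vanishes (because $\sqrt{f(t)-C}=0$ there), while at $t=z$ one has $\sqrt{f(z)-C}=\theta\sqrt{f(z)}$, giving the term $-2\theta(f(z)-f(1))\sqrt{f(z)}/f'(z)$.

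For differentiability, the point is that the representation \eqref{eqn.LDiffForm1} has an integrand in which the coefficient $3-2(f(t)-f(1))f''(t)/f'(t)^2$ is bounded (it extends continuously through $t=1$ by Taylor expansion) and the factor $\sqrt{f(t)-C}$ is $C^1$ in $z$ uniformly on compact subintervals, with a controlled square-root vanishing at $t=f_2^{-1}(C)$. Dominated convergence plus the smooth dependence of the upper limit $f_2^{-1}(C)$ on $z$ give $C^1$-regularity of the RHS, hence of $L_1$. To derive \eqref{LDerivativeForm}, I would differentiate \eqref{eqn.LDiffForm1} in $z$ via Leibniz. The upper-limit contribution vanishes by the same $\sqrt{f(t)-C}=0$ mechanism, while the inner $\partial_z\sqrt{f(t)-C}=-(1-\theta^2)f'(z)/(2\sqrt{f(t)-C})$ resurrects a $1/\sqrt{f(t)-C}$ integral. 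Using $3-2(f(t)-f(1))f''(t)/f'(t)^2=2+2A(t)$, this integral splits as $\sqrt{2}L_1(z)+\int A(t)/\sqrt{f(t)-C}\,dt$; the first piece cancels against the $(1-\theta^2)f'(z)\sqrt{2}L_1(z)$ term from the left-hand side of (1.1) differentiated, and a direct (short) computation of the derivative of $(f(z)-f(1))\sqrt{f(z)}/f'(z)$ combines with the boundary-evaluation term $M(z)\theta\sqrt{f(z)}$ to collapse to $\theta f(1)/\sqrt{f(z)}$.

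Finally, \eqref{H_p(z,theta)NotSingular} is obtained from \eqref{LDerivativeForm} by a second integration by parts on $\int A(t)/\sqrt{f(t)-C}\,dt$, writing $1/\sqrt{f(t)-C}=(2/f'(t))\,\frac{d}{dt}\sqrt{f(t)-C}$. The boundary contribution at $t=z$ yields $-2A(z)\theta\sqrt{f(z)}/f'(z)$, which multiplied by $(1-\theta^2)f'(z)$ gives the extra $-2\theta(1-\theta^2)A(z)f(z)/\sqrt{f(z)}$ inside the first bracket. The integral term reduces to computing $(2A(t)/f'(t))'$; the key algebraic verification is
\[
\left(\frac{2A(t)}{f'(t)}\right)'=\frac{g(t)}{f'(t)^4},
\]
which I would check by direct differentiation of $2A=1-2(f-f(1))f''/(f')^2$ and collecting terms. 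I expect this last algebraic identity to be the main technical obstacle: it is a bookkeeping computation that must reproduce exactly the three terms defining $g$. One must also confirm that $g$ vanishes at $t=1$ to sufficient order (at least $4$) so that $g(t)/f'(t)^4$ is bounded near $t=1$, ensuring the final integral is genuinely singularity-free and the formula gives a legitimate representation of $H(z,\theta)$ valid on the whole admissible range of $z$.
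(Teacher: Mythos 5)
Your proposal is correct and follows essentially the same route as the paper: the same splitting of $(1-\theta^2)f(z)-f(1)$, an integration by parts with antiderivative $\tfrac{2(f(t)-f(1))}{f'(t)}\sqrt{f(t)-C}$ (the paper packages this as the exact differential of $W(x,y)=\tfrac{f(x)-f(1)}{f'(x)}y$ along the level curve, which is the identical computation), then Leibniz differentiation with the cancellation of the $\sqrt{2}L_1(z)(1-\theta^2)f'(z)$ term via $3-2(f-f(1))f''/(f')^2=2+2A$, and finally a second integration by parts using precisely the identity $\bigl(2A/f'\bigr)'=g/(f')^4$, which does hold and whose boundedness near $t=1$ (your stated concern) follows since $2A/f'$ extends smoothly across $t=1$. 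No gaps.
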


\begin{proof}
	Observe that
	\begin{align}
		&\sqrt{2}L_1(z)\left((1-\theta^2)f(z)-f(1)\right)=\int_z^{f_2^{-1}\left((1-\theta^2)f(z)\right)}\frac{(1-\theta^2)f(z)-f(1)}{\sqrt{f(t)-(1-\theta^2)f(z)}}dt \nonumber \\
		=&-\int_z^{f_2^{-1}\left((1-\theta^2)f(z)\right)}\sqrt{f(t)-(1-\theta^2)f(z)}dt+\int_z^{f_2^{-1}\left((1-\theta^2)f(z)\right)}\frac{f(t)-f(1)}{\sqrt{f(t)-(1-\theta^2)f(z)}}dt.\label{eq:aux_lemma:2.62}
	\end{align}
	For fixed $z,\theta$ as in the statement, define $\gamma:[z,f_2^{-1}\left((1-\theta^2)f(z)\right)]\to\R^2$ by $$\gamma(t)=(x(t),y(t))=(t,\sqrt{2}\sqrt{f(t)-(1-\theta^2)f(z)}).$$ This parametrizes the level curve $\Gamma$, $-\frac{1}{2}y^2+f(x)=(1-\theta^2)f(z)$, for $y\geq0$. Consider also $W(x,y)=\frac{f(x)-f(1)}{f'(x)}y$, which is of class $C^1$ in $\R^+\setminus\{1\}\times\R$ and can be extended by continuity to $\R^+\times\R$. Then,
	\begin{align*}
		&W\left(f_2^{-1}\left((1-\theta^2)f(z)\right),0\right)-W\left(z,\theta\sqrt{2}\sqrt{f(z)}\right)=\int_{\Gamma}dW(x,y)=\int_\Gamma\frac{\partial W}{\partial x}(x,y)dx+\frac{\partial W}{\partial y}(x,y)dy\\
		=&\int_\Gamma \left(1-\frac{(f(x)-f(1))}{f'(x)^2}f''(x)\right)ydx+\frac{f(x)-f(1)}{f'(x)}dy+\frac{\sqrt{2}}{2}\int_z^{f_2^{-1}\left((1-\theta^2)f(z)\right)}\frac{f(t)-f(1)}{\sqrt{f(t)-(1-\theta^2)f(z)}}dt.
	\end{align*}
Plugging the latter identity in \eqref{eq:aux_lemma:2.62} and using the fact $W\left(f_2^{-1}\left((1-\theta^2)f(z)\right),0\right)=0$, we are able to obtain \eqref{eqn.LDiffForm1}. In particular, since the integrand function contains no singularities, this shows that $L_1$ is differentiable.
	
	Differentiating both sides of \eqref{eqn.LDiffForm1}, we obtain
	\begin{align*}
		&H(z,\theta)+\sqrt{2}L(z)(1-\theta^2)f'(z)=-2\theta\left(\sqrt{f(z)}+\frac{f(z)-f(1)}{2\sqrt{f(z)}}-\frac{f(z)-f(1)}{f'(z)^2}f''(z)\sqrt{f(z)}\right)\\
		&+\left(3-2\frac{f(z)-f(1)}{f'(z)^2}f''(z)\right)\theta\sqrt{f(z)}\\&+(1-\theta^2)f'(z)\int_{z}^{f_2^{-1}\left((1-\theta^2)f(z)\right)}\left(\frac{3}{2}-\frac{f(t)-f(1)}{{f'}(t)^2}f''(t)\right)\frac{dt}{\sqrt{f(t)-(1-\theta^2)f(z)}}\\
		=&\theta\frac{f(1)}{\sqrt{f(z)}}+(1-\theta^2)f'(z)\int_{z}^{f_2^{-1}\left((1-\theta^2)f(z)\right)}\left(\frac{3}{2}-\frac{f(t)-f(1)}{{f'}(t)^2}f''(t)\right)\frac{dt}{\sqrt{f(t)-(1-\theta^2)f(z)}}.
	\end{align*}
	Passing $\sqrt{2}L_1(z)(1-\theta^2)f'(z)$ to the right hand side, \eqref{LDerivativeForm} follows. 
	
	Finally, integration by parts in \eqref{LDerivativeForm} will yield us \eqref{H_p(z,theta)NotSingular}:
	\begin{align*}
		&H(z,\theta)=\theta\frac{f(1)}{\sqrt{f(z)}}+(1-\theta^2)f'(z)\int_z^{f_2^{-1}\left((1-\theta^2)f(z)\right)}\frac{2A(t)}{f'(t)}\frac{d}{dt}\sqrt{f(t)-(1-\theta^2)f(z)}dt\nonumber\\
		=&\theta\frac{f(1)}{\sqrt{f(z)}}+(1-\theta^2)f'(z)\left[-\frac{2A(z)}{f'(z)}\theta\sqrt{f(z)}-\int_z^{f_2^{-1}\left((1-\theta^2)f(z)\right)}\frac{g(t)}{f'(t)^4}\sqrt{f(t)-(1-\theta^2)f(z)}dt\right]\nonumber\\
		=&\frac{\theta}{\sqrt{f(z)}}\left(f(1)-2(1-\theta^2)A(z)f(z)\right)-(1-\theta^2)f'(z)\int_z^{f_2^{-1}\left((1-\theta^2)f(z)\right)}\frac{g(t)}{f'(t)^4}\sqrt{f(t)-(1-\theta^2)f(z)}dt.\  \qedhere
	\end{align*}
\end{proof}

The following result contains useful properties of the function $A$ defined in the previous lemma.
\begin{lemma}{}\label{lemma:2.63}
	Let $A:{\R^+}\setminus\{1\}\to\R$ be the function defined by
	$$A(t)=\frac{1}{2}-\frac{(f(t)-f(1))f''(t)}{{f'}^2(t)},\ \text{for}\  t\neq1.$$ Then, $A$ has a continuous extension to $\R^+$, which we still denote by $A$, with the following properties: $A(t)>0$ for $t\in(0,1)$, $A(t)<0$ for $t\in(1,+\infty)$ and $A(1)=0$. Moreover, $\lim\limits_{t\to\infty}A(t)=-\frac{p-2}{2p}<0$.
\end{lemma}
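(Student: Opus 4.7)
The plan is to compute the relevant derivatives of $f$ and then dispose of $A$ in three short steps: a Taylor expansion around $t=1$ handles the continuous extension, a leading-order asymptotic analysis at infinity gives the limit, and the sign is determined through an auxiliary function.

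First, I would record the explicit derivatives of $f(x)=\tfrac12 x^{2}-\tfrac1p x^{p}$ on $\R^+$:
\[
f'(x)=x-x^{p-1}=x(1-x^{p-2}),\quad f''(x)=1-(p-1)x^{p-2},\quad f'''(x)=-(p-1)(p-2)x^{p-3}.
\]
These formulas show $f'''<0$ on $\R^+$, that $f'$ vanishes on $\R^+$ only at $t=1$, and that $f''(1)=2-p\neq 0$. Moreover, $f$ is increasing on $(0,1)$ and decreasing on $(1,\infty)$, so $f(t)-f(1)<0$ for every $t\in\R^+\setminus\{1\}$.

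For the continuous extension at $t=1$, Taylor-expanding around $1$ gives
\[
f(t)-f(1)=\tfrac{1}{2}f''(1)(t-1)^{2}+O((t-1)^{3}),\qquad f'(t)=f''(1)(t-1)+O((t-1)^{2}),
\]
so that $(f(t)-f(1))f''(t)/f'(t)^{2}\to f''(1)/(2f''(1))=1/2$ as $t\to 1$, and therefore $A$ extends continuously with $A(1)=\tfrac12-\tfrac12=0$. For the behavior at infinity, I would use the leading-order equivalents $f(t)\sim-t^{p}/p$, $f'(t)\sim-t^{p-1}$, $f''(t)\sim-(p-1)t^{p-2}$ to obtain
\[
\frac{(f(t)-f(1))f''(t)}{f'(t)^{2}}\longrightarrow\frac{p-1}{p}\quad\text{as }t\to\infty,
\]
and hence $\lim_{t\to\infty}A(t)=\tfrac{1}{2}-\tfrac{p-1}{p}=-\tfrac{p-2}{2p}<0$.

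For the sign of $A$, since $f'(t)^{2}>0$ on $\R^+\setminus\{1\}$, the sign of $A(t)$ agrees with that of
\[
G(t):=f'(t)^{2}-2\bigl(f(t)-f(1)\bigr)f''(t).
\]
A direct computation (the two $2f'(t)f''(t)$ terms cancel) yields $G'(t)=-2(f(t)-f(1))f'''(t)$. As both factors $f(t)-f(1)$ and $f'''(t)$ are strictly negative on $\R^+\setminus\{1\}$, this gives $G'<0$ there, so $G$ is strictly decreasing on $\R^+$. Combined with $G(1)=0$, this forces $G>0$ on $(0,1)$ and $G<0$ on $(1,\infty)$, which is the asserted sign of $A$.

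The whole argument is essentially elementary; the only mildly subtle moment is the sign analysis. Differentiating $A$ directly would be unpleasant because of the $1/f'(t)^{2}$ factor, but working with the polynomial-type auxiliary $G$ cleans things up and reduces the proof to the two elementary facts that $f(t)<f(1)$ and $f'''(t)<0$ on $\R^+\setminus\{1\}$.
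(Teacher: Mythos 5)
Your proof is correct and follows essentially the same route as the paper: the auxiliary function $G$ is exactly the paper's $\eta$, with the identical computation $G'(t)=-2(f(t)-f(1))f'''(t)<0$ and the same sign conclusion from $G(1)=0$, and the asymptotics at infinity are handled identically. The only cosmetic difference is that you resolve the singularity at $t=1$ by Taylor expansion where the paper uses L'H\^opital's rule.
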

\begin{proof}
	Firstly, observe that
	\begin{equation*}
		\lim_{t\to1}A(t)=\frac{1}{2}-f''(1)\lim_{t\to1}\frac{f(t)-f(1)}{{f'}^2(t)}=\frac{1}{2}-f''(1)\lim_{t\to1}\frac{f'(t)}{2f'(t)f''(t)}=\frac{1}{2}-\frac{f''(1)}{2f''(1)}=0,
	\end{equation*}
	and thus $A$ admits a continuous extension. To check the sign properties we write $$A(t)=\frac{{f'}^2(t)-2(f(t)-f(1))f''(t)}{2{f'}^2(t)}=:\frac{\eta(t)}{2{f'}^2(t)}.$$
	Since $f'''(z)<0$, a direct computation yields $\eta'(t)=-2(f(t)-f(1))f'''(t)<0$. Moreover, $\eta(1)=0$ and thus, $\eta>0$ in $(0,1)$ and $\eta<0$ in $(1,\infty)$.
	
	Finally, observe that, near infinity, we have $f(t)\sim -\frac{1}{p}t^p$, $f'(t)\sim -t^{p-1}$ and $f''(t)\sim -(p-1)t^{p-2}$. Then,
	\begin{equation*}
		\lim_{t\to\infty}A(t)=\frac{1}{2}-\lim_{t\to\infty}\frac{\left(f(t)-f(1)\right)f''(t)}{{f'(t)^2}}=\frac{1}{2}-\lim_{t\to\infty}\frac{p-1}{p}\frac{t^{2p-2}}{t^{2p-2}}=-\frac{p-2}{2p}<0.\qedhere		
	\end{equation*}
\end{proof}

The qualitative properties of the function $g$ defined in Lemma \ref{lemma:2.62} (particularly, its sign) will play a crucial role in the study of the monotonicity of $L_1$. It is clear, by definition of $g$, that $g(1)=0$. Everywhere else, the sign is determined as follows.
\begin{proposition}\label{prop:gNegativa}
	Let $g:\R^+\to\R$ be defined by the expression in Lemma \ref{lemma:2.62}. Then, $g(t)<0$ for $t\neq1$.
\end{proposition}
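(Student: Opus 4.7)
The plan is to reduce the sign claim to a concavity statement and then verify it via an explicit polynomial reduction. First, starting from $g=\eta'f'-3\eta f''$ (which follows from Lemma \ref{lemma:2.62} together with the identity $\eta'=-2(f-f(1))f'''$), one recognizes
\[
g(t)=f'(t)^4\cdot\frac{d^2}{dt^2}\!\left[\frac{f(t)-f(1)}{f'(t)^2}\right],
\]
since $g=f'^{4}(\eta/f'^{3})'$ and $\eta/f'^{3}=\bigl((f-f(1))/f'^{2}\bigr)'$ by direct computation. Because $f'(t)^4>0$ for $t\ne 1$, the inequality $g(t)<0$ for $t\ne 1$ is equivalent to strict concavity of $M(t):=(f(t)-f(1))/f'(t)^2$ on $(0,\infty)$. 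A L'H\^opital computation shows that $M$ extends smoothly across $t=1$ with $M(1)=-1/(2(p-2))$ and $M'(1)=(p-1)/(3(p-2))$, so $M$ is a well-defined $C^2$ function.

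To establish this strict concavity, I would reduce to a polynomial inequality via the substitution $u:=t^{p-2}$. Using $f'=t(1-u)$, $f''=1-(p-1)u$, $f'''=-(p-1)(p-2)u/t$ and $f-f(1)=[t^{2}(p-2u)-(p-2)]/(2p)$, a direct expansion yields
\[
p\,g(t)=t^{2}\,T(u)+R(u),
\]
with $T,R$ explicit cubic polynomials in $u$. Setting $v:=1-u$, one verifies the algebraic identity
\[
T(u)+R(u)=(p-2)\,v\,\bigl[(p-1)v^{2}-(7p-10)v+6(p-2)\bigr]=:(p-2)\,v\,Q(v),
\]
so that $p\,g(t)=(t^{2}-1)\,T(u)+(p-2)\,v\,Q(v)$. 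The key sign observation is that $t^{2}-1$ and $-v=t^{p-2}-1$ always share signs (both negative on $(0,1)$, both positive on $(1,\infty)$).

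I expect the main obstacle to be the final polynomial sign analysis: although $Q(0)=6(p-2)>0$, one has $Q(1)=-3<0$, and the discriminant $25p^{2}-68p+52>0$ (itself a quadratic in $p$ with negative discriminant $-576$) forces $Q$ to have two distinct positive real roots, one lying inside $(0,1)$; consequently neither summand of $p\,g(t)$ is individually sign-definite. The plan to close this step is to employ a Bernoulli-type inequality comparing $t^{p-2}-1$ with $t^{2}-1$ (the direction depending on whether $p<4$ or $p>4$, with equality at $p=4$) to balance the two summands, thereby reducing $p\,g<0$ to a one-variable polynomial inequality in $v$ that can be checked by elementary analysis. The argument is anchored by three extreme-regime computations: $g(t)\to -3(p-2)/p<0$ as $t\to 0^+$, $g(t)\sim -(p-1)(p-2)t^{3p-4}/p\to -\infty$ as $t\to\infty$, and the fourth-order Taylor expansion
\[
g(t)=-\tfrac{(p-2)^{3}(p-1)(p+2)}{6}(t-1)^{4}+O\!\bigl((t-1)^{5}\bigr)<0\quad\text{near }t=1,
\]
each of which confirms the inequality in the respective limit and guides the bulk algebraic argument.
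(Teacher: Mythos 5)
Your reductions are correct and, as far as they go, quite clean: the identity $g=\eta'f'-3\eta f''=f'^4\bigl[(f-f(1))/f'^2\bigr]''$ checks out, the substitution $u=t^{p-2}$ does give $pg(t)=t^2T(u)+R(u)$ with the $T,R$ you describe, and I verified the factorization $T(u)+R(u)=(p-2)v\bigl[(p-1)v^2-(7p-10)v+6(p-2)\bigr]$ coefficient by coefficient, as well as the limits at $0$, $\infty$ and the fourth-order vanishing at $t=1$. Note that the paper itself does not prove this proposition: it simply invokes Claim 2.18 and Lemmas 2.20--2.23 of \cite{AgostinhoCorreiaTavares}, so any self-contained argument here would be genuinely different from (and arguably more informative than) what the paper offers.

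However, there is a real gap: the decisive step is never carried out. Having written $pg(t)=(t^2-1)T(u)+(p-2)vQ(v)$, you correctly observe that \emph{neither summand is sign-definite} -- for instance, for $t$ slightly above $1$ one has $(t^2-1)T(u)>0$ (since $T(1)=3(p-2)^3>0$) while $(p-2)vQ(v)<0$, and on $(0,1)$ the factor $Q(v)$ changes sign because $Q$ has a root in $(0,1)$. Everything therefore hinges on the balancing of the two terms, uniformly in $t\in(0,1)\cup(1,\infty)$ \emph{and} in $p>2$, and this is exactly the part you defer to a ``Bernoulli-type inequality comparing $t^{p-2}-1$ with $t^2-1$'' whose direction flips at $p=4$ and whose conclusion you describe only as a plan. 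Since $T(u)$ itself is a cubic in $u=t^{p-2}$ whose sign is not controlled, it is not clear that such a comparison reduces the problem to a one-variable polynomial inequality in $v$ alone; the coupling between $t$ and $v$ through the exponent $p$ survives the substitution. The three extreme-regime computations confirm the inequality only near $t=0$, $t=1$ and $t=\infty$ and do not constrain the bulk. As it stands, the proposal is a promising reduction plus a sketch of the hard step, not a proof; the sign analysis of $(t^2-1)T(u)+(p-2)vQ(v)$ (the analogue of the several lemmas in \cite{AgostinhoCorreiaTavares} that the paper cites) still needs to be supplied.
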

\begin{proof}
This corresponds to Claim 2.18 in \cite{AgostinhoCorreiaTavares}, whose proof follows from Lemmas 2.20-2.23 therein.
\end{proof}

\subsubsection{Monotonicity analysis of $L_1$: the case $\theta\in(0,1]$.} 
\begin{lemma}\label{lemma:2.64}
	If $0<\theta\leq 1$ and $z\in\left(0,(p/2)^{1/(p-2)}\right)$, then $z\mapsto L_1(z)$ is strictly decreasing.
\end{lemma}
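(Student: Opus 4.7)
My plan is to work from the identity $H(z,\theta)=\sqrt{2}L_1'(z)\bigl((1-\theta^2)f(z)-f(1)\bigr)$ established in Lemma \ref{lemma:2.62}. Since $f(1)=\max f$, for $\theta\in(0,1)$ one has $(1-\theta^2)f(z)\le(1-\theta^2)f(1)<f(1)$ (the last strict inequality using $\theta>0$), and for $\theta=1$ the left-hand side is simply $-f(1)<0$. Thus in all cases the factor multiplying $L_1'(z)$ is strictly negative, so the task reduces to showing $H(z,\theta)>0$ throughout $(0,(p/2)^{1/(p-2)})$. I would split the analysis at $z=1$, since on each side a different representation of $H$ has a transparent sign.

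For $z\ge 1$, I would apply formula \eqref{LDerivativeForm}. The first term $\theta f(1)/\sqrt{f(z)}$ is strictly positive. The integration range lies inside $[1,\infty)$ because $f_2^{-1}$ takes values in $[1,\infty)$, and it is nonempty when $z>1$ since $(1-\theta^2)f(z)<f(z)$ and $f_2^{-1}$ is decreasing. By Lemma \ref{lemma:2.63}, $A(t)\le 0$ on this window, so the integral is $\le 0$; combining with $f'(z)\le 0$ on $[1,\infty)$ and $1-\theta^2\ge 0$ shows the second term in \eqref{LDerivativeForm} is $\ge 0$. At $z=1$ the second term vanishes because $f'(1)=0$, leaving $H(1,\theta)=\theta\sqrt{f(1)}>0$; for $z>1$ and $\theta\in(0,1)$ the strict negativity of $A$ on $(1,\infty)$ and of $f'(z)$ makes the second term strictly positive, while for $\theta=1$ it vanishes and the first term suffices.

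For $z\in(0,1)$, I would apply formula \eqref{H_p(z,theta)NotSingular}. Since $f'(z)>0$ and, by Proposition \ref{prop:gNegativa}, $g(t)/f'(t)^4\le 0$ (vanishing only at $t=1$), the second term is nonnegative. The first term requires more care: I would prove the stronger inequality $f(1)-2A(z)f(z)>0$ on $(0,1)$, which, together with $A(z)>0$ on $(0,1)$ (Lemma \ref{lemma:2.63}), yields
\[
f(1)-2(1-\theta^2)A(z)f(z)=\bigl(f(1)-2A(z)f(z)\bigr)+2\theta^2 A(z)f(z)>0.
\]
The key algebraic step is the identity
\[
f(1)-2A(z)f(z)=\bigl(f(1)-f(z)\bigr)\,\frac{f'(z)^2-2f(z)f''(z)}{f'(z)^2},
\]
coming directly from the definition of $A$, combined with the explicit factorization
\[
f'(z)^2-2f(z)f''(z)=\frac{p-2}{p}\,z^p\,\bigl((p-1)-z^{p-2}\bigr),
\]
obtained by expanding $f(z)=z^2/2-z^p/p$. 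On $(0,1)$ one has $z^{p-2}<1<p-1$, so this expression is strictly positive, and together with $f(1)-f(z)>0$ it yields $f(1)-2A(z)f(z)>0$. Hence $H(z,\theta)>0$ on $(0,1)$ as well. The main obstacle is selecting the correct representation of $H$ on each side of $z=1$ and realising that the positivity of the first term of \eqref{H_p(z,theta)NotSingular} reduces ultimately to the elementary polynomial inequality $z^{p-2}<p-1$ on $(0,1)$.
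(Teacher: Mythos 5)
Your proof is correct and follows essentially the same route as the paper: reduce to showing $H(z,\theta)>0$, use \eqref{LDerivativeForm} with the sign of $A$ for $z\ge 1$, and use \eqref{H_p(z,theta)NotSingular} with the sign of $g$ plus the positivity of $f(1)-2(1-\theta^2)A(z)f(z)$ for $z<1$. The only cosmetic difference is that you establish the latter via the direct decomposition $\bigl(f(1)-2A(z)f(z)\bigr)+2\theta^2A(z)f(z)$, whereas the paper differentiates in $\theta$ and checks $\theta=0$; both rest on the same factorization $f'(z)^2-2f(z)f''(z)=\frac{p-2}{p}z^p\bigl((p-1)-z^{p-2}\bigr)$.
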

\begin{proof}
	Recall the definition of $H$ in Lemma \ref{lemma:2.62}. Given that $\theta\leq1$ and $f(1)=\max_{\R^+}f$, we have $((1-\theta^2)f(z)-f(1))<0,\ \text{for all}\ z\in\left(0,\left(p/2\right)^{1/(p-2)}\right).$ Then, it suffices to show that, for fixed $\theta\leq 1$, we have $z\mapsto H(z,\theta)>0$. The case $\theta=1$ is straightforward, since
	$H(z,1)=\frac{f(1)}{\sqrt{f(z)}}>0$.
	
	We split our analysis in the cases $0<z<1$ and $z\in[1,(p/2)^{1/(p-2)})$. We start with the latter. Then we have $(1-\theta^2)f'(z)\leq0$ and, moreover, from Lemma \ref{lemma:2.63}, we have $A\leq0$. Therefore, using \eqref{LDerivativeForm}, $H(z,\theta)\geq \theta f(1)/\sqrt{f(z)}>0$.
	
	Assume now that $0<\theta<1$ and $0<z<1$, and consider the expression of $H(z,\theta)$ given by \eqref{H_p(z,theta)NotSingular}. Since we are in the case where $f'(z),\ (1-\theta^2)>0$, we have
	\begin{equation*}
		-(1-\theta^2)f'(z)\int_z^{f_2^{-1}\left((1-\theta^2)f(z)\right)}\frac{g(t)}{f'(t)^4}\sqrt{f(t)-(1-\theta^2)f(z)}dt>0.
	\end{equation*}
	We claim that $\psi(z,\theta):=f(1)-2(1-\theta^2)A(z)f(z)>0$. From Lemma \ref{lemma:2.63}, it follows that
	$$\frac{\partial}{\partial\theta}\psi(z,\theta)=4\theta A(z)f(z)>0,\ \text{for all}\ (z,\theta)\in(0,1)^2.$$
	To finish, we show that $\psi(z,0)=f(1)-2A(z)f(z)>0$ for any $z\in(0,1)$. Indeed, using the explicit form of $f$, $f'$ and $f''$,
	\begin{align*}
		\psi(z,0)&=f(1)-2\left(\frac{1}{2}-\frac{(f(z)-f(1))f''(z)}{{f'}^2(z)}\right)f(z)=-(f(z)-f(1))+2(f(z)-f(1))\frac{f''(z)f(z)}{{f'}^2(z)}\\
		&=\frac{(f(z)-f(1))}{{f'}^2(z)}\left(-{f'}^2(z)+2f(z)f''(z)\right)
		=\frac{(f(z)-f(1))}{{f'}^2(z)}\frac{(p-2)}{p}z^p\left[-(p-1)+z^{p-2}\right].
	\end{align*}
	Since
	$$\frac{(f(z)-f(1))}{{f'}^2(z)}\frac{(p-2)}{p}z^p<0,$$
	and $-(p-1)+z^{p-2}\leq -(p-2)<0$,
	it follows that $\psi(z,0)>0$, which finishes the proof.
\end{proof}
\subsubsection{Monotonicity analysis of $L_1$: the case $\theta>1$.}

\begin{lemma}\label{lemma:2.65}
	Suppose that $\theta>1$. Then, $z\mapsto L_1(z)$ is strictly decreasing in the interval $(0,1)$.
\end{lemma}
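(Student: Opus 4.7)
The plan is to prove this by analyzing the sign of $H(z,\theta) := \sqrt{2}L_1'(z)\bigl[(1-\theta^2)f(z) - f(1)\bigr]$ using the formulas derived in Lemma \ref{lemma:2.62}. Since for $\theta > 1$ and $z \in (0,1)$ we have $(1-\theta^2)f(z) < 0 < f(1)$, so $(1-\theta^2)f(z) - f(1) < 0$, showing $H(z,\theta) > 0$ on this region is equivalent to $L_1'(z) < 0$, which is the desired strict monotonicity.

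I would start from formula \eqref{H_p(z,theta)NotSingular}, writing $H(z,\theta) = H_1 + H_2$ with
\[
H_1 = \frac{\theta\bigl(f(1) + 2(\theta^2-1)A(z)f(z)\bigr)}{\sqrt{f(z)}}, \qquad
H_2 = (\theta^2-1)f'(z) \int_z^{M} \frac{g(t)}{f'(t)^4}\sqrt{f(t)-(1-\theta^2)f(z)}\,dt,
\]
where $M = f_2^{-1}((1-\theta^2)f(z))$. For $z \in (0,1)$, Lemma \ref{lemma:2.63} gives $A(z) > 0$, and $f'(z) > 0$; combined with $\theta^2 - 1 > 0$ and $f(1), f(z) > 0$, this shows $H_1 > 0$. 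On the other hand, Proposition \ref{prop:gNegativa} ensures $g(t) \leq 0$, so the integrand in $H_2$ is nonpositive and hence $H_2 \leq 0$.

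The main obstacle (in contrast to Lemma \ref{lemma:2.64}, case $z \in (0,1)$, where the second term had favorable sign) is that $H_2$ now has the opposite sign of $H_1$, so termwise positivity is unavailable and one must quantitatively control the two competing contributions. To bound $|H_2|$, I would exploit the fact that $B(t) := A(t)/f'(t)$ extends continuously to $t = 1$ (by the expansion used in Lemma \ref{lemma:2.62}) and satisfies $(2B)' = g/f'^4 \leq 0$, i.e., $B$ is nonincreasing on $\R^+$. This allows an alternative integration-by-parts decomposition of $\int_z^M A/\sqrt{f(t)-C}\,dt$ in the equivalent representation \eqref{LDerivativeForm}, together with the elementary upper bound $\sqrt{f(t)-C} \leq \sqrt{f(1)+(\theta^2-1)f(z)}$ coming from $f(t) \leq f(1)$ on $[z,M]$.

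The hardest step will be to verify the resulting quantitative inequality uniformly in $z \in (0,1)$ and $\theta > 1$: in the asymptotic regimes $z \to 0^+$ and $\theta \to \infty$, both $H_1$ and $|H_2|$ grow at comparable algebraic rates (a term-by-term expansion near $z = 0$ yields $H_1 \sim \sqrt{2}\theta^3 f(1)/z$ while $|H_2| \sim 3(\theta^2-1)f(1)/(\sqrt{2}z)$, giving $H \sim f(1)\bigl(2\theta^3 - 3\theta^2 + 3\bigr)/(z\sqrt{2})$, which is strictly positive for $\theta \geq 1$ since the cubic $2\theta^3 - 3\theta^2 + 3$ is increasing with value $2$ at $\theta=1$). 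A backup strategy, should the direct bound prove too crude, is to split the integral in \eqref{LDerivativeForm} at $t = 1$: on $[1,M]$ one has $A \leq 0$ and the contribution to $H$ is actually nonnegative (matching the argument used for $z \in [1,(p/2)^{1/(p-2)})$ in Lemma \ref{lemma:2.64}), so the obstruction is localized to $\int_z^1 A(t)/\sqrt{f(t)-C}\,dt$, which can be estimated using $\sqrt{f(t)-C} \geq \theta\sqrt{f(z)}$ and a bound on $\int_z^1 A$; combining with the structural lower bound $H_1 \geq \theta f(1)/\sqrt{f(z)} + 2(\theta^2-1)\theta A(z)\sqrt{f(z)}$ should close the estimate.
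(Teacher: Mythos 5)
Your setup is sound and your route is genuinely different from the paper's: you work with the representation of $H(z,\theta)=\sqrt{2}\,L_1'(z)\left((1-\theta^2)f(z)-f(1)\right)$ from Lemma \ref{lemma:2.62}, whereas the paper never invokes $H$ for this lemma. Instead, it splits $L_1(z)$ itself at $t=1$, differentiates the piece $\int_z^1$ directly (both resulting terms are manifestly negative since $f'(z)>0$ and $1-\theta^2<0$), and handles the piece $\int_1^{f_2^{-1}((1-\theta^2)f(z))}$ via the substitution $t=1+sI(z,\theta)$ and a convexity argument ($k_1''>0$, $k_1'(0)=0$, $k_1(1)=0$) showing the transformed integrand is decreasing in $z$. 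The gap in your proposal is that the decisive step --- the quantitative comparison between $H_1>0$ and $H_2\le 0$ --- is never actually carried out: the asymptotics at $z\to0^+$ cover only one regime and cannot substitute for a bound uniform in $z\in(0,1)$ and $\theta>1$, and the control of $|H_2|$ via the monotonicity of $B=A/f'$ is not developed far enough to tell whether it closes. As written, the hardest inequality is asserted, not proved.

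That said, your backup strategy does close, and cleanly, once one missing computation is supplied. Starting from \eqref{LDerivativeForm} and splitting at $t=1$: on $[1,f_2^{-1}((1-\theta^2)f(z))]$ one has $A\le0$ and $(1-\theta^2)f'(z)<0$, so that portion contributes nonnegatively to $H$; on $[z,1]$, the monotonicity of $f$ gives $\sqrt{f(t)-(1-\theta^2)f(z)}\ge\theta\sqrt{f(z)}$ as you say, and --- this is the bound on $\int_z^1 A$ you left open --- $A$ has the explicit antiderivative $\frac{f(t)-f(1)}{f'(t)}-\frac{t}{2}$, whence
\[
f'(z)\int_z^1A(t)\,dt=f(1)-f(z)-\frac{(1-z)f'(z)}{2}<f(1).
\]
Therefore the negative contribution to $H$ is bounded in absolute value by $\frac{(\theta^2-1)f(1)}{\theta\sqrt{f(z)}}<\frac{\theta f(1)}{\sqrt{f(z)}}$, which is strictly dominated by the first term of \eqref{LDerivativeForm}; hence $H(z,\theta)>0$ and, since $(1-\theta^2)f(z)-f(1)<0$ on $(0,1)$, $L_1'(z)<0$. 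If you make this explicit, your argument is complete and arguably shorter than the paper's; as submitted, it is a plan rather than a proof.
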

\begin{proof}
This proof is similar to the proof of \cite[Lemma 2.8]{AgostinhoCorreiaTavares}, which deals with $\theta=2$. We include here some computations for convenience of the reader. 	Fix $\theta>1$ and let $z\in(0,1)$. We write
	\begin{align*}
		L_1(z)&=\frac{1}{\sqrt{2}}\int_z^1\frac{dt}{\sqrt{f(t)-(1-\theta^2)f(z)}} +\frac{1}{\sqrt{2}}\int_1^{f_2^{-1}\left((1-\theta^2)f(z)\right)}\frac{dt}{\sqrt{f(t)-(1-\theta^2)f(z)}}\\
		&=\frac{1}{\sqrt{2}}\int_z^1\frac{dt}{\sqrt{f(t)-(1-\theta^2)f(z)}}+\int_0^1K_1(z,s,\theta)ds.
	\end{align*}
	for
	\[
	K_1(z,s,\theta):=\frac{I(z,\theta)ds}{\sqrt{f(1+I(z,\theta)s)-(1-\theta^2)f(z)}},
	\]where we used the change of variable $t=sf^{-1}_2\left((1-\theta^2)f(z)\right)+(1-s)=:1+sI(z,\theta)$ in the second integral.
	The derivative of the first integral is
	\begin{equation*}
		-\frac{1}{\theta\sqrt{2f(z)}}+\frac{(1-\theta^2)f'(z)}{2\sqrt{2}}\int_z^1\frac{dt}{(f(t)-(1-\theta^2)f(z))^{\frac{3}{2}}}<0.
	\end{equation*}
We now show that the second integral  is decreasing with $z$. 	Using the fact that $\frac{\partial I}{\partial z}(z,\theta)=\frac{(1-\theta^2)f'(z)}{f'(1+I(z,\theta))}$,
	\begin{multline*}
		\frac{\partial K_1}{\partial z}(z,s,\theta)
		=\frac{(1-\theta^2)f'(z)}{2f'(1+I(z,\theta))\left(f(1+I(z,\theta)s)-(1-\theta^2)f(z)\right)^\frac{3}{2}}\left[2\left(f(1+I(z,\theta)s)-(1-\theta^2)f(z)\right)\right.\\
		\qquad\left.-I(z,\theta)\left(f'(1+I(z,\theta)s)s-f'(1+I(z,\theta))\right)\right].
	\end{multline*}
	For $(z,\theta)\in(0,1)\times(1,\infty)$, the term outside brackets is positive. Then, it is enough to show that	
	$$k_1(s)=2\left(f(1+I(z,\theta)s)-(1-\theta^2)f(z)\right)-I(z,\theta)\left(f'(1+I(z,\theta)s)s-f'(1+I(z,\theta))\right)$$
	is negative for fixed $(z,\theta)\in(0,1)\times(1,\infty)$ and $s\in(0,1)$. For all $s\in(0,1)$,
	\[k_1'(s)=f'(1+I(z,\theta)s)I(z,\theta)-I(z,\theta)^2f''(1+I(z,\theta)s)s,\quad k_1''(s)=-I(z,\theta)^3f'''(1+I(z,\theta)s)s>0.\]
	Since $k_1'$ is increasing and $k_1'(0)=0$, we have $k_1'>0$ and thus $k_1$ is also increasing. Together with $k_1(1)=0$, we conclude that $k_1$ is negative in $(0,1)$. 
\end{proof}

In the next results, we analyze the monotonicity of $L_1$ in the interval $\left(1,(p/2)^{1/(p-2)}\right)$ for values $\theta>1$.

\begin{lemma}\label{lemma:2.66}
	There exists $\theta^*>1
	$ such that, for $\theta>\theta^*$, the function $z\mapsto L_1'(z)$ changes sign in the interval $(1,(p/2)^{1/(p-2)})$. In particular, $L_1$ is not monotone in $(1,(p/2)^{1/(p-2)})$.
\end{lemma}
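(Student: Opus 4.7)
The plan is to show that, for $\theta$ sufficiently large, $L_1$ admits an interior maximum on $(1,(p/2)^{1/(p-2)})$, which by the mean value theorem forces $L_1'$ to take both signs on this interval. The key ingredient is a leading-order asymptotic for $L_1(z)$ as $\theta\to\infty$ at each fixed $z\in(1,(p/2)^{1/(p-2)})$. Starting from the defining integral and using the identity $f(M)=(1-\theta^2)f(z)$ to factor
\[
f(t)-(1-\theta^2)f(z) \;=\; \frac{1}{p}(M^p-t^p) - \frac{1}{2}(M^2-t^2),
\]
I would perform the substitution $t=M\tau$, which yields
\[
L_1(z) = \frac{\sqrt{p}\,M^{1-p/2}}{\sqrt{2}} \int_{z/M}^{1} \frac{d\tau}{\sqrt{1-\tau^p - \epsilon(1-\tau^2)}}, \qquad \epsilon := \frac{p}{2}M^{2-p}.
\]
Since $M\to\infty$ as $\theta\to\infty$ and $p>2$, both $z/M$ and $\epsilon$ tend to zero. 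A dominated convergence argument then produces
\[
L_1(z) \sim K_p\, \theta^{(2-p)/p}\, f(z)^{(2-p)/(2p)} \qquad \text{as } \theta\to\infty,
\]
with $K_p := \frac{\sqrt{p}}{\sqrt{2}} p^{(2-p)/(2p)} \int_0^1 d\tau/\sqrt{1-\tau^p}$ a positive constant depending only on $p$.

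Once this asymptotic is in hand, non-monotonicity follows by comparing two interior points. Fix $z_1<z_2$ in $(1,(p/2)^{1/(p-2)})$. Since $f$ is strictly decreasing on this interval and the exponent $(2-p)/(2p)<0$, the asymptotic gives
\[
\frac{L_1(z_2)}{L_1(z_1)} \longrightarrow \left(\frac{f(z_2)}{f(z_1)}\right)^{(2-p)/(2p)} > 1,
\]
so $L_1(z_2)>L_1(z_1)$ for all $\theta$ sufficiently large. On the other hand, Proposition \ref{prop:2.61} yields $L_1(z)\to 0$ as $z\to (p/2)^{1/(p-2)}$, so (for each such $\theta$) one can choose $z_3\in(z_2,(p/2)^{1/(p-2)})$ with $L_1(z_3)<L_1(z_2)$. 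This produces an interior maximum $L_1(z_1)<L_1(z_2)>L_1(z_3)$, and the mean value theorem then exhibits points in $(z_1,z_2)$ where $L_1'>0$ and in $(z_2,z_3)$ where $L_1'<0$, establishing the claimed sign change.

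The main technical difficulty is justifying the dominated convergence uniformly in $\epsilon$ near the square-root singularity at $\tau=1$. I would handle this by splitting the integration at $\tau=1-\delta$ for small $\delta>0$: on $[z/M,1-\delta]$ the integrand is uniformly bounded and converges pointwise to $(1-\tau^p)^{-1/2}$; on $[1-\delta,1]$, using the factorization $1-\tau^p=(1-\tau)\sum_{k=0}^{p-1}\tau^k$, one chooses $\delta$ small enough that $\epsilon(1-\tau^2) \le \frac{1}{2}(1-\tau^p)$ throughout, providing a uniform integrable majorant proportional to $(1-\tau)^{-1/2}$.
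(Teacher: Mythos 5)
Your proof is correct, but it follows a genuinely different route from the paper's. The paper never touches the function values $L_1(z)$ directly: it works with the derivative through the quantity $H(z,\theta)=\sqrt{2}L_1'(z)\left((1-\theta^2)f(z)-f(1)\right)$ from Lemma \ref{lemma:2.62}, observes that $H(1,\theta)=\theta\sqrt{f(1)}>0$, and then shows that for each fixed $z>1$ one has $H(z,\theta)<0$ for $\theta$ large, by bounding the integrand in \eqref{LDerivativeForm} from below by $1/(\theta\sqrt{f(z)})$ and using that $\int_z^{f_2^{-1}((1-\theta^2)f(z))}A(t)\,dt\to-\infty$ (Lemma \ref{lemma:2.63}). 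You instead extract the leading-order behaviour of $L_1(z)$ itself as $\theta\to\infty$, namely $L_1(z)\sim K_p\,\theta^{(2-p)/p}f(z)^{(2-p)/(2p)}$, compare values at two interior points, and close with the boundary limit $L_1\to 0$ from Proposition \ref{prop:2.61} plus the mean value theorem. Both arguments are sound; the paper's is shorter given the machinery of Lemmas \ref{lemma:2.62}--\ref{lemma:2.63} already in place, whereas yours is self-contained at the level of the defining integral and yields strictly more information (an explicit asymptotic profile showing $L_1$ becomes increasing on any compact subinterval of $\left(1,(p/2)^{1/(p-2)}\right)$ as $\theta\to\infty$, which quantifies the failure of monotonicity). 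One cosmetic repair: the factorization $1-\tau^p=(1-\tau)\sum_{k=0}^{p-1}\tau^k$ presumes $p\in\N$; for general $p>2$ use instead $1-\tau^p\ge 1-\tau^2$ on $[0,1]$, so that $1-\tau^p-\epsilon(1-\tau^2)\ge(1-\epsilon)(1-\tau^p)$ and the majorant $\sqrt{2}\,(1-\tau^p)^{-1/2}$ is available on the whole interval once $\epsilon\le 1/2$, which also removes the need to split at $1-\delta$.
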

\begin{proof}
	By direct inspection of \eqref{LDerivativeForm}, one observes that $H(1,\theta)=\theta\sqrt{f(1)}>0$. 
	On the other hand, since  $z>1$, we know that $f$ is decreasing in $[z,f_2^{-1}((1-\theta^2)f(z))]$ and thus, for all $t\in[z,f_2^{-1}((1-\theta^2)f(z))]$, 
	\[\frac{1}{\sqrt{f(t)-(1-\theta^2)f(z)}}=\frac{1}{\sqrt{f(t)-f(z)+\theta^2f(z)}}>\frac{1}{\theta\sqrt{f(z)}}.\]
	Given that $(1-\theta^2)f'(z)>0$ and, by Lemma \ref{lemma:2.63}, $A(t)<0$ for all $t\in[z,f_2^{-1}((1-\theta^2)f(z))]$, we get	
	\[H(z,\theta)<\theta\frac{f(1)}{\sqrt{f(z)}}+\frac{(1-\theta^2)}{\theta\sqrt{f(z)}}f'(z)\int_z^{f_2^{-1}((1-\theta^2)f(z))}A(t)dt.\]
	Therefore, for any fixed $z\in\left(1,(p/2)^{1/(p-2)}\right)$, we have, due to asymptotic behavior of $A$ in Lemma \ref{lemma:2.63}, that
	\[\limsup_{\theta\to+\infty}\frac{H(z,\theta)\sqrt{f(z)}}{\theta}\leq f(1)+\limsup_{\theta\to+\infty}\left(\frac{1}{\theta^2}-1\right)f'(z)\int_z^{f_2^{-1}((1-\theta^2)f(z))}A(t)dt=f(1)-f'(z)(-\infty)=-\infty.\]
	This implies that $H(z,\theta)<0$ for large values of $\theta$ and the conclusion follows.
\end{proof}

The following result will help us establish a range of $\theta$ for which $L_1$ will be increasing.

\begin{proposition}[Sufficient condition for $L_1$ to be decreasing]\label{prop:2.67}
	Let $p>2$. Consider $\psi:\left(\R^+\right)^2\times(1,\infty)\to\R$ to be defined by $$\psi(z,x,\theta)=-f'(x)^2+(1-\theta^2)f'(z)\left(f'(x)+(z-x)f''(x)\right).$$ Then, for $\theta\in(1,2]$, $\psi(z,x,\theta)\leq0$ for all $1\leq z\leq x$.
\end{proposition}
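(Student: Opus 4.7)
My plan is to exploit the fact that $\psi$ depends on $\theta$ only through $\theta^2$, and is in fact \emph{affine} in $\theta^2$:
\[
\psi(z,x,\theta) = -f'(x)^2 + (1-\theta^2)\, f'(z)\bigl[f'(x) + (z-x) f''(x)\bigr].
\]
Consequently, for any $\theta \in [1,2]$, setting $\lambda = (\theta^2-1)/3 \in [0,1]$, we can write $\theta^2 = (1-\lambda)\cdot 1 + \lambda\cdot 4$, and affinity yields
\[
\psi(z,x,\theta) = (1-\lambda)\,\psi(z,x,1) + \lambda\,\psi(z,x,2).
\]
Thus the whole statement reduces to verifying the two boundary inequalities $\psi(z,x,1) \leq 0$ and $\psi(z,x,2) \leq 0$ for all $1 \leq z \leq x$.

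The lower endpoint is trivial: at $\theta=1$ the factor $(1-\theta^2)$ vanishes, so $\psi(z,x,1) = -f'(x)^2 \leq 0$. For the upper endpoint, the required inequality
\[
\psi(z,x,2) = -f'(x)^2 - 3\, f'(z)\bigl[f'(x) + (z-x) f''(x)\bigr] \leq 0 \qquad (1\leq z\leq x)
\]
is precisely the content of \cite[Proposition 2.5]{AgostinhoCorreiaTavares}: the $\mathcal{T}$-graph studied there corresponds to bound-states with incidence index $\theta=2$, and the monotonicity of $L_1$ established there hinges on exactly this pointwise estimate. Thus it can be quoted directly, and combining the two boundary cases through the convex combination above yields $\psi(z,x,\theta) \leq 0$ for every $\theta \in [1,2]$ and every $1 \leq z \leq x$.

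The only non-routine piece is the $\theta=2$ case, which is already handled in our previous work. What makes the extension to the whole interval $(1,2]$ essentially free is the affinity in $\theta^2$: without it, one would have to redo the delicate computations of \cite[Proposition 2.5]{AgostinhoCorreiaTavares} for each new value of $\theta$, whereas the convex-combination viewpoint turns a one-parameter family of inequalities into an interpolation between two specific cases. The only point to double-check carefully is that the formula for $\psi(z,x,2)$ above genuinely matches the quantity handled in the previous paper; this is a direct comparison using the explicit expressions $f'(x)=x-x^{p-1}$ and $f''(x)=1-(p-1)x^{p-2}$, so the argument can proceed essentially without further computation.
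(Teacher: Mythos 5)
Your argument is correct and is genuinely different from (and shorter than) the paper's. The paper proves Proposition \ref{prop:2.67} by Taylor-expanding $x\mapsto\psi(z,x,\theta)$ about $x=z$ and controlling the signs of all the derivatives $\frac{\partial^k\psi}{\partial x^k}(z,z,\theta)$ together with a remainder term (Lemmas \ref{lemma:2.69}--\ref{lemma:2.72}, split according to $p\in(2,3]$ versus $p\in(n,n+1]$); each of those coefficient estimates is itself reduced to $\theta=2$ by a monotonicity-in-$\theta$ argument and a citation of the earlier work. You instead observe that $\psi$ is affine in $\theta^2$, so that on $\theta^2\in[1,4]$ it is the convex combination $(1-\lambda)\psi(z,x,1)+\lambda\psi(z,x,2)$ with $\lambda=(\theta^2-1)/3$; the endpoint $\theta=1$ is trivially nonpositive and the endpoint $\theta=2$ is the inequality already established in \cite{AgostinhoCorreiaTavares}. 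This collapses the whole case analysis in $p$ and all the intermediate lemmas, at the price of leaning entirely on the previous paper for $\theta=2$ --- which is a legitimate dependency, since the paper's own proof already imports the $\theta=2$ ingredients (its Lemmas 2.9 and 2.11) from the same source. The one thing to fix is the citation: by this paper's own Remark following Proposition \ref{prop:2.61}, \cite[Proposition 2.5]{AgostinhoCorreiaTavares} is the statement about the qualitative behavior of $L_1$ at $\theta=2$, not the pointwise estimate $\psi(z,x,2)\le 0$ for $1\le z\le x$; the latter is a separate auxiliary statement in that paper (the analogue of Proposition \ref{prop:2.67}, proved via its Lemmas 2.9--2.11), and you should point to that statement explicitly rather than to the $L_1$ proposition.
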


This result will be a direct consequence of Lemmas \ref{lemma:2.70} and \ref{lemma:2.72}, which we prove below. Before doing so, we show that, if Proposition \ref{prop:2.67} holds, then the monotonicity of $L_1$ will easily follow.

\begin{lemma}\label{lemma:2.68}
	Suppose that Proposition \ref{prop:2.67} holds true. Then, $z\mapsto L_1(z)$ is strictly decreasing for $z\geq 1$.
\end{lemma}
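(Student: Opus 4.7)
The plan is to show that, under Proposition~\ref{prop:2.67} (hence for $\theta\in(1,2]$), the sign of $L_1'(z)$ for $z\geq 1$ is controlled by a weighted integral of $\psi(z,t,\theta)$ along the trajectory, and then to invoke $\psi\leq 0$ to conclude. First, for $z\geq 1$ and $\theta\in(1,2]$ one has $(1-\theta^2)f(z)\leq 0<f(1)$, so the factor $(1-\theta^2)f(z)-f(1)$ appearing in Lemma~\ref{lemma:2.62} is strictly negative; consequently $L_1'(z)$ and $H(z,\theta)$ have opposite signs, and it is enough to prove $H(z,\theta)>0$.

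Next I would isolate $\psi$ inside the integral representation of $H$ provided by \eqref{LDerivativeForm} (or \eqref{H_p(z,theta)NotSingular}). The crucial observation is the algebraic identity
\[
f'(t)+(z-t)f''(t)=f'(t)^2\,\frac{d}{dt}\!\left[\frac{t-z}{f'(t)}\right],
\]
which rewrites the $\theta$-dependent part of $\psi$ as a total derivative and suggests a natural integration by parts with $v=(t-z)/f'(t)$. Performing this integration by parts along $[z,g(z)]$, where $g(z):=f_2^{-1}((1-\theta^2)f(z))$, requires a $\delta$-regularization at the turning endpoint $t=g(z)$, since $\sqrt{f(t)-(1-\theta^2)f(z)}$ vanishes there. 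After the $\delta\to 0^+$ limit, the divergent boundary contribution cancels the divergent tail of a Hadamard-type integral, yielding a finite representation of the general form
\[
H(z,\theta)=B(z,\theta)+\int_z^{g(z)}\bigl(-\psi(z,t,\theta)\bigr)\,\Theta(z,t)\,dt,
\]
with $B(z,\theta)>0$ essentially the explicit boundary term $\theta f(1)/\sqrt{f(z)}$ appearing in \eqref{LDerivativeForm}, and $\Theta(z,t)>0$ on the open interval $(z,g(z))$.

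Since $1\leq z\leq t\leq g(z)$ and $\theta\in(1,2]$, Proposition~\ref{prop:2.67} gives $\psi(z,t,\theta)\leq 0$, so $-\psi\cdot\Theta\geq 0$ and the integral is nonnegative. Together with $B>0$ this yields $H(z,\theta)>0$, hence $L_1'(z)<0$, with strict inequality throughout $[1,(p/2)^{1/(p-2)})$ because $\psi<0$ on the interior of the integration interval (its only possible zero on the diagonal $t=z$ being a measure-zero set).

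The main obstacle is carrying out the integration by parts cleanly: the integrand is singular at $t=g(z)$, and the balance between the divergent boundary term and the divergent part of the resulting integral must be tracked via a regularization. This is the same cancellation mechanism implicit in the derivation of \eqref{H_p(z,theta)NotSingular} in Lemma~\ref{lemma:2.62}, and it parallels the argument of \cite[Lemma 2.8]{AgostinhoCorreiaTavares} in the special case $\theta=2$; the extension to general $\theta\in(1,2]$ is expected to go through with only minor algebraic modifications, the essential new ingredient being the pointwise bound on $\psi$ supplied by Proposition~\ref{prop:2.67}.
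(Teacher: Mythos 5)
Your reduction to proving $H(z,\theta)>0$ for $z\geq 1$ is correct, but the central step --- the existence of a representation $H(z,\theta)=B(z,\theta)+\int_z^{f_2^{-1}((1-\theta^2)f(z))}(-\psi(z,t,\theta))\,\Theta(z,t)\,dt$ with $B>0$ and $\Theta>0$ --- is asserted, not derived, and it carries the entire content of the lemma. It is not a ``minor algebraic modification'' of the computation in Lemma \ref{lemma:2.62}: that integration by parts produces the weight $g(t)/f'(t)^4$ (with $g$ the cubic expression of Lemma \ref{lemma:2.62}), not $\psi$, and the resulting identity \eqref{H_p(z,theta)NotSingular} does not settle the sign of $H$ for $z\geq 1$, since for $z>1$ its boundary term $\theta\big(f(1)-2(1-\theta^2)A(z)f(z)\big)/\sqrt{f(z)}$ contains the nonpositive contribution $-2(1-\theta^2)A(z)f(z)$ (recall $A(z)<0$ and $1-\theta^2<0$ there). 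Moreover, there is a concrete obstruction to your claimed form of $B$ and $\Theta$: at $z=1$ one has $H(1,\theta)=\theta f(1)/\sqrt{f(1)}$ exactly (the integral term in \eqref{LDerivativeForm} carries the factor $(1-\theta^2)f'(1)=0$), while $-\psi(1,t,\theta)=f'(t)^2$ is strictly positive on the nondegenerate interval $\big(1,f_2^{-1}((1-\theta^2)f(1))\big)$; hence either $B(1,\theta)$ is strictly smaller than the boundary term or $\Theta(1,\cdot)\equiv 0$, so the weight must degenerate (e.g.\ carry a factor $(1-\theta^2)f'(z)$) in a way your sketch neither identifies nor controls. Until $\Theta$ is exhibited and its nonnegativity proved, there is no proof.

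The paper's argument is structurally different and avoids the singular integration by parts altogether. Writing $L_1(z)=\frac{1}{\sqrt2}\int_0^1K_1(z,s,\theta)\,ds$ via the affine substitution $t=z+sI(z,\theta)$ with $I(z,\theta)=f_2^{-1}((1-\theta^2)f(z))-z$, one differentiates under the integral sign; the numerator $g(z,s,\theta)$ of $\partial_z K_1$ vanishes at $s=1$, so no boundary divergences arise, and Proposition \ref{prop:2.67} enters only through the single endpoint identity $\frac{\partial g}{\partial s}(z,1,\theta)=\frac{I(z,\theta)}{f'(z+I(z,\theta))}\,\psi\big(z,z+I(z,\theta),\theta\big)\geq 0$, i.e.\ $\psi$ is only needed at the turning point $x=f_2^{-1}((1-\theta^2)f(z))$, not along the whole trajectory. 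A monotonicity argument in $s$ (as in the case $\theta=2$ treated in \cite{AgostinhoCorreiaTavares}) then gives $g(z,\cdot,\theta)<0$ on $(0,1)$ and hence $L_1'(z)<0$. If you wish to keep your route, you must actually construct $\Theta$ and verify its sign; otherwise the substitution-based argument is the one to follow.
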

\begin{proof}
	Fix $z\geq 1$. Take $t=z+sI(z,\theta)$, where $I(z,\theta)=f_2^{-1}\left((1-\theta^2)f(z)\right)-z>0$. Then
	\begin{equation*}\label{Lzmaiorque1}
		L_1(z)=\frac{1}{\sqrt{2}}\int_0^1\frac{I(z,\theta)ds}{\sqrt{f(z+sI(z,\theta))-(1-\theta^2)f(z)}}=:\frac{1}{\sqrt{2}}\int_0^1K_1(z,s,\theta)ds.
	\end{equation*}
	Differentiating $K_1$ with respect to $z$, we obtain
	\begin{align*}
		\frac{\partial K_1}{\partial z}(z,s,\theta)&=\frac{1}{2(f(z+sI(z,\theta))-(1-\theta^2)f(z))^{3/2}}\left[2\frac{\partial I}{\partial z}(z,\theta)(f(z+I(z,\theta)s)-(1-\theta^2)f(z))\right.\\   -&\left.I(z,\theta)\left(f'(z+I(z,\theta)s)\left(1+\frac{\partial I}{\partial z}(z,\theta)s\right)-(1-\theta^2)f'(z)\right)\right]\\
		&=:\frac{1}{2(f(z+sI(z,\theta))-(1-\theta^2)f(z))^{3/2}}g(z,s,\theta).
	\end{align*}
Since $\frac{\partial I}{\partial z}(z,\theta)=-1+\frac{(1-\theta^2)f'(z)}{f'(z+I(z,\theta))}$, by definition of $I(z,\theta)$, $g(z,1,\theta)=0$ for any $z\geq1$ and $\theta>1$, and
	\begin{align*}
		\frac{\partial g}{\partial s}(z,1,\theta)=\frac{I(z,\theta)}{f'(z+I(z,\theta))}\psi(z,z+I(z,\theta),\theta)\geq 0,
	\end{align*}
since $z+I(z,\theta)s>z$ and by Proposition \ref{prop:2.67}. From this, reasoning exactly as in \cite[Lemma 2.8]{AgostinhoCorreiaTavares}, we show that, for each $z\geq1$ and $\theta>1$, the map $s\mapsto g(z,s,\theta)$ is an increasing function, which then yields $g<0$ for $s\in(0,1)$, and the conclusion follows. \end{proof}

\begin{remark}
	It is important to stress that, in the previous proof, the range of $\theta\in(1,2]$ is not relevant. In fact, Lemma \ref{lemma:2.68} will be true whenever, for a fixed $\theta>1$, the conclusion of Proposition \ref{prop:2.67} is valid. We managed to prove this only for $\theta\in(1,2]$.
\end{remark}

Now, we focus on proving Proposition \ref{prop:2.67}. We start with an auxiliary lemma.
\begin{lemma}{}\label{lemma:2.69}
	Let $p>2$ and define $h:\left[1,\infty\right)\times(1,2]\to\R$ by $h(x,\theta)=-2+(p-1)(4-(p-2)(\theta^2-3))x+(p-1)(-2(p-1)+(p-2)(\theta^2-3))x^2.$
	Then, $h<0$ for all $x\in[1,\infty)$ and $\theta\in(1,2]$.
\end{lemma}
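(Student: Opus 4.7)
The plan is to view $h(\cdot,\theta)$ as a concave quadratic polynomial in $x$ and to show that on $[1,\infty)$ it is strictly decreasing starting from a strictly negative value at $x=1$. This reduces the claim to three short one-variable computations, and the key technical ingredient is an algebraic cancellation that makes $h(1,\theta)$ independent of $\theta$.

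First I would evaluate $h(1,\theta)$. Grouping the two terms carrying the factor $(p-1)$, the contributions proportional to $(p-2)(\theta^2-3)$ cancel, leaving
$$
h(1,\theta) \;=\; -2 + (p-1)(6-2p) \;=\; -2(p-2)^{2},
$$
which is strictly negative for every $p>2$ and, crucially, does not depend on $\theta$.

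Second I would check that the quadratic is concave. The leading coefficient is $(p-1)\bigl(-2(p-1)+(p-2)(\theta^2-3)\bigr)$. Since $\theta\in(1,2]$ gives $\theta^2-3\le 1$, the inner bracket is bounded above by $-2(p-1)+(p-2)=-p<0$, so the coefficient is negative and $h(\cdot,\theta)$ is concave on $\R$.

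Third I would compute the slope at the left endpoint. A cancellation analogous to the one in step one yields
$$
\partial_{x}h(1,\theta) \;=\; (p-1)(p-2)(\theta^{2}-7),
$$
which is strictly negative for $p>2$ and $\theta\in(1,2]$ because $\theta^{2}\le 4<7$. By concavity, $\partial_{x}h(\cdot,\theta)$ is strictly decreasing in $x$, so it remains strictly negative on $[1,\infty)$; hence $h(\cdot,\theta)$ is strictly decreasing on that interval. Combined with $h(1,\theta)=-2(p-2)^{2}<0$, this yields $h(x,\theta)<0$ for all $(x,\theta)\in[1,\infty)\times(1,2]$. I do not foresee any real obstacle: the whole argument hinges on noticing the perfect-square identity $h(1,\theta)=-2(p-2)^{2}$, which one immediately spots by expanding and recognising $p^{2}-4p+4$.
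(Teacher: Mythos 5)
Your proof is correct, but it takes a genuinely different route from the paper. The paper exploits monotonicity in $\theta$: it computes $\partial_\theta h(x,\theta)=2\theta(p-1)(p-2)x(x-1)\geq 0$ for $x\geq 1$, so $h(x,\theta)\leq h(x,2)$, and then invokes the previously established fact (Lemma 2.9 of the authors' earlier work on the $\mathcal{T}$-graph, which corresponds to $\theta=2$) that $h(x,2)<0$. You instead fix $\theta$ and analyze $h(\cdot,\theta)$ directly as a quadratic in $x$: the cancellation giving $h(1,\theta)=-2(p-2)^2<0$, the bound $\theta^2-3\leq 1$ forcing a negative leading coefficient (hence concavity), and the identity $\partial_x h(1,\theta)=(p-1)(p-2)(\theta^2-7)<0$; concavity then keeps the slope negative on all of $[1,\infty)$, so $h$ decreases from a negative value. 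All three computations check out. The trade-off is that your argument is fully self-contained and elementary, at the cost of redoing (in effect, strengthening) the endpoint case, whereas the paper's one-line reduction leans on prior work and is shorter in context. Either is acceptable; yours has the minor advantage of not requiring the reader to consult the external reference.
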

\begin{proof}
By \cite[Lemma 2.9]{AgostinhoCorreiaTavares}, we have $h(x,2)<0$ for $x\geq1$.	For fixed $x\geq1$,
	$ \frac{\partial h}{\partial\theta}(x,\theta)=2\theta(p-1)(p-2)x(x-1)\geq0.$
	Thus, for every fixed $x\geq1$, the function $\theta\mapsto h(x,\theta)$ is non-decreasing and the conclusion follows.
	\end{proof}
\begin{lemma}{}\label{lemma:2.70}
	Suppose that $p\in(2,3]$. Then Proposition \ref{prop:2.67} holds true.
\end{lemma}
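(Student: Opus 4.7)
The plan is to fix $x \geq 1$ and $\theta \in (1,2]$ and show $\psi(z,x,\theta) \leq 0$ on the interval $z \in [1,x]$ by combining an easy sign analysis on a sub-interval with a reduction to the polynomial inequality of Lemma \ref{lemma:2.69} on the complementary sub-interval.

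\textbf{Boundary values.} First I record that the endpoint values are manifestly non-positive: $\psi(1,x,\theta) = -f'(x)^2$ (since $f'(1)=0$) and $\psi(x,x,\theta) = -\theta^2 f'(x)^2$.

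\textbf{Sign of the tangent factor.} Next, set $T(z) := f'(x) + (z-x)f''(x)$. This is linear in $z$ with slope $f''(x) = 1 - (p-1)x^{p-2} < 0$ for $x \geq 1$, and $T(x) = f'(x) \leq 0$. Hence there exists a unique $z^\star \in [1,x]$ with $T(z^\star) = 0$, $T \geq 0$ on $[1,z^\star]$, and $T \leq 0$ on $[z^\star,x]$. On $[z^\star,x]$ one has $f'(z) \leq 0$ (as $z \geq 1$), $T(z) \leq 0$, and $1-\theta^2 < 0$, so the product $(1-\theta^2)f'(z)T(z) \leq 0$, which yields immediately $\psi(z,x,\theta) \leq -f'(x)^2 \leq 0$.

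\textbf{Main step: the sub-interval $[1,z^\star]$.} Here all three of $(1-\theta^2)$, $f'(z)$, $T(z)$ are non-positive, so $(1-\theta^2)f'(z)T(z) \geq 0$ and one must actually bound it by $f'(x)^2$. Substituting the explicit expressions $f'(t) = t - t^{p-1}$ and $f''(t) = 1-(p-1)t^{p-2}$, and using that $t \mapsto t^{p-2}$ is concave on $[1,\infty)$ when $p-2 \leq 1$ (so in particular $z^{p-2} \leq 1 + (p-2)(z-1)$ for $z \geq 1$), I expect the positive contribution $(1-\theta^2) f'(z) T(z)$ to be controlled by a polynomial in $x$ with coefficients depending on $\theta$ and $p$. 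After collecting terms and factoring out positive quantities, the resulting inequality reduces precisely to $h(x,\theta) \leq 0$, where $h$ is the quadratic polynomial of Lemma \ref{lemma:2.69}. Invoking that lemma, $h(x,\theta) < 0$ for all $x \geq 1$ and $\theta \in (1,2]$, and therefore $\psi(z,x,\theta) \leq 0$ on $[1,z^\star]$, completing the proof.

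\textbf{Expected obstacle.} The main technical challenge is organizing the algebra in the reduction of the third step so that, after applying the concavity bound on $z^{p-2}$, the coefficients of the residual polynomial in $x$ match exactly those of $h(x,\theta)$. A looser bound would produce extra terms requiring case analysis, and the sharpness of the match is precisely what makes $p=3$ the boundary of this direct argument. A secondary concern is ensuring the estimate is continuous at the crossover $z = z^\star$ so that no intermediate sliver of $[1,x]$ is left unaddressed, but this follows from $T(z^\star)=0$, which makes the contribution at $z^\star$ vanish.
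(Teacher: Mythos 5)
Your decomposition of $[1,x]$ at the zero $z^\star$ of the tangent line $T(z)=f'(x)+(z-x)f''(x)$ is sound as far as it goes: the boundary values are right, $T(1)\ge 0$ does follow (from $0=f'(1)=T(1)+\tfrac12 f'''(c)(1-x)^2$ and $f'''<0$), and on $[z^\star,x]$ all three factors conspire to make $(1-\theta^2)f'(z)T(z)\le 0$, so $\psi\le -f'(x)^2$ there. But that region is the trivial part of the lemma. The entire content lies in your third step, and there you do not give a proof: you write that you ``expect'' the positive contribution $(1-\theta^2)f'(z)T(z)$ on $[1,z^\star]$ to be controlled by a polynomial whose negativity ``reduces precisely to'' Lemma~\ref{lemma:2.69}, without carrying out the algebra. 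This is not a minor bookkeeping omission. The quantity to be dominated, $(\theta^2-1)(z^{p-1}-z)\,T(z)$, is (even after the concavity bound $z^{p-2}\le 1+(p-2)(z-1)$) a cubic in $z$ with coefficients depending on $x$, and you must bound its maximum over $[1,z^\star]$ by $f'(x)^2$; nothing in the proposal shows that this maximization collapses to the single quadratic inequality $h(\cdot,\theta)<0$. Note also that in the paper $h$ enters evaluated at $z^{p-2}$ (it is $\partial^2_x w(z,\theta)=h(z^{p-2},\theta)$, a coefficient in an expansion about the \emph{lower} endpoint $z$), whereas you invoke $h(x,\theta)$; this mismatch is a further sign that the claimed reduction has not actually been checked. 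Your own ``expected obstacle'' paragraph concedes the point: the step whose sharpness ``makes $p=3$ the boundary'' is exactly the step that is missing.

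For comparison, the paper's argument does not split the interval at all. It Taylor-expands $f'(z)$ about $x$ to second order, so that
$\psi(z,x,\theta)=-f'(x)^2+(1-\theta^2)f'(z)^2-\tfrac{1-\theta^2}{2}f'(z)f'''(c)(z-x)^2$,
and uses that for $p\in(2,3]$ the function $f'''$ is non-decreasing (this is where the hypothesis $p\le 3$ enters) together with the sign of $-\tfrac{1-\theta^2}{2}f'(z)(z-x)^2$ to replace $f'''(c)$ by $f'''(z)$, producing a majorant $w(x,\theta)$ depending on $x$ only through explicit powers. It then expands $w$ in $x$ about $z$ to third order: the zeroth- and first-order coefficients are negative by inspection, the second-order coefficient equals $h(z^{p-2},\theta)<0$ by Lemma~\ref{lemma:2.69}, and the third-order remainder is non-positive. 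If you want to salvage your route, you would need to actually perform and verify the maximization on $[1,z^\star]$; as written, the proposal establishes the lemma only on the sub-interval where it is immediate.
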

\begin{proof}
	Fix $z>1$ and $\theta\in(1,2]$. Note that
		$f'(z)=f'(x)+f''(x)(z-x)+\frac{f'''(c)}{2}(z-x)^2$
	for some $c\in (z,x)$, which gives
	\begin{equation*}
		\psi(z,x,\theta)=-f'(x)^2+(1-\theta^2)f'(z)^2-\frac{(1-\theta^2)}{2}f'(z)f'''(c)(z-x)^2.
	\end{equation*}
For $p\in(2,3]$,  $f'''$ is non-decreasing; moreover, since $\theta>1$,
	$-\frac{(1-\theta^2)}{2}f'(z)(z-x)^2<0$. Thus,
	\begin{equation*}	\psi(z,x,\theta)\leq -f'(x)^2+(1-\theta^2)f'(z)^2-\frac{(1-\theta^2)}{2}f'(z)f'''(z)(z-x)^2=:w(x,\theta).
	\end{equation*}
To finish the proof, it is enough to show that $w(x,\theta)\leq0$. For fixed $\theta>1$, and some $\xi\in(z,x)$,
	$$w(x,\theta)=w(z,\theta)+\frac{\partial w}{\partial x}(z,\theta)(x-z)+\frac{1}{2}\frac{\partial^2 w}{\partial x^2}(z,\theta)(x-z)^2+\frac{1}{3!}\frac{\partial^3 w}{\partial x^3}(\xi,\theta)(x-z)^3,$$ where 
	$w(z,\theta)=-\theta^2f'(z)^2<0$, and $\frac{\partial w}{\partial x}(z,\theta)=-2f'(z)f''(z)<0$. Also, by Lemma \ref{lemma:2.69}, a direct computation using the explicit expression of $f'$, $f''$ and $f'''$ yields
	\begin{align*}
		\frac{\partial^2 w}{\partial x^2}(z,\theta)=-2(1-(p-1)z^{p-2})^2-(\theta^2-3)(p-1)(p-2)z^{p-2}(1-z^{p-2})=h(z^{p-2},\theta)<0.
	\end{align*}
	Additionally, since $\xi>1$,
	\begin{equation*}
		\frac{\partial^3 w}{\partial x^3}(\xi,\theta)=-2\left[3f''(\xi)f'''(\xi)+f'(\xi)f^{(4)}(\xi)\right]=-2\left[(p-1)(p-2)\xi^{p-3}\left(-p+2(2p-3)\xi^{p-2}\right)\right]\leq0.\qedhere
	\end{equation*}
\end{proof} 

In the following results, we prove that Proposition \ref{prop:2.67} also holds true for values $p>3$. 

\begin{lemma}\label{lemma:2.71}
	Let $n\in\mathbb{N}$ be such that $n\geq3$ and $p\in(n,n+1]$. Then, for each $k\in\left\{0,\cdots,n-1\right\}$,
	$$\frac{\partial^k\psi}{\partial x^k}(z,z,\theta)<0,\ \text{for}\ z\geq1\ \text{and}\ \theta\in(1,2). $$
\end{lemma}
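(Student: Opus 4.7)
The plan is a direct calculation of the $k$-th partial derivative of $\psi$ at $x=z$ via Leibniz's rule, followed by a sign analysis exploiting the explicit monomial form of $f$ on $[1,\infty)$. Writing $\psi(z,x,\theta) = -f'(x)^2 + (1-\theta^2) f'(z)\bigl[f'(x) + (z-x)f''(x)\bigr]$, I first differentiate $k$ times in $x$, treating $(1-\theta^2)f'(z)$ as a constant. Leibniz applied to $(z-x)f''(x)$ yields $(z-x)f^{(k+2)}(x) - k f^{(k+1)}(x)$, which at $x=z$ collapses to $-k f^{(k+1)}(z)$. Combining with $\partial_x^k f'(x) = f^{(k+1)}(x)$ and the standard Leibniz expansion of $-f'(x)^2$, I obtain for $k\ge 1$
\[
\frac{\partial^k \psi}{\partial x^k}(z,z,\theta) = -\sum_{j=0}^k \binom{k}{j} f^{(j+1)}(z) f^{(k-j+1)}(z) + (1-\theta^2)(1-k) f'(z) f^{(k+1)}(z),
\]
while the $k=0$ case reduces to $-\theta^2 f'(z)^2$.

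To extract signs, I use the explicit expressions $f'(z)=z-z^{p-1}$ and $f^{(j)}(z)=-(p-1)\cdots(p-j+1)z^{p-j}$ for $j\ge 2$. On $[1,\infty)$, $f'(z)\le 0$, and each $f^{(j)}(z)$ with $j\ge 2$ is strictly negative whenever $(p-1),\dots,(p-j+1)$ are all positive. The hypothesis $p\in(n,n+1]$ with $n\ge 3$, together with $k\le n-1$, guarantees that every derivative $f^{(\ell)}$ appearing in the formula (so $\ell\le k+1\le n$) is strictly negative on $[1,\infty)$.

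Isolating the $j=0$ and $j=k$ terms of the convolution sum (the two that carry $f'(z)$), I reorganize the formula as
\[
\frac{\partial^k \psi}{\partial x^k}(z,z,\theta) = f'(z) f^{(k+1)}(z) \bigl[(k-1)(\theta^2-1) - 2\bigr] - \sum_{j=1}^{k-1} \binom{k}{j} f^{(j+1)}(z) f^{(k-j+1)}(z).
\]
The interior sum is a positive combination of products of two strictly negative quantities, hence strictly positive, so its negative is strictly negative. The factor $f'(z) f^{(k+1)}(z)\ge 0$ is a product of two non-positive numbers, so the sign of the first term is dictated by the bracket $(k-1)(\theta^2-1)-2$. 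When this bracket is non-positive (automatic for $k=0,1$, and for $k=2$ whenever $\theta^2\le 3$), the whole expression is non-positive and the conclusion is immediate.

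The main obstacle is the subcase $(k-1)(\theta^2-1)-2>0$, where the first term is non-negative and strict negativity requires a quantitative comparison against the negative interior sum. My plan here is to substitute the explicit monomial forms of each $f^{(\ell)}$, reducing the inequality to a polynomial statement in $w=z^{p-2}\ge 1$ of finite degree, with coefficients that are explicit functions of $p$, $k$ and $\theta$. The bound $\theta\le 2$ caps the bracket, and, combined with the constraint $k\le n-1$, should allow one to verify this polynomial inequality for every $z\ge 1$ along the lines of Lemma \ref{lemma:2.69}: establish monotonicity in $\theta$ of a suitable one-parameter family of polynomials and then check the worst case $\theta=2$ directly, for instance by factoring out the common $z^{p-k-1}$ term and analyzing the resulting polynomial at $z=1$ and as $z\to\infty$.
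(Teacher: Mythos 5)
Your computation of $\frac{\partial^k\psi}{\partial x^k}(z,z,\theta)$ via Leibniz is correct and agrees with the paper's formula, your sign analysis of the interior convolution sum is sound (all derivatives $f^{(m)}$ with $2\le m\le k\le n-1<p$ are indeed strictly negative on $[1,\infty)$), and your reorganization into $f'(z)f^{(k+1)}(z)\bigl[(k-1)(\theta^2-1)-2\bigr]$ plus a $\theta$-independent strictly negative sum is exactly the structure the paper exploits: since $f'(z)f^{(k+1)}(z)\ge 0$ and the bracket is increasing in $\theta$, the expression is non-decreasing in $\theta$ and the worst case is $\theta=2$. Up to this point you are reproducing the paper's argument.

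The genuine gap is that you never actually establish the inequality at the worst case. You write that substituting the monomial forms ``should allow one to verify this polynomial inequality for every $z\ge1$'' by ``analyzing the resulting polynomial at $z=1$ and as $z\to\infty$'' — but evaluating a polynomial at one endpoint and at infinity does not determine its sign on all of $[1,\infty)$, and this verification is precisely the hard content of the lemma. The paper closes this step by citing Lemma 2.11 of \cite{AgostinhoCorreiaTavares}, which asserts $\frac{\partial^k\psi}{\partial x^k}(z,z,2)<0$ for $z\ge1$ and is itself a nontrivial result (already for $k=2$ it amounts to $f'f'''<2(f'')^2$, i.e.\ the case $\theta=2$ of Lemma \ref{lemma:2.69}, which in turn rests on Lemma 2.9 of that reference). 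Without either importing that result or carrying out the quantitative polynomial estimate for general $k\le n-1$, the cases where $(k-1)(\theta^2-1)-2>0$ (e.g.\ $k=2$ with $\theta^2>3$, and all $k\ge3$ for $\theta$ near $2$) remain unproved. A secondary, minor point shared with the paper's own write-up: at $z=1$ one has $f'(1)=0$, so for $k=0,1$ the expression vanishes rather than being strictly negative; this does not affect the application in Lemma \ref{lemma:2.72}, but your phrase ``the conclusion is immediate'' for those cases silently asserts strictness that does not hold at the endpoint.
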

\begin{proof}
	We have $\psi(z,z,\theta)=-\theta^2f'(z)^2<0$, $\frac{\partial\psi}{\partial x}(z,z,\theta)=-2f'(z)f''(z)<0$, and
	$$\frac{\partial ^2\psi}{\partial x^2}(z,z,\theta)=-2f''(z)^2+(\theta^2-3)f'(z)f'''(z)=h(z^{p-2},\theta)<0.$$
	Fix  $z>1$ and  $ k\geq 3$. Then
	\begin{align*}
		\frac{\partial^k\psi}{\partial x^k}(z,x,\theta)&=-\sum_{j=0}^{k}\left[{\binom{k}{j}}f^{(k+1-j)}(x)f^{(j+1)}(x)\right]\\
		+&(1-\theta^2)f'(z)\left(f^{(k+1)}(x)+\sum_{j=0}^k{\binom{k}{j}}f^{(k+2-j)}(x)\frac{d^j}{dx^j}(z-x)\right)\\
		&=-\sum_{j=0}^{k}\left[{\binom{k}{j}}f^{(k+1-j)}(x)f^{(j+1)}(x)\right]+(1-\theta^2)f'(z)\left(f^{(k+2)}(x)(z-x)-(k-1)f^{(k+1)}(x)\right).
	\end{align*}
	For  fixed $z\geq1$, consider
	$$ \frac{\partial ^k\psi}{\partial x^k}(z,z,\theta)=-\sum_{j=0}^{k}\left[{\binom{k}{j}}f^{(k+1-j)}(z)f^{(j+1)}(z)\right]-(1-\theta^2)(k-1)f'(z)f^{(k+1)}(z).$$ 
	For $z\geq1$ and $3\leq k<n<p$, differentiation with respect to $\theta$ yields
	\[\frac{\partial}{\partial\theta}\frac{\partial^k}{\partial x^k}\psi(z,z,\theta)=2\theta f'(z)(k-1)f^{(k+1)}(z)\geq0.\]
	Thus, the map $\theta\mapsto\frac{\partial^k}{\partial x^k}\psi(z,z,\theta)$ is non-decreasing. From \cite[Lemma 2.11]{AgostinhoCorreiaTavares} we know that, given $z\geq1$, $\frac{\partial^k}{\partial x^k}\psi(z,z,2)<0$, which concludes the proof. 
	\end{proof}
\begin{lemma}\label{lemma:2.72}
	Let $n\in\mathbb{N}$ be such that $n\geq3$ and $p\in(n,n+1]$. Then Proposition \ref{prop:2.67} holds.
\end{lemma}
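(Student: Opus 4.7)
The plan is to emulate the Taylor-expansion strategy of Lemma~\ref{lemma:2.70}, but with the expansion now centered at $x=z$ and the low-order coefficients supplied by Lemma~\ref{lemma:2.71}. For fixed $z\geq 1$ and $\theta\in(1,2]$, I would write the Lagrange-remainder expansion
\begin{equation*}
\psi(z,x,\theta) = \sum_{k=0}^{n-1} \frac{\partial_x^k \psi(z,z,\theta)}{k!}(x-z)^k + \frac{\partial_x^n \psi(z,\xi,\theta)}{n!}(x-z)^n, \qquad \xi\in(z,x).
\end{equation*}
By Lemma~\ref{lemma:2.71}, $\partial_x^k\psi(z,z,\theta)<0$ for $k=0,\dots,n-1$, and since $x\geq z$ we have $(x-z)^k\geq 0$; so the polynomial part is nonpositive, and the argument reduces to establishing that $\partial_x^n \psi(z,\xi,\theta)\leq 0$ for all $\xi\geq z\geq 1$ and $\theta\in(1,2]$.

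To deal with this remaining inequality, I would repeat the monotonicity-in-$\theta$ trick already used in the proof of Lemma~\ref{lemma:2.71}. Direct differentiation of the explicit Leibniz-rule formula for $\partial_x^n\psi$ gives
\begin{equation*}
\partial_\theta \partial_x^n \psi(z,\xi,\theta) = -2\theta f'(z)\bigl[(1-n)f^{(n+1)}(\xi) + (z-\xi)f^{(n+2)}(\xi)\bigr].
\end{equation*}
Writing $a=p-n-1\in(-1,0]$ and $c_n=(p-1)\cdots(p-n)>0$, so that $f^{(n+1)}(\xi)=-c_n\xi^a$ and $f^{(n+2)}(\xi)=-c_n a\xi^{a-1}$, the bracket simplifies to $-c_n\xi^{a-1}\bigl[(1-n-a)\xi + a z\bigr]$. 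For $n\geq 3$, $a\leq 0$, and $\xi\geq z\geq 1$, both $(1-n-a)\xi$ and $az$ are nonpositive, so the bracket is nonnegative. Combined with $-2\theta f'(z)\geq 0$ (since $f'(z)\leq 0$ for $z\geq 1$), this shows that the map $\theta\mapsto \partial_x^n\psi(z,\xi,\theta)$ is non-decreasing on $(1,2]$.

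It therefore suffices to treat the base case $\theta=2$, namely $\partial_x^n\psi(z,\xi,2)\leq 0$ for $\xi\geq z\geq 1$. The pointwise value at $\xi=z$ is covered by \cite[Lemma~2.11]{AgostinhoCorreiaTavares}, and to extend it to general $\xi\geq z$ I would iterate the same Taylor/monotonicity procedure one order higher, now in the variable $\xi$. The main technical obstacle lies precisely in this last extension: the formula for $\partial_x^n\psi(z,\xi,2)$ contains a nonpositive sum of products of derivatives of $f$ together with a nonnegative linear correction involving $f^{(n+1)}$ and $f^{(n+2)}$, and showing that the first dominates the second uniformly in $\xi$ forces a delicate algebraic analysis of the explicit polynomial expressions for the $f^{(k)}$, in the spirit of Lemmas~\ref{lemma:2.69} and~\ref{lemma:2.70}.
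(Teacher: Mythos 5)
Your setup is the same as the paper's: Taylor-expand $\psi(z,\cdot,\theta)$ to order $n$ about $x=z$, use Lemma \ref{lemma:2.71} to kill the polynomial part, and reduce everything to the sign of $\frac{\partial^n\psi}{\partial x^n}(z,\xi,\theta)$ for $\xi\geq z\geq 1$, $\theta\in(1,2]$. Your intermediate computation is also correct: writing $a=p-n-1\in(-1,0]$, the bracket $(1-n)f^{(n+1)}(\xi)+(z-\xi)f^{(n+2)}(\xi)=-c_n\xi^{a-1}\left[(2-p)\xi+az\right]$ is indeed nonnegative, so $\theta\mapsto\frac{\partial^n\psi}{\partial x^n}(z,\xi,\theta)$ is non-decreasing and the problem collapses to $\theta=2$. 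This is a legitimate (and arguably cleaner) place to invoke $\theta$-monotonicity than where the paper uses it.

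The problem is that you stop exactly at the statement that carries the weight of the lemma: $\frac{\partial^n\psi}{\partial x^n}(z,\xi,2)\leq 0$ for all $\xi\geq z\geq 1$. Citing \cite[Lemma 2.11]{AgostinhoCorreiaTavares} only gives you the value on the diagonal $\xi=z$ (that is already the content of Lemma \ref{lemma:2.71} at $\theta=2$), and your plan to ``iterate the Taylor/monotonicity procedure one order higher'' cannot work as stated, since $f^{(n+1)}$ is not $C^1$ up to $x=z$ when $p\in(n,n+1)$ and, more importantly, you explicitly defer the resulting estimate as the ``main technical obstacle'' without resolving it. So the proof is incomplete precisely where Lemma \ref{lemma:2.71} stops helping. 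The paper closes this gap differently, and without any further Taylor expansion: it isolates the manifestly nonpositive sum $-\sum_{j=2}^{n-2}\binom{n}{j}f^{(n+1-j)}(x)f^{(j+1)}(x)$ (each summand involves two derivatives of order $\geq 3$, hence two negative factors), rewrites the remaining terms as $f^{(n+1)}(x)$ times a bracket, discards from that bracket the term $(1-\theta^2)f'(z)(z-x)\frac{p-(n+1)}{x}\geq 0$ and replaces $f'(z)$ by $f'(x)$ using that $f'$ is decreasing on $[1,\infty)$, arriving at the upper bound $\frac{f^{(n+1)}(x)x}{p-n}\,g(x,\theta)$ with $g(x,\theta)=-2nf''(x)+(-(n+1)+\theta^2(n-1))(p-n)(1-x^{p-2})$. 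Only then is $\theta$-monotonicity applied (to $g$, which is decreasing in $\theta$), reducing to the explicit inequality $g(x,2)=-2n(1-(p-1)x^{p-2})+(3n-5)(p-n)(1-x^{p-2})\geq 0$, which follows from $\frac{2n}{(p-n)(3n-5)}\geq\frac{1}{p-1}\geq\frac{1-x^{p-2}}{1-(p-1)x^{p-2}}$. Some such explicit algebraic closure is needed; your proposal identifies where it must happen but does not supply it.
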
 
\begin{proof}
	For fixed $z\geq 1$ and $\theta\in(1,2]$, we have
	$$\psi(z,x,\theta)=\sum_{k=0}^{n-1}\frac{\partial^k\psi}{\partial x^k}(z,z,\theta)\frac{(x-z)^k}{k!}+\frac{\partial^n\psi}{\partial x^n}(z,c,\theta)\frac{(x-z)^n}{n!}, $$
	for some $c\in(z,x)$. The first term on the right-hand side is negative by Lemma \ref{lemma:2.71}. It is enough to show that $\frac{\partial ^n\psi}{\partial x^n}(z,x,\theta)<0$  for $x\geq z\geq1$ and $\theta\in(1,2]$. We have
	\begin{align*}
		\frac{\partial ^n\psi}{\partial x^n}(z,x,\theta)&=-\sum_{j=0}^{n}\left[{ \binom{n}{j}}f^{(n+1-j)}(x)f^{(j+1)}(x)\right]\\
		+&(1-\theta^2)f'(z)\left(f^{(n+1)}(x)+\sum_{j=0}^n{\binom{n}{j}}\frac{d^j(z-x)}{dx^j}f^{(n+2-j)}(x)\right)\nonumber \\
		&=-\sum_{j=2}^{n-2}\left[{ \binom{n}{j}}f^{(n+1-j)}(x)f^{(j+1)}(x)\right]+f^{(n+1)}(x)\left[\frac{-2n}{p-n}f''(x)x-2f'(x)\right.\\
		+&\left.(1-\theta^2)f'(z)(z-x)\frac{p-(n+1)}{x}-(1-\theta^2)(n-1)f'(z)\right].
	\end{align*}
	Note that 
	$f^{(n+1)}(x)<0$, $\frac{(1-\theta^2)f'(z)(z-x)(p-(n+1))}{x}>0$ for $z>1$, and $f'$ is decreasing. Furthermore, $f^{(j)}(z)<0$ for $3\leq j\leq n+1$. Thus,
	\begin{align*}
		\frac{\partial ^n\psi}{\partial x^n}(z,x,\theta)&\leq f^{(n+1)}(x)\left[\frac{-2n}{(p-n)}f''(x)x-2f'(x)-(1-\theta^2)(n-1)f'(x)\right]\\
		&=\frac{f^{(n+1)}(x)x}{p-n}\left[-2nf''(x)+(-(n+1)+\theta^2(n-1))(p-n)(1-x^{p-2})\right].
	\end{align*}
	Define $g:(1,\infty)\times(1,2]\to\R$ by $$g(x,\theta)=-2nf''(x)+(-(n+1)+\theta^2(n-1))(p-n)(1-x^{p-2}).$$ We claim that $g(x,\theta)\geq 0$. To that end, observe that for each fixed $x\in(1,\infty)$, the map $\theta\mapsto g(x,\theta)$ is decreasing. Indeed,
	\[\frac{\partial g}{\partial \theta}(x,\theta)=2\theta(n-1)(p-n)(1-x^{p-2})<0,\]
	since $n\geq 3$, $x,\theta>1$, and $p\in (n,n+1]$. To conclude the proof, it suffices to show that $g(x,2)\geq0$. Note that
	$ g(x,2)=-2n(1-(p-1)x^{p-2})+(3n-5)(p-n)(1-x^{p-2})\geq 0$
	which is a consequence of 
	\begin{equation*}
		\frac{2n}{(p-n)(3n-5)}\geq\frac{1}{p-1}\geq\frac{1-x^{p-2}}{1-(p-1)x^{p-2}}.\qedhere 
	\end{equation*}
\end{proof} 

As mentioned before,
\begin{proof}[Proof of Proposition \ref{prop:2.67}]
	The conclusion follows as a direct application of Lemmas \ref{lemma:2.70} and \ref{lemma:2.72}.
\end{proof}

We are now in position to conclude the analysis of increasing solutions.
\begin{proof}[Proof of Proposition \ref{prop:2.61}]
	We recall that the case $\theta=0$ was shown in  \cite[Proposition 2.16]{AgostinhoCorreiaTavares}. Take $\theta>0$. The function $L_1$ is positive by definition and differentiable by Lemma \ref{lemma:2.62}. Item 1 follows from Lemma \ref{lemma:2.64}. Item 2 is a consequence of Lemma \ref{lemma:2.65}. Item 3 follows from Lemmas \ref{lemma:2.66} and \ref{lemma:2.68}.
	
	As $z\to0^+$, Fatou's Lemma and Definition \ref{def:2.60} yield
	\[
	\liminf_{z\to 0^+}L_1(z)\geq\int_0^{\left(\frac{p}{2}\right)^\frac{1}{p-2}}\frac{dt}{\sqrt{f(t)}}=\infty.
	\]
	As $z\to (p/2)^{1/(p-2)}$, since $L_1$ is continuous and  the integral term in \eqref{eqn.LDiffForm1} has no singularities, we can take limits in \eqref{eqn.LDiffForm1} to find
	$$-f(1)\sqrt{2}\lim_{z\to \left(\frac{p}{2}\right)^\frac{1}{p-2}}L_1(z)=0.\qedhere $$
\end{proof}

\subsection{Classification of decreasing solutions of the overdetermined problem \eqref{gen.IVP2}}

To construct \emph{decreasing} solutions of \eqref{gen.IVP2}, let $z\in\left(0,(p/2)^{1/(p-2)}\right)$, we consider once again a solution $u$ of the IVP \eqref{eqn.probu_IVP}, but this time with $\theta\leq 0.$ Recalling from Lemmas \ref{lemma:Theta=1} and \ref{lemma:Theta<1} that for $\theta\leq -1$ there are no positive decreasing solutions of the overdetermined problem \eqref{gen.IVP2}, for the rest of this subsection we restrict our attention on the case $\theta\in (-1,0]$.

We proceed in a similar fashion as in the increasing case. If $u$ is decreasing, let $L>0$ be the first positive zero of $u'$. Since $(1-\theta^2)f(z)=f(u(L))$ and $0<u(L)<u(0)$,  this time we have $u(L)=f_1^{-1}\left((1-\theta^2)f(z)\right)$.

Since $u'<0$ in $[0,L)$, using exactly the same arguments which led to \eqref{eq:L_increasing_aux} in the increasing case, we obtain
\[L=\frac{1}{\sqrt{2}}\int^z_{f_1^{-1}\left((1-\theta^2)f(z)\right)}\frac{dt}{\sqrt{f(t)-(1-\theta^2)f(z)}}.\]

Moreover, the previous argument is valid for each $z\in\left(0,(p/2)^{1/(p-2)}\right)$ if $\theta>0$. If $\theta=0$ the previous construction works for $z\in\left(1,(p/2)^{1/(p-2)}\right)$, but fails for $z\in\left(0,1\right]$ (recall Lemma  \ref{PHASEPLANETHETA=0}).
\begin{definition}[Length function of decreasing solutions of the IVP \eqref{gen.IVP2}]
	For $p>2$, $\theta\in(-1,0]$, we consider the \textit{length function} of \eqref{eqn.probu_IVP}  as the function $L_2:\left(0,(p/2)^{1/(p-2)}\right)\to\R^+$ defined by
	$$L_2(z)=\frac{1}{\sqrt{2}}\int^z_{f_1^{-1}\left((1-\theta^2)f(z)\right)}\frac{dt}{\sqrt{f(t)-(1-\theta^2)f(z)}}.$$
	If $\theta=0$, the function $L_2$, defined by the same expression, is considered for $z\in\left(1,\left(\frac{p}{2}\right)^\frac{1}{p-2}\right)$.
\end{definition}

Again, we focus on studying the qualitative properties of $L_2$.

\begin{proposition}\label{prop:2.74}
	For $p>2$ and $\theta\in(-1,0]$, the function $L_2$ is positive, differentiable, strictly increasing and satisfies 
 \begin{equation}\label{eq:limitesL2}
         \lim\limits_{z\to (p/2)^\frac{1}{p-2}}L_2(z)=+\infty,\quad \begin{cases}
        \lim\limits_{z\to 0^+}L_2(z)=\argsinh\left(\frac{\theta}{\sqrt{1-\theta^2}}\right), & \theta\in(-1,0)\\
        L_2(1):=\lim\limits_{z\to1^+}L_2(z)=\frac{\pi}{\sqrt{p-2}}, & \theta=0
    \end{cases}.
 \end{equation}
\end{proposition}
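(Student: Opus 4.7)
The plan is to closely parallel the analysis of the increasing length function $L_1$ developed in Lemmas \ref{lemma:2.62}--\ref{lemma:2.64}; the essential novelty is that the integration interval for $L_2$ lies below the starting point $z$ and, once $z>1$, crosses the critical value $t=1$. I organize the argument in three steps: differentiability via a suitable integration by parts, sign analysis for monotonicity, and asymptotic analysis for the limits.

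Step 1 (Smooth representation). Set $a(z):=f_1^{-1}((1-\theta^2)f(z))$ and $C(z):=(1-\theta^2)f(z)$. The integrand in $L_2(z)$ is integrable but singular only at $t=a(z)$. Applying the exact-differential device of Lemma \ref{lemma:2.62} to $W(x,y)=(f(x)-f(1))y/f'(x)$ along the branch $y(t)=\sqrt{2(f(t)-C(z))}$ of the level curve $\{F=C(z)\}$ for $t\in[a(z),z]$, and using $W(a(z),0)=0$, yields
\begin{equation*}
\sqrt{2}\,L_2(z)\bigl((1-\theta^2)f(z)-f(1)\bigr)= 2|\theta|\sqrt{f(z)}\,\frac{f(z)-f(1)}{f'(z)}-\int_{a(z)}^{z}\!\left(3-\frac{2(f(t)-f(1))f''(t)}{f'(t)^{2}}\right)\sqrt{f(t)-C(z)}\,dt,
\end{equation*}
which is free of singularities, so $L_2\in C^1$. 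Differentiating in $z$ (the boundary term at $a(z)$ vanishes because $f(a(z))=C(z)$) produces a formula for $H_2(z,\theta):=\sqrt{2}\,L_2'(z)\bigl((1-\theta^2)f(z)-f(1)\bigr)$ analogous to \eqref{LDerivativeForm}; a further integration by parts delivers the non-singular form analogous to \eqref{H_p(z,theta)NotSingular} involving the auxiliary functions $A$ of Lemma \ref{lemma:2.63} and $g$ of Proposition \ref{prop:gNegativa}.

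Step 2 (Monotonicity). Since $1-\theta^2\in(0,1]$ and $f\le f(1)$, the factor $(1-\theta^2)f(z)-f(1)$ is strictly negative on the entire domain, so $L_2'(z)>0$ is equivalent to $H_2(z,\theta)<0$. For $\theta\in(-1,0)$ and $z\le 1$, the interval $[a(z),z]\subset(0,1)$ lies entirely on the branch where $f'>0$ and $A>0$, and the sign argument of Lemma \ref{lemma:2.64}, adapted to the orientation of $L_2$, gives $H_2<0$. For $z>1$ I split the integral at $t=1$ and use $A>0$ on $(0,1)$, $A<0$ on $(1,\infty)$ (Lemma \ref{lemma:2.63}), $g<0$ off $1$ (Proposition \ref{prop:gNegativa}) and the sign of $(1-\theta^2)f'(z)$ on each subinterval to check that both the boundary contribution and each piece of the split integral are non-positive. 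The case $\theta=0$, $z\in(1,(p/2)^{1/(p-2)})$, reduces to the monotonicity of the half-period of the Hamiltonian inside the homoclinic, already established in \cite[Proposition 2.16]{AgostinhoCorreiaTavares}.

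Step 3 (Limits). Positivity is immediate. As $z\to(p/2)^{1/(p-2)}$, $C(z)\to 0$ and $a(z)\to 0$, and the integrand behaves like $1/t$ near the origin, so $L_2(z)\to+\infty$. For $\theta\in(-1,0)$ and $z\to 0^+$, the rescaling $t=zs$ combined with $f(y)\sim y^{2}/2$ near the origin reduces the integral to
\begin{equation*}
\int_{\sqrt{1-\theta^{2}}}^{1}\frac{ds}{\sqrt{s^{2}-(1-\theta^{2})}},
\end{equation*}
which, via the hyperbolic substitution $s=\sqrt{1-\theta^{2}}\cosh u$, evaluates explicitly to the $\argsinh$ expression in \eqref{eq:limitesL2}. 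For $\theta=0$ and $z\to 1^+$, the orbit collapses onto the center $(1,0)$ and $L_2(z)$ converges to half the linearized period, namely $\pi/\sqrt{p-2}$. The main obstacle is Step 2 once $z>1$: the interval $[a(z),z]$ crosses $t=1$, and one must verify that the negative boundary contribution of $H_2$ dominates over the possibly positive piece of the integral on $(a(z),1)$.
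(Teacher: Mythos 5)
Your Steps 1 and 3 follow the paper's route: Lemma \ref{lemma:2.75} provides exactly the singularity-free representation and the two formulas for $H_2$, the limit at $(p/2)^{1/(p-2)}$ follows by Fatou, and your rescaling $t=zs$ for the limit at $0^+$ is a clean alternative to the paper's monotone-convergence computation on the kernel of Lemma \ref{lemma:2.77}. The problem is Step 2, where you have located the difficulty in the wrong place. For $z>1$ no splitting at $t=1$ is needed: in the representation \eqref{H_p(z,theta)NotSingular2} the integrand $g(t)/f'(t)^4$ is negative on \emph{both} sides of $t=1$ by Proposition \ref{prop:gNegativa} (and $g$ vanishes to fourth order at $t=1$, so the quotient stays bounded), the prefactor $-(1-\theta^2)f'(z)$ is nonnegative since $f'(z)\le 0$, and the boundary term is negative because $\theta<0$ and $A(z)\le 0$ by Lemma \ref{lemma:2.63}; both contributions to $H_2$ are therefore nonpositive and this case is immediate --- this is precisely Lemma \ref{lemma:2.76}.

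The genuine gap is the case $\theta\in(-1,0)$, $z\in(0,1)$, which you dispose of by ``adapting the sign argument of Lemma \ref{lemma:2.64}''. That adaptation does not close. In \eqref{H_p(z,theta)NotSingular2} the boundary term $\theta\bigl(f(1)-2(1-\theta^2)A(z)f(z)\bigr)/\sqrt{f(z)}$ is indeed negative, but since $f'(z)>0$ on $(0,1)$ and the integral of $g/f'^4<0$ over $[a(z),z]\subset(0,1)$ is negative, the remaining term $-(1-\theta^2)f'(z)\int(\cdots)$ is \emph{positive}; the same opposition of signs occurs in the $A$-representation \eqref{L2DerivativeForm}, because $A>0$ on $(0,1)$. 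So the two terms compete, and pure sign inspection proves nothing; in Lemma \ref{lemma:2.64} both terms come out with the same favorable sign only because there $\theta>0$. The paper handles exactly this regime by a separate device (Lemma \ref{lemma:2.77}): the substitution $t=f_1^{-1}(u)$ rewrites $L_2(z)$ as an integral over $s\in(0,1)$ of an explicit kernel $K(z,\theta,s)$, and a direct computation shows $z\mapsto K(z,\theta,s)$ is increasing for each $s$. You need either this argument or a genuine quantitative comparison of the two terms of $H_2$; as written, your proposal leaves the whole interval $z\in(0,1)$ unproven.
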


We devote the rest of this section to the proof of this result. Since the case $\theta=0$ has been treated in \cite[Proposition 2.16]{AgostinhoCorreiaTavares}, we fix $\theta\in(-1,0)$.

The first result is the equivalent to Lemma \ref{lemma:2.62} applied to $L_2$.

\begin{lemma}{}\label{lemma:2.75}
	For any $\theta\in(-1,0)$ and $z\in \left(0,(p/2)^{1/(p-2)}\right)$,
	\begin{align}
		\sqrt{2}L_2(z)\left((1-\theta^2)f(z)\right.&\left.-f(1)\right)=-2\theta\frac{(f(z)-f(1))\sqrt{f(z)}}{f'(z)}\label{eqn.LDiffForm}\\
		&-\int^z_{f_1^{-1}\left((1-\theta^2)f(z)\right)}\left(3-2\frac{(f(t)-f(1))}{f'(t)^2}f''(t)\right)\sqrt{f(t)-(1-\theta^2)f(z)}dt.\nonumber
	\end{align}
	In particular, $L_2$ is differentiable. Moreover, defining $H(z,\theta):=\sqrt{2}L_2'(z)\left((1-\theta^2)f(z)-f(1)\right)$,
	\begin{align}
		&H(z,\theta)=\theta\frac{f(1)}{\sqrt{f(z)}}+(1-\theta^2)f'(z)\int^z_{f_1^{-1}\left((1-\theta^2)f(z)\right)}\frac{A(t)dt}{\sqrt{f(t)-(1-\theta^2)f(z)}}\label{L2DerivativeForm}\\
		=&\frac{\theta}{\sqrt{f(z)}}\left(f(1)-2(1-\theta^2)A(z)f(z)\right)-(1-\theta^2)f'(z)\int^z_{f_1^{-1}\left((1-\theta^2)f(z)\right)}\frac{g(t)}{f'(t)^4}\sqrt{f(t)-(1-\theta^2)f(z)}dt,\label{H_p(z,theta)NotSingular2}
	\end{align}
	where $A:\R^+\to \R$ and  $g:\R^+\setminus\{1\}\to\R$ are defined in Lemma \ref{lemma:2.62}, and $g(t)<0$.\end{lemma}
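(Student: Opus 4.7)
The strategy is to mirror, line by line, the proof of Lemma \ref{lemma:2.62}, adjusting signs to reflect that a decreasing orbit traverses the \emph{lower} branch of the level curve $F = (1-\theta^2)f(z)$. Starting from the definition
\[
\sqrt{2}L_2(z)\bigl((1-\theta^2)f(z)-f(1)\bigr)=\int_{f_1^{-1}((1-\theta^2)f(z))}^{z}\frac{(1-\theta^2)f(z)-f(1)}{\sqrt{f(t)-(1-\theta^2)f(z)}}\,dt,
\]
I would use the algebraic identity $(1-\theta^2)f(z)-f(1) = -(f(t)-(1-\theta^2)f(z)) + (f(t)-f(1))$ to split the integrand into two pieces, exactly as in \eqref{eq:aux_lemma:2.62}.

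Next, I would reintroduce the vector field $W(x,y) = \frac{f(x)-f(1)}{f'(x)}y$ (continuously extended across $x=1$) and parametrize the relevant arc on the phase-plane. The key difference from Lemma \ref{lemma:2.62} is that for $\theta\in(-1,0)$ the orbit of the decreasing solution lies in $\{y \le 0\}$, so I would use $\gamma(t)=\bigl(t,-\sqrt{2}\sqrt{f(t)-(1-\theta^2)f(z)}\bigr)$ with $t$ running from $f_1^{-1}((1-\theta^2)f(z))$ (where $y=0$) to $z$ (where $y=\theta\sqrt{2f(z)}<0$). Applying the fundamental theorem of line integrals to $W$ along $\gamma$ yields
\[
W\bigl(z,\theta\sqrt{2f(z)}\bigr)-W\bigl(f_1^{-1}((1-\theta^2)f(z)),0\bigr)=\int_{\gamma}dW,
\]
and, after substituting back and noting that $W$ vanishes at the turning point, one recovers \eqref{eqn.LDiffForm}. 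The sign changes cancel exactly, producing the same shape of formula as in the increasing case but with $-2\theta$ coming from $\theta<0$ and the $y$-endpoint $\theta\sqrt{2f(z)}$ being negative.

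Once \eqref{eqn.LDiffForm} is established, differentiability of $L_2$ is automatic since the integrand in the right-hand side has no singularity. I would then differentiate both sides in $z$, using that the endpoint $f_1^{-1}((1-\theta^2)f(z))$ is differentiable in $z$ (its derivative is $(1-\theta^2)f'(z)/f'(f_1^{-1}((1-\theta^2)f(z)))$) and that the boundary term vanishes because the integrand vanishes there. Collecting terms, exactly as in the proof of Lemma \ref{lemma:2.62}, gives the formula \eqref{L2DerivativeForm} for $H(z,\theta)=\sqrt{2}L_2'(z)((1-\theta^2)f(z)-f(1))$. Finally, an integration by parts on the integral in \eqref{L2DerivativeForm}, writing $\frac{A(t)}{\sqrt{f(t)-(1-\theta^2)f(z)}} = \frac{2A(t)}{f'(t)}\cdot\frac{d}{dt}\sqrt{f(t)-(1-\theta^2)f(z)}$, yields the alternative form \eqref{H_p(z,theta)NotSingular2} with the non-singular integrand $g(t)/f'(t)^4$; the negativity of $g$ away from $1$ is quoted from Proposition \ref{prop:gNegativa}.

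The only genuine obstacle is bookkeeping: one must verify that the orientation of $\gamma$ (from the turning point $y=0$ towards the starting point $(z,\theta\sqrt{2f(z)})$) produces the correct overall sign in front of each of the two resulting integrals, and that the derivative of the variable endpoint $f_1^{-1}((1-\theta^2)f(z))$ contributes no extra boundary term (it does not, since the integrand vanishes at that endpoint). Everything else is a direct transcription of the computations in Lemma \ref{lemma:2.62}, with $f_2^{-1}$ replaced by $f_1^{-1}$ and the order of integration reversed; no new ingredients are needed.
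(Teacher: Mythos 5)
Your proposal is correct and follows essentially the same route as the paper: the paper's proof of Lemma \ref{lemma:2.75} simply reruns the argument of Lemma \ref{lemma:2.62} with the level curve parametrized on the lower branch $y\le 0$, so that the line integral of $W$ produces $W\bigl(z,\theta\sqrt{2f(z)}\bigr)-W\bigl(f_1^{-1}((1-\theta^2)f(z)),0\bigr)$ and the signs of the non-integral terms adjust accordingly, with $g<0$ quoted from Proposition \ref{prop:gNegativa}. The only cosmetic difference is that the paper reverses the orientation of $\gamma$ via an explicit reparametrization, whereas you keep $\gamma(t)=(t,-\sqrt{2}\sqrt{f(t)-(1-\theta^2)f(z)})$ and track endpoints directly; both are equivalent.
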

\begin{proof}
	The proof is exactly the same as in Lemma \ref{lemma:2.62}. The only difference is the parametrization of the curve $\gamma(t)=(x(t),y(t))$, which parametrizes the level curve $\Gamma$ given by $-\frac{1}{2}y^2+f(x)=(1-\theta^2)f(z)$ with $y<0$. The conclusion will follow by taking, for $t\in[f_1^{-1}\left((1-\theta^2)f(z)\right),z]$, the curve $$\gamma(t)=(x(t),y(t))=\left(f_1^{-1}((1-\theta^2)f(z))+z-t,-\sqrt{2}\sqrt{f\left(f_1^{-1}((1-\theta^2)f(z))+z-t\right)-(1-\theta^2)f(z)}\right).$$
	When compared with Lemma \ref{lemma:2.62}, the change in the signs of the non-integral terms is justified by the fact that the integration along the curve $\Gamma$ will now yield $W(z,\theta\sqrt{2}\sqrt{f(z)})- W(f_1^{-1}\left((1-\theta^2)f(z)\right),0)$ instead of $ W(f_1^{-1}\left((1-\theta^2)f(z)\right),0)-W(z,\theta\sqrt{2}\sqrt{f(z)}),$ as was the case in Lemma \ref{lemma:2.62}. We recall that $g$ is negative by Proposition \ref{prop:gNegativa}.
\end{proof}

\subsubsection{Monotonicity Analysis of $L_2$.}

\begin{lemma}{}\label{lemma:2.76}
	If $\theta\in(-1,0)$ and $z\geq 1$, then $L_2$ is strictly increasing.
\end{lemma}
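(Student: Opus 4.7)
The plan is to work directly from the representation of $L_2'$ provided by Lemma \ref{lemma:2.75}: namely, that
\[
\sqrt{2}\, L_2'(z)\bigl((1-\theta^2)f(z)-f(1)\bigr)=H(z,\theta),
\]
and to show, for $\theta\in(-1,0)$ and $z\geq 1$, that both the bracket on the left-hand side and $H(z,\theta)$ are strictly negative, so that $L_2'(z)>0$.

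First I would observe that, for $z\geq 1$ and $\theta\in(-1,0)$, the quantity $(1-\theta^2)f(z)-f(1)$ is strictly negative. Indeed, since $f$ attains its maximum at $z=1$, one has $f(z)\leq f(1)$, and multiplying by $1-\theta^2\in(0,1)$ gives $(1-\theta^2)f(z)\leq (1-\theta^2)f(1)<f(1)$, as $\theta\neq 0$.

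Next, the key step is to show $H(z,\theta)<0$, for which I use the second representation \eqref{H_p(z,theta)NotSingular2}:
\[
H(z,\theta)=\frac{\theta}{\sqrt{f(z)}}\bigl(f(1)-2(1-\theta^2)A(z)f(z)\bigr)-(1-\theta^2)f'(z)\int^{z}_{f_1^{-1}((1-\theta^2)f(z))}\frac{g(t)}{f'(t)^4}\sqrt{f(t)-(1-\theta^2)f(z)}\,dt.
\]
For the first term, Lemma \ref{lemma:2.63} tells us $A(z)\leq 0$ on $[1,\infty)$, hence $f(1)-2(1-\theta^2)A(z)f(z)\geq f(1)>0$; combined with $\theta<0$, the first term is strictly negative. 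For the second term, Proposition \ref{prop:gNegativa} gives $g(t)<0$ for $t\neq 1$, so the integrand is strictly negative, while $f'(z)\leq 0$ on $[1,\infty)$ and $1-\theta^2>0$; thus $-(1-\theta^2)f'(z)\cdot(\text{negative integral})\leq 0$, with strict inequality when $z>1$. At the remaining boundary case $z=1$, the second term vanishes because $f'(1)=0$, and the first term reduces to $\theta\sqrt{f(1)}<0$, so $H(1,\theta)<0$ as well.

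Combining the two sign analyses, both $H(z,\theta)$ and $(1-\theta^2)f(z)-f(1)$ are strictly negative on $[1,\infty)\times(-1,0)$, so $L_2'(z)>0$, proving that $L_2$ is strictly increasing on $[1,\infty)$. I do not foresee any genuine obstacle here beyond keeping the signs straight; the work has already been concentrated in the structural identity \eqref{H_p(z,theta)NotSingular2} and in the auxiliary Lemma \ref{lemma:2.63} and Proposition \ref{prop:gNegativa}, both of which are available.
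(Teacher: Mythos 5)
Your proof is correct and follows essentially the same route as the paper: both use the representation \eqref{H_p(z,theta)NotSingular2} of $H(z,\theta)$ together with the sign information $A\leq 0$ on $[1,\infty)$ (Lemma \ref{lemma:2.63}), $g<0$ (Proposition \ref{prop:gNegativa}), and $f'\leq 0$ on $[1,\infty)$ to conclude that $H(z,\theta)<0$ while $(1-\theta^2)f(z)-f(1)<0$, hence $L_2'>0$. Your treatment of the boundary case $z=1$ is in fact slightly more explicit than the paper's.
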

\begin{proof}
	By Lemma \ref{lemma:2.75}, it is enough to show that, for all $(z,\theta)\in (0,1]\times(-1,0]$, the expression for $H(z,\theta)$ in \eqref{H_p(z,theta)NotSingular2} satisfies $H(z,\theta)\leq0$.
	
	Fix $\theta\in(-1,0]$ and recall that for $z\geq 1$, we have $f'(z)\leq0$, with strict inequality if $z>1$. Since $g$, given in Lemma \ref{lemma:2.75}, is negative, we have $H(z,\theta)\leq 0$. From Lemma \ref{lemma:2.63}, 
	$f(1)-2(1-\theta^2)A(z)f(z)>0$ and thus, both terms in \eqref{H_p(z,theta)NotSingular2} are non-positive, which finishes the proof.
\end{proof}

\begin{lemma}\label{lemma:2.77}
	If $\theta\in(-1,0)$ and $z\in(0,1)$, then we have $L_2(z)=\frac{\theta}{\sqrt{2}}\int_0^1K(z,\theta,s)ds,$
	where
\[
   K(z,\theta,s):=\frac{\sqrt{f(z)}}{\sqrt{s}f'\left(f_1^{-1}\left(\theta^2f(z)s+(1-\theta^2)f(z)\right)\right)}.
    \]
	The map $z\mapsto K(z,\theta,s)$ is increasing and, in particular, $L_2$ is increasing in $(0,1)$. 
\end{lemma}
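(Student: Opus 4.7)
The plan is to prove the representation by a direct change of variables in the integral defining $L_2$ and to establish the monotonicity of $K$ by reducing it to an explicit polynomial inequality.

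For the integral representation, starting from
$$L_2(z)=\frac{1}{\sqrt{2}}\int_{f_1^{-1}((1-\theta^2)f(z))}^{z}\frac{dt}{\sqrt{f(t)-(1-\theta^2)f(z)}},$$
I would perform the substitution $t=f_1^{-1}\bigl(\theta^2 f(z)\,s+(1-\theta^2)f(z)\bigr)$. Since the affine map $s\mapsto \theta^2 f(z)s+(1-\theta^2)f(z)$ sends $[0,1]$ onto $[(1-\theta^2)f(z),f(z)]\subset[0,f(1)]$ and $f_1^{-1}$ is a smooth bijection $[0,f(1)]\to[0,1]$, this yields $t\in[f_1^{-1}((1-\theta^2)f(z)),z]\subset(0,1)$, matching the endpoints of the original integral. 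One then computes $f(t)-(1-\theta^2)f(z)=\theta^2 f(z)\,s$ and $dt=\theta^2 f(z)/f'(t)\,ds$. Substituting and using $\sqrt{\theta^2}=|\theta|$ produces the claimed integral formula (with prefactor $|\theta|/\sqrt{2}$, which is positive and thereby makes $L_2$ positive, as it must be).

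For the monotonicity of $K(z,\theta,s)=\sqrt{f(z)}/\bigl(\sqrt{s}\,f'(G(z))\bigr)$ in $z$, with $G(z):=f_1^{-1}\bigl(f(z)(1-\theta^2(1-s))\bigr)$, I would differentiate with respect to $z$. Implicit differentiation of $f(G(z))=f(z)(1-\theta^2(1-s))$ gives $G'(z)=f'(z)(1-\theta^2(1-s))/f'(G(z))$, and after a short simplification one obtains
$$\frac{\partial K}{\partial z}(z,\theta,s)=\frac{f'(z)\bigl(f'(G(z))^2-2f(G(z))\,f''(G(z))\bigr)}{2\sqrt{s}\,\sqrt{f(z)}\,f'(G(z))^3}.$$
For $z\in(0,1)$ and $s\in(0,1)$, one has $G(z)\in(0,z)\subset(0,1)$, so $f'(z)>0$ and $f'(G(z))>0$; thus the sign of $\partial K/\partial z$ coincides with that of $\Phi(t):=f'(t)^2-2f(t)f''(t)$ at $t=G(z)$. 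A direct expansion using $f(x)=x^2/2-x^p/p$ produces the clean identity
$$\Phi(t)=\frac{(p-2)\,t^p}{p}\bigl((p-1)-t^{p-2}\bigr),$$
which is strictly positive for $t\in(0,1)$ and $p>2$. Hence $K$ is strictly increasing in $z$ on $(0,1)$; integrating over $s$ (with positive prefactor) yields the monotonicity of $L_2$ on $(0,1)$.

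The only nonroutine step is the clean factorization of $\Phi$; once this is in hand, the proof reduces to elementary bookkeeping for the substitution and for the signs, the latter being the one subtle point since $\theta<0$.
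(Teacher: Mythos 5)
Your proof is correct, and while the derivation of the integral representation is essentially the same change of variables as in the paper ($t=f_1^{-1}(u)$ followed by the affine reparametrization $u=\theta^2 f(z)s+(1-\theta^2)f(z)$), your monotonicity argument takes a genuinely different and more direct route. The paper, after computing the same expression
\[
\frac{\partial K}{\partial z}=\frac{f'(z)}{2\sqrt{sf(z)}\,f'(X)^3}\Bigl[f'(X)^2-2f(z)f''(X)\bigl(1-\theta^2(1-s)\bigr)\Bigr],\qquad X=f_1^{-1}\bigl(\theta^2f(z)s+(1-\theta^2)f(z)\bigr),
\]
establishes positivity of the bracket by showing it vanishes at $z=0$ and is strictly increasing in $z$ (via $f'''<0$). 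You instead observe that $f(X)=f(z)\bigl(1-\theta^2(1-s)\bigr)$ by the very definition of $X$, so the bracket collapses to the single-variable quantity $\Phi(X)=f'(X)^2-2f(X)f''(X)$, which factors explicitly as $\frac{(p-2)}{p}t^p\bigl((p-1)-t^{p-2}\bigr)>0$ for $t\in(0,1)$; this is exactly the factorization that already appears in the paper's proof of Lemma \ref{lemma:2.64} (in the computation of $\psi(z,0)$), so your argument is both shorter and unifies the two computations. Two further remarks: (i) you correctly note that the substitution produces the prefactor $|\theta|/\sqrt{2}=-\theta/\sqrt{2}$, which is what makes $L_2$ positive and increasing; the statement's prefactor $\theta/\sqrt{2}$ with $\theta<0$ is a sign slip in the paper (one has $\sqrt{s\theta^2 f(z)}=|\theta|\sqrt{sf(z)}$), and your handling of it is the right one; (ii) your verification that $G(z)\in(0,z)\subset(0,1)$, which guarantees $f'(z)>0$ and $f'(G(z))>0$ and hence that the sign of $\partial K/\partial z$ is that of $\Phi(G(z))$, is exactly the point that needs checking and is done correctly. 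No gaps.
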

\begin{proof}
	Since $z<1$, we write $z=f_1^{-1}\left(f(z)\right)$ and consider the change of variable $t=f_1^{-1}(u)$. Then $u(s)=\theta^2f(z)s+(1-\theta^2)f(z)$ and
	\begin{align*}
    L_2(z)&=\frac{1}{\sqrt{2}}\int_{f_1^{-1}\left((1-\theta^2)f(z)\right)}^{f_1^{-1}(f(z))}\frac{dt}{\sqrt{f(t)-(1-\theta^2)f(z)}}=\frac{1}{\sqrt{2}}\int_{(1-\theta^2)f(z)}^{f(z)}\frac{1}{\sqrt{u-(1-\theta^2)f(z)}}\frac{du}{f'\left(f_1^{-1}(f(u))\right)}\\
        &=\frac{1}{\sqrt{2}}\int_0^1\frac{\theta^2f(z)}{\sqrt{s\theta^2f(z)}}\frac{ds}{f'\left(f_1^{-1}(u(s))\right)}=\frac{\theta}{\sqrt{2}}\int_0^1K(z,\theta,s)ds,
    \end{align*}
	where $K$ is defined as in the statement. We show that, for fixed $(\theta,s)\in(-1,0)\times(0,1)$, the map $z\mapsto K(z,\theta,s)$ is increasing.
	
	To simplify, let $X(z,\theta,s):=f_1^{-1}\left(\theta^2f(z)s+(1-\theta^2)f(z)\right)$, which satisfies $\frac{\partial}{\partial z}X(z,\theta,s)=\frac{f'(z)(1-\theta^2(1-s))}{f'\left(X(z,\theta,s)\right)}$. Note that, by construction, $X(z,\theta,s)\leq 1$. Differentiating $K$ with respect to $z$ yields
	\begin{align*}
		\frac{\partial}{\partial z}K(z,\theta,s)&=\frac{1}{\sqrt{s}f'\left(X(z,\theta,s)\right)^2}\left[\frac{f'(z)}{2\sqrt{f(z)}}f'(X(z,\theta,s))-\sqrt{f(z)}f''\left(X(z,\theta,s)\right)\frac{\partial}{\partial z}X(z,\theta,s)\right]\\
		&=\frac{f'(z)}{2\sqrt{sf(z)}f'\left(X(z,\theta,s)\right)^3}\left[f'\left(X(z,\theta,s)\right)^2-2f(z)f''\left(X(z,\theta,s)\right)(1-\theta^2(1-s))\right]\\
		&=:\frac{f'(z)}{2\sqrt{sf(z)}f'\left(X(z,\theta,s)\right)^3}g(z,\theta,s).
	\end{align*}
	Since $z\in(0,1)$ and $X(z,\theta,s)\in (0,1)$, it is then enough to show that $g(z,\theta,s)\geq 0$.
	Using the fact that $f'''\left(X(z,\theta,s)\right)\leq0$ and  $f'\left(X(z,\theta,s)\right)\geq0$, we obtain
	\begin{align*}
		\frac{\partial}{\partial z}g(z,\theta,s)&=2f'(X(z,\theta,s))f''(X(z,\theta,s))\frac{\partial}{\partial z}X(z,\theta,s)\\
		-&2(1-\theta^2(1-s))\left(f'(z)f''\left(X(z,\theta,s)\right)+f(z)f'''\left(X(z,\theta,s)\right)\frac{\partial}{\partial z}X(z,\theta,s)\right)\\
		&=-2f(z)f'(z)f'''\left(X(z,\theta,s)\right)\frac{(1-\theta^2(1-s))^2}{f'\left(X(z,\theta,s)\right)}>0\ \text{for all}\ z,\theta,s\in(0,1).
	\end{align*}
	Whence, the map $z\mapsto g(z,\theta,s)$ is (strictly) increasing. Moreover, since $X(0,\theta,s)=f_1^{-1}(0)=0$ for all $(\theta,s)\in(-1,0)\times (0,1)$, we have that $g(0,\theta,s)=0$ for all $(\theta,s)\in(-1,0)\times (0,1)$. Thus, $g(z,\theta,s)>0$ for all $(z,\theta,s)$, which finishes the proof.
\end{proof}

\begin{proof}[Proof of Proposition \ref{prop:2.74}]
	The facts that $L_2$ is positive and differentiable follow from its construction and Lemma \ref{lemma:2.75}, respectively. The monotonicity is a consequence of Lemmas \ref{lemma:2.76} and \ref{lemma:2.77}. It remains to prove \eqref{eq:limitesL2}. We begin with $z\to0^+$.
	
	Recall the characterization of $L_2$ in Lemma \ref{lemma:2.77}.
	Observe that, for each $(\theta,s)\in (-1,0)\times(0,1)$ the map $z\mapsto K(z,\theta,s)$ is strictly increasing and,
    for  $X(z,\theta,s):=f_1^{-1}\left(\theta^2f(z)s+(1-\theta^2)f(z)\right)$,
	\begin{align*}
		\lim_{z\to 0^+}&(K(z,\theta,s))^2=\lim_{z\to 0^+}\frac{1}{s}\frac{f(z)}{f'(X(z,\theta,s))^2}=\frac{1}{s}\lim_{z\to 0^+}\frac{f'(z)}{2f'\left(X(z,\theta,s)\right)f''\left(X(z,\theta,s)\right)\frac{\partial X}{\partial z}(z,\theta,s)}\\
		=&\frac{1}{s}\lim_{z\to 0^+}\frac{1}{2f''\left(X(z,\theta,s)\right)\left(1-\theta^2(1-s)\right)}=\frac{1}{s}\frac{1}{2f''(0)\left(1-\theta^2(1-s)\right)}=\frac{1}{s}\frac{1}{2\left(1-\theta^2(1-s)\right)}.
	\end{align*}
    It noe follows by the Monotone Convergence Theorem that
	\begin{align*}
    \lim_{z\to 0^+}L_2(z)&=\frac{\theta}{\sqrt{2}}\int_0^1\lim_{z\to 0^+}K(z,\theta,s)ds=\int_0^1\frac{\theta ds}{2\sqrt{s\left(1-\theta^2(1-s)\right)}}\\&=\left.\argsinh\left(\frac{\theta\sqrt{s}}{\sqrt{1-\theta^2}}\right)\right|^{s=1}_{s=0}=\argsinh\left(\frac{\theta}{\sqrt{1-\theta^2}}\right).
    \end{align*}

    The limit as $z\to (p/2)^{1/(p-2)}$ is proved exactly as in Proposition \ref{prop:2.61}. 
\end{proof}

We finish this section with the proof of Theorem \ref{th:SolIVPGeralTheta}.

\begin{proof}[Proof of Theorem \ref{th:SolIVPGeralTheta}]
	We start with item 1. Fix\footnote{Even though the result for $\theta=0$ is included in the proof of \cite[Theorem 1.2, part 3]{AgostinhoCorreiaTavares}, this fact is slighlty hidden and, for that reason, we present a proof here.} $\theta=0$.  By Propositions \ref{prop:2.61} and \ref{prop:2.74}, define $\ell^*_1:=L_1(1)=L_2(1)=\frac{\pi}{\sqrt{p-2}}>0$.
	The sub-item (a) follows from Lemma \ref{PHASEPLANETHETA=0}-3. Sub-items (b) and (c) are both a consequence of asymptotic behavior of $L_1$ and $L_2$, items 1 and 3 of Propositions \ref{prop:2.61} and \ref{prop:2.74} and items 1 and 2 of the Lemma \ref{PHASEPLANETHETA=0}.

	Item 2 ($\theta\in(0,1)$) is a consequence of Proposition \ref{prop:2.61}-1. For item 3 ($\theta\in(-1,0)$), observe that $\ell^*_2=L_2(0)$ and use  Proposition \ref{prop:2.74}-2.
	Items 4 ($\theta=1$) and 5 ($\theta\le -1$) are a direct consequence of Lemmas \ref{lemma:Theta=1} and \ref{lemma:Theta<1}.
	Finally, item 6 ($\theta>1$) is a consequence of Lemma \ref{lemma:Theta<1}, Proposition \ref{prop:2.61} and the fact that the range of $L_1$ is $\R^+$.
\end{proof}

\section{Action ground-states on single-knot graphs}

\subsection{Existence of action ground-states}\label{sec:exist}

In this section, we prove our main results regarding the existence of action ground-states, namely Theorem \ref{thm:ags} and \ref{thm:agsregular}. We start with a general result, needed for both proofs.

\begin{lemma}\label{lemma:minimizing_sequences}
Let $\mathcal{G}$ be a single-knot metric graph that admits bound-states. Let $u_n$ be a  minimizing sequence of $\mathcal{S}_\mathcal{G}$, that is,
\[
u_n \text{ is a bound-state and } S(u_n)\to \mathcal{S}_\mathcal{G}.
\]
Then:
\begin{enumerate}
    \item There exists $u\in H^1(\mathcal{G})$ such that $u_n\to u$ weakly in $H^1(\mathcal{G})$ and strongly in $L^q_{\rm loc}(\mathcal{G})$ for every $q\in [2,\infty)$.
    \item Moreover, if $u\not \equiv 0$, then $u$ is an action ground-state.
\end{enumerate}
\end{lemma}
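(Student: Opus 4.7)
The plan is to proceed in four natural steps: boundedness of the minimizing sequence, extraction of a weak limit, passage to the limit in the bound-state equation, and identification of the ground-state level.

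First I would exploit the fact that each $u_n$ is a bound-state to obtain the Nehari identity $\|u_n\|_{H^1(\mathcal{G})}^2 = \int_{\mathcal{G}} |u_n|^p\,dx$, by testing the weak formulation \eqref{Weak-Solution} against $\eta = u_n$. Substituting this into the action gives $S(u_n) = \big(\tfrac{1}{2}-\tfrac{1}{p}\big)\|u_n\|_{H^1(\mathcal{G})}^2$, so that the finiteness of $\mathcal{S}_{\mathcal{G}}$ (ensured by the existence of at least one bound-state) immediately yields a uniform $H^1$-bound on the minimizing sequence. By reflexivity, a subsequence satisfies $u_n \rightharpoonup u$ weakly in $H^1(\mathcal{G})$. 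A standard application of Rellich--Kondrachov on the compact core and on bounded initial segments of the half-lines upgrades this to strong convergence in $L^q_{\mathrm{loc}}(\mathcal{G})$ for every $q\in[2,\infty)$. This settles item 1.

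Suppose now $u\not\equiv 0$. To prove that $u$ is a bound-state, I would pass to the limit in \eqref{Weak-Solution} tested against an arbitrary $\eta\in H^1(\mathcal{G})$. The two linear terms are handled directly by weak convergence in $H^1(\mathcal{G})$. The nonlinear term is the only subtle point, since $\eta$ need not be compactly supported and the half-lines are unbounded. I would split the integral at some $R>0$: on the portion of $\mathcal{G}$ where $|x|\leq R$, the strong $L^p_{\mathrm{loc}}$-convergence together with the continuous embedding $H^1(\mathcal{G})\hookrightarrow L^p(\mathcal{G})$ yields convergence; on the tail, H\"older's inequality gives
\[
\Big|\int_{\{|x|>R\}} |u_n|^{p-2}u_n\,\eta\,dx\Big| \leq \|u_n\|_{L^p(\mathcal{G})}^{p-1}\,\|\eta\|_{L^p(\{|x|>R\})},
\]
which vanishes as $R\to\infty$ uniformly in $n$, thanks to the $H^1$-bound on $u_n$ and to $\eta\in L^p(\mathcal{G})$.

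Once $u$ is known to be a bound-state, the ground-state identification follows by combining the Nehari identity for $u$ with the weak lower semicontinuity of the $H^1$ norm:
\[
S(u) = \Big(\tfrac{1}{2}-\tfrac{1}{p}\Big)\|u\|_{H^1(\mathcal{G})}^2 \leq \liminf_{n\to\infty}\Big(\tfrac{1}{2}-\tfrac{1}{p}\Big)\|u_n\|_{H^1(\mathcal{G})}^2 = \mathcal{S}_{\mathcal{G}},
\]
while the reverse inequality $S(u)\geq \mathcal{S}_{\mathcal{G}}$ is automatic since $u$ is itself a bound-state. The main obstacle throughout is therefore concentrated in the nonlinear passage to the limit on the unbounded part of $\mathcal{G}$, and it is resolved by the $L^p$-decay of $H^1$-functions on half-lines. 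Note that at this stage we have not ruled out $u\equiv 0$; this will be the central dichotomy addressed in the proofs of Theorems \ref{thm:ags} and \ref{thm:agsregular}.
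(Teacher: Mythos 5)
Your proof is correct, and item 1 follows the paper essentially verbatim (Nehari identity, uniform $H^1$-bound, weak convergence plus Rellich on compact subsets, and passage to the limit in the weak formulation with the tail controlled by H\"older). For item 2, however, you take a genuinely different and in fact shorter route. The paper exploits the rigid structure of bound-states on the half-lines of a single-knot graph: each $(u_n)_h$ is either zero or a translate $\pm\varphi(\cdot\pm y_n)$ of the soliton, and the argument splits according to whether $u(\mathbf{0})\neq 0$ (so the $y_n$ stay bounded and $u_n\to u$ strongly in $L^p(\mathcal{G})$) or $u(\mathbf{0})=0$ (so $y_n\to\infty$ and the defect of compactness is exactly $H^-S(\varphi)\geq 0$, whence $\mathcal{S}_\mathcal{G}=S(u)+H^-S(\varphi)\geq S(u)\geq\mathcal{S}_\mathcal{G}$). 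You instead combine the Nehari identity $S(u)=\left(\tfrac12-\tfrac1p\right)\|u\|_{H^1(\mathcal{G})}^2$ for the limit $u$ with weak lower semicontinuity of the $H^1$-norm to get $S(u)\leq\liminf S(u_n)=\mathcal{S}_\mathcal{G}$, and conclude by minimality of $\mathcal{S}_\mathcal{G}$ over nontrivial bound-states. This is valid, more elementary, and does not use the single-knot structure at all, so it would work on any noncompact metric graph with finitely many edges. What the paper's longer argument buys is structural information that is reused later: it identifies the escaping mass as $H^-$ full solitons on the half-lines where $(u_n)_h=\varphi(\cdot-y_n)$ (and in fact forces $H^-=0$ when $u\not\equiv 0$), and this same half-line decomposition is the engine of the blow-up analysis in the proofs of Theorems \ref{thm:ags} and \ref{thm:agsregular} for the remaining case $u\equiv 0$, which, as you correctly note, is not addressed by the lemma itself.
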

\begin{proof}
1. The bound-state equation implies that $\|u_n\|_{H^1(\mathcal{G)}}^2=\|u_n\|_{L^p(\mathcal{G})}^p$, and thus
$$
\left(\frac{1}{2} - \frac{1}{p}\right)\|u_n\|_{H^1(\mathcal{G})}^2=\frac{1}{2}\|u_n\|_{H^1(\mathcal{G})}^2-\frac{1}{p}\|u_n\|_{L^p(\mathcal{G})}^p =S(u_n)\to \mathcal{S}_\mathcal{G}.
$$
In particular, $u_n$ is bounded in $H^1(\mathcal{G})$ and, up to a subsequence, $u_n\rightharpoonup u$ and, on compact sets, we have strong-$L^q$ convergence. From this, the equation $-u_n''+u_n=|u_n|^{p-2}u_n$ passes to the limit in a weak sense, and $u$ is either a bound-state or $u=0$.    

2. Let us assume that $u\not \equiv 0$.  Recall that, on each half-line $h$, $u_n$ must either be zero or coincide with a portion of a soliton: there exists $y_n>0$ such that
$$
(u_n)_h=\pm\varphi(\cdot \pm y_n), \quad y_n\ge 0.
$$
If $u(\mathbf{0})\neq 0$, then the sequence $(y_n)$ is uniformly bounded and thus $u_n\to u$ in $L^p(\mathcal{G})$. This implies that
$$
S(u)=\left(\frac{1}{2}-\frac{1}{p}\right)\|u\|_{L^p(\mathcal{G})}^p =\lim \left(\frac{1}{2}-\frac{1}{p}\right)\|u_n\|_{L^p(\mathcal{G})}^p =\mathcal{S}_\mathcal{G}
$$
and $u$ is an action ground-state.

Suppose now that $u(\mathbf{0})=0$, so that $y_n\to \infty$. Without loss of generality, we may suppose that $u_n(\mathbf{0})>0$. Up to a subsequence, there exist $H^+$ (constant) half-lines where $(u_n)_h=\varphi(\cdot + y_n)$, and $H^-=H-H^+$ half-lines where $(u_n)_h=\varphi(\cdot - y_n)$. Since the action of a soliton is positive,
$$
\mathcal{S}_\mathcal{G}=\lim S(u_n) = S(u) + H^- S(\varphi) \ge S(u)\geq \mathcal{S}_\mathcal{G}. \qedhere
$$

\end{proof}

\begin{proof}[Proof of Theorem \ref{thm:ags}]
 Let $\mathcal{G}$ be a single-knot graph that admits bound-states, and take $u_n$ to be a minimizing sequence associated with the level $\mathcal{S}_\mathcal{G}$. Up to a subsequence, either $u_n(\mathbf{0})=0$ or $u_n(\mathbf{0})\neq 0$. 
 
 In the first case, $u_{n,h}=0$ for all $h\in \Hcal$, and $u_n\to u$ strongly in $L^p(\mathcal{G})$. From the identity
 \[
 \|u_n\|_{H^1(\mathcal{G})}^2=\|u_n\|^p_{L^p(\mathcal{G})},
 \]
 we deduce that $u\not \equiv 0$ and the proof is finished.

 \smallbreak
 
 From now on, we consider the second case, $u_n(\mathbf{0})\neq 0$. Without loss of generality, we may suppose that $u_n(\mathbf{0})>0$ and, recalling \eqref{eq:portion}, let $y_n>0$ be such that
$$
u_{n,h}=\varphi(\cdot \pm y_n)\quad \text{ for every unbounded edge $h$}.
$$
By Lemma \ref{lemma:minimizing_sequences}  we have, up to a subsequence, $u_n\rightharpoonup u\in H^1(\mathcal{G})$ and strongly in $L^q_{\rm loc}(\mathcal{G})$. 
In order to conclude, we only need to prove that $u\not \equiv 0$. 

Suppose, by contradiction, that $u\equiv 0$. In particular, $y_n\to \infty$. Let $H^-$ be the number of half-lines on which $u_{n,h}=\varphi(x-y_n)$ (this number is constant, up to a subsequence), and $H^+:=H-H^-$. Take $\tilde{\mathcal{K}}$ to be the subgraph whose edges are the ones of the compact core of $\mathcal{G}$, together with $h\cap \overline{B_1(0)}$ for every half-line $h$ (in other words, the unit segments in each half-line starting at $\mathbf{0}$).  Observing that $\|u_n\|_{L^\infty(\tilde{ \mathcal{K}})}\to 0$, define
$$
v_n(x)=\frac{u_n}{\|u_n\|_{L^\infty(\tilde{\mathcal{K})}}},\quad x\in \tilde{\mathcal{K}},
$$
which solves 
\[
    -(v_n)_e''+(v_n)_e = \|u_n\|_{L^\infty(\tilde{ \mathcal{K})}}^{p-2}|(v_n)_e|^{p-2}(v_n)_e,\qquad \sum_{e} (v_n)_e'(0)=0,
\]
where $e$ runs through the set of edges of $\tilde{\mathcal{K}}$. The integration of the equation on all edges yields, by the Neumann-Kirchoff condition at $\mathbf{0}$,
\begin{equation}\label{eq:boundh1}
    \|v_n\|_{H^1(\tilde{\mathcal{K}})}^2 =  \|u_n\|_{L^\infty(\tilde{\mathcal{K}})}^{p-2}\|v_n\|_{L^p(\tilde{\mathcal{K}})}^p + \sum_{h\in \Hcal} \frac{u_{n,h}'(1)u_{n,h}(1)}{\|u_n\|_{L^\infty(\tilde{\mathcal{K})}}^2} 
\end{equation}
For each half-line $h\in \Hcal$,
\begin{equation}\label{eq:unhalfline}
(u_{n})_h'(x)=\pm \sqrt{2f(u_{n,h}(x))}
\end{equation}
so that    $|(u_{n})_h'(x)|\leq C |(u_n)_h(x)|\leq C\|u_n\|_{L^\infty(\tilde{\mathcal{K}})}$.
 Then, by \eqref{eq:boundh1}, $v_n$ is bounded in $H^1(\tilde{\mathcal{K}})$ and, up to a subsequence, $v_n\rightharpoonup v$ in $H^1(\tilde{\mathcal{K}})$ and $v_n\to v$ in $L^q(\tilde{\mathcal{K}})$, $q\in [2,\infty]$. In particular, for every edge $e$ of $\tilde{\mathcal{K}}$,
 \[
    -v_e''+v_e = 0,\qquad \sum_{e} v_e'(0)=0,\qquad \|v\|_{L^\infty(\tilde{\mathcal{K}})}=1.
\]
By \eqref{eq:unhalfline}, on each half-line $h\in \Hcal$, we have
$$
v_h'(0)=\pm v(\mathbf{0}),
$$
with $H^-$ half-lines with positive sign and $H^+$ half-lines with the negative sign.

On each segment $\kappa\in \Kcal$, the solution is given by $v_\kappa(x)=c_1e^x+c_2e^{-x}$. If $\kappa$ is a pendant, using the condition $v'_\kappa(\ell_\kappa)=0$, we have $c_1=\frac{v(\mathbf{0})}{1+e^{\ell_\kappa}}$, $c_2=\frac{v(\mathbf{0})e^{2\ell_\kappa}}{1+e^{\ell_\kappa}}$, and so
$$
v_\kappa(x)=\frac{v(\mathbf{0})\cosh(\ell_\kappa-x)}{\cosh(\ell_\kappa)},\quad x\in[0,\ell_\kappa].
$$
If $\kappa$ is a loop, the condition $v_\kappa(0)=v_\kappa(\ell_\kappa)$ yield $v'(\ell_\kappa/2)=0$ and
\[
v_\kappa(x)=\frac{v(\mathbf{0})\cosh(\ell_\kappa/2-x)}{\cosh(\ell_\kappa/2)},\quad x\in[0,\ell_\kappa].
\]
The Neumann-Kirchoff condition at $\mathbf{0}$ reads
$$
-\sum_{\substack{\kappa\in \Kcal \\ \kappa\ \text{pendant}}} v(\mathbf{0})\tanh(\ell_\kappa)-2\sum_{\substack{\kappa\in \Kcal \\ \kappa\ \text{loop}}} v(\mathbf{0})\tanh(\ell_\kappa/2) + \sum_{h\in \Hcal} v_h'(0) = 0.
$$
Since $v(\mathbf{0})\neq 0$ (since $\|v\|_{L^\infty(\tilde{\mathcal{K}})}=1)$), then
$$
\sum_{\substack{\kappa\in \Kcal \\ \kappa\ \text{pendant}}} \tanh(\ell_\kappa)+2\sum_{\substack{\kappa\in \Kcal \\ \kappa\ \text{loop}}} \tanh(\ell_\kappa/2) = H^--H^+ = H-2H^+.
$$
This is impossible, by assumption \eqref{eq:tanh}.
\end{proof}

Before going through the proof of Theorem \ref{thm:agsregular}, we show the following.

\begin{lemma}\label{lem:funcaol_zfixo}
    Fix $0<z<\left(\frac{1}{p-1}\right)^\frac{1}{p-2}<1$ and consider the mapping $\ell:(0,z)\to \R^+$ given by
    \begin{equation}
        \ell(w)=\frac{1}{\sqrt{2}}\int_{w}^z \frac{dt}{\sqrt{f(t)-f(w)}}.
    \end{equation}
    Then $\ell$ is strictly decreasing.
\end{lemma}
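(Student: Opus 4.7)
The plan is to interpret $\ell(w)$ dynamically rather than trying to manipulate the singular integral directly. Let $\phi(\cdot;w)$ denote the solution of the Cauchy problem
\begin{equation*}
\phi''=f'(\phi),\qquad \phi(0;w)=w,\qquad \phi'(0;w)=0,
\end{equation*}
associated with the Hamiltonian level $F(\phi,\phi')=f(w)$. Using that $f'>0$ on $(0,1)$ and energy conservation, $\phi(\cdot;w)$ is strictly increasing on some initial interval and the integral formula in the statement is exactly the first time at which $\phi$ reaches the value $z$; in particular $\phi(\ell(w);w)=z$ and $\phi'(\ell(w);w)=\sqrt{2(f(z)-f(w))}>0$. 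Smooth dependence on initial data gives $\ell\in C^{1}$, and implicit differentiation of $\phi(\ell(w);w)=z$ yields
\begin{equation*}
\ell'(w)=-\frac{\psi(\ell(w))}{\sqrt{2(f(z)-f(w))}},\qquad \psi(x):=\partial_{w}\phi(x;w).
\end{equation*}
Since the denominator is positive, the sign of $\ell'(w)$ is opposite to that of $\psi(\ell(w))$, so the whole problem reduces to showing $\psi(\ell(w))>0$.

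The function $\psi$ solves the variational equation
\begin{equation*}
\psi''=f''(\phi)\,\psi,\qquad \psi(0)=1,\qquad \psi'(0)=0.
\end{equation*}
The key observation is the hypothesis $z<(p-1)^{-1/(p-2)}$: this is precisely the threshold at which $f''(t)=1-(p-1)t^{p-2}$ changes sign, so $f''>0$ on $[0,z]$. Since $\phi(x;w)\in[w,z]$ for $x\in[0,\ell(w)]$, the coefficient $f''(\phi(x))$ is strictly positive on that interval. A standard comparison/bootstrap argument then shows that $\psi$ stays positive, convex and nondecreasing throughout $[0,\ell(w)]$: indeed $\psi''(0)=f''(w)>0$ forces $\psi'>0$ immediately after $0$, and as long as $\psi>0$ the equation gives $\psi''>0$, so $\psi$ keeps growing and cannot return to zero. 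In particular $\psi(\ell(w))\geq 1>0$, which yields $\ell'(w)<0$ as desired.

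The bulk of the argument is conceptual rather than computational; the only delicate point is justifying that $\phi(\cdot;w)$ really reaches $z$ monotonically and that the integral formula coincides with this hitting time. This follows from the phase-plane portrait already described in Section~\ref{sec:PositiveSol} (the orbit starting at $(w,0)$ with $w\in(0,1)$ travels along a closed curve crossing $u=z$ with $u'>0$). The main obstacle, if any, is simply keeping track of signs in the implicit differentiation and in the sign of $f''$, which is what makes the hypothesis $z<(1/(p-1))^{1/(p-2)}$ essential: without it $f''$ could become negative along the orbit and the variational argument for positivity of $\psi$ would break down.
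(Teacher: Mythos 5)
Your proposal is correct. The hitting-time identification is sound: for $w\in(0,z)$ the orbit through $(w,0)$ lies on the level set $F=f(w)$ inside the homoclinic loop, has $\phi''(0)=f'(w)>0$, and rises monotonically past $u=z$ (since $z<1<f_2^{-1}(f(w))$), so the integral is indeed the first time $\phi$ reaches $z$ and $\phi'(\ell(w);w)=\sqrt{2(f(z)-f(w))}>0$, which legitimizes the implicit differentiation. The variational equation $\psi''=f''(\phi)\psi$ with $\psi(0)=1$, $\psi'(0)=0$, together with $f''>0$ on $[w,z]\subset\bigl(0,(1/(p-1))^{1/(p-2)}\bigr)$, does give $\psi\ge 1$ on $[0,\ell(w)]$ by the bootstrap you describe, hence $\ell'<0$.

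This is a genuinely different route from the paper's. The paper rescales via $t=w+s(z-w)$, differentiates under the integral sign, and reduces the claim to the positivity on $(w,z]$ of an auxiliary function $g$ with $g(w)=0$ and $g'(t)=\tfrac12 f'(t)+\tfrac{z-t}{2}f''(t)>0$ — so both arguments ultimately rest on the same structural facts, namely $f'>0$ and $f''>0$ on $[w,z]$, with the hypothesis $z<(1/(p-1))^{1/(p-2)}$ entering in exactly the same way. What your approach buys is that it sidesteps the singular integral entirely (the paper's differentiated integrand is $O(s^{-1/2})$ and one must check integrability and the legitimacy of differentiating under the integral, which the paper passes over quickly), replacing it with the standard monotone-dependence-on-initial-data argument for period/hitting-time functions. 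What the paper's computation buys is uniformity of style with the rest of Section~\ref{sec:PositiveSol}, where the length functions $L_1,L_2$ are analyzed by precisely such integral manipulations. Either proof is acceptable.
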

\begin{proof}
    Setting $t=w+s(z-w)$,
    $$
    \ell(w)=\frac{1}{\sqrt{2}}\int_0^1 \frac{z-w}{\sqrt{f(w+s(z-w))-f(w)}}ds.
    $$
    and, differentiating in $w$,
    \begin{align*}
        \ell'(w)&=-\frac{1}{\sqrt{2}}\int_0^1 \frac{1}{(f(t)-f(w))^{3/2}}\left[ f(t)-f(w) + \frac{z-w}{2}((1-s)f'(t)-f'(w)) \right]ds\\
        &=-\frac{1}{\sqrt{2}}\int_0^1 \frac{1}{(f(w+s(z-w))-f(w))^{3/2}}g(w+s(z-w))ds,
    \end{align*}
    where
    $$
    g(t):=f(t)-f(w) + \frac{z-w}{2}\left(\frac{z-t}{z-w}f'(t)-f'(w)\right),\quad \text{ for } t\in[w,z].
    $$
    Since $g(w)=0$ and
    \begin{align*}
        g'(t)=f'(t)+\frac{z-w}{2}\left( -\frac{1}{z-w}f'(t) + \frac{z-t}{z-w}f''(t) \right) = \frac{1}{2}f'(t)+\frac{z-t}{2}f''(t)>0
    \end{align*}
   ($(\frac{1}{p-1})^\frac{1}{p-2}$ is the zero of $f''$), we conclude that $g$ is strictly positive on $(w,z]$ and, thus, $\ell'<0$ in $(0,z)$.
\end{proof}

\begin{proof}[Proof of Theorem \ref{thm:agsregular}]
First of all, we observe that the existence of bound-states is guaranteed by Theorem \ref{thm:amin}. Indeed, if $\mathcal{G}$ is a regular graph with $H$ half-lines, this theorem guarantees the existence of core-increasing solutions for any choice where $H^+\geq H^-$ (which corresponds to $\theta\geq 0$). 

Recalling once again  Lemma \ref{lemma:minimizing_sequences} and reasoning as in the beginning of the proof of Theorem \ref{thm:ags}, we want to show that, if $u$ is a weak $H^1$-limit of a minimizing sequence $u_n$ such that $u_n(\mathbf{0})>0$, then $u\not \equiv 0$.

By contradiction, suppose that $u=0$. Since $u_n\to 0$ in $H^1$ on each compact edge, a direct phase-plane analysis shows that the orbit of each $u_n$ needs to be in the interior of the homoclinic curve (that is, the energy level $C_e$ must be positive). Moreover, up to asubsequence,
\begin{itemize}
    \item on each loop, $u_n$ must be symmetric (otherwise, $\|(u_n)_e\|_{L^\infty}\geq 1$). 
    \item $u_n$ must be strictly decreasing in $(0,\ell)$ with $(u_n)_\kappa'(0)<0$ if $\kappa$ is a terminal edge (otherwise, $(u_n)_\kappa(\ell)\geq 1$).
\end{itemize}

Set $z_n=\varphi(y_n)=u_n(0)$ and fix a compact edge $\kappa$. Let $w_n=(u_n)_\kappa(\ell)$. By the Neumann-Kirchoff conditions, $(C_n)_\kappa=f(w_n)$ and then
$$
\ell = \frac{1}{\sqrt{2}}\int_{w_n}^{z_n}\frac{dt}{\sqrt{f(t)-f(w_n)}}.
$$
By Lemma \ref{lem:funcaol_zfixo}, $w_n$ is determined by the values of $\ell$ and $z_n$ and therefore independent of the edge $\kappa$. In particular, $u_n$ is symmetric. By Theorem \ref{thm:amin}, there exists at most one positive decreasing bound-state on $\mathcal{G}$. In particular, $u_n$ must be a constant sequence (equal to $u$), which is absurd since $u=0$.
\end{proof}

While Theorems \ref{thm:ags} and \ref{thm:agsregular} are a definitive answer to the existence of action ground-states for single-knot graphs, they do not give us any information on the qualitative properties of action ground-states. As such, we characterize $\mathcal{S}_\mathcal{G}$ with a variational approach, whenever available.
Consider the auxiliary minimization problem  
\begin{equation}\label{eq:agscharact}
	\mathcal{I}_{\G}(\mu):=\inf_{ u\in H^1(\G)}\left\{ I (u):=\frac{1}{2}\norm{u'}{2}{\G}^2+\frac{1}{2}\norm{u}{2}{\G}^2:\  \frac{1}{p}\norm{u}{p}{\G}^p=\mu\right\}.
\end{equation}
The relation between action ground-states and minimizers of \eqref{eq:agscharact} was shown in \cite[Section 3]{AgostinhoCorreiaTavares}, adapting in a simple way the ideias from \cite{adami2016ground, adami2015nls} (see also \cite{de2023notion}). Set
$$
\mu_0=\left(\frac{2}{p}\mathcal{I}_{\G}(1)\right)^{\frac{p}{p-2}}
$$
and observe that, by scaling, $\mathcal{I}_{\G}(\mu)=\mu^{2/p}\mathcal{I}_{\G}(1)$. The following result applies to general non-compact graphs.

\begin{lemma}\label{lem:minim} Let $\mathcal{G}$ be a non-compact metric graph.
\begin{enumerate}
    \item Suppose that $\mathcal{I}_\mathcal{G}(\mu_0)$ is attained. Then $\mathcal{S}_\mathcal{G}=\frac{p}{p-2}\mathcal{I}_\mathcal{G}(\mu_0)$ is achieved. Moreover, $u\in H^1(\mathcal{G})$ is an action ground-state if and only if $u$ is a minimizer of $\mathcal{I}_\mathcal{G}(\mu_0)$,  in which case $u$ is a signed function. 
    \item  One has $\mathcal{I}_{\G}(\mu)\leq \mathcal{I}_{\R}(\mu)$. If there exists $v\in H^1(\mathcal{G})$ with $\|v\|_{L^p(\mathcal{G})}^p=p\mu$ such that $\mathcal{I}(v,\mathcal{G})\le \mathcal{I}_\R(\mu)$  for some $\mu>0$,
then the minimization problem \eqref{eq:agscharact} has a solution for any $\mu>0$.
\end{enumerate}
\end{lemma}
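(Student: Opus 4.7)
The plan is to adapt the standard correspondence between action ground-states and minimizers of the constrained $H^1$-energy, in the spirit of \cite{adami2015nls,adami2016ground} and \cite{AgostinhoCorreiaTavares}. The key point is that $\mu_0$ is defined precisely so that the Lagrange multiplier coming from \eqref{eq:agscharact} equals $1$, making constrained minimizers into genuine bound-states of \eqref{eq:bstate}.

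For item 1, given a minimizer $u$ of $\mathcal{I}_\mathcal{G}(\mu_0)$, the Lagrange multiplier theorem yields $\lambda\in\R$ with $-u''+u=\lambda|u|^{p-2}u$ edge-wise together with Neumann-Kirchhoff at every vertex. Testing against $u$ gives $\|u'\|_2^2+\|u\|_2^2=\lambda p\mu_0$, so $\mathcal{I}_\mathcal{G}(\mu_0)=\lambda p\mu_0/2$; combined with $\mathcal{I}_\mathcal{G}(\mu)=\mu^{2/p}\mathcal{I}_\mathcal{G}(1)$ and the definition of $\mu_0$, this forces $\lambda=1$, so $u$ is a bound-state. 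The value of $\mathcal{S}_\mathcal{G}$ then follows from the identity $S(u)=I(u)-\mu_0$. For the converse, any bound-state $w$ satisfies $S(w)=\frac{p-2}{2p}\|w\|_p^p$ (by testing the equation against $w$), so a one-parameter scaling $\sigma\mapsto\sigma w$ identifies action ground-states with minimizers of $\mathcal{I}_\mathcal{G}(\mu_0)$ after the canonical normalization. The signed property uses $I(|u|)=I(u)$ and $\||u|\|_p=\|u\|_p$: so $|u|$ is itself a minimizer and therefore a non-negative bound-state, and the strong maximum principle applied edge-wise forces $|u|>0$ everywhere, making $u$ sign-definite.

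For item 2, the inequality $\mathcal{I}_\mathcal{G}(\mu)\le\mathcal{I}_\R(\mu)$ follows by plugging shifted cut-off solitons on one half-line into the functional. Fix $h\in\Hcal$, a smooth cut-off $\chi_T$ with $\chi_T(0)=0$ and $\chi_T\equiv 1$ on $[1,\infty)$, and set $u_T(x)=\chi_T(x)\varphi_\mu(x-T)$ on $h$, extended by $0$ elsewhere, where $\varphi_\mu$ is the real-line soliton with $\tfrac{1}{p}\|\varphi_\mu\|_p^p=\mu$. Then $u_T\in H^1(\mathcal{G})$, and as $T\to+\infty$ both $\|u_T\|_p^p\to p\mu$ and $I(u_T)\to\mathcal{I}_\R(\mu)$; a rescaling $c_T\to 1$ enforces the constraint exactly. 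For the attainment, the hypothesis on $v$ yields either $I(v)=\mathcal{I}_\mathcal{G}(\mu)$ (so $v$ is itself a minimizer) or $\mathcal{I}_\mathcal{G}(\mu)<\mathcal{I}_\R(\mu)$; in the latter case, the scaling $\mathcal{I}_\bullet(\mu)=\mu^{2/p}\mathcal{I}_\bullet(1)$ propagates the strict inequality to every $\mu>0$ and, together with the subhomogeneity $(\mu_1+\mu_2)^{2/p}<\mu_1^{2/p}+\mu_2^{2/p}$ valid for $p>2$, gives the strict subadditivity
\begin{equation*}
\mathcal{I}_\mathcal{G}(\mu)<\mathcal{I}_\mathcal{G}(\mu_1)+\mathcal{I}_\R(\mu-\mu_1),\qquad 0<\mu_1<\mu.
\end{equation*}
A Lions-type concentration-compactness argument then rules out loss of mass for a minimizing sequence $u_n\rightharpoonup u$ in $H^1(\mathcal{G})$: any escape must occur along one of the finitely many half-lines, and a translated subsequence captures it as an $H^1(\R)$ profile contributing at least $\mathcal{I}_\R(\mu-\mu_1)$ to the limiting energy, contradicting the above inequality unless $\mu_1=\mu$; hence $u$ is a minimizer.

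The main technical hurdle is the concentration-compactness step: the escaping bubbles must be isolated by suitable translations along individual half-lines, with a Brezis-Lieb-type decomposition of both $\|\cdot\|_p^p$ and $I(\cdot)$ matched with the single-knot structure so that the local limits yield genuine real-line profiles. This is rendered tractable by the fact that any escape occurs far from the unique vertex $\mathbf{0}$.
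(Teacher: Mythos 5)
Your proposal is correct and takes essentially the same route as the paper, which does not prove this lemma directly but defers to \cite{AgostinhoCorreiaTavares} (Section 3), itself an adaptation of \cite{adami2015nls, adami2016ground}: the Lagrange-multiplier computation showing $\mu_0$ is calibrated so that $\lambda=1$, the scaling argument identifying bound-states of minimal action with constrained minimizers, the sign argument via $|u|$, and the dichotomy ``$v$ is itself a minimizer, or $\mathcal{I}_\mathcal{G}(\mu)<\mathcal{I}_\R(\mu)$ and strict subadditivity plus concentration--compactness along the half-lines restores compactness'' are exactly the standard scheme. One remark: your (correct) identity $S(u)=I(u)-\mu_0$ combined with $\mu_0=\tfrac{2}{p}\mathcal{I}_\mathcal{G}(\mu_0)$ yields $\mathcal{S}_\mathcal{G}=\tfrac{p-2}{p}\mathcal{I}_\mathcal{G}(\mu_0)$, so the factor $\tfrac{p}{p-2}$ in the statement appears to be a typo for its reciprocal, and your computation is the consistent one.
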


It is standard to show that $I_\mathcal{\mathcal{G}}(\mu_0)$ can be characterized as a minimization problem on a Nehari manifold, and that $\mathcal{S}_\mathcal{G}\geq \frac{p}{p-2}I_\mathcal{\mathcal{G}}(\mu_0)$. However, it is possible that $I_\mathcal{\mathcal{G}}(\mu_0)$ is not achieved and $\mathcal{G}$ admits a ground-state, or also that both $I_\mathcal{\mathcal{G}}(\mu_0)$ and $\mathcal{S}_\mathcal{G}$ are not achieved. We refer to \cite{de2023notion} for a detailed discussion on this topic.

Combining Lemma \ref{lem:minim} with Theorem \ref{thm:amin}, we can prove the full characterization of action ground-states in the tadpole graphs:

\begin{proof}[Proof of Proposition \ref{cor:tadpole}]
First, we show that all bound-states are symmetric. Let $u_2:[0,2\ell]\to \R$ be the restriction of the bound-state to the loop.
	Evaluating \eqref{ODE_Chapter2} at $x=0$ and $x=2\ell$, and subtracting both equations, we obtain $u'_\kappa(0)^2=u'_2(2\ell)^2$.
	If $u_2'(0)=u_2'(2\ell)$, the Neumann-Kirchoff conditions yield $u_h'(0)=0$, which leads to a contradiction. Therefore, $u_2'(0)=-u_2'(2\ell)$ and $u_2'(0)=-\frac{1}{2}u'_h(0)=-\frac{1}{2}\varphi'(y)$. In particular, by reflection symmetry, $u_2$ is symmetric with respect to $x=\ell$.

Next, we show that  $\mathcal{I}_{\mathcal{G}}(\mu)$ is achieved for any $\mu>0$. Here, we closely follow the same argument in \cite{adami2016threshold} (which deals with \emph{energy} ground-states) and, for that reason, we just present a sketch. Take $v\in H^1(\R)$ to be a minimizer of $\mathcal{I}_{\R}(\mu)$, which satisfies, up to multiplication by a constant, $v=\varphi$. Recall that the loop has a total length of $2\ell$. Consider a graph $\widetilde G\simeq \R/\{-\ell,\ell\}$, obtained as the quotient space of $\R$ identifying the points $x=-\ell$ and $x=\ell$. By exploiting the symmetry of $v$ we can now glue the points $v(-\ell)=v(\ell)$, which gives rise to a function, denoted by $w$, such that $w\in H^1(\widetilde \G)$ with $\norm{w}{p}{\widetilde \G}^p=\norm{v}{p}{\R}^p=p\mu$ and $I(w)=\mathcal{I}_{\R}(\mu)$.
	
	Since $\widetilde \G$ contains two half-lines, say $e_1$ and $e_2$, where $w_{1}(x)=w_2(x)=\varphi(x+\ell)$ (due to the symmetry of the soliton $\varphi$), by doing a decreasing rearrangement of the function on both  half-lines into a single $H^1(\R^+)$ function, we obtain a function $u\in H^1(\mathcal{G})$. Moreover, $\norm{u}{p}{\G}^p=\norm{w}{p}{\G}^p=p\mu$ and, since $w$ is strictly decreasing on the half-lines and decays to zero at infinity, $n(t)=\#\{x\in\G:w(x)=t\}=2$, for all $t\in(0,w(\ell))$. By Pólya-Szeg\"o's Inequality, \cite[Proposition 3.1]{adami2015nls}, it follows that $I(\tilde{u},\G_\ell)<I(w)=\mathcal{I}_{\R}(\mu)$. By Lemma \ref{lem:minim}, that $\mathcal{I}_{\R}(\mu)$, and there exist action ground-states.

Proceeding as in \cite[Lemma 5.3]{dovetta2020uniqueness}, we conclude that any minimizer is, up to a change of sign, a positive symmetric increasing bound-state, decreasing on the half-line. Finally, by Theorem \ref{thm:amin}, the action ground-state is unique.
\end{proof}

Finally, using Lemma \ref{lem:minim} and reasoning similarly to \cite[Proposition 4.1]{adami2015nls} (which deals with energy ground-states), one shows the following general criterium, which we shall use in Section \ref{sec:large_ell}.
\begin{proposition}\label{ThresholdPhenomProp}
	Let $\G$ be a non-compact graph with \textit{at least} a terminal edge of length $\ell$. Then there exists $C_p^*>0$ such that, if $\ell\geq C_p^*$, the level $\mathcal{I}_{\G}(\mu)$ is attained for any $\mu>0$. In particular, there exist action ground-states.
\end{proposition}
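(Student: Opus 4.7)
My plan is to apply Lemma \ref{lem:minim} by constructing a suitable competitor supported on the terminal edge. Part (2) of that lemma reduces the statement to producing some $\mu>0$ and $v\in H^1(\G)$ with $\norm{v}{p}{\G}^p=p\mu$ for which $I(v)\le \mathcal{I}_\R(\mu)$. Using the scaling $\mathcal{I}_\R(\mu)=\mu^{2/p}\mathcal{I}_\R(1)$ and the fact that $\mathcal{I}_\R(\mu_\varphi)$ is attained by $\varphi$ (so that $\mathcal{I}_\R(\mu_v)=(\norm{v}{p}{\G}/\norm{\varphi}{p}{\R})^2 I(\varphi,\R)$), this inequality rewrites in the scale-invariant form
\[
\frac{I(v)}{\norm{v}{p}{\G}^2}\;\le\;\frac{I(\varphi,\R)}{\norm{\varphi}{p}{\R}^2}.
\]

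For the competitor, I would let $\kappa$ be the terminal edge of length $\ell$, parameterized by $[0,\ell]$ with $0$ identified with $\mathbf{0}$, and set
\[
v_\kappa(x):=\varphi(\ell-x)-\varphi(\ell),\qquad v\equiv 0 \text{ on every other edge.}
\]
Since $\varphi$ is even, $v_\kappa(0)=0$, so $v$ is continuous at $\mathbf{0}$ and belongs to $H^1(\G)$. A change of variables $y=\ell-x$ gives
\[
I(v)=\tfrac{1}{2}\int_0^\ell\bigl[\varphi'(y)^2+(\varphi(y)-\varphi(\ell))^2\bigr]dy,\qquad \norm{v}{p}{\G}^p=\int_0^\ell(\varphi(y)-\varphi(\ell))^p\,dy,
\]
and by the exponential decay of $\varphi$ at infinity, dominated convergence yields, as $\ell\to\infty$, $I(v)\to \tfrac{1}{2}I(\varphi,\R)$ and $\norm{v}{p}{\G}^p\to \tfrac{1}{2}\norm{\varphi}{p}{\R}^p$ (using that $\varphi$ is even, so $\int_0^\infty=\tfrac12\int_\R$).

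Combining these limits,
\[
\lim_{\ell\to\infty}\frac{I(v)}{\norm{v}{p}{\G}^2}=2^{\frac{2}{p}-1}\,\frac{I(\varphi,\R)}{\norm{\varphi}{p}{\R}^2}<\frac{I(\varphi,\R)}{\norm{\varphi}{p}{\R}^2},
\]
the strict inequality coming from $p>2$. A threshold $C_p^*$ therefore exists so that, for every $\ell\ge C_p^*$, the function $v$ is a strict competitor; Lemma \ref{lem:minim}(2) gives attainment of $\mathcal{I}_\G(\mu)$ for every $\mu>0$, and Lemma \ref{lem:minim}(1) with $\mu=\mu_0$ then delivers the action ground-state. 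The argument is essentially an explicit asymptotic, so there is no substantial obstacle; the conceptual point I would emphasize is the strictly subadditive factor $2^{2/p-1}<1$: the pendant accommodates one half of a soliton, halving the action while also halving the $L^p$-mass, but halving the mass only reduces the real-line threshold by $2^{-2/p}>\tfrac12$, leaving precisely the gap that the regime $p>2$ is required to open.
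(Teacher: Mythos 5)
Your argument is correct and coincides with the paper's intended proof: the paper does not write out the details but explicitly defers to the adaptation of \cite[Proposition 4.1]{adami2015nls} via Lemma \ref{lem:minim}, and that adaptation is precisely your construction of the vertically shifted half-soliton $\varphi(\ell-\cdot)-\varphi(\ell)$ on the terminal edge, extended by zero, together with the strict subadditivity factor $2^{2/p-1}<1$. The scaling identity, the dominated-convergence limits as $\ell\to\infty$, and the final appeal to both parts of Lemma \ref{lem:minim} are all as the paper intends.
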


\subsection{Characterization of bound-states for small $\ell$}\label{ExUniqofPosSolution}

In this section we prove Theorems \ref{thm:small_ell}. {In this section, we take $\mathcal{G}$ a regular single-knot graph of length $\ell$ with a \emph{fixed} number of half-lines ($H$),  loops ($L$) and  pendants ($P$).}

The first part of Theorems \ref{thm:small_ell}  deals with positive solutions, so we let  $u=u_\ell$ to be a bound-state such that $u(\mathbf{0})> 0$. As already mentioned, there exists $y=y(\ell)> 0$ such that
$$
u_h=\varphi(\cdot + y)\mbox{ or }u_h=\varphi(\cdot - y),\quad \text{ for every half-line $h$.}
$$
Let us now take $H^+$ (resp. $H^-=H^-$) to be the number of half-lines on which the first (resp. second) alternative holds. Our first goal  is to prove that, for $\ell$ small, such a bound-state is unique (up to permutations on the half-lines).

\begin{lemma}\label{lemma:loops_sym}
Let $\ell\leq \ell_1^*=\frac{\pi}{\sqrt{(p-2)}}$.  If $u$ is a bound-state in $\mathcal{G}$ and $\kappa$ is a loop of $\mathcal{G}$ where $u_\kappa>0$, then $u_\kappa$ is symmetric in  $\kappa$ with respect to the center of the loop.
\end{lemma}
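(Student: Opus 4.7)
The plan is to reduce the statement to a phase-plane argument on the loop and invoke the threshold $\ell_1^* = \pi/\sqrt{p-2}$ provided by Proposition~\ref{prop:2.61} (case $\theta = 0$). I would parametrize the loop $\kappa$ as $[0, 2\ell]$ with both endpoints identified at $\mathbf{0}$, so that $u_\kappa(0) = u_\kappa(2\ell) = u(\mathbf{0})$. Since $u_\kappa > 0$, the Hamiltonian $F(u_\kappa, u_\kappa')$ is constant along $\kappa$, and evaluating at the two endpoints yields $u_\kappa'(0)^2 = u_\kappa'(2\ell)^2$. This produces the dichotomy $u_\kappa'(0) = -u_\kappa'(2\ell)$ or $u_\kappa'(0) = u_\kappa'(2\ell)$.

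In the first alternative the conclusion is immediate: the reflected function $x \mapsto u_\kappa(2\ell - x)$ solves the same ODE with the same Cauchy data at $x = 0$ as $u_\kappa$, so uniqueness delivers the symmetry $u_\kappa(x) = u_\kappa(2\ell - x)$ about the center $x = \ell$. The bulk of the work is to rule out the second alternative, unless $u_\kappa$ is constant (in which case $u_\kappa \equiv 1$ is trivially symmetric). In that alternative, uniqueness makes the periodic extension of $u_\kappa$ to all of $\R$ genuinely $2\ell$-periodic, which forces $u_\kappa > 0$ on all of $\R$. Then the energy $C_\kappa$ cannot be negative (such level sets meet $\{u = 0\}$) and cannot be zero either: the corresponding orbit would be a translate of the soliton $\varphi$, and combining $u_\kappa(0) = u_\kappa(2\ell) > 0$ with $u_\kappa'(0) = u_\kappa'(2\ell)$ together with the evenness of $\varphi$ and oddness of $\varphi'$ would force $\varphi'(\ell) = 0$, which is impossible since $\ell > 0$. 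Hence $0 < C_\kappa < (p-2)/(2p)$ and the orbit is a non-trivial closed curve around the center $(1, 0)$ inside the homoclinic loop.

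To close the argument, I would use the period formula from Section~\ref{sec:PositiveSol}: if $z_{\min} \in (0, 1)$ denotes the minimum of $u_\kappa$, the minimal period $T_{\min}$ of the orbit equals $2L_1(z_{\min})$ in the notation of Definition~\ref{def:2.60} with $\theta = 0$. By Proposition~\ref{prop:2.61}, $L_1$ is strictly decreasing on $(0, 1)$ with limit $\ell_1^*$ at $z = 1^-$, so $L_1(z_{\min}) > \ell_1^*$, giving $T_{\min} > 2\ell_1^* \geq 2\ell$. Since every period of $u_\kappa$ is an integer multiple of $T_{\min}$, the relation $2\ell = n T_{\min}$ with $n \geq 1$ is incompatible with $T_{\min} > 2\ell$, yielding the desired contradiction. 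The main delicate step is the phase-plane bookkeeping that excludes $C_\kappa \leq 0$; once this is done, the monotonicity of $L_1$ from Section~\ref{sec:PositiveSol} does all the heavy lifting.
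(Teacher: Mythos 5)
Your proof is correct and follows essentially the same route as the paper's: reduce non-symmetry on the loop to genuine periodicity of $u_\kappa$ (the paper phrases this tersely as ``it should have at least one period''), observe that positivity forces the orbit onto a closed level curve with $0<C_\kappa<f(1)$, and then contradict $\ell\le\ell_1^*$ via the lower bound on the half-period coming from the $\theta=0$ length function $L_1$ (the paper cites Theorem~\ref{th:SolIVPGeralTheta}-1(b), which rests on the same Proposition~\ref{prop:2.61} you invoke). Your endpoint-derivative dichotomy and the exclusion of $C_\kappa\le 0$ are just a more explicit write-up of the paper's phase-plane step.
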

\begin{proof}
Let $\kappa$ be a loop of length $2\ell$.  Assume $u_\kappa$ is not symmetric with respect to $x=\ell$. By a phase-plane analysis, then it should have at least one period. Since $u_\kappa$ is positive, then necessarily $F(u_\kappa,u_\kappa')=C_\kappa>0$, and the orbit is in the interior of the homoclinic. In particular, there exist two points $0< x_{1,\ell}<x_{2,\ell}\leq 2\ell$,  with $x_{2,\ell}-x_{1,\ell}\leq \ell$, where $u'$ vanishes, and $u$ is monotone decreasing in $(x_{1,\ell},x_{2,\ell})$. By Theorem \ref{th:SolIVPGeralTheta}-1(b), $\ell> \ell_1^*$.
\end{proof}

\begin{lemma}\label{lemma:uniformbounds}
There exists $M>0$ and $\bar \ell>0$ such that, if $\ell\in (0,\bar \ell)$ and $u$ is a positive bound-state in $\mathcal{G}$, then 
\[
\|u\|_{L^\infty(\Kcal)}\leq M.
\]
\end{lemma}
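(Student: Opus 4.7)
I argue by contradiction, combining the Neumann--Kirchhoff condition at $\mathbf{0}$ with the phase-plane analysis developed in Section~\ref{sec:PositiveSol}; in fact $M=(p/2)^{1/(p-2)}$ works for any $\bar\ell>0$, and the smallness of $\ell$ plays no essential role in the lemma itself. Suppose for contradiction that there exist $\ell_n\in(0,\bar\ell)$ and positive bound-states $u_n$ with $M_n:=\|u_n\|_{L^\infty(\mathcal{K})}\to+\infty$. Since each half-line satisfies $u_{n,h}=\varphi(\cdot\pm y_n)$, the vertex value $z_n:=u_n(\mathbf{0})=\varphi(y_n)\in(0,(p/2)^{1/(p-2)}]$ is uniformly bounded. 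Consequently, for $n$ large the maximum $M_n$ is attained at some $x_n\in(0,\ell_n]$ on a compact edge $\kappa_n$, and evaluating the conserved Hamiltonian $F(u_n,u_n')\equiv C_{n,\kappa_n}$ at $x_n$ yields
\[
C_{n,\kappa_n}=f(M_n)-\tfrac{1}{2}\bigl(u_{n,\kappa_n}'(x_n)\bigr)^2\leq f(M_n)\longrightarrow -\infty.
\]

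The crucial phase-plane fact is the following: for $C<0$, the level set $\{F=C\}$ restricted to the open half-plane $\{u>0\}$ is a single arc joining $(0^+,\sqrt{-2C})$ through the unique zero-velocity point $(f_2^{-1}(C),0)$ to $(0^+,-\sqrt{-2C})$, and it reaches $u=0$ in finite time. Hence, keeping $u_n$ positive on $\kappa_n$ forces the initial velocity $u_{n,\kappa_n}'(0)$ to lie on the \emph{upward} branch of this arc; otherwise the trajectory would hit $u=0$ before encountering any zero-velocity point. For a pendant this is immediate from $u'(\ell_n)=0$. For a loop, split at the dummy vertex into two half-edges $e_1,e_2$, the constraints $u(0)=u(2\ell_n)=z_n$ and $u>0$ on $[0,2\ell_n]$ together with the positive-$u$ arc structure force $u'(\ell_n)=0$ at the dummy and both halves to track the same upward branch, so $u_{e_j,n}'(0)=+\sqrt{2(f(z_n)-C_{n,\kappa_n})}$ for $j=1,2$. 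In either case the ``bad'' edge contributes at $\mathbf{0}$ a strictly positive derivative diverging to $+\infty$.

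Finally, apply the Neumann--Kirchhoff condition at $\mathbf{0}$, counting each loop via its two half-edges:
\[
\sum_{\kappa\in\mathcal{K}}u_{n,\kappa}'(0)+\sum_{h\in\mathcal{H}}u_{n,h}'(0)=0.
\]
The half-line sum has modulus at most $H\sqrt{2f(z_n)}\leq H\sqrt{(p-2)/p}$, and every compact edge with $C_{n,\kappa}\geq 0$ contributes $|u_{n,\kappa}'(0)|\leq\sqrt{2f(z_n)}$; thus all non-bad terms stay uniformly bounded. Since $M_n\to\infty$ forces the existence of at least one bad edge ($C_{n,\kappa}<0$), the sum of bad contributions is positive and divergent, contradicting the identity. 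Hence $\|u\|_{L^\infty(\mathcal{K})}\leq (p/2)^{1/(p-2)}$. The main obstacle is the sign bookkeeping in the loop case: one must rule out a spurious cancellation between the two half-loop-edges at $\mathbf{0}$, which I resolve using the geometry of the positive-$u$ arc and the matching at the dummy vertex.
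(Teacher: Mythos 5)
Your contradiction argument is sound and, at its core, rests on the same two ingredients as the paper's proof: the Neumann--Kirchhoff balance at $\mathbf{0}$, and the phase-plane observation that positivity together with the terminal condition $u_\kappa'(\ell)=0$ (resp.\ the closing condition $u_\kappa(0)=u_\kappa(2\ell)$ on a loop) forces the inward derivative at $\mathbf{0}$ of any edge with energy level $C_\kappa<0$ to equal $+\sqrt{2(f(z)-C_\kappa)}$. The paper organizes this around the quantities $u_{\kappa}'(0)$ and $u_\kappa(\ell)$, first reducing loops to pendants via Lemma \ref{lemma:loops_sym} (which needs $\ell\le\ell_1^*$) and then bounding $u_\kappa'(0)$ from below; you organize it around the energy levels $C_\kappa$ and treat loops directly, which has the genuine advantage that no smallness of $\ell$ is used anywhere. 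Indeed, your argument yields the explicit bound $C_\kappa\ge -(H+P+2L)^2 f(1)$ for every compact edge, hence $\|u\|_{L^\infty(\Kcal)}\le f_2^{-1}\left(-(H+P+2L)^2f(1)\right)$ for \emph{all} $\ell>0$.

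However, the value of $M$ you announce at the outset and restate in your last sentence, $M=(p/2)^{1/(p-2)}=\varphi(0)$, is false and must be corrected. Theorem \ref{thm:amin}(6) (equivalently Theorem \ref{th:SolIVPGeralTheta}(6)) produces, for every $\ell>0$, positive core-increasing bound-states with incidence index $\theta>1$ whenever $H^+-H^->P+2L$ (e.g.\ on any broom graph, where $\theta=H$); for these the orbit lies \emph{outside} the homoclinic, $C=(1-\theta^2)f(z)<0$, and $u_\kappa(\ell)=f_2^{-1}\left((1-\theta^2)f(z)\right)>(p/2)^{1/(p-2)}$. What your reductio actually establishes is only that $\sup_n M_n<\infty$ --- which is all the lemma asks for --- so the final sentence should state exactly that (or the explicit bound above); as written, it extracts from the contradiction a quantitative conclusion that does not follow from the argument and contradicts other results in the paper.
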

\begin{proof}

Let $\ell<\ell_1^*$, so that any bound-state is symmetric in each possible loop { with zero derivative at the middle point,} by Lemma \ref{lemma:loops_sym}.  {By cutting each loop at its middle point}{, we obtain two pendants. }Thus, we assume from now on that we are dealing with graphs whose compact core only has pendants, each one identified with $[0,\ell]$, and $u_\kappa'(\ell)=0$ for every $\kappa\in \Kcal$.

By contradiction, assume there exists $(u_\ell)_{\ell\geq 0}$ such that $\|u_\ell\|_{L^\infty(\Kcal)}\to \infty$ as $\ell\to 0^+$.

\smallbreak

\noindent {Step 1. } We claim that, as $\ell\to0^+$, $u_{\ell,\kappa}(\ell)$ is bounded, for every $\kappa\in \Kcal$.
	First, we check that there exists $\delta>0$, independent of $\ell$, such that $u'_{\ell,\kappa}(0)\geq-\delta$. If $u'_{\ell,\kappa}(0)\to-\infty$ as $\ell\to0^+$, by a phase-plane analysis, $u_{\ell,\kappa}$ would be sign-changing for small $\ell$, which is a contradiction. By the Neumann-Kirchoff conditions:
    $$0=\lim_{\ell\to 0^+}\left(\sum_{h\in \Hcal} u_{\ell,h}'(0)+ \sum_{\kappa\in \Kcal} u'_{\ell,\kappa}(0)\right)$$
	and since $u_{\ell,h}'(0)$ is bounded ($u_{\ell,h}$ is part of a soliton), then also $u_{\ell,\kappa}'(0)$ is bounded.
	
Since $u_{\ell,\kappa}(0)=\varphi(y(\ell))$ is bounded and 	
	\begin{equation*}
		-\frac{1}{2}{u'_{\ell,\kappa}}^2(0)+f\left(\varphi(y(\ell))\right)=f\left(u_{\ell,\kappa}(\ell)\right)
	\end{equation*}
    (because $u'_{\ell,\kappa}(\ell)=0$), then $u_{\ell,\kappa}(\ell)$ is also bounded for small $\ell$, as claimed.
	
\smallbreak

\noindent {Step 2.} There exists $M>0$, independent of $\ell$, such that $|u_{\ell,\kappa}|\leq M$ for every $\kappa\in \Kcal$.

	Let $x\in [0,\ell]$. Using again $u'_{\ell,\kappa}(\ell)=0$, 
	\begin{equation}\label{eqn.boundedbelow}
		f(u_{\ell,\kappa}(x))\geq-\frac{1}{2}{u'_{\ell,\kappa}}^2(x)+f\left(u_{\ell,\kappa}(x)\right)=f\left(u_{\ell,\kappa}(\ell)\right).
	\end{equation}
	Since $f$ is continuous, by the previous step we deduce that  the left hand side of \eqref{eqn.boundedbelow} is bounded below uniformly in $\ell$, hence the conclusion.
\end{proof}

\begin{lemma}\label{NeummanKirchofAux}
 Let $\epsilon>0$. Then, for any $\ell>0$ sufficiently small, if $u$ is a positive bound-state on $\mathcal{G}$, then $|u'_\kappa(0)|<\epsilon$ for all $\kappa\in \Kcal$.
\end{lemma}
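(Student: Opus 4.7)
My plan is to argue by continuity of the Hamiltonian together with the uniform $L^\infty$ bound of Lemma \ref{lemma:uniformbounds}. Since for $\ell<\ell_1^*$ every loop is symmetric with respect to its midpoint (Lemma \ref{lemma:loops_sym}), I may cut each loop at its center and reduce to the situation in which the compact core consists only of pendants; equivalently, for every compact edge $\kappa$ I can assume $u_\kappa'(\ell)=0$. This reduction is cheap but is what makes the conservation law usable from both endpoints.

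On a fixed such edge $\kappa$, the Hamiltonian $F(u_\kappa,u_\kappa')=C_\kappa$ combined with $u_\kappa'(\ell)=0$ gives
\[
C_\kappa = f(u_\kappa(\ell)),\qquad (u_\kappa'(x))^2 = 2\bigl(f(u_\kappa(x))-f(u_\kappa(\ell))\bigr)\quad\text{for all }x\in[0,\ell].
\]
By Lemma \ref{lemma:uniformbounds}, $\|u_\kappa\|_{L^\infty(0,\ell)}\leq M$ uniformly in $\ell$ (for $\ell$ small). Since $f$ is continuous and thus bounded on $[0,M]$, this identity shows $|u_\kappa'|$ is uniformly bounded on $[0,\ell]$ by some constant $C=C(M,p)$ independent of $\ell$.

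With the uniform bound on $u_\kappa'$ in hand, I integrate to obtain
\[
|u_\kappa(\ell)-u_\kappa(0)| \leq \int_0^\ell |u_\kappa'(x)|\,dx \leq C\,\ell \longrightarrow 0\qquad (\ell\to 0^+).
\]
Applying the uniform continuity of $f$ on $[0,M]$ then gives $|f(u_\kappa(0))-f(u_\kappa(\ell))|\to 0$, and feeding this back into the conservation law at $x=0$,
\[
(u_\kappa'(0))^2 = 2\bigl(f(u_\kappa(0))-f(u_\kappa(\ell))\bigr)\longrightarrow 0,
\]
uniformly over the finite collection of compact edges $\kappa\in\Kcal$. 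Choosing $\ell$ small enough makes $|u_\kappa'(0)|<\varepsilon$ for all $\kappa$, which is the claim. The only mildly delicate point is the initial reduction to pendants: one must observe that the restriction to each half-loop solves the same ODE with zero derivative at $x=\ell$, so all steps of the argument apply verbatim on it.
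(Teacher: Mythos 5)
Your argument is correct. It uses the same preliminary reduction as the paper (cutting each loop at its midpoint via Lemma \ref{lemma:loops_sym} so that every compact edge satisfies $u_\kappa'(\ell)=0$) and the same key input, the uniform $L^\infty$ bound of Lemma \ref{lemma:uniformbounds}; the only difference is mechanical. The paper gets the conclusion in one line by applying the mean value theorem to $u_\kappa'$ and using the equation $u_\kappa''=f'(u_\kappa)$, which yields the quantitative bound $|u_\kappa'(0)|=|f'(u_\kappa(\xi_\ell))|\,\ell\le C(M)\,\ell$ directly. You instead run the first integral twice: once at $x=\ell$ to bound $|u_\kappa'|$ uniformly, then integrate to get $|u_\kappa(0)-u_\kappa(\ell)|=O(\ell)$, and finally evaluate the Hamiltonian at $x=0$ together with the uniform continuity of $f$ on $[0,M]$. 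Both routes are sound and rest on the same two lemmas; the paper's is shorter and gives an explicit $O(\ell)$ rate, while yours is a touch longer but requires no reference to the second derivative. Your closing remark about the half-loops is the right thing to check and is exactly the justification the paper relies on.
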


\begin{proof}
As in the previous proof, taking $\ell<\ell_1^*$, we assume from now on that we are dealing with graphs whose compact core only has pendants, each one identified with $[0,\ell]$. 

Since $u_{\ell,\kappa}'(\ell)=0$, we have
	\begin{equation}\label{eqn.Lemma3.4}               |u_{\ell,
    \kappa}'(0)|=|u_{\ell,\kappa}'(0)-u_{\ell,\kappa}'(\ell)|=|u''_{\ell,\kappa}(\xi_\ell)|\ell=|f'(u_{\ell,\kappa}(\xi_\ell))|\ell,\quad \mbox{for some }\xi_\ell\in(0,\ell).
	\end{equation}
	By Lemma \ref{lemma:uniformbounds},  $|f'(u_{\ell,i}(\xi_\ell))|$ is uniformly bounded for $\ell>0$, and the result follows.

\end{proof}

The case where $H^+=H^-=\frac{H}{2}>0$ can be treated immediately.

\begin{proposition}\label{prop:CASEH^+=H^-}
Let $\mathcal{G}$ be a regular single-knot graph with length  $\ell$ sufficiently small, having an even number of half-lines $H$. For $H^+=H^-=H/2$  there exists a unique positive bound-state  with incidence index $\theta=(H^+-H^-)/(P+2L)=0$. This bound-state is  constant equal to 1 on the compact edges.
\end{proposition}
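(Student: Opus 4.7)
The plan is to reduce the problem to an overdetermined ODE on each compact edge and exploit the vanishing of half-line contributions at $\mathbf{0}$ (from $\theta=0$) to pin down $z:=u(\mathbf{0})$. Existence of a bound-state with $u\equiv 1$ on the compact core is immediate (pair up the half-lines, choose $y_*$ with $\varphi(y_*)=1$, and take $H/2$ lines of each sign); the main task is uniqueness.

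First, for $\ell<\ell^*_1$, Lemma \ref{lemma:loops_sym} ensures any positive bound-state is symmetric about the center of each loop; with convention \eqref{dummy}, this means both halves of any loop carry identical functions and each satisfies $u_{\kappa_j}'(\ell)=0$ at the dummy vertex, exactly like a pendant. Thus every compact edge $\kappa$ solves
\[
-u_\kappa''+u_\kappa=u_\kappa^{p-1}\ \text{on}\ [0,\ell],\qquad u_\kappa(0)=z,\qquad u_\kappa'(\ell)=0,
\]
with the same value $z$ at the origin.

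Second, I would show that for $\ell$ sufficiently small and $z\in(0,(p/2)^{1/(p-2)})$, this overdetermined problem has a unique initial slope $\beta^*(z,\ell):=u_\kappa'(0)$. Applying the implicit function theorem to $(\beta,\ell)\mapsto u_x(\ell;z,\beta)$ at $(\beta,\ell)=(0,0)$ produces a continuous branch with
\[
\beta^*(z,\ell)=-f'(z)\,\ell+O(\ell^3).
\]
A phase-plane inspection rules out any other branch for small $\ell$: any orbit hitting $\{y=0\}$ at a small time must start close to a zero of $y$ on its own level curve, and a direct calculation identifies such starting points with the IFT branch (the homoclinic value $\beta=\sqrt{2f(z)}$ is recovered only in the limit $z\to(p/2)^{1/(p-2)}$, where it coincides with the IFT branch at leading order). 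The crucial consequence is that $\beta^*$ depends only on $z$ and $\ell$, so it takes the same value on every compact edge.

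Finally, the Neumann–Kirchoff condition at $\mathbf{0}$ reads
\[
(P+2L)\,\beta^*(z,\ell)+(H^--H^+)\sqrt{2f(z)}=0.
\]
Since $\theta=0$, the second term vanishes, forcing $\beta^*(z,\ell)=0$. This demands $f'(z)=O(\ell^2)$, so $z$ must be close to a zero of $f'$ on $(0,(p/2)^{1/(p-2)})$, namely $z=0$ or $z=1$. Applying the implicit function theorem to the rescaled map $g(z,\ell):=\beta^*(z,\ell)/\ell$ at $(1,0)$, with $\partial_z g(1,0)=-f''(1)=p-2\ne 0$, the unique local branch of zeros is the constant $z\equiv 1$ (as $u\equiv 1$ gives $\beta^*\equiv 0$ identically); similarly the only branch near $z=0$ is $z\equiv 0$, which is excluded by positivity. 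For $z$ bounded away from $\{0,1\}$, $|f'(z)|$ is bounded below and $\beta^*$ does not vanish for small $\ell$. Hence $z=1$ and $u_\kappa\equiv 1$ on every compact edge, yielding the unique positive bound-state. The main technical obstacle is the uniqueness of $\beta^*$, which relies on the half-period lower bound $\ell^*_1$ from Section \ref{sec:PositiveSol} to exclude ``wraparound'' branches.
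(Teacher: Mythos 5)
Your overall strategy --- reduce to the overdetermined problem on each compact edge, show the initial slope is a function $\beta^*(z,\ell)$ of $z$ and $\ell$ alone (hence the bound-state is core-symmetric), then use Neumann--Kirchhoff with $\theta=0$ to force $\beta^*=0$ and finally $z=1$ --- is a legitimate alternative to the paper's argument. But the central claim, that for small $\ell$ the overdetermined problem $-u''+u=u^{p-1}$, $u(0)=z$, $u'(\ell)=0$, $u>0$ admits a \emph{unique} initial slope, is false as stated, and your ``phase-plane inspection'' does not close it. Besides the grazing branch $\beta^*(z,\ell)=-f'(z)\ell+O(\ell^3)$ produced by the implicit function theorem, there is a large-amplitude branch: for $\beta>0$ large the orbit lies on the level $C=f(z)-\beta^2/2\to-\infty$, stays positive and increasing up to its turning point $M=f_2^{-1}(C)\sim(p\beta^2/2)^{1/p}$, and the transit time
\[
T=\frac{1}{\sqrt2}\int_z^M\frac{dt}{\sqrt{f(t)-f(M)}}\sim c_p\,M^{1-p/2}\longrightarrow 0\quad\text{as }\beta\to\infty,
\]
so for every small $\ell$ there is a large $\beta$ with $T=\ell$. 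Such an orbit reaches $\{u'=0\}$ in time $\ell$ without starting anywhere near a zero of $u'$ on its level curve, so the principle you invoke (``small hitting time implies starting near the $u$-axis'') fails; the half-period bound $\ell_1^*$ only excludes orbits that wind around, not these fast high-amplitude excursions.

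The gap is fixable, but only by injecting an a priori bound you never state: Lemma \ref{lemma:uniformbounds} gives a uniform $L^\infty$ bound on positive bound-states for small $\ell$, whence $|u_\kappa'(0)|=|u_\kappa''(\xi)|\,\ell\leq C\ell$ as in Lemma \ref{NeummanKirchofAux}; restricted to slopes $|\beta|\leq C\ell$, a uniform-in-$z$ implicit function theorem does give uniqueness and your argument goes through. (Alternatively: an edge with large positive slope forces, via Kirchhoff, another edge with large negative slope, whose orbit leaves the homoclinic region and changes sign.) You should also treat the endpoint $z=(p/2)^{1/(p-2)}$, i.e.\ $y=0$, which your restriction to the open interval silently omits. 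For comparison, the paper never proves core-symmetry: it uses Lemmas \ref{lemma:uniformbounds}--\ref{NeummanKirchofAux} to locate one non-constant edge with $0\leq u_\kappa'(0)\leq\epsilon$ (or $-\epsilon\leq u_\kappa'(0)\leq 0$), whose orbit must then traverse the arc from $u=1$ to its turning point, and shows this transit time is bounded below uniformly in the energy level --- a direct contradiction with $\ell$ small. Finally, your last step ($f'(z)=O(\ell^2)$ plus a second implicit function theorem) is correct but can be replaced by one line: once $u_\kappa'(0)=u_\kappa'(\ell)=0$ with $\ell<\ell_1^*$, Lemma \ref{PHASEPLANETHETA=0} forces $u_\kappa\equiv 1$.
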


\begin{proof}
    By contradiction, suppose that $u$ is not constant on compact edges. Observe that, since $\theta=0$, the Neumann-Kirchoff condition at $\mathbf{0}$ reads
    \begin{equation}\label{eq:NL_auxH+=H-}
    \sum_{\kappa\in \Kcal} u_\kappa'(0)=0.
    \end{equation}
    Consider the case $0<u(\mathbf{0})\leq 1$ and take $\varepsilon>0$ small. From Lemma \ref{NeummanKirchofAux} and \eqref{eq:NL_auxH+=H-}, up to a subsequence there exists at least one edge $\kappa\in \Kcal$ on which $0\leq u_{\ell,\kappa}'(0)\leq \epsilon$. Since $u_\ell$ is positive, this implies that the orbit of $u_{\ell,\kappa}$ in the $(u,u')$-plane is enclosed in a rectangle $[0,A]\times [0,B]$, with $A>(p/2)^{1/(p-2)}$. Let $x_{0,\ell}\ge0$ be the first solution of $u_{\ell,\kappa}(x)=1$ and $x_{1,\ell}$ be the first positive zero of $u'_\kappa$. This implies that $u_{\ell,\kappa}$ is a strictly increasing positive solution of
    $$
    -u''+u=u^{p-1},\quad u(x_{0,\ell})=1,\quad u'(x_{1,\ell})=0.
    $$
    We have, using the change of variables $u(x)=t$,
    \begin{equation}\label{eq:newL}
    \ell\geq x_{1,\ell}-x_{\ell,0}=\frac{1}{\sqrt{2}}\int_1^
    {u(x_{1,\ell})}\frac{dt}{\sqrt{f(t)-f(u(x_{1,\ell}))}}.
    \end{equation}
The map 
\[
z\in (1,A]\mapsto \frac{1}{\sqrt{2}}\int_1^
    {z}\frac{dt}{\sqrt{f(t)-f(z)}}=\frac{1}{\sqrt{2}}\int_0^1 \frac{z-1}{\sqrt{f(z+(1-z)s)-f(z)}}\, ds
\]
is continuous and, since
 $$
    \lim_{z\to 1} \frac{(z-1)^2}{f(z(1-s)+z)-f(z)} = \lim_{z\to 1}\frac{2(z-1)}{f'(z(1-s)+z)(1-s)-f'(z)} = \frac{2}{f''(1)(1-s)^2-f''(1)},
    $$
we have
\begin{align*}
    \lim_{z\to 1^+} \int_0^1 \frac{z-1}{\sqrt{f(z+(1-z)s)-f(z)}}\, ds = \int_0^1 \frac{\sqrt{2}}{\sqrt{f''(1)((1-s)^2-1)}}ds >0.
\end{align*}
 This, combined with \eqref{eq:newL}, provides a contradiction for small $\ell$. The case $1<u(\mathbf{0})<(p/2)^{1/(p-2)}$ can be handled analogously, taking this time an edge $\kappa$ where $-\epsilon\leq u_{\ell,\kappa}(0)\leq 0$.

    It remains to exclude the case $u(\mathbf{0})=(p/2)^{1/(p-2)}$: in such case, there exists an edge on which $-\epsilon< u'_{\ell,\kappa}(0)  \leq 0$. If $u_{\ell,\kappa}'(0)< 0$ then, in the phase-plane portrait, the orbit of $u_{\ell,\kappa}$ must traverse the forth quadrant slightly below the homoclinic orbit, and it is sign changing solution. If $u_{\ell,\kappa}'(0)=0$, then $u_{\ell,\kappa}$ is a decreasing half-soliton, whose derivative never vanishes for $x>0$. In either case, we obtain a contradiction.
\end{proof}

\begin{lemma}\label{Solucoes com L pequeno}
Given $\epsilon>0$, if $\ell$ is sufficiently small and $u$ is a bound-state on  $\mathcal{G}$  such that $H^+\neq H^-$, $H=H^++H^-$, then
$$
|u(\mathbf{0})-z|<\epsilon, \quad \text{ for } z\in\{0,\varphi(0)\}.
$$
\end{lemma}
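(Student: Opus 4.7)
The plan is to extract $u(\mathbf{0})$ via the value $f(z)$ with $z := u(\mathbf{0})$, using the Neumann-Kirchhoff condition at $\mathbf{0}$ to match the half-line contribution (which is a multiple of $\sqrt{f(z)}$) against the compact-core contribution (which Lemma \ref{NeummanKirchofAux} forces to vanish as $\ell \to 0$). The fact that $f$ vanishes on $[0,\varphi(0)]$ precisely at the two endpoints then gives the dichotomy $u(\mathbf{0}) \to 0$ or $u(\mathbf{0}) \to \varphi(0)$.

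First I would reduce to the case $u(\mathbf{0}) > 0$: since the case $u(\mathbf{0})=0$ forces $u_h \equiv 0$ on every half-line and hence $H^+=H^-=0$, the hypothesis $H^+\ne H^-$ excludes it (after, if needed, replacing $u$ by $-u$). Setting $z=u(\mathbf{0})=\varphi(y)$ for some $y\geq 0$, I would compute on each half-line that $u_h = \varphi(\cdot\pm y)$ yields
\[
u_h'(0) \;=\; \mp\,\sqrt{2f(z)},
\]
the sign being $-$ on the $H^+$ lines and $+$ on the $H^-$ lines (using $(\varphi')^2 = 2f(\varphi)$ and the fact that $\varphi'<0$ on $\R^+$). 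Summing the Neumann-Kirchhoff condition at $\mathbf{0}$,
\[
\sum_{\kappa \in \mathcal{K}} u_\kappa'(0) \;=\; (H^+ - H^-)\,\sqrt{2f(z)}.
\]

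Next I would invoke Lemma \ref{NeummanKirchofAux}: given any $\delta>0$, for $\ell$ small enough, $|u_\kappa'(0)| < \delta$ for every $\kappa \in \mathcal{K}$. Combining this with the identity above and the hypothesis $|H^+ - H^-| \geq 1$,
\[
\sqrt{2f(z)} \;\leq\; \frac{(P+2L)\,\delta}{|H^+ - H^-|} \;\leq\; (P+2L)\,\delta.
\]
Since $\delta$ can be taken arbitrarily small by shrinking $\ell$, this yields $f(z) \to 0$ as $\ell \to 0^+$, uniformly over the set of positive bound-states satisfying $H^+ \neq H^-$.

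To conclude, I would use that $f(x) = \tfrac{1}{2}x^2 - \tfrac{1}{p}|x|^p$ satisfies $f(0)=f(\varphi(0))=0$ and $f>0$ on $(0,\varphi(0))$, with maximum $(p-2)/(2p)$ at $x=1$. Hence the two roots $0$ and $\varphi(0)$ are isolated in $[0,\infty)$, and by continuity $f(z) < f(1)$ small forces $z$ to lie in one of two prescribed neighbourhoods $(0,\delta') \cup (\varphi(0)-\delta',\varphi(0)]$ whose radii tend to $0$ as $\delta' \to 0$. This gives exactly the desired conclusion $|u(\mathbf{0}) - z_*| < \epsilon$ for some $z_* \in \{0,\varphi(0)\}$. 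No serious obstacle is expected: the argument is essentially a careful bookkeeping of the Neumann-Kirchhoff balance combined with the uniform smallness already established in Lemma \ref{NeummanKirchofAux}.
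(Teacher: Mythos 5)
Your proof is correct and rests on exactly the same two ingredients as the paper's: the Neumann--Kirchhoff balance at $\mathbf{0}$, which equates $(H^+-H^-)\sqrt{2f(u(\mathbf{0}))}$ with the sum of the compact-edge derivatives, and Lemma \ref{NeummanKirchofAux}, which forces that sum to vanish as $\ell\to0$. The only difference is presentational: you run a direct quantitative estimate on $f(u(\mathbf{0}))$, whereas the paper argues by contradiction, extracting a limit $y(\ell)\to y_0\in(0,\infty)$ and contradicting $(H^+-H^-)\varphi'(y_0)\neq 0$ — the two are equivalent.
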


\begin{proof}
By contradiction, suppose that there exists a sequence of bound-states $u_\ell$, with $\ell\to 0$, such that
$$
|u_\ell(\mathbf{0})|>\epsilon \quad \mbox{and}\quad  |u_\ell(\mathbf{0})-\varphi(0)|>\epsilon.
$$
Passing if necessary to another subsequence, we can assume that  $H^+,H^-$ are constant.
Since 
   $$
    u_{\ell,h}=\varphi(\cdot\pm y(\ell))\qquad \forall h\in \Hcal
    $$
    for some $y(\ell)>0$, we conclude that $y(\ell)$ is bounded away from 0 and infinity. Hence, up to a subsequence, $y(\ell)\to y_0\neq 0$ as $\ell\to 0$, and
        $$
    u_{\ell,h}\to \varphi(\cdot\pm y_0) \quad \mbox{in}\quad  W^{1,\infty}_{loc}([0,\infty)).
    $$
    Using the Neumann-Kirchoff condition at $\mathbf{0}$ and Lemma \ref{NeummanKirchofAux},
    $$
    0=\lim_{\ell\to 0} \sum_{h\in \Hcal } u_{\ell,h}'(0) + \sum_{\kappa \in \Kcal} u'_{\ell,\kappa}(0) = (H^+-H^-)\varphi'(y_0)\neq 0
    $$
    a contradiction. 
\end{proof}

Consider the IVP
\begin{equation}\label{Theo.IVP.1}
	-u''+u= u^{p-1}\ \text{in}\ \R,\quad
	u(0)=a,\quad
	u'(0)=b.
\end{equation}
Given any solution $u$ (which is globally defined and depends in $C^1$ manner on the initial conditions $a,b$),  and define $F:\R^3\to\R$ by
\begin{equation}\label{FuncF}
	F(a,b,\ell)=u'(\ell).
\end{equation}

\begin{lemma}\label{Para l pequeno solucoes sao simetricas}
	For $\varepsilon>0$ sufficiently small, 
	consider $F$ as in \eqref{FuncF} and let $z\in\{0,\varphi(0)\}$ be fixed. 
	Then, there exist neighborhoods $V_z$ of $(z,0,0)$ and $U_z\subseteq \left(z-\varepsilon,z+\varepsilon\right)\times(-\varepsilon,\varepsilon)$ of $(z,0)$, and a $C^1$-function $g:U_z\to \R$ for which: $F(a,b,\ell)=0$ if and only if $b=g(a,\ell)$  for any $(a,b,\ell)\in V_z$. 
\end{lemma}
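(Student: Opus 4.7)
The plan is to apply the implicit function theorem to $F$ at the point $(a,b,\ell)=(z,0,0)$, solving the equation $F(a,b,\ell)=0$ for $b$ as a $C^1$ function of $(a,\ell)$.

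First I would verify that $F$ is of class $C^1$ in a neighborhood of $(z,0,0)$. Rewriting \eqref{Theo.IVP.1} as the first order system
\[
\frac{d}{dx}\begin{pmatrix} u \\ u' \end{pmatrix} = \begin{pmatrix} u' \\ u - |u|^{p-2}u \end{pmatrix},
\]
the right-hand side is $C^1$ on $\R^2$ since $p>2$ (the derivative of $|u|^{p-2}u$ is $(p-1)|u|^{p-2}$, which is continuous on $\R$ precisely because $p-2>0$). Standard ODE theory (smooth dependence on initial conditions and parameters) then guarantees that the flow map $(a,b,\ell)\mapsto (u(\ell),u'(\ell))$ is $C^1$ in an open set containing $(z,0,0)$, so $F$ inherits this regularity.

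Next, I would compute the derivative $\partial F/\partial b$ at the base point. Since $F(a,b,0)=u'(0)=b$ for every $(a,b)$, one has immediately
\[
F(z,0,0)=0 \qquad \text{and} \qquad \frac{\partial F}{\partial b}(z,0,0)=1 \neq 0.
\]
The implicit function theorem then produces open neighborhoods $V_z$ of $(z,0,0)$ in $\R^3$ and $U_z$ of $(z,0)$ in $\R^2$, together with a $C^1$ map $g:U_z\to\R$, such that the zero set of $F$ inside $V_z$ coincides with the graph $\{(a,g(a,\ell),\ell):(a,\ell)\in U_z\}$. By shrinking $U_z$ if necessary, one may assume $U_z\subseteq(z-\varepsilon,z+\varepsilon)\times(-\varepsilon,\varepsilon)$.

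The main (mild) point of care is the regularity at $z=0$, where the nonlinearity $|u|^{p-2}u$ is only $C^1$ (and not smoother for $p\in(2,3)$); however, since IFT requires only $C^1$ regularity of $F$, this presents no obstacle. All other aspects, including local existence of the solution on $[0,\ell]$ for $\ell$ near zero, follow from elementary ODE theory.
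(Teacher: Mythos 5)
Your proof is correct and follows essentially the same route as the paper: both apply the Implicit Function Theorem to $F$ at $(z,0,0)$ after checking $F(z,0,0)=0$ and $\frac{\partial F}{\partial b}(z,0,0)=1$. The only (harmless) difference is that you obtain the nonvanishing derivative directly from the identity $F(a,b,0)=b$, whereas the paper computes it via the variational equation satisfied by $\partial u/\partial b$; your observation that $C^1$ regularity of the nonlinearity suffices for $p\in(2,3)$ is a welcome extra check.
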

\begin{proof}
	This is a direct consequence of the Implicit Function Theorem applied to the $C^1$-function $F$ at the point $(z,0,0)$, which satisfies $F(z,0,0)=0$, and 
	\begin{equation*}
		\frac{\partial F}{\partial b}(z,0,0)=\frac{\partial u'}{\partial b}(0)=\left(\frac{\partial u}{\partial b}\right)'(0)=1\neq 0,
	\end{equation*}
	since  $v:=\left(\frac{\partial u}{\partial b}\right)'$ solves either
	\begin{equation*}
			-v''+v=(p-1)\varphi^{p-2}v,\quad  \text{ if $z=\varphi(0)$,}\qquad \text{ or }\qquad
			-v''+v=0, \quad \text{ if $z=0$,}
	\end{equation*}
    and $v(0)=0$, $v'(0)=1$.
\end{proof}

\begin{lemma}\label{lem_tfi2}
Let $\theta> 0$ and consider the map $G:(-\infty, \varphi(0)]\times \R\to \R$ defined as
\[
    G(a,\ell)=F(a,\theta\sqrt{2f(a)}\sign(a),\ell).
    \]
Then:
    \begin{itemize}
        \item there exist $\epsilon>0$, a neighborhood $\widetilde V_0$ of $(0,0)$ and a $C^1$-function $h:(-\epsilon,\epsilon)\to \R$ such that, over $\widetilde V_0$,
\[
    G(a,\ell)=0\quad \mbox{if and only if}\quad  a=h(\ell).
  \]
  \item there exist $\epsilon>0$, a neighborhood $\widetilde V_{\varphi(0)}$ of $(\varphi(0),0)$ and a $C^1$-function $h:[0,\epsilon)\to \R$ such that, over $\widetilde V_{\varphi(0)}\cap \{a\le \varphi(0)\}$,
  \[
    G(a,\ell)=0\quad \mbox{if and only if}\quad  a=h(\ell).
  \]
    \end{itemize}
\end{lemma}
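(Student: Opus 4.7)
The plan is to apply the Implicit Function Theorem (IFT) to $G$ directly near $(0,0)$, and, after a square-root change of variables resolving the singularity of $\psi(a) := \theta\sqrt{2f(a)}\sign(a)$ at $a = \varphi(0)$, also near $(\varphi(0), 0)$. Throughout I would use the smooth dependence of $F$ on $(a, b, \ell)$ together with the identity $\partial_b F(z, 0, 0) = 1$ for $z \in \{0, \varphi(0)\}$ already observed in the proof of Lemma \ref{Para l pequeno solucoes sao simetricas}. The main obstacle is precisely the singularity at $\varphi(0)$, which is also responsible for the one-sided domain of $h$ in the second bullet.

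Near $(0,0)$, I would write $G(a,\ell) = F(a, \psi(a), \ell)$ with $\psi(a) = \theta a\sqrt{1 - (2/p)|a|^{p-2}}$, and check that $\psi \in C^1$ near $a = 0$ with $\psi(0) = 0$ and $\psi'(0) = \theta$ (this uses that $(a|a|^{p-2})' = (p-1)|a|^{p-2} \to 0$ as $a\to 0$). Since $u \equiv 0$ is the solution of the ODE with $a = b = 0$, the function $w := \partial u / \partial a$ solves $-w'' + w = 0$, $w(0) = 1$, $w'(0) = 0$, so $w(x) = \cosh(x)$ and $\partial_a F(0, 0, 0) = w'(0) = 0$. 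By the chain rule,
\[
    \partial_a G(0, 0) = \partial_a F(0, 0, 0) + \partial_b F(0, 0, 0)\,\psi'(0) = \theta \neq 0,
\]
and the IFT yields $h$ on a symmetric interval $(-\epsilon,\epsilon)$.

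Near $(\varphi(0), 0)$, since $f(\varphi(0)) = 0$ and $f'(\varphi(0)) = -\tfrac{p-2}{2}\varphi(0) < 0$, a Taylor expansion of $f$ at $\varphi(0)$ produces a smooth function $g$ with $g(0) = (p-2)\varphi(0) > 0$ and $2f(\varphi(0) - s^2) = s^2\, g(s^2)$. Setting $a = \varphi(0) - s^2$ and
\[
    \widetilde G(s, \ell) := F\!\left(\varphi(0) - s^2,\; \theta s\sqrt{g(s^2)},\; \ell\right), \quad s \in \R,
\]
yields a $C^1$ extension of $(s, \ell) \mapsto G(\varphi(0) - s^2, \ell)$ from $\{s \geq 0\}$ to a full neighborhood of the origin. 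Since the solution of the ODE starting at $(\varphi(0), 0)$ is the soliton $\varphi$, one has $\widetilde G(0, 0) = \varphi'(0) = 0$,
\[
    \partial_s \widetilde G(0, 0) = \theta\sqrt{(p-2)\varphi(0)} > 0, \qquad \partial_\ell \widetilde G(0, 0) = \varphi''(0) = -\tfrac{p-2}{2}\varphi(0) < 0.
\]
The IFT then produces a $C^1$ curve $s = \widetilde h(\ell)$ of zeros of $\widetilde G$ near the origin with $\widetilde h(0) = 0$ and $\widetilde h'(0) = -\partial_\ell \widetilde G / \partial_s \widetilde G > 0$, so $\widetilde h(\ell) > 0$ for $\ell \in (0, \epsilon)$. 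Defining $h(\ell) := \varphi(0) - \widetilde h(\ell)^2$ gives a $C^1$ function on $[0, \epsilon)$ with $h(0) = \varphi(0)$ and $h(\ell) < \varphi(0)$ for $\ell > 0$. Since the substitution $s = \sqrt{\varphi(0) - a}$ sets up a bijection between zeros of $G$ in $\widetilde V_{\varphi(0)} \cap \{a \leq \varphi(0)\}$ and zeros of $\widetilde G$ with $s \geq 0$ (for $s<0$ the sign of the second argument of $F$ flips, so such points are not zeros of the original $G$), the claimed characterization follows.
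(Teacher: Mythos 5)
Your proof is correct, and the first bullet is handled exactly as in the paper: the map $a\mapsto\theta\sqrt{2f(a)}\sign(a)$ is $C^1$ near $0$ with derivative $\theta$ there, and the Implicit Function Theorem applies since $\partial_a G(0,0)=\theta\neq 0$ (you additionally verify $\partial_a F(0,0,0)=0$ via the variational equation, which the paper leaves implicit but which is indeed needed for that computation). For the second bullet the underlying idea is the same — desingularize the square root at $a=\varphi(0)$ by a change of variables and then apply the IFT — but the substitutions differ: the paper takes $b$ itself as the new variable, writing $a=f_2^{-1}(b^2/(2\theta^2))$ and studying $\widetilde G(b,\ell)=F(f_2^{-1}(b^2/(2\theta^2)),b,\ell)$ with $\partial_b\widetilde G(0,0)=1$, whereas you set $a=\varphi(0)-s^2$ and factor $2f(\varphi(0)-s^2)=s^2g(s^2)$; the two parametrizations are related by the local diffeomorphism $b=\theta s\sqrt{g(s^2)}$, so they are interchangeable. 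What your version buys is an explicit justification of the one-sided statement: by computing $\partial_\ell\widetilde G(0,0)=\varphi''(0)<0$ you get $\widetilde h'(0)>0$, hence $\widetilde h(\ell)\ge 0$ precisely for $\ell\ge 0$, and you note that the branch $s<0$ corresponds to flipping the sign of the second argument of $F$ and therefore does not produce zeros of the original $G$. The paper closes the argument by simply remarking that $b\mapsto f_2^{-1}(b^2/(2\theta^2))$ is a diffeomorphism from a right-neighborhood of $b=0$ onto a left-neighborhood of $a=\varphi(0)$, leaving the sign of the implicit branch unaddressed; your write-up fills in that detail.
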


\begin{proof}
We begin with the case $z=0$. Observe that $a\mapsto \theta\sqrt{2f(a)}\sign(a)=\theta a\sqrt{1-\frac{|a|^{p-2}}{p}}$ is of class $C^1$, and 
\[
\frac{\partial G}{\partial a}(0,0)=\theta\neq 0.
\] 
By the Implicit Function Theorem, near $(a,\ell)=(0,0)$, $a$ is locally determined as a function of $\ell$, $h(\ell)$, and the conclusion follows. 

Now we move to the case $z=\varphi(0)$. Since $z>1$, then for $a$ near $z$, the equation $b=\theta\sqrt{2f(a)}$ is equivalent to $a=f_2^{-1}(b^2/(2\theta^2))$. Instead of working directly with $G$, we deal with
$$
\widetilde{G}(b,\ell):=F(f_2^{-1}(b^2/(2\theta^2)), b, \ell).
$$
We apply the Implicit Function Theorem near $(b,\ell)=(0,0)$, where $\widetilde G$ is of class $C^1$: it suffices to compute
$$
\frac{\partial \widetilde{G}}{\partial b}(0,0) = \left(\frac{\partial u}{\partial b}\right)'(0)=1.
$$
As such, $\widetilde{G}(b,\ell)=0$ determines, for $\ell$ small and $b$ near $0$, $b$ as a function of $\ell$. Since $b\mapsto a=f_2^{-1}(2b^2/\theta^2)$ is a diffeomorphism from a right-neighborhood of $b=0$ to a left-neighborhood of $a=\varphi(0)$, the proof is finished.
\end{proof}

\begin{proof}[Proof of Theorem \ref{thm:small_ell} 
] 

\noindent \textit{Items 1.-3.}:\newline
    \noindent\textit{Step 1. Positive solutions are core symmetric for $\ell>0$ small.} For $\ell<\ell_1^*$, any positive bound-state $u$ is symmetric in each possible loop, by Lemma \ref{lemma:loops_sym}. Therefore, on each loop $\kappa$ of length $2\ell$, we have $u_\kappa'(\ell)=0$. Therefore we can cut the loop at  $x=\ell$, creating two pendants; reasoning in this way, we may assume without loss that $\mathcal{G}$ only pendants of length $\ell>0$. Let us check that $u_{\kappa_1}\equiv u_{\kappa_2}$ for every $\kappa_1,\kappa_2\in \Kcal$.  
    
    Take $\epsilon$ from Lemma \ref{Para l pequeno solucoes sao simetricas}. By Lemmas \ref{NeummanKirchofAux} and \ref{Solucoes com L pequeno}, for $\ell\leq \bar \ell$ sufficiently small (depending only on $\epsilon$), any bound-state $u$ on a regular single-knot graph of length $\ell$ satisfies
    $$
    |u_{\kappa}'(0)|<\epsilon  \text{ for } \kappa \in \Kcal ,\quad \text{ and } \quad  |u(\mathbf{0})-z|<\epsilon \text{ for } z\in\{0,\varphi(0)\}.
    $$
    By Lemma \ref{Para l pequeno solucoes sao simetricas}, $u_{\kappa}$ is uniquely determined by $\ell$ and $u(\mathbf{0})$. In particular, $u$ is core-symmetric and, therefore,
    $$
    u_\kappa'(0)=\frac{H^+-H^-}{P+2L}\sqrt{2f(u(\mathbf{0}))}=:\theta \sqrt{2f(u(\mathbf{0}))}\quad \text{ for every } \kappa\in \Kcal.
    $$ 
\smallbreak
    
    \textit{Step 2.  Nonexistence result of positive solutions for $H^->H^+$.} This is a direct consequence of Theorem \ref{thm:amin}-3.-5.. Indeed, for $H^->H^+$ we have $\theta<0$ and there are no decreasing positive solutions for $\ell<\ell_2^*(\theta)$.
\smallbreak

\textit{Step 4. Existence and uniqueness result for $H^+=H^-$}. This is a direct consequence of Proposition \ref{prop:CASEH^+=H^-}.

\smallbreak

    \textit{Step 5. Existence and uniqueness result for $H^+>H^-$.} 
    In this situation $\theta>0$, and the existence follows directly from Theorem \ref{thm:amin}. As for the uniqueness, by Lemma \ref{lem_tfi2}, near $z=0$ and $z=\varphi(0)$, the value of $u(\mathbf{0})$ is uniquely determined by $\ell$. If $z\sim 0$, then $u_\kappa$ must be constant equal to zero and then $u\equiv 0$, which contradicts the definition of bound-state. Then $z\sim\varphi(0)$, and $u_\kappa$ is uniquely determined by the relation $\varphi(y)=u(\mathbf{0})$.

\smallbreak 

\noindent \emph{Items (a)-(b):}\newline
We check that ground-states are necessarily signed for $\ell$ small, after which the remaining conclusions follow from Theorem \ref{thm:small_ell}-1.,2.,3.. Let $u_\ell$ be an action ground-state (recall Theorem \ref{thm:agsregular}) on  $\mathcal{G}$. On such graph, take also $v_\ell$, a positive core-symmetric and core-increasing solution with $H^+=H$ (see Theorem \ref{thm:small_ell}). Then
\[
S_{\mathcal{G}}=\left(\frac{1}{2}-\frac{1}{p}\right)\left(\sum_{h\in \Kcal} \|u_h\|_{H^1}^2+ \sum_{\kappa\in \Hcal} \|u_\kappa\|_{H^1}^2\right)\leq \left(\frac{1}{2}-\frac{1}{p}\right)\left(\sum_{h\in \Hcal} \|v_h\|_{H^1}^2+ \sum_{\kappa\in \Kcal} \|v_\kappa\|_{H^1}^2\right).
\]
Since $\|u_{\ell,h}\|_{H^1}^2, \|v_{\ell,h}\|_{H^1}^2\leq \|\varphi\|_{H^1(\R)}^2$ for every $h\in \Hcal$, and $\|v_{\ell,\kappa}\|_{H^1}$ is uniformly bounded in $\ell$ by Lemma \ref{lemma:uniformbounds}, then
\[
\sum_{\kappa\in \Kcal} \|u_{\ell,\kappa}\|_{H^1}^2 \quad \text{ is uniformly bounded in $\ell$}.
\]
In particular, given $\kappa\in \Kcal$, the orbits of $(u_{\ell,\kappa})_\ell$ are uniformly bounded, being contained in a rectangle $[-A,A]\times [-B,B]$ in the phase-plane.

Consider the case $u_{\ell}(\mathbf{0})>0$. If $u$ is sign-changing, then necessarily  $u_{\ell,\kappa}$ is sign-changing for some $k\in \Kcal$ and $\ell\sim 0$. Then there exists $x_{0,\ell}, x_{1,\ell}$ with  $0\leq x_{0,\ell}<x_{1,\ell}\leq 2\ell$ such that $u_\kappa$ is decreasing in $(x_{0,\ell},x_{1,\ell})$, and 
    $$
    -u''+u=u^{p-1},\quad u(x_{0,\ell})=0,\ u'(x_{0,\ell})<0, \quad \text{ and } \quad u'(x_{1,\ell})=0
    $$
    (i.e., its orbit traverses the whole fourth quadrant in the phase-plane).
Then, arguing as in the proof of Lemma \ref{prop:CASEH^+=H^-},
    \begin{equation}\label{eq:newL2}
2   \ell\geq x_{1,\ell}-x_{0,\ell}=\frac{1}{\sqrt{2}}\int_0^
    {u(x_{1,\ell})}\frac{dt}{\sqrt{f(t)-f(u(x_{1,\ell}))}}\geq \delta>0.
    \end{equation}
since 
\[
z\in \left[-A,\left(\frac{p}{2}\right)^{\frac{1}{p-2}}\right)\mapsto \widetilde{L}(z)=\frac{1}{\sqrt{2}}\int_0^
    {z}\frac{dt}{\sqrt{f(t)-f(z)}}=\frac{1}{\sqrt{2}}\int_0^1\frac{zds}{\sqrt{f(sz)-f(z)}}
\]
is continuous and $\lim_{z\to (p/2)^{1/(p-2)}} \widetilde L(z)=+\infty.$
 This is a contradiction for small $\ell$. The case $u_{\ell}(\mathbf{0})<0$ is analogous.

As for the case $u_{\ell}(\mathbf{0})=0$, it can not correspond to a ground-state since its orbit necessarily transverses one entire quadrant of the phase-plane, which (as in the previous paragraph) leads to a contradiction for $\ell>0$ small.
\end{proof}

\begin{remark}\label{rmk:falha_var}
From the previous arguments we see that, for $\ell$ small, the action of a bound-state is approximately $H$ times the action of a half-soliton. In particular, when $H\ge 3$, all bound-states have action strictly larger than that of the real-line soliton. By Lemma \ref{lem:minim}, the variational problem \eqref{eq:agscharact} \textit{does not} have a solution, since $\mathcal{I}_{\G}(\mu_0)<S(\varphi)<\mathcal{S}_{\mathcal{G}}$.
\end{remark}

\subsection{Symmetry of action ground-states for large $\ell$}\label{sec:large_ell}

In this section, we prove all statements mentioned in the introduction that are related with the study for $\ell>0$ large, namely Theorems \ref{prop:existsymbreak1},  \ref{cor:symmetry} and \ref{prop:symmetricPL1}.

\begin{proof}[Proof of Theorem \ref{prop:existsymbreak1}]

    \textit{Step 1. Setup.} For  $0<z<1$ and $0<b<\sqrt{2f(z)}$, take $w_1,w_2$ to be the solutions of
    \begin{equation}\label{eq:IVP_pos}
    -w''+w=|w|^{p-1}w \text{ in } \R^+,\quad w(0)=z
    \end{equation}
    with 
    \[
    w_1'(0)=-b<0,\quad \text{ and } \quad w_2'(0)=\frac{H\sqrt{2f(z)}+(P+2L-E_0)b}{E_0}> 0.
    \]
     Then, define $v\in H^1(\mathcal{G})$ by
    \[
    \begin{cases}
    v_\kappa=w_1 & \text{for every compact edge }\kappa\notin \mathcal{G}_0\\
    v_\kappa =w_2 & \text{for } \kappa \in \mathcal{G}_0\\
    v_h=\varphi(\cdot+y) & \text{for every half-line }h\in \Hcal,
    \end{cases}
    \]
where $y>0$ is such that $\varphi(y)=z$. Since $v'_h(0)=\varphi'(y)=-\sqrt{2f(z)}$,  then $v$ satisfies the Neumann Kirchoff conditions at the vertex $\mathbf{0}$. 

Reasoning exactly as in Section \ref{sec:PositiveSol}, the expressions of the first positive zero of $w_1'$ and $w_2'$ are given respectively by 
    \[
    g_{1}(z,b):=\frac{1}{\sqrt
    2}\int_{z_1}^z\frac{dt}{\sqrt{f(t)-f(z_1)}}\qquad \text{ and } \qquad  g_{2}(z,b):=\frac{1}{\sqrt{2}}\int_z^{z_2}\frac{dt}{\sqrt{f(t)-f(z_2)}},
    \]
    where $z_1,z_2$ are given by 
    \[
    -\frac{b^2}{2}+f(z)=f(z_1)\qquad \text{ and }-\frac{(w_2'(0))^2}{2}+f(z)=f(z_2).
    \]

For the rest of the proof, we fix $\epsilon>0$ small. We show, via the Poincar\'e-Miranda theorem, that for any $\ell>0$ large we can find such $(z,b)$ such that $(g_{1}(z,b),g_{2}(z,b))=(\ell,\ell)$ - which directly implies that $v$, as defined above, is a bound-state -  and show that $v$ satisfies all the remaining conditions of the statement.  

\smallbreak

\noindent \textit{Step 2. Study of $g_1$.} For any given $0<z<1$, we claim that
    \begin{equation}\label{PoincareMiranda}
    \lim_{b\to 0} g_1(z,b)=0,\qquad \lim_{b\to 2\sqrt{f(z)}} g_1(z,b) = +\infty.
    \end{equation}
    Indeed, when $b\to 0$, we have $z_1\to z$ and
    $$
    \lim_{b\to 0}g_1(z,b) = \lim_{z_1\to z}\frac{1}{\sqrt{2}}\int_0^1 \frac{z-z_1}{\sqrt{f(z_1+s(z-z_1))-f(z_1)}}ds.
    $$
    Since
    $$
    \lim_{z_1\to z} \frac{(z-z_1)^2}{f(z_1+s(z-z_1))-f(z_1)} = \lim_{z_1\to z}\frac{-2(z-z_1)}{f'(z_1+s(z-z_1))(1-s)-f'(z_1)} =0,
    $$
 the first limit in \eqref{PoincareMiranda} follows by the dominated convergence theorem. For the second limit, it suffices to observe that, as $b\to \sqrt{2f(z)}$, we have $z_1\to 0$ and therefore
 $$
    \lim_{b\to  \sqrt{2f(z)}}g_1(z,b) = \lim_{z_1\to 0}\frac{1}{\sqrt{2}}\int_{z_1}^z \frac{dt}{\sqrt{f(t)-f(z_1)}}=\frac{1}{\sqrt{2}}\int_{0}^z \frac{dt}{\sqrt{f(t)}}=\infty.
    $$

\smallbreak

\noindent \textit{Step 3. Study of $g_2$.}  Observe that $g_2(\epsilon, \cdot):[0,\sqrt{2f(\epsilon)}]\to \R$ is continuous and thus 
$$
|g_2(\epsilon, b)|\le M(\epsilon)\quad \text{ for every } 0\le b\le\sqrt{2f(\epsilon)}.
$$
Fix $\ell>2M(\epsilon)$. There exist $z_\ast\in (0,\epsilon)$ such that 
$$
g_{2}(z,b)\geq 2\ell \qquad \mbox{ for every }  0<z<z_\ast,\ 0<b<\sqrt{2f(z_*)}.
$$
Indeed, if, on the contrary, there exists a sequence $(z_n,b_n)\to (0,0)$  such that $g_2(z_n,b_n)<2\ell$, then
$$
2\ell\ge \lim_n g_2(z_n,b_n) = \lim \frac{1}{\sqrt{2}}\int_{z_n}^{(z_n)_2}\frac{dt}{\sqrt{f(t)-f((z_n)_1)}} = \frac{1}{\sqrt{2}}\int_0^{(p/2)^{1/(p-2)}}\frac{dt}{\sqrt{f(t)}} =+\infty,
$$
which is a contradiction. 

\smallbreak

\noindent \textit{Step 4. Construction of $v$.} Let
$$
D=\{ (z,b)\in \R^2: z_\ast\leq z\leq \epsilon,\quad 0<b<\sqrt{2f(z)}\}.
$$
Over $D$, the function $G=(g_1,g_2)$ is  continuous. By Steps 2 and 3, using the Poincar\`e-Miranda theorem, we deduce that, for $\ell>2M(\epsilon)$, there exists $(z,b)=(z_\ell,b_\ell)\in D$ with
$G(z_\ell,b_\ell)=(\ell,\ell)$. In particular, we have built a nontrivial bound-state $v_\ell$ which satisfies properties 1--5. Moreover, we point out that (recalling the notation from Step 1), $z_1=z_{1,\ell}=w_{1,\ell}(\ell)$ and $z_2=z_{2,\ell}=w_{2,\ell}(\ell)$.

\smallbreak

\noindent \textit{Step 5. Conclusion.} Finally, we prove the estimate for the action. First, observe that, on each half-line $h\in \Hcal$, since $v_{h,\ell}$ is a decreasing portion of the soliton with $0<v_{h,\ell}(0)<\epsilon$,
$$
\|v_{h,\ell}\|_{L^p(\R^+)}=O(\epsilon).
$$
On the compact edges $\kappa\notin \mathcal{G}_0$,
$$
\|v_{\kappa,\ell}\|_{L^p(0,\ell)}^p = \frac{1}{\sqrt{2}}\int_{z_{1,\ell}}^{z_{\ell}} \frac{t^pdt}{\sqrt{f(t)-f(z_{1,\ell})}}\to 0 \qquad \text{ as } \ell\to \infty,
$$
so we conclude that, for $\ell$ large,
$$
\|v_{\kappa,\ell}\|_{L^p(0,\ell)}^p < \epsilon,\qquad \text{ for every } \kappa \notin \mathcal{G}_0 \text{ compact edge}.
$$
Similarly, on an edge $\kappa\in\mathcal{G}_0$,
$$
\lim_{\ell\to \infty} \|v_{\kappa,\ell}\|_{L^p(0,\ell)}^p = \frac{1}{\sqrt{2}}\int_0^{(p/2)^{1/(p-2)}}\frac{t^p}{\sqrt{f(t)}} = \frac{1}{2}\|\varphi\|_{L^p(\R)}^p.
$$
Therefore, for $\ell$ large,
$$
S(v_\ell)=\left(\frac{1}{2}-\frac{1}{p}\right)\|v\|_{L^p(\mathcal{G})}^p = \left(\frac{1}{2}-\frac{1}{p}\right)\frac{E_0}{2}\|\varphi\|_{L^p(\mathcal{G})}^p + O(\epsilon) = \frac{E_0}{2}S(\varphi) + O(\epsilon).
$$
\end{proof}

In order to prove Theorem \ref{cor:symmetry}, we first need the following asymptotic estimate for the action of symmetric bound-states.

\begin{proposition}\label{prop:actionsym}
    Given $\epsilon>0$, if $\ell$ is sufficiently large and $u$ is a {core}-symmetric bound-state, then
    $$
    S(u)\ge \min\left\{H, \frac{P+2L}{2}\right\}S(\varphi) - \epsilon.
    $$
\end{proposition}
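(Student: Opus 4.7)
The idea is to prove the stronger inequality
\[
S(u)\ \ge\ \frac{P+2L}{2}\,S(\varphi)\ -\ \epsilon,
\]
which implies the claim since $\min\{H,(P+2L)/2\}\le (P+2L)/2$. Testing the bound-state equation against $u$ yields the identity $S(v)=\frac{p-2}{2p}\|v\|_{L^p(\mathcal{G})}^p$ for every bound-state $v$, and the same identity holds for the soliton on $\R$. Combined with the core-symmetric decomposition
\[
\|u\|_{L^p(\mathcal{G})}^p\ \ge\ (P+2L)\,\|u_\kappa\|_{L^p(0,\ell)}^p
\]
(the half-line contributions being nonnegative), it therefore suffices to show that, uniformly over core-symmetric bound-states,
\[
\|u_\kappa\|_{L^p(0,\ell)}^p\ \ge\ \tfrac{1}{2}\|\varphi\|_{L^p(\R)}^p\ -\ o_\ell(1)\qquad\text{as }\ell\to\infty,
\]
where $u_\kappa$ is the common compact-edge profile. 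Also, $u(\mathbf{0})\neq 0$: if $u(\mathbf{0})=0$, then $u_h\equiv 0$ on every half-line and core-symmetry combined with the Kirchhoff condition at $\mathbf{0}$ forces $u_\kappa'(0)=0$, hence $u_\kappa\equiv 0$, contradicting non-triviality. Without loss of generality, $u(\mathbf{0})>0$.

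By Lemma \ref{characterization of bound-states}, $u_\kappa$ solves \eqref{gen.IVP2_0} with phase-plane energy $C=(1-\theta^2)f(z)$, where $z=\varphi(y)\in (0,\varphi(0))$ and $\theta=(H^+-H^-)/(P+2L)$. For $\theta=1$ the orbit is the positive homoclinic and $u_\kappa'(\ell)=0$ forces $u_\kappa(x)=\varphi(x-\ell)$, so $\|u_\kappa\|_{L^p(0,\ell)}^p=\int_{-\ell}^{0}\varphi^p\to \tfrac{1}{2}\|\varphi\|_{L^p(\R)}^p$; the case $\theta=-1$ admits no solutions (Lemma \ref{lemma:Theta=1}). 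For $|\theta|\neq 1$ the orbit is a closed periodic curve of period $T=T(C)$, and $u_\kappa'(\ell)=0$ forces $\ell=t_1+kT/2$ for some integer $k\ge 0$, where $t_1\in[0,T/2)$ is the time from $(z,\theta\sqrt{2f(z)})$ to the first turning point. As $\ell\to\infty$, two scenarios arise: either $T(C)$ remains bounded and $k\to\infty$, in which case each half-period contributes a uniformly positive amount of $L^p$-mass and $\|u_\kappa\|_{L^p}^p\to\infty$; or $T(C)\to\infty$, which by inspection of the period integral $T(C)=2\int du/\sqrt{2(f(u)-C)}$ occurs only if $C\to 0$ (the orbit approaches the homoclinic from either side). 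In this latter regime, the change of variables $dx=du/\sqrt{2(f(u)-C)}$ shows that the $L^p$-contribution of a complete half-period tends to $\tfrac{1}{2}\|\varphi\|_{L^p(\R)}^p$ when $C\to 0^+$, or to $\|\varphi\|_{L^p(\R)}^p$ when $C\to 0^-$, settling the sub-case $k\ge 1$.

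The delicate remaining sub-case is $k=0$, i.e.\ $\ell=t_1\to\infty$. Direct inspection of the first-passage integral $t_1=\int_{z}^{z_2}du/\sqrt{2(f(u)-C)}$ (and the analogous expression when $\theta<0$) shows that $t_1\to\infty$ requires $z$ to approach one of the endpoints $0$ or $\varphi(0)$, so that the trajectory spends unbounded time near the saddle at the origin; in the complementary regime where $z$ stays bounded away from these values, $t_1$ remains bounded, which is incompatible with $\ell\to\infty$. In each admissible limit, the trajectory covers at least a complete half-soliton loop, and the same change of variables yields $\|u_\kappa\|_{L^p(0,\ell)}^p\to \tfrac{1}{2}\|\varphi\|_{L^p(\R)}^p$ (or more, when $|\theta|>1$ and the orbit crosses zero). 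Combining all cases establishes the required bound; the $o_\ell(1)$ error is made uniform by a standard compactness-and-contradiction argument over sequences of core-symmetric bound-states with $\ell_n\to\infty$. The main technical obstacle is the clean case analysis of the $k=0$ regime across the various sign regimes of $\theta$, which requires the precise asymptotic analysis of the first-passage integral as the orbit approaches the homoclinic.
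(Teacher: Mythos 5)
Your proof takes a genuinely different route from the paper's. The paper argues by contradiction and concentration: it locates the global maximum point $x_\ell$ of $u_\ell$, splits into four cases according to whether $x_\ell$ stays in a compact set, escapes along a half-line, or escapes inside the compact core, and in each case extends the relevant edge profiles to $\R^+$ or $\R$ and passes to a weak limit which is a nontrivial bound-state on a limiting graph; the term $H$ in the minimum comes precisely from the case where the maximum escapes along a half-line, where the paper extracts the lower bound from the half-lines alone and says nothing about the core. You instead work only on the compact core and aim at the \emph{stronger} bound $S(u)\ge \frac{P+2L}{2}S(\varphi)-\epsilon$. Your reduction is sound: core-symmetry plus Kirchhoff at $\mathbf 0$ and at the terminal/dummy vertices pins the common core profile to a single phase-plane orbit at level $C=(1-\theta^2)f(z)$ with $\theta$ ranging over the finitely many values $(H^+-H^-)/(P+2L)$, and $\ell$ must equal $t_1+kT/2$. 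Your dichotomy ($T$ bounded and $k\to\infty$ versus $C\to0$) and the limits of the half-period $L^p$-mass are correct. If completed, your argument would in fact sharpen Theorem \ref{cor:symmetry}: for $P=0$ it would remove the hypothesis $H\ge 2$ and settle the flower-graph case the paper explicitly leaves open in the remark following that theorem. This is a strong reason to subject the argument to extra scrutiny rather than a reason to reject it, but you should be aware of it.

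The genuine gap is in the $k=0$ case, which you correctly identify as the crux but then dispose of with the assertion that ``in each admissible limit, the trajectory covers at least a complete half-soliton loop.'' As stated this is false for one of the limits you declare potentially admissible: for $\theta\in(-1,0)$ and $z\to 0^+$ the orbit starts near the saddle \emph{and} reaches its first turning point $(f_1^{-1}(C),0)$ while still near the saddle, so the trajectory covers essentially no $L^p$-mass at all. The argument survives only because this limit is in fact \emph{inadmissible}: the first-passage time stays bounded there, $\lim_{z\to0^+}L_2(z)=\argsinh\bigl(|\theta|/\sqrt{1-\theta^2}\bigr)<\infty$, so it cannot produce $\ell\to\infty$ with $k=0$. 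In other words, ``$z\to 0$ or $z\to\varphi(0)$'' is necessary but not sufficient for $t_1\to\infty$, and deciding which of the (sign of $\theta$, $|\theta|\lessgtr1$, $z\to0$ versus $z\to\varphi(0)$) combinations actually force $t_1\to\infty$ is exactly the content of the asymptotics of the length functions in Propositions \ref{prop:2.61} and \ref{prop:2.74} (namely $L_1(0^+)=+\infty$, $L_1(\varphi(0)^-)=0$, $L_2(0^+)$ finite, $L_2(\varphi(0)^-)=+\infty$), together with the analogous first-passage analysis for the sign-changing orbits when $|\theta|>1$, which the paper does not write down. You should either cite those propositions explicitly and add the exterior-orbit case, or prove the finiteness of the relevant first-passage limits directly; without this the central claim of your $k=0$ step is unsupported, and it is precisely the step that distinguishes your stronger bound from the paper's weaker one.
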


\begin{proof}
    By contradiction, suppose that $u_\ell$ is a sequence of {core}-symmetric bound-states for which
\begin{equation}\label{eq:accaomenorquephi}
        \lim_{\ell\to \infty} S(u_\ell) < \min\left\{H, \frac{P+2L}{2}\right\}S(\varphi).
\end{equation}
{Since $u_\ell$ is core symmetric, then $u_{\ell,\kappa}'(\ell)=0$ whenever $\kappa$ is a loop. Therefore, we can cut the loop at its middle point, and assume from the start that $\mathcal{G}$ has $P+2L$ pendants.}   

The sequence $u_\ell$ is uniformly bounded in $H^1$. Since $$
    \|u_\ell\|_{L^p}^p \le \|u\|_{L^\infty}^{p-2}\|u_\ell\|_{L^2}^2 \le \|u\|_{L^\infty}^{p-2}\|u_\ell\|_{H^1}^2 = \|u_\ell\|_{L^\infty}^{p-2}\|u_\ell\|_{L^p}^p\leq C\|u_\ell\|_{L^p}^p,
    $$
    we conclude that $\|u_\ell\|_{L^\infty} \ge {1/C}$ for all $\ell>0$. {We have that $u_\ell(\mathbf{0})\neq 0$, otherwise $u=0$ on all half-lines, and $u$ must be sign-changing on $\Kcal$, thus not core-symmetric. Without loss of generality, $u_\ell(\mathbf{0})> 0$. }
    
    Let $x_\ell\in \mathcal{G}$ to be a point of global maximum for $u_\ell$. We now split the analysis depending on the behavior of $x_\ell$ as $\ell\to\infty$.
\smallbreak

\noindent    \textit{Case 1.} There exists a compact set $\Lambda$ such that $x_\ell\in \Lambda$ for all $\ell$. For $\kappa\in \Kcal$, we extend $u_{\ell,\kappa}$, defined on $[0,\ell]$, to $v_{\ell,\kappa}$ defined on $\R^+$ as
    $$
    v_{\ell,\kappa}(x)=\begin{cases}
        u_{\ell,\kappa}(x), & x<\ell \\ u_{\ell,\kappa}(\ell)e^{-(x-\ell)}, & x>\ell.
    \end{cases}
    $$
    Letting $\mathcal{G}_\infty$ be the star graph with $H+P+2L$ half-lines, consider $v_\ell\in H^1(\mathcal{G}_\infty)$ which coincides with $u_{\ell, h}$ on $H$ half-lines and with $v_{\ell,\kappa}$ on the remaining $P+2L$ half-lines. One can easily check that $v_\ell$ is bounded in $H^1(\mathcal{G}_\infty)$, with maximum attained in $x_\ell\in K$ and $\|v_{\ell,\kappa}\|_{L^\infty}\ge 1$. Arguing as in Lemma \ref{lemma:minimizing_sequences}, up to a subsequence,  $v_\ell\rightharpoonup v$, and $v\in H^1(\mathcal{G}_\infty)\setminus\{0\}$ is a bound-state. In particular, if $H$ is odd, then $v_h=\varphi$, for all $h\in\mathcal{H}$; if $H$ is even, one may find a pairing of half-lines such that $v$ is equal to a soliton on the union of each pair of half-lines (see \cite[pg. 3]{Adami_star2012} for more details). This implies that
    $$
    \lim_{\ell\to \infty} S(u_\ell) = \lim_{\ell\to \infty} \frac{p-2}{2p}\|u_\ell\|_{L^p(\mathcal{G})}^p = \lim_{\ell\to \infty} \frac{p-2}{2p}\|v_\ell\|_{L^p(\mathcal{G}_\infty)}^p \ge  \frac{p-2}{2p}\|v\|_{L^p(\mathcal{G}_\infty)}^p  = \frac{H+P+2L}{2}S(\varphi),
    $$
which is a contradiction.

\smallbreak

\noindent \textit{Case 2.} $x_\ell\in \mathcal{H}$ and $x_\ell \to \infty$. In this case $u_{\ell,\kappa}'(0)\geq 0$ and  $u_{\ell,h} = \varphi(\cdot - y(\ell))$, so we must have $y(\ell)\to -\infty$ and
$$
\lim_{\ell\to \infty} S(u_\ell) \ge HS(\varphi)
$$
which is absurd.

\smallbreak

\noindent \textit{Case 3.} $x_\ell\in \mathcal{K}$ and there exists $C>0$ such that $|\ell-x_\ell| < C$ for every $\ell$. We define $w_{\ell,\kappa}$ as
$$
w_{\ell,\kappa}(x)=\begin{cases}
    u_{\ell,\kappa}(\ell-x),& 0<x<\ell, \\
    u_{\ell,\kappa}(0)e^{-(x-\ell)},& x>\ell.
\end{cases}
$$
In particular, $w_{\ell,\kappa}$ is bounded in $H^1(\R^+)$. Up to a subsequence, $w_{\ell,\kappa}\rightharpoonup w_\kappa\neq 0$, for some $w_\kappa$ bound-state in $\R^+$. By strong convergence on compact sets, $w_\kappa'(0)=0$, which implies that $w_\kappa$ is a half-soliton. In particular,
$$
\lim_{\ell \to \infty} S(u_\ell) \ge \lim_{\ell \to \infty} \sum_{\kappa\in\mathcal{K}}\frac{p-2}{2p}\|u_{\ell,\kappa}\|_{L^p(0,\ell)}^p  = \lim_{\ell \to \infty} \sum_{\kappa\in\mathcal{K}}\frac{p-2}{2p}\|w_{\ell,\kappa}\|_{L^p(R^+)}^p = \frac{P+2L}{2}S(\varphi),
$$
which contradicts \eqref{eq:accaomenorquephi}.

\smallbreak

\noindent \textit{Case 4.} $x_\ell\in \mathcal{K}$, $x_\ell\to \infty$ and $\ell-x_{\ell}\to \infty$. In this case, we consider $z_{\ell,\kappa}\in H^1(\R)$ defined as
$$
z_{\ell,\kappa}(x)=\begin{cases}
    u_{\ell,\kappa}(x+x_\ell),& -x_\ell<x<\ell-x_\ell, \\
    u_{\ell,\kappa}(0)e^{x+x_\ell},& x<-x_\ell.\\ u_{\ell,\kappa}(\ell)e^{-x+\ell-x_\ell},& x>\ell-x_\ell.
\end{cases}
$$
Similar to the previous case, up to a subsequence, $z_{\ell,\kappa}\rightharpoonup z_\kappa\neq 0$, where $z_\kappa$ is a bound-state on the real-line. In particular, $z_\kappa$ coincides with a soliton and thus
$$
\lim_{\ell \to \infty} S(u_\ell) \ge \lim_{\ell \to \infty} \sum_{\kappa\in\mathcal{K}}\frac{p-2}{2p}\|u_{\ell,\kappa}\|_{L^p(0,\ell)}^p  = \lim_{\ell \to \infty} \sum_{\kappa\in\mathcal{K}}\frac{p-2}{2p}\|z_{\ell,\kappa}\|_{L^p(R^+)}^p = (P+2L)S(\varphi),
$$
contradicting \eqref{eq:accaomenorquephi}.
 \end{proof}

\begin{proof}[Proof of Theorem \ref{cor:symmetry}]
    By contradiction, suppose there exists a {core}-symmetric action ground-state  $u_\ell$, for $\ell$ large. If $P\neq 0$, fix $P_0=1$, $L_0=0$ and $\epsilon>0$ small. For $\ell$ sufficiently large, take $v_\ell$ to be the bound-state given by Theorem \ref{prop:existsymbreak1}. Then, by Proposition \ref{prop:actionsym}, 
    $$
    S(u_\ell)\ge \min\left\{H,\frac{P+2L}{2}\right\}S(\varphi)-\epsilon \geq S(\varphi)-\epsilon = \frac{1}{2}S(\varphi)+ S(v_\ell)+O(\epsilon), $$
    contradicting the definition of action ground-state for sufficiently small $\epsilon$.

    If $P=0$, we choose instead $P_0=0$ and $L_0=1$. In this case, the bound-state $v_\ell$ given by Theorem \ref{prop:existsymbreak1} satisfies )
    \[
    S(u_\ell)\geq \min\{H, L\}S(\varphi) - \epsilon \geq  2S(\varphi)-\epsilon =S(\varphi)+S(v_\ell)+O(\epsilon),
    \]
    which is again a contradiction.
\end{proof}

\begin{remark}\label{rmk:P>0}
    In the case where $P\neq 0$, one may also prove Theorem \ref{cor:symmetry} using the variational approach \eqref{eq:agscharact}, which we sketch below. 
    
    On the one hand, Proposition \ref{ThresholdPhenomProp} implies that, for $\ell$ large, the action ground-state is a minimizer of $\mathcal{I}_\mathcal{G}(\mu_0)$. Arguing as in the proof of Proposition \ref{ThresholdPhenomProp}, one may construct a competitor $w_\ell$ with
    $$
    \mathcal{I}(w, \mathcal{G}) = \mathcal{I}_{\R^+}(\mu_0)+\epsilon,\quad \epsilon\mbox{ small},
    $$
    which implies, in particular, that $ \mathcal{I}(u_\ell, \mathcal{G}) \le \mathcal{I}_{\R^+}(\mu_0) + \epsilon$.

    On the other hand, if $u_\ell$ were core-symmetric, as $\ell\to \infty$, $u$ would have to converge to half-solitons on each compact edge and
    $$
     \mathcal{I}_{\R^+}(\mu_0)+\epsilon>\mathcal{I}(u,\mathcal{G})\sim (P+2L)\mathcal{I}_{\R^+}\left(\frac{\mu_0}{P+2L}\right) = (P+2L)^{1-2/p}\mathcal{I}_{\R^+}\left(\mu_0\right) 
    $$
    which is a contradiction for $P+L>1$.
\end{remark}

\begin{proof}[Proof of Theorem \ref{prop:symmetricPL1}]

\noindent \textit{Proof for $P=1$ and $L=0$.} Suppose that $u_\ell$ is an action ground-state which is not symmetric; in this case, it means it is not symmetric on the half-lines. If $u_\ell(\mathbf{0})=0$, then $u\equiv 0$ on each half-line, contradicting its lack of symmetry. If $u(\mathbf{0})>0$, then $H^+, H^->0$, which implies the existence of two half-lines $h_1, h_2$ on which union $u_\ell$ coincides with a soliton, hence $S(u_\ell)\geq S(\varphi)$.

    On the other hand, by Theorem \ref{th:SolIVPGeralTheta} and Proposition \ref{prop:2.61}, taking $\theta=H$, there exists a positive symmetric bound-state $v_\ell$ with $\lim_{\ell\to \infty} v_{\kappa,\ell}(0)=0$ and $\lim_{\ell\to \infty} v_{\kappa,\ell}(\ell)=\varphi(0)$ {(indeed, $L_1(\varphi(v_{\ell,h}(0)))=\ell\to \infty$)}. Then
$$
S(\varphi)\le \lim_{\ell\to \infty} S(u_\ell) \le  \lim_{\ell\to \infty} S(v_\ell) = \frac{1}{2}S(\varphi)
$$
which is a contradiction, since $S(\varphi)>0$.

\smallbreak

\noindent \textit{Proof for $P=0$ and $L=1$ (with $H\geq 2$).}  As before, Theorem \ref{th:SolIVPGeralTheta} and Proposition \ref{prop:2.61} imply the existence of a positive symmetric bound-state with $H^+=H$ satisfying $\lim_{\ell\to \infty} v_{\kappa,\ell}(0)=0$ and $\lim_{\ell\to \infty} v_\kappa(\ell)=\varphi(0)$. In particular,
\begin{equation}\label{eq:limSv2}
        \lim_{\ell\to\infty}S(v_\ell)= S(\varphi).
\end{equation} 
Let $u_\ell$ be a non-symmetric action ground-state. If $u(\mathbf{0})=0$, then $u$ is a ground-state on the loop. This implies that $u>0$, which is absurd. Therefore, we may assume that $u(\mathbf{0})>0$. 

If $u_\ell$ were not symmetric on the half-lines, then, as in the previous case $u_\ell$ coincides with a soliton on $h_1\cup h_2$.
Letting $\mathcal{G'}=\mathcal{G}\setminus (h_1\cup h_2)$, $u_\ell$ must be a nontrivial bound-state on $\mathcal{G'}$. Since
\begin{equation}\label{eq;gn}
    \|u_\ell\|_{H^1(\mathcal{G}')}^2=\|u_\ell\|_{L^p(\mathcal{G}')}^p\le C \|u_\ell\|_{H^1(\mathcal{G}')}^p, \quad C\mbox{ independent on }\mathcal{G}',
\end{equation}
this implies that $\|u_\ell\|_{L^p(\mathcal{G}')}^p>\delta $ for some universal $\delta>0$. As such,
$$
S(u_\ell) = \frac{p-2}{2p}\|u_\ell\|_{L^p(\mathcal{G})}^p = \frac{p-2}{2p}\left(\|u_\ell\|_{L^p(h_1\cup h_2)}^p + \|u_\ell\|_{L^p(\mathcal{G}')}^p\right) \ge S(\varphi) + \frac{p-2}{2p}\delta.
$$
Together with \eqref{eq:limSv2}, this contradicts the minimality of $u_\ell$. Therefore $u_\ell$ is symmetric on the half-lines.

Finally, suppose that $u_\ell$ is not symmetric on the loop. Then it must be periodic. In particular, $u$ must be a bound-state on $\mathcal{H}$, which implies that $u_\ell$ is equal to a soliton on pairs of half-lines. Hence, by \eqref{eq;gn},
$$
S(u_\ell)\ge \frac{H}{2}S(\varphi) + \|u_\ell\|_{L^p(\mathcal{K})}^p \ge \frac{H}{2}S(\varphi) + \frac{p-2}{2p}\delta.
$$
Recalling that $H\geq 2$, this contradicts \eqref{eq:limSv2} and the minimality of $u_\ell$.
\end{proof}

\paragraph{Acknowledgments.}
F. Agostinho, S. Correia and H. Tavares are partially supported by the Portuguese government through FCT - Funda\c c\~ao para a Ci\^encia e a Tecnologia, I.P., project UIDB/04459/2020 with DOI identifier 10-54499/UIDP/04459/2020 (CAMGSD), and project PTDC/MAT-PUR/1788/2020 with DOI identifier 10.54499/PTDC/MAT-PUR/1788/2020 (project NoDES). F. Agostinho was also partially supported by Funda\c c\~ao para a Ci\^encia e Tecnologia, I.P., through the PhD grant UI/BD/150776/2020. H. Tavares is also partially supported by FCT under the project  2023.13921.PEX, with DOI identifier  https://doi.org/10.54499/2023.13921.PEX (project SpectralOPs).

\bibliographystyle{plain}
\bibliography{AST}

	{\noindent Francisco Agostinho, Sim\~ao Correia and Hugo Tavares}\\
{\footnotesize
	Center for Mathematical Analysis, Geometry and Dynamical Systems,\\
	Instituto Superior T\'ecnico, Universidade de Lisboa\\
    Department of Mathematics,\\
	Av. Rovisco Pais, 1049-001 Lisboa, Portugal\\
	\texttt{francisco.c.agostinho@tecnico.ulisboa.pt}\\ \texttt{simao.f.correia@tecnico.ulisboa.pt}\\ \texttt{hugo.n.tavares@tecnico.ulisboa.pt}
}

\end{document}